\newtheorem{thm}{Theorem}[section]
\newtheorem{conj}[thm]{Conjecture}
\newtheorem{cor}[thm]{Corollary}
\newtheorem{lem}[thm]{Lemma}
\newtheorem{prop}[thm]{Proposition}
\newtheorem{defin}[thm]{Definition}
\newtheorem{rmk}[thm]{Remark}
\def\Char{{\mathrm{Char}}}
\def\int{{\text{int}}}
\def\mod{{\textup{mod} \;}}
\def\rk{{\mathrm{rk}}}
\begin{document}

\title[On lens space surgeries from the Poincar\'e homology sphere]%
{On lens space surgeries from the Poincar\'e homology sphere}

\begin{abstract}
    Building on Greene's changemaker lattices, we develop a lattice embedding obstruction to realizing an L-space bounding a definite 4-manifold as integer surgery on a knot in the Poincar\'e homology sphere. As the motivating application, we determine which lens spaces are realized by $p/q$-surgery on a knot $K$ when $p/q > 2g(K) -1$. Specifically, we use the lattice embedding obstruction to show that if $K(p)$ is a lens space and $p \geq 2g(K)$, then there exists an equivalent surgery on a Tange knot with the same knot Floer homology groups; additionally, using input from Baker, Hedden, and Ni, we identify the only two knots in the Poincar\'e homology sphere that admit half-integer lens space surgeries. Thus, together with the Finite/Cyclic Surgery Theorem of Boyer and Zhang, we obtain the corollary that lens space surgeries on hyperbolic knots in the Poincar\'e homology sphere are integral.
\end{abstract}
\author[Jacob Caudell]{Jacob Caudell}

\address{Department of Mathematics, Boston College\\ Chestnut Hill, MA 02467}

\email{caudell@bc.edu}

\maketitle

\section{Introduction}
\subsection{Background.}
\emph{Dehn surgery}---the cut and paste operation whereby one solid torus is removed from a 3-manifold and replaced by another---has been one of the most well-studied and befuddling constructions in low-dimensional topology since Dehn first performed surgery in 1910. A classical result (\cite{Lic62},\cite{Wal60}) states that for any two 3-manifolds $Y$ and $Y'$, there is a link $L$ in $Y$ admitting a Dehn surgery to $Y'$, but determining whether $Y'$ can be obtained by surgery on a \emph{knot} $K$ in $Y$---and if so then for which knots in $Y$---is notoriously difficult in general. In certain cases---e.g. when $Y$ is $S^3$ and $Y'$ is reducible, contains an incompressible torus, or has a finite fundamental group---general constructions motivated by low-complexity examples have led to conjecturally complete accounts of all such surgeries. At present, we treat the case where $Y$ is the Poincar\'e homology sphere $\mathcal P$, oriented as the boundary of the negative definite $E_8$ plumbing, and $Y'$ is homeomorphic to a connected sum of exactly one or two lens spaces. Note that the scenario where $Y'$ is instead a connected sum of three or more lens spaces was dispatched in \cite{Cau21}, and the main obstruction formulated there serves as the foundation for the present work.

To set up the statement of our main results, we recall some conventions of Dehn surgery. For $K \subset \mathcal P$ with exterior $M_K : = \mathcal P \setminus \overset{\circ}{\nu}(K)$, identify $\mathbb Q \cup \{1/0\}$ with the set of \emph{slopes} on $\partial M_k$ by $p/q \mapsto p[\mu] + q [\lambda]$, where $\mu$ is the meridian of $K$ and $\lambda$ is the \emph{Seifert longitude} of $K$ oriented such that $\mu \cdot \lambda = 1$, i.e. the unique slope on $\partial \nu(K)$ that bounds in $M_K$ with $\mu \cdot \lambda = 1$. Denote by $K(p/q)$ the manifold obtained by identifying the boundaries of an abstract solid torus $D^2 \times S^1$ and $M_K$ such that $\partial D^2 \times \{\text{pt}\}$ is identified with the slope $p/q$, and call this manifold $p/q$-surgery on $K$. Practitioners of Heegaard Floer homology will recognize that for $K(p/q)$ to be a connected sum of lens spaces, we must have $p/q \geq 2g(K)-1$, as $\mathcal P$ and connected sums of lens spaces are all \emph{Heegaard Floer L-spaces}---that is, they satisfy $\text{rk }\widehat{HF}(Y) = |H_1(Y; \mathbb Z)|$. Practitioners of surface intersection graph techniques will recognize that if $p\geq 2g(K)$ and $K(p)$ is reducible, then $K$ is the $(r,s)$-cable of a knot $\kappa$, where $p = rs$, $s\geq 2$, and $K(p) \cong \kappa(r/s) \# L(s,r)$ \cite{MS03} (cf. \cite{Hof98}).

\subsection{Main results.} We now state our main results, which resolve the question of which knots in $\mathcal P$ admit half-integer lens space surgeries---thereby completely resolving the question of which knots admit non-integer lens space surgeries---and the question of which lens spaces are realized by $\geq 2g(K)$-surgery on a knot $K \subset \mathcal P$. 

\subsubsection{Half-integer surgeries.} 
Figures 1 and 2 present two knots in $\mathcal P$, $E$ and $C$, with half-integer lens space surgeries.

\begin{figure}
    \centering
\begin{tikzpicture}[scale = .75]
    \begin{knot}[flip crossing = 2, flip crossing = 3, flip crossing = 6, flip crossing = 8, flip crossing = 10]
    \strand[ultra thick] (0,0) to[curve through = {(2,2)..(0,4)..(-2,2)}] (0,0);
    \strand[ultra thick] (-3.2,2) to[curve through = {(-2.2,1)..(-1.2,2)..(-2.2,3)}] (-3.2,2);
    \strand[ultra thick] (0,.8) to[curve through = {(1, -.2)..(0,-1.2)..(-1,-.2)}] (0,.8);
    \strand[ultra thick] (3.2,2) to [curve through = {(2.2,3)..(1.2,2)..(2.2,1)}] (3.2,2);
    \strand[thick, red] (-4+.1 , 2) to [curve through = {(-3.2,1.2+.1)..(-2.4-.1,2)..(-3.2,2.8-.1)}] (-4+.1,2);
    \node at (0,4.25) {$\Large 0$};
    \node at (-3.4,2) {$\Large 3$};
    \node[red] at (-4,2.8) {$\Large E$};
    \node[red] at (-4.2,2) {$\Large \frac{1}{2}$};
    \node at (0,-1.45) {$\Large -2$};
    \node at (3.4,2) {$\Large 5$};
    \end{knot}
\end{tikzpicture}
    \caption{Performing a slam-dunk of $E$ through the 3-framed unknot followed by a slam-dunk of the resulting 1-framed unknot through the central 0-framed unknot yields a linear surgery diagram for $L(7,6)$. Thus, $E\subset \mathcal P$, the exceptional fiber of order $3$, admits a half-integer surgery to $L(7,6)$.}
\end{figure}
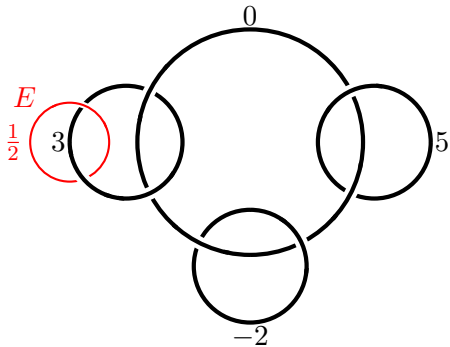

\begin{figure}
    \centering
    \begin{tikzpicture}
    \begin{knot}[clip width = 3, flip crossing = 2, flip crossing = 3, flip crossing = 5, flip crossing = 7, flip crossing = 10, flip crossing = 12, flip crossing = 14, flip crossing = 15]
    \strand[ultra thick] (3,0) to[curve through={(4,1)..(3,2)..(2,1)}] (3,0);
    \strand[ultra thick] (4.6,0) to[curve through={(5.6,1)..(4.6,2)..(3.6,1)}] (4.6,0);
    \strand[ultra thick] (6.2,0) to[curve through={(7.2,1)..(6.2,2)..(5.2,1)}] (6.2,0);
    \strand[ultra thick] (7.8,0) to[curve through={(8.8,1)..(7.8,2)..(6.8,1)}] (7.8,0);
    \strand[ultra thick] (9.4,0) to[curve through={(10.4,1)..(9.4,2)..(8.4,1)}] (9.4,0);
    \strand[ultra thick] (3,-1.6) to [curve through = {(4, -.6)..(3,.4)..(2,-.6)}] (3,-1.6);
    \strand[thick, blue] (1.2,1) to[curve through ={(1.8,.2)..(2.35, .7)..(1.6,1)..(2.35,1.3)..(1.8,1.8)}] (1.2,1);
    \node[blue] at (1.3,2) {$\Large C^*$};
    \node[blue] at (.6,1) {$\Large -3/2$};
    \node at (3,2.2) {$\Large -4$};
    \node at (4.6, 2.2) {$\Large -2$};
    \node at (6.2,2.2) {$\Large -2$};
    \node at (7.8,2.2) {$\Large -2$};
    \node at (9.4, 2.2) {$\Large -2$};
    \node at (2.1, -1.6) {$\Large -2$};
    \end{knot}
    \end{tikzpicture}
    \caption{After modifying the surgery diagram by adding a $-2$-framed meridian to $C^*$ and changing the framing of $C^*$ to $-2$, a sequence of blow-ups and blow-downs takes the surgery diagram to the negative definite $E_8$ plumbing. Thus, there is a knot $C \subset \mathcal P$ with a half-integer surgery to $L(27,16)$ whose surgery dual is $C^*$.}
\end{figure}
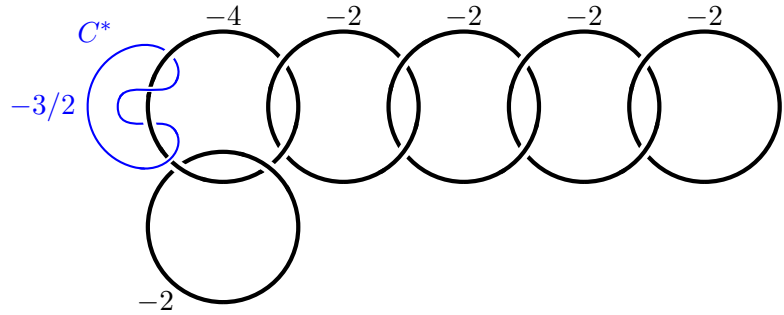

\begin{thm}\label{thm:main}
The two knots $E$ and $C$ are the only knots in $\mathcal P$ with half-integer lens space surgeries. 
\end{thm}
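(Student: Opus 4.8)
The plan is to turn a half-integer lens space surgery into a lattice embedding problem, run the ensuing combinatorics to pin down the possible slopes and the knot Floer homology of $K$, and then use the cited input of Baker, Hedden, and Ni to identify $K$ itself. So suppose $K \subset \mathcal P$ and $K(p/2) \cong L(p,q)$ for some odd integer $p$. As $L(p,q)$ is an L-space this forces $p/2 \geq 2g(K) - 1$, so every sufficiently positive surgery on $K$ is an L-space; just as for L-space knots in $S^3$, the mapping cone formula then makes $\widehat{HFK}(\mathcal P, K)$ rigid --- it is carried by a staircase, equivalently determined by the torsion coefficients of $K$ --- and it is this numerical data that the lattice obstruction reads off.

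First I would assemble a closed definite $4$-manifold from the surgery. By construction $\mathcal P$ bounds the negative definite $E_8$-plumbing, and every lens space bounds a negative definite linear plumbing coming from a continued fraction expansion; to interpolate between $\mathcal P$ and $L(p,q)$ I would take the trace of the corresponding surgery on the dual knot $K^* \subset L(p,q)$, whose dual slope necessarily has denominator $2$, so that (after reorienting as needed to make every piece negative definite) this cobordism has $b_2 = 2$ rather than $1$, with an extra generator of square $-2$ coming from the meridional $2$-handle, as in the modification of Figure 2. Gluing the $E_8$-plumbing, this cobordism, and the linear plumbing along their common $\mathcal P$- and $L(p,q)$-boundaries produces a closed negative definite $4$-manifold $Z$, and Donaldson's diagonalization theorem forces the intersection form of $Z$ to be standard. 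Restricting the diagonalization to the images of the second homology of the three pieces realizes the $E_8$-lattice and the linear plumbing lattice as orthogonal sublattices of $\langle -1 \rangle^{N}$, and the sublattice orthogonal to the $E_8$-summand and to the two surgery classes is forced to be a changemaker-type lattice. This is exactly the lattice embedding obstruction developed in the body of the paper, now applied in the presence of the extra rigid generator.

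The heart of the matter is then the combinatorial classification of which such embeddings occur. Here I would run the arguments of the body of the paper and of \cite{Cau21}, but the extra rigid generator should make the analysis far more restrictive than in the integer case, where the realizing knots form an infinite family: I expect only finitely many --- in fact exactly two --- solutions to survive, namely $(p,q) = (7,6)$ together with the knot Floer homology of $E$ and $(p,q) = (27,16)$ together with that of $C$. That both solutions are actually attained is precisely the content of Figures 1 and 2.

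The last step, and the one I expect to be the real obstacle, is to upgrade ``$(p,q)$ and $\widehat{HFK}(\mathcal P, K)$ match those of $E$ or of $C$'' to ``$K$ is $E$ or $K$ is $C$'': the lattice machinery only constrains Floer-theoretic data, and a priori some other knot in $\mathcal P$ could share it. To close this gap I would invoke the input of Baker, Hedden, and Ni --- such a $K$ has fibered exterior, and for each of the two surviving data sets the knot, equivalently its surgery dual in $L(7,6)$ or $L(27,16)$, is uniquely determined --- so that $K$ must indeed be $E$ or $C$. Passing from ``correct invariants'' to ``this exact knot'' is genuinely harder than the lattice bookkeeping, since it requires a rigidity input that the changemaker formalism alone cannot supply.
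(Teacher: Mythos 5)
Your overall architecture---definite $4$-manifold plus Donaldson plus $d$-invariants to constrain the surgery, then Floer-theoretic rigidity to identify the knot---matches the paper's, but the two load-bearing steps are asserted rather than carried out, and your route into the lattice obstruction is not the one the machinery of this paper supports. The $E_8$-changemaker obstruction is built only for \emph{integer} surgeries; it explicitly does not treat non-integer slopes directly (that would require a $p/q$-changemaker theory in the style of Gibbons \cite{Gib}, which is developed neither here nor in your proposal). Instead of your $b_2=2$ cobordism with an extra generator of square $-2$, the paper cables: if $\kappa(p/2)\cong L(p,q)$ then the $(p,2)$-cable $K$ satisfies $K(2p)\cong L(p,q)\#L(2,1)$, an integer L-space surgery with $2p\ge 2g(K)$, and conversely by Matignon--Sayari \cite{MS03} every knot with such a surgery arises this way. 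That reduction is what makes Theorem \ref{thm:twosummandhardeightembedding} applicable. More seriously, the statement you merely ``expect''---that only $(p,q)=(7,6)$ and $(27,16)$ survive the combinatorics---is the main content of the theorem, not a consequence of the setup: in the paper it follows from the classification showing that no decomposable linear $E_8$-changemaker lattice with $n\ge 2$ has a $\Lambda(2,1)$ summand, a separate argument ruling out $n=0,1$, and a finite search over the $1003$ non-zero $E_8$-changemakers in $-E_8$. Without supplying this, your proof has no content at the decisive step.

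The endgame also needs two inputs you omit. First, Baker's theorem (Theorem \ref{thm:pplusoneoverfour}) requires $g(\kappa)\le (p+1)/4$; this comes from the lattice computation $g(K)=p$ in Theorem \ref{thm:twosummandhardeightembedding} together with the cabling genus formula, which yields $g(\kappa)=(p+1)/4$ exactly---a borderline case that must be verified, not just ``read off from $\widehat{HFK}$.'' Second, Baker, Rasmussen--Ni, and Hedden only get you to ``the surgery dual $\kappa^*$ is a \emph{simple} knot in $L(p,q)$''; since there is one simple knot in each homology class, uniqueness is not automatic. The paper closes this with Lemma \ref{lem:distancensurgery}: a knot in $L(p,q)$ admits a distance-$2$ surgery to an integer homology sphere only if its self-linking number satisfies a congruence, and the arithmetic ($x^2\equiv\pm 4\bmod 7$ and $x^2\equiv\pm 14\bmod 27$ each having a unique solution up to sign) is what singles out $E^*$ and $C^*$. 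That step is not ``input from Baker, Hedden, and Ni'' and cannot be waved through.
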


We point out that $E(7/2) \cong L(7,6)$ and $C(27/2) \cong L(27,16)$. It follows that $14$-surgery on the $(7,2)$-cable of $E$ is homeomorphic to $L(7,6)\#L(2,1)$, and that $54$-surgery on the $(27,2)$-cable of $C$ is homeomorphic to $L(27,16)\#L(2,1)$. In fact, the cabling construction ensures that the $2p$-surgery on the $(p,2)$-cable of a knot $K$ is homeomorphic to $K(p/2) \# L(2,1)$. In order to obstruct half-integer lens space surgeries, we turn our attention to obstructing integer surgeries to 3-manifolds of the form $L(p,q) \# L(2,1)$. The upshot of this is twofold: first, our lattice embedding obstruction does not deal directly with non-integer surgeries, though some modifications may be made to make it amenable to such surgeries, as Gibbons did with $p/q$-changemakers in \cite{Gib}, and second, any knot $K$ where $K(2p)$ is homeomorphic to $L(p,q) \# L(2,1)$ must satisfy $2p\geq 2g(K)$, so in fact $K$ is a cable knot. Moreover, $K$ must be the $(p,2)$-cable of knot whose $p/2$-surgery yields $L(p,q)$, since $2/p$-surgery certainly never yields a lens space for $p > 2$--then, the genus of the knot would have to be $0$, but then the knot is contained in a $3$-ball, thus the exterior of the knot is reducible. En route to proving Theorem \ref{thm:main}, we prove the following.

\begin{thm}\label{thm:mainrephrased}
If $K(2p) \cong L(p,q) \# L(2,1)$, then $(p,q) \in \{(27,16), (7,6)\}$ and $g(K) = p$.
\end{thm}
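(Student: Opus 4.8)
The plan is to apply the changemaker-style lattice embedding obstruction described in the abstract to the hypothesis that $K(2p) \cong L(p,q) \# L(2,1)$. First I would record that $2p \geq 2g(K)$ (from the L-space condition) and, using the cabling result of \cite{MS03} quoted in the introduction, deduce that $K$ is the $(p,2)$-cable of a knot $\kappa \subset \mathcal P$ with $\kappa(p/2) \cong L(p,q)$; the genus formula for cables then gives $g(K) = p g(\kappa) + (p-1)(2-1)/2 = \ldots$, so that controlling $g(K)$ reduces to controlling $g(\kappa)$. The point of passing to $L(p,q)\#L(2,1)$ rather than working with $\kappa(p/2)$ directly is that the surgery coefficient $2p$ on $K$ is an \emph{integer}, so the lattice obstruction of \cite{Cau21} (and its refinement developed earlier in this paper) applies on the nose: $K(2p)$ bounds a sharp negative-definite $4$-manifold obtained by capping off the $E_8$ plumbing along the surgery, and changemaker-type considerations force the associated intersection lattice to embed in a standard negative-definite lattice $\mathbb Z^{N} \oplus (-E_8)$ in a constrained way.

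Next I would run the lattice embedding obstruction: the linear plumbing realizing $L(p,q) \# L(2,1)$ together with the $E_8$ block gives a concrete negative-definite lattice, and the surgery hypothesis forces the orthogonal complement of the changemaker vector to be isomorphic to this plumbing lattice. I would then carry out the combinatorial classification of which changemaker vectors in $\mathbb Z^{N} \oplus (-E_8)$ have orthogonal complement a linear lattice of the required determinant $2p$ with a $-L(2,1)$ (i.e. a length-two vector of square $-2$) splitting off. This is the heart of the argument and follows the template of Greene's work and \cite{Cau21}: one shows that up to the symmetries of the lattice, the only possibilities are the two linear lattices corresponding to $(p,q) = (27,16)$ and $(7,6)$, which are exactly the ones realized by the Tange knot surgeries and by $C$. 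I expect this classification — bounding $N$, pinning down the coordinates of the changemaker vector, and ruling out the infinite families of near-misses — to be the main obstacle; it is where the "sharpness" of the $4$-manifold and the changemaker inequalities do all the work, and where most of the case analysis lives.

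Finally, having identified $(p,q) \in \{(27,16),(7,6)\}$, I would read off $g(K) = p$. For this I would use the fact, standard in the changemaker framework, that the genus of the surgered knot is determined by the top grading of knot Floer homology, which in turn is pinned down by the coefficients of the changemaker vector once the lattice is identified; equivalently, the two lattices that survive the classification each force the cabled knot $\kappa$ to have $g(\kappa) = 0$-type behavior compatible with $g(K) = p$, matching $E(7/2) \cong L(7,6)$ with $K$ the $(7,2)$-cable of $E$ (so $g(K) = 7$) and $C(27/2) \cong L(27,16)$ with $K$ the $(27,2)$-cable of $C$ (so $g(K) = 27$). A minor point to dispatch along the way, already flagged in the excerpt, is that $\kappa$ cannot instead be a knot admitting a $2/p$-surgery to a lens space: that would force $g(\kappa) = 0$, hence $\kappa$ lies in a ball and $M_\kappa$ is reducible, a contradiction — so the cable structure is exactly as claimed and the genus computation is unambiguous.
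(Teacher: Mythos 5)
Your outline follows the paper's route: pass to the integer surgery $K(2p)\cong L(p,q)\#L(2,1)$, use sharpness of $X(p,q)\natural X(2,1)$ together with Theorem \ref{thm:hardeightembedding} to conclude $\Lambda(p,q)\oplus\Lambda(2,1)\cong(\tau)^\perp$ for an $E_8$-changemaker $\tau$, classify the possible $\tau$, and read the genus off the changemaker data. The difficulty is that the step you yourself flag as ``the heart of the argument'' is exactly the step you do not supply: you assert that the classification leaves only $(7,6)$ and $(27,16)$, but give no mechanism for terminating it or for killing the infinite families. In the paper this is Theorem \ref{thm:twosummandhardeightembedding}, and the idea that makes it finite is structural: the general classification of decomposable linear $E_8$-changemaker lattices in $E_8\oplus\mathbb Z^{n+1}$ with $n\geq 2$ (Proposition \ref{prop:discv} and the six decomposable families recorded in Section \ref{sec:main}) produces \emph{no} summand isomorphic to $\Lambda(2,1)$, so any $\tau$ with a $\Lambda(2,1)$ splitting must lie in $E_8\oplus\mathbb Z^{n+1}$ with $n\leq 1$; the cases $n=1$ and $n=0$ are then excluded by short claw arguments, and the remaining case $\tau\in E_8$ is settled by a finite check of the $1003$ nonzero $E_8$-changemakers, yielding exactly $s^*=(1,0,0,1,0,0,0,0)$ (giving $(27,16)$) and $s^*=(0,0,1,0,0,0,0,0)$ (giving $(7,6)$). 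Without some version of this reduction your plan has no endpoint. Note also that the genus claim follows directly from the lattice, not from the cable structure: here $\sigma$ is empty, so $\max\{\langle \mathfrak c,\tau\rangle \colon \mathfrak c \in \mathrm{short}(E_8)\}=0$ and $2g(K)=2p-0$.

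A secondary issue: your cable genus formula is wrong as written. For the $(p,2)$-cable one has $g(K)=2g(\kappa)+(p-1)/2$, not $p\,g(\kappa)+(p-1)/2$; combined with $g(K)=p$ this gives $g(\kappa)=(p+1)/4$, which is precisely what the paper feeds into Baker's, Rasmussen's, and Hedden's theorems to identify $E$ and $C$ in the proof of Theorem \ref{thm:main}. Your version would make $g(\kappa)$ non-integral. This slip does not affect Theorem \ref{thm:mainrephrased} itself, since there the genus comes from $\tau$, but it would derail the follow-up argument you allude to.
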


Recall that $\mathcal P$ is the branched double cover of the pretzel knot $P(-2, 3, 5)$, and that the lens space $L(p,q)$ is the branched double cover of the 2-bridge knot $K(p,q)$. Recall also that, by the Montesinos trick, if the knot $K_1 \subset S^3$ may be obtained from the knot $K_0$ by changing a crossing in a planar diagram for $K_0$, then there is a knot in the branched double of $K_0$ that admits a half-integer surgery to the branched double cover of $K_1$. We immediately obtain the following corollary of Theorem \ref{thm:main}.

\begin{cor}
    The only 2-bridge knots in $S^3$ that admit crossing changes to $P(-2,3,5)$ are $K(7,6)$ and $K(27,16)$.\qed
\end{cor}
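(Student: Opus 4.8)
The plan is to trade the crossing change for a half-integer surgery by way of the Montesinos trick and then quote Theorem~\ref{thm:main}. First I would record the trivial but essential observation that the crossing-change relation is symmetric: if $P(-2,3,5)$ is obtained from a $2$-bridge knot $J$ by changing one crossing of a diagram of $J$, then $J$ is obtained from $P(-2,3,5)$ by changing that same crossing of the resulting diagram of $P(-2,3,5)$. Hence the hypothesis may be read as: $J$ is obtained from $P(-2,3,5)$ by a single crossing change.

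Next I would apply the Montesinos trick in the form recalled above. Since $J$ arises from $P(-2,3,5)$ by a crossing change, there is a knot $K$ in the branched double cover $\Sigma_2(P(-2,3,5)) = \mathcal P$ admitting a half-integer surgery to $\Sigma_2(J)$. Writing $J = K(p,q)$, this target is the lens space $\Sigma_2(K(p,q)) = L(p,q)$ (so in fact the surgery coefficient is $\pm p/2$, since $|H_1| = p$), and therefore $K \subset \mathcal P$ is a knot with a half-integer lens space surgery. Theorem~\ref{thm:main} then forces $K \in \{E, C\}$, and since $E(7/2) \cong L(7,6)$ and $C(27/2) \cong L(27,16)$ we conclude $L(p,q) \in \{L(7,6), L(27,16)\}$. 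As $\Sigma_2$ induces a bijection between $2$-bridge knots and oriented lens spaces (Schubert's classification together with the classification of lens spaces), this pins down $J \in \{K(7,6), K(27,16)\}$. For the reverse inclusion I would observe that $E$ and $C$ are, by their surgery presentations in Figures~1 and~2, equivariant with respect to the covering involution of $\mathcal P \to S^3$, so their half-integer surgeries descend through the Montesinos trick to crossing changes realizing $K(7,6)$ and $K(27,16)$, respectively.

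The part that requires genuine care---and where essentially all of the bookkeeping in this corollary lies---is keeping the orientations straight: one must track the canonical orientation on a branched double cover together with the sign of the surgery coefficient produced by the Montesinos trick, so that the lens space appearing downstairs is $L(7,6)$ (resp.\ $L(27,16)$) with its standard orientation rather than its mirror---which would correspond to a different $2$-bridge knot---and so that the surgery fed into Theorem~\ref{thm:main} has the sign that theorem treats; a crossing change of the opposite sign is handled by the evident orientation-reversal argument.
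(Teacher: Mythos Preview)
Your proposal is correct and takes essentially the same approach as the paper: the paper presents this corollary as immediate from the Montesinos trick together with Theorem~\ref{thm:main} (it is stated with a \qed and no further proof), and your argument simply fills in those details carefully, including the symmetry of the crossing-change relation, the appeal to Schubert's classification to pass from lens spaces back to $2$-bridge knots, and the equivariance check for the reverse inclusion.
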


When paired with the Finite/Cyclic Surgery Theorem of Boyer--Zhang \cite{BZ96}, Theorem \ref{thm:main} gives a remarkable corollary characterizing knots in $\mathcal P$ with non-integral lens space surgeries. The following is a specialization of the Finite/Cyclic Surgery Theorem relevant to the case at hand.

\begin{thm}[Theorem 1.1 (2) of \cite{BZ96}]\label{thm:FST}
Let $K\subset \mathcal P$ and suppose that $K(p/q)$ is a lens space. If $q\geq 2$, then exactly one of the following holds:
\begin{enumerate}
    \item $K$ is an exceptional fiber in a Seifert fibering of $\mathcal P$;
    \item $K$ is a cable of an exceptional fiber in a Seifert fibering of $\mathcal P$; or
    \item the interior of $M_K$ admits a hyperbolic metric with respect to which $\partial M_K$ is totally geodesic and $q = 2$. \qed
\end{enumerate}
\end{thm}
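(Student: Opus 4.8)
The plan is to obtain Theorem~\ref{thm:FST} by specializing the general Finite/Cyclic Surgery Theorem of Boyer--Zhang \cite[Theorem~1.1]{BZ96} to the ambient manifold $\mathcal P$. Since a lens space has cyclic fundamental group, $K(p/q)$ is a cyclic Dehn filling of the exterior $M_K$, while the meridian $\mu = 1/0$ has filling $M_K(\mu) = \mathcal P$, whose fundamental group is the finite, noncyclic binary icosahedral group; thus $M_K$ carries both a finite filling and a distinct cyclic filling (distinct because $q \geq 2$), which is precisely the input for \cite{BZ96}. Before invoking it I would record that $M_K$ is a knot manifold: $\mathcal P$ is irreducible, so an essential sphere in $M_K$ would bound a ball $B \subset \mathcal P$, and either $B$ misses $K$, so the sphere is inessential in $M_K$, or $B \supset K$, in which case $K$ lies in a ball and $K(p/q) \cong \mathcal P \# Y$ with $Y$ a surgery on an unknot in $S^3$, impossible for a lens space since $\mathcal P$ is a nontrivial prime summand; likewise $\partial M_K$ is incompressible, as otherwise $M_K$ would be a solid torus and $\mathcal P$ a lens space or $S^1 \times S^2$.

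With $M_K$ a knot manifold, I would run the geometric trichotomy: $M_K$ is hyperbolic, Seifert fibered, or toroidal. In the hyperbolic case, the Culler--Shalen seminorm estimates of \cite{BZ96} bound the distance between a finite filling slope and a cyclic filling slope by $2$, so $q = \Delta(1/0, p/q) \leq 2$, forcing $q = 2$ and conclusion (3). In the toroidal case, an essential torus $T \subset M_K$ is compressible in the atoroidal manifold $\mathcal P$, so it bounds a knotted solid torus $V$ with $K \subset V$ geometrically essential; since a lens space is atoroidal, $T$ must compress after filling along $p/q$, which by the standard annulus argument exhibits $K$ as a cable of the core $\kappa$ of $V$ with $p/q$ the cabling slope, reducing matters to a cyclic filling of the exterior of $\kappa$. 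Finally, if $M_K$ — or, after that reduction, the exterior of $\kappa$ — is Seifert fibered, then comparing its base orbifold with that of $\mathcal P = S^2(2,3,5)$ forces the knot to be an exceptional fiber of the Seifert fibering of $\mathcal P$: it cannot be a regular fiber, since every surgery on a regular fiber of $\mathcal P$ is again Seifert fibered over a base retaining the cone points $2,3,5$, hence is never a lens space, or else is reducible for the fiber slope. This yields (1) directly for $K$ and (2) when the reduction through $\kappa$ was needed, and the three conclusions are mutually exclusive because the corresponding geometric structures are.

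The genuinely hard part is the hyperbolic case: the distance bound $q \leq 2$ is not elementary and rests on the full Culler--Shalen character-variety machinery together with Boyer--Zhang's analysis of norm curves and of finite representations of filled groups. The toroidal-to-cable step and the classification of cyclic surgeries on Seifert fibered knot exteriors are also carried out in \cite{BZ96}. Consequently, the proof here really consists of invoking \cite[Theorem~1.1]{BZ96} and reading off how its conclusions specialize once the ambient manifold is $\mathcal P$; no new argument is needed beyond the bookkeeping sketched above.
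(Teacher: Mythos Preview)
The paper does not prove this statement at all: it is stated as a quotation of \cite[Theorem~1.1(2)]{BZ96} and marked with a \qed, the surrounding text saying only that it is ``a specialization of the Finite/Cyclic Surgery Theorem relevant to the case at hand.'' Your proposal therefore goes well beyond what the paper does, by actually sketching how Boyer--Zhang's general theorem restricts to the ambient manifold $\mathcal P$.

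Your sketch is broadly sound in outline --- the key points being that $\mu$ is a finite noncyclic slope and $p/q$ a cyclic slope on the knot manifold $M_K$, and that the geometric trichotomy feeds into the conclusions (1)--(3). A few of the steps are compressed enough that they really amount to re-invoking parts of \cite{BZ96} rather than independent arguments: the ``standard annulus argument'' in the toroidal case and the assertion that a Seifert-fibered knot exterior in $\mathcal P$ with a cyclic filling forces the core to be an exceptional fiber both require the detailed analysis carried out in that paper (and in the case of regular fibers one must also handle the possibility of non-Seifert-fibered, e.g.\ reducible or lens space, fillings more carefully). But since you explicitly acknowledge that the substantive content is imported from \cite{BZ96}, this is consistent with the paper's own treatment, which simply cites the result outright.
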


In the case of (3), the knot $K$ is said to be \emph{hyperbolic}. In light of Theorem \ref{thm:FST}, we obtain the following corollaries.

\begin{cor}\label{cor:integral}
Lens space surgeries on hyperbolic knots in $\mathcal P$ are integral.
\end{cor}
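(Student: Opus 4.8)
The plan is to deduce Corollary~\ref{cor:integral} directly from Theorem~\ref{thm:main} together with the Finite/Cyclic Surgery Theorem (Theorem~\ref{thm:FST}), with no additional computation: essentially all of the work has already been done in proving Theorem~\ref{thm:main}. So suppose, for contradiction, that $K\subset\mathcal{P}$ is hyperbolic and that $K(p/q)$ is a lens space with $q\geq 2$. Since $q\geq 2$, Theorem~\ref{thm:FST} applies and exactly one of its three alternatives holds for $K$. By hypothesis the interior of $M_K$ carries a hyperbolic metric---which is precisely alternative~(3)---so alternatives (1) and (2) are excluded, and (3) forces $q=2$. Hence $K$ admits a half-integer surgery to the lens space $K(p/2)$.

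By Theorem~\ref{thm:main}, the only knots in $\mathcal{P}$ with a half-integer lens space surgery are $E$ and $C$, so $K\in\{E,C\}$. But $E$ is, by construction, the exceptional fiber of order $3$ in the Seifert fibering of $\mathcal{P}$ over $S^2(2,3,5)$: removing a fibered neighborhood of $E$ leaves a Seifert fibration of $M_E$ over $D^2(2,5)$, so $E$ falls under alternative~(1) of Theorem~\ref{thm:FST}, not~(3). Likewise, the identification of $C$ obtained in the course of proving Theorem~\ref{thm:main} exhibits $C$ as a cable of an exceptional fiber of $\mathcal{P}$ (consistent with $C(27/2)\cong L(27,16)$), so $M_C$ contains the essential cabling torus and $C$ falls under alternative~(2), again not~(3). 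Either way $K$ cannot be in alternative~(3), contradicting the assumption that $K$ is hyperbolic. Therefore $q=1$, i.e.\ the surgery is integral.

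The only point requiring care is the last one---certifying that $E$ and $C$ are not hyperbolic---since Theorem~\ref{thm:FST} by itself does not preclude a hyperbolic knot from admitting a half-integer lens space surgery; that exclusion is exactly what is being proved, so it must come from the explicit descriptions of $E$ and $C$ rather than from the Finite/Cyclic Surgery Theorem. For $E$ this is immediate from its definition as an exceptional fiber; for $C$ it amounts to reading a cable (equivalently, essential torus) structure off the surgery diagram of Figure~2, which was already needed to pin down $C$ while proving Theorem~\ref{thm:main}. With that in hand, the corollary is a formal consequence of Theorem~\ref{thm:main} and the mutual exclusivity of the three cases of Theorem~\ref{thm:FST}.
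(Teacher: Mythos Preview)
Your proof is correct and follows essentially the same route as the paper: apply Theorem~\ref{thm:FST} to reduce a hypothetical non-integral lens space surgery on a hyperbolic knot to the half-integer case, invoke Theorem~\ref{thm:main} to force $K\in\{E,C\}$, and then observe that neither $E$ nor $C$ is hyperbolic. Your exposition is in fact a bit more careful than the paper's in flagging that the non-hyperbolicity of $E$ and $C$ must come from their explicit identifications (as an exceptional fiber and as a cable thereof) rather than from Theorem~\ref{thm:FST} itself.
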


\begin{proof}
By Theorem \ref{thm:FST}, a non-integer lens space surgery on a hyperbolic knot must be a half-integer surgery. By Theorem \ref{thm:main}, the only knots in $\mathcal P$ with half-integer lens space surgeries are $E$, which is the exceptional fiber of order $3$, and $C$, which is a cable of an exceptional fiber. Neither of these knots is hyperbolic. 
\end{proof}

\begin{cor}
Let $K\subset \mathcal P$ and suppose that $K(p/q)$ is a connected sum of exactly two lens spaces. If $p/q>2g(K)-1$, then $K$ is either an exceptional fiber, or a once- or twice-iterated cable thereof. 
\end{cor}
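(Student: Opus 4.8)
The plan is to show that $K(p/q)$ being a connected sum of exactly two lens spaces forces $K$ to be a cable knot, to recognize its companion $\kappa\subset\mathcal P$ as a knot admitting a non-integral lens space surgery along a slope exceeding $2g(\kappa)-1$, and then to feed $\kappa$ into the Finite/Cyclic Surgery Theorem (Theorem~\ref{thm:FST}), using Corollary~\ref{cor:integral} to eliminate its hyperbolic alternative.

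First I would fix normalizations. Writing $K(p/q)\cong L_1\#L_2$ with $L_i\not\cong S^3$, the connect-sum sphere bounds no ball, so $K(p/q)$ is reducible; comparing $H_1(K(p/q))\cong\mathbb Z/p$ with $H_1(L_1)\oplus H_1(L_2)$ gives $\gcd(|L_1|,|L_2|)=1$ and $p=|L_1||L_2|\geq 4$. Also $K$ is not contained in a ball in $\mathcal P$: otherwise $K(p/q)\cong\mathcal P\#K'(p/q)$ for some $K'\subset S^3$, exhibiting the non-lens-space $\mathcal P$ as a prime summand, a contradiction; hence $M_K$ is irreducible. I would then argue that the reducing slope must be integral --- since $\mathcal P$ is a homology sphere and $K(p/q)$ is reducible, the standard theory of reducible Dehn surgery (cf.\ \cite{Hof98}, \cite{MS03}) forces $p/q=p\in\mathbb Z$ --- so $p>2g(K)-1$ means $p\geq 2g(K)$, and the fact quoted above \cite{MS03} applies: $K$ is the $(r,s)$-cable of a knot $\kappa\subset\mathcal P$ with $p=rs$, $s\geq 2$, $\gcd(r,s)=1$, and $K(rs)\cong\kappa(r/s)\#L(s,r)$. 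Marshalling the reducible-surgery inputs so as to pin down precisely this integral cabling description --- in particular ruling out a rational reducing slope --- is the step I expect to demand the most care.

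With the cabling in hand I would analyze $\kappa$. Since $L(s,r)$ is a genuine lens space ($s\geq 2$) and $K(rs)\cong\kappa(r/s)\#L(s,r)$ has exactly two lens-space summands, uniqueness of prime decompositions forces $\kappa(r/s)$ to be a single lens space: it is not $S^3$ (else $K(rs)$ would have a single summand), not reducible (else three or more), and not $S^1\times S^2$ (as $H_1(K(rs))$ is finite). Feeding the cable genus formula $g(K)=s\,g(\kappa)+\tfrac{(s-1)(r-1)}{2}$ --- with $r>0$ since $rs=p\geq 4$ and $s\geq 2$ --- into $rs=p>2g(K)-1$ and simplifying yields $r+s>2s\,g(\kappa)$, i.e.\ $r/s>2g(\kappa)-1$. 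Finally, because $\gcd(r,s)=1$ with $s\geq 2$, the slope $r/s$ is not an integer, and in lowest terms has denominator $s\geq 2$.

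To conclude I would apply Theorem~\ref{thm:FST} to $\kappa$: since $\kappa(r/s)$ is a lens space and $r/s$ has denominator $\geq 2$, either $\kappa$ is an exceptional fiber of a Seifert fibering of $\mathcal P$, or a cable of such a fiber, or $\kappa$ is hyperbolic. The hyperbolic case is ruled out by Corollary~\ref{cor:integral}, which forbids non-integral lens space surgeries on hyperbolic knots in $\mathcal P$, whereas $r/s\notin\mathbb Z$. Hence $\kappa$ is an exceptional fiber or a cable of an exceptional fiber, and therefore $K$, being the $(r,s)$-cable of $\kappa$, is a once-iterated cable of an exceptional fiber in the former case and a twice-iterated cable of an exceptional fiber in the latter, which is the assertion.
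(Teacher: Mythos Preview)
Your argument for the integral case ($q=1$) is essentially the paper's: once $p\geq 2g(K)$, Matignon--Sayari gives the cabling, and then Theorem~\ref{thm:FST} together with Corollary~\ref{cor:integral} (equivalently, Theorem~\ref{thm:main}) classifies the companion $\kappa$. The genus computation you carry out to verify $r/s>2g(\kappa)-1$ is a nice explicit check, though the paper does not need it since it feeds $\kappa$ directly into Theorem~\ref{thm:FST} via Theorem~\ref{thm:main}.

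The gap is your reduction to the integral case. The claim that ``the standard theory of reducible Dehn surgery (cf.\ \cite{Hof98}, \cite{MS03}) forces $p/q\in\mathbb Z$'' is not something those references deliver for knots in $\mathcal P$: Matignon--Sayari and Hoffman take as \emph{hypothesis} that the surgery is integral and conclude cabling when $p\geq 2g(K)$; they do not prove that a reducing slope in an arbitrary integer homology sphere must be integral. Indeed, the corollary's first alternative---$K$ an exceptional fiber---arises precisely from the non-integral case $q\geq 2$, which the paper handles by invoking Lemma~14 of \cite{Cau21}. Your argument, by asserting $q=1$ at the outset, would exclude this alternative and thus prove a statement different from (and incompatible with) the one claimed. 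You flagged this step as the most delicate, and you were right: it does not go through without the separate input from \cite{Cau21}.
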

\begin{proof}
By Lemma 14 of \cite{Cau21}, if $K(p/q)$ is reducible, $q\geq 2$, and $p/q > 2g(K)-1$, then $K$ is an exceptional fiber. Otherwise, we have $q =1$ and $p\geq 2g(K)$, and therefore $K$ is a cable knot. Together, Theorems \ref{thm:main} and \ref{thm:FST} imply that $K$ is either a cable of an exceptional fiber or a cable of a cable of an exceptional fiber. 
\end{proof}

Interestingly enough, the strategy we use to prove Theorem \ref{thm:mainrephrased} cannot on its own be used to characterize the knots $K \subset \mathcal P$ where $K(2g(K)-1)$ is a connected sum of exactly two lens spaces. We are at present unable to to produce such a knot, nor are we able to prove that no such knot exists. However, we deduce an obstruction toward that end.

\begin{thm}\label{thm:twogminusonereducible}
Let $K \subset \mathcal P$, $p = 2g(K)-1$, and suppose that $K(p)$ is a connected sum of exactly two lens spaces. Then there is a torus knot, or a cable thereof, $T \subset S^3$ with $K(p) \cong T(p)$, and
\[\Delta_K(t) = \Delta_T(t) -(t^{(p-1)/2} + t^{-(p-1)/2}) + (t^{(p+1)/2} + t^{-(p+1)/2}).\]
\end{thm}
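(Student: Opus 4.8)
The plan is to follow the same template that underlies Theorem~\ref{thm:mainrephrased}, but without the extra leverage that comes from the connected-sum factor $L(2,1)$. Write $p = 2g(K) - 1$ and suppose $K(p) \cong L_1 \# L_2$. Since $\mathcal{P}$ bounds the negative definite $E_8$-plumbing $X_{E_8}$, capping off the surgery trace $W_p(K)$ with $-X_{E_8}$ along $\mathcal{P}$ produces a smooth $4$-manifold $Z$ with $\partial Z \cong L_1 \# L_2$ and $b_2(Z) = 9$; because $p = 2g(K)-1$ sits at the lower end of the L-space surgery range, $Z$ is negative definite. On the other side, each lens space $L_i = L(a_i, b_i)$ bounds its canonical negative definite linear plumbing $X_i$, so gluing $Z$ to $X_1 \sqcup X_2$ along $L_1 \# L_2$ (after a standard puncturing/connected-sum argument in dimension $4$) yields a closed negative definite $4$-manifold; by Donaldson's theorem its intersection form is diagonal, $-\mathbb{Z}^N$. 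The change-maker machinery then forces the vector representing the core of the surgery solid torus to be a \emph{change-maker vector} in $-\mathbb{Z}^N$, and the orthogonal complement of the change-maker vector inside $-\mathbb{Z}^N$ must contain the direct sum of the two linear lattices $\Lambda(X_1) \oplus \Lambda(X_2)$ as a full-rank sublattice. This is exactly the setup of Greene's and Gibbons's embedding obstructions, adapted as in \cite{Cau21} to surgeries on $\mathcal{P}$.

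Next I would run the lattice-theoretic classification. The key structural input is that a change-maker lattice which contains an orthogonal direct sum of two ``linear'' (i.e. lens-space) lattices as a full-rank sublattice is severely constrained: this is precisely the kind of statement Greene proves in his work on the lens space realization problem and on sums of two lens spaces, and that Caudell extends in \cite{Cau21}. Combinatorially analyzing the possible change-maker vectors $\sigma$ and the standard-basis elements of $\Lambda(X_1) \oplus \Lambda(X_2)$ orthogonal to $\sigma$, one finds that the only lattices that arise are those that also arise from surgeries on torus knots and their cables in $S^3$ at slope $2g - 1$ --- these being the classical examples of knots with reducible surgeries at $2g(K)-1$ (Moser's torus knot surgeries $T_{r,s}(rs) \cong L(r,s)\#L(s,r)$, which occur at slope $rs = 2g(T_{r,s})+1$... here at $2g-1$ one instead sees the cabling-slope reducible surgeries). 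The output of this classification is a torus knot or cable $T \subset S^3$ whose trace, capped with a negative definite filling, realizes the \emph{same} closed lattice, hence $T(p) \cong L_1 \# L_2 \cong K(p)$ at the same slope $p$.

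Having matched the three-manifolds and the slope, the final step is to extract the Alexander polynomial relation. The torsion coefficients $t_i(K) = \sum_{j \geq 1} j\, a_{i+j}(K)$ of a knot in $\mathcal{P}$ with an L-space surgery at slope $p = 2g(K)-1$ are determined by the Heegaard Floer $d$-invariants of $K(p)$ together with the correction terms of $\mathcal{P}$, via the surgery formula (this is the mapping-cone computation underlying all changemaker arguments; cf. Ozsv\'ath--Szab\'o and the $V_i$-invariant formalism). Since $K(p) \cong T(p)$ with the same slope, the $d$-invariants agree, so the torsion coefficients of $K$ and $T$ agree \emph{except} for the correction coming from the mismatch between the $d$-invariants of $\mathcal{P}$ and of $S^3$; at the top gradings this discrepancy is a single unit, which is exactly the statement that $t_i(K) - t_i(T)$ equals $1$ for $i = (p-1)/2$ and $0$ otherwise. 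Translating back from torsion coefficients to Alexander polynomials (inverting the relation $t_i = \sum_{j\geq 1} j a_{i+j}$, equivalently $\Delta(t) \doteq \sum t_i(t^i - 2 + t^{-i}) \cdot(\text{something})$, or more cleanly $a_i = t_{i-1} - 2t_i + t_{i+1}$) converts the $\delta$-function discrepancy in $t_i$ into precisely the claimed four-term correction
\[
\Delta_K(t) = \Delta_T(t) - (t^{(p-1)/2} + t^{-(p-1)/2}) + (t^{(p+1)/2} + t^{-(p+1)/2}).
\]

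The main obstacle I expect is the lattice classification step: showing that a change-maker lattice containing a \emph{sum of two} lens-space lattices of full rank must be (isomorphic to) one coming from a torus knot or cable. With only two summands and no $L(2,1)$ factor to pin down the parity/structure, the combinatorics is genuinely involved --- one must carefully enumerate how the change-maker vector interacts with the ``linear chains'' of the two plumbing lattices, rule out sporadic configurations, and identify the survivors with Moser-type examples. This is where the bulk of the technical work (and the case analysis inherited from \cite{Cau21} and from Greene's papers) will live; the $4$-manifold topology at the start and the Alexander-polynomial bookkeeping at the end are comparatively routine.
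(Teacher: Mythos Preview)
Your overall architecture---changemaker embedding, then match with a knot in $S^3$, then read off the Alexander polynomial shift---is the same as the paper's. But you are making the middle step far harder than it needs to be, and this stems from a wrong choice of $4$-manifold.

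The paper does \emph{not} glue in the $E_8$ plumbing. Instead it takes $Z = W \cup X$ with $\partial Z = \mathcal P$, where $X = X(p_1,q_1)\natural X(p_2,q_2)$ is sharp. By Scaduto's theorem $Q_Z$ is either $-\mathbb Z^{n+1}$ or $-E_8\oplus-\mathbb Z^{n-7}$, and the point of Proposition~\ref{prop:twogminusoneembedding} (cf.\ \cite[Theorem~3]{Cau21}) is that for $p = 2g(K)-1$ it is the \emph{former}: since $t_{(p-1)/2}(K) = 1$ rather than $0$, the sharpness equality in Lemma~\ref{lem:generalized_changemaker} is already met by short characteristic vectors of $-\mathbb Z^{n+1}$. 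So $\Lambda(p_1,q_1)\oplus\Lambda(p_2,q_2)$ is an \emph{ordinary} changemaker lattice, not an $E_8$-changemaker lattice. At that point there is nothing to classify: Greene has already done it in \cite{Gre15}, where he proves that a changemaker lattice which splits as a sum of two linear lattices arises from $p$-surgery on a torus knot or a cable thereof in $S^3$. This immediately produces $T$. The ``main obstacle'' you flag is therefore not an obstacle at all; it is a citation.

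One smaller correction. Your torsion-coefficient bookkeeping is off: comparing Lemma~\ref{lem:generalized_changemaker} for $d(\mathcal P)=2$ versus $d(S^3)=0$ with the \emph{same} changemaker $\sigma$ gives $t_i(K) - t_i(T) = 1$ for every $|i|\leq (p-1)/2$, not just at the endpoint. A $\delta$-function discrepancy at $i=(p-1)/2$ would, via $a_i = t_{i-1}-2t_i+t_{i+1}$, produce a six-term correction involving $t^{\pm(p-3)/2}$ as well; it is the constant shift by $1$ on the whole interval that collapses to the four-term correction in the statement.
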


\subsubsection{Integer surgeries.}

In \cite{Ber}, Berge produced an elegant construction of lens space surgeries on knots in $S^3$. Let $K \subset S^3$ lie on a genus two Heegaard surface $\Sigma$, and suppose that $K$ represents a \emph{primitive} element in the fundamental group of each of the two handlebodies bounded by $\Sigma$, in which case $K$ is said to be \emph{doubly primitive}. The surface $\Sigma$ induces an integral slope on $\partial M_K$: $\Sigma \cap \partial M_K$ consists of two anti-parallel curves that represent the same slope $p$. Berge showed that the result of $p$-surgery on $K$ is a lens space and tabulated a list of doubly primitive knots in $S^3$, the specific collection of which are known as the \emph{Berge knots}. The \emph{Berge conjecture} posits that every integer surgery on a knot in $S^3$ arises from this construction. This conjecture has received considerable attention since Berge's observation more than thirty years ago. At the time of this writing, one of the furthest strides toward proving the Berge conjecture is the resolution of the \emph{lens space realization problem} \cite{Gre13}, where Greene proved the following.

\begin{thm}[Theorem 1.2 of \cite{Gre13}]
If $K \subset S^3$, $p$ is a positive integer, and $K(p) \cong L(p,q)$, then there is a Berge knot $B \subset S^3$ such that $B(p) \cong L(p,q)$ and $K$ has the same knot Floer homology as $B$. \qed
\end{thm}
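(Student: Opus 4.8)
The plan is to derive the result by playing two $4$-manifolds bounding $L(p,q)$ against each other: the trace of the surgery on $K$, and the canonical negative-definite plumbing, fed into Donaldson's diagonalization theorem, with Heegaard Floer homology supplying the extra arithmetic constraint. First I would record the Floer-theoretic input. Since $K(p)\cong L(p,q)$ with $p>0$, the knot $K$ admits a positive L-space surgery, so $p\geq 2g(K)-1$ and $K$ is an L-space knot; consequently $\widehat{HFK}(K)$ is determined by $\Delta_K(t)$, which in turn is determined by the torsion coefficients $t_i(K)$, equivalently the correction-term invariants $V_i(K)$. Pushing the constraint ``the surgery is this particular L-space $L(p,q)$'' through the Ozsv\'ath--Szab\'o mapping-cone surgery formula and comparing $d$-invariants forces the associated ``surgery vector'' $\sigma=(\sigma_0,\dots,\sigma_n)$, normalized so that $|\sigma|^2=\sum_i \sigma_i^2 = p$, to be a \emph{changemaker vector}: after reordering $0\leq \sigma_0\leq\sigma_1\leq\cdots\leq\sigma_n$, one has $\sigma_i \leq 1+\sigma_0+\cdots+\sigma_{i-1}$ for every $i$ (equivalently, every non-negative integer $\leq \sum_i\sigma_i$ is a subset sum of the $\sigma_i$).

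Next I would set up the lattice embedding. The surgery $L(p,q)=K(p)$ bounds the trace $W=W_p(K)$, with $H_2(W)\cong\bZ$ generated by a class of self-intersection $p$ (so $W$ is positive definite), while $L(p,q)$ also bounds the canonical negative-definite linear plumbing $X=X(p,q)$ on a chain of unknots, whose intersection lattice is the linear lattice $\Lambda(p,q)$ of rank $n$ with $|\det\Lambda(p,q)|=p$. Gluing produces a closed, negative-definite $4$-manifold $Z = X\cup_{L(p,q)}(-W)$ with $b_2(Z)=n+1$ (the boundary being a rational homology sphere, the intersection form of $Z$ is the orthogonal direct sum of those of the pieces, up to finite index). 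By Donaldson's Theorem~A, $Q_Z\cong\langle -1\rangle^{n+1}$, so the inclusion $X\hookrightarrow Z$ embeds $\Lambda(p,q)$ into the diagonal lattice $\bZ^{n+1}$, and the orthogonal complement is spanned by the image $\sigma$ of the generator of $H_2(-W)$, with $|\sigma|^2=p$. A discriminant comparison shows $\Lambda(p,q)$ is exactly the orthogonal complement of a primitive vector $\sigma$, and $\sigma$ is precisely the changemaker vector from the previous paragraph. Thus every lens space arising as integer surgery on a knot yields a \emph{changemaker lattice}: a linear lattice realized as $(\sigma)^\perp\subset\bZ^{n+1}$ for a changemaker $\sigma$.

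The technical heart, and the step I expect to be the main obstacle, is the combinatorial classification of changemaker lattices: one must determine precisely which linear lattices $\Lambda(p,q)$ occur as $(\sigma)^\perp$ for a changemaker $\sigma$. I would organize this as a long induction on the length and structure of $\sigma$, analyzing the pattern of ``tight'' versus ``gappy'' entries, tracking the ``standard basis vectors'' of $\Lambda(p,q)$ and how they must sit inside $\bZ^{n+1}$, and splitting into the resulting cases. The payoff is that the list of pairs $(p,q)$ surviving this analysis coincides exactly with the list of lens spaces realized by Berge's doubly primitive construction (Berge types I--XII together with the sporadic families); in particular, for each such $(p,q)$ there is a Berge knot $B\subset S^3$ with $B(p)\cong L(p,q)$.

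Finally I would close the argument: $B$ is itself an L-space knot, and its changemaker vector, equivalently $\Delta_B(t)$, equivalently $\widehat{HFK}(B)$, is governed by the same lattice data $(\Lambda(p,q),\sigma)$ that governs $K$; hence $\widehat{HFK}(K)\cong\widehat{HFK}(B)$ as bigraded groups, which upgrades to agreement of the full knot Floer invariants. I would stress that one cannot strengthen ``same knot Floer homology'' to ``$K$ is a Berge knot,'' since knot Floer homology does not determine the knot type; bridging that final gap is exactly the content of the still-open Berge conjecture.
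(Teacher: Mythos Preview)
The paper does not prove this theorem; it is quoted from \cite{Gre13} and marked \qed, so there is no in-paper proof to compare against. Your outline is a faithful high-level sketch of Greene's actual argument, and in particular the lattice-embedding step (reverse the trace, glue to $X(p,q)$, apply Donaldson to get $\Lambda(p,q)\cong(\sigma)^\perp\subset -\bZ^{n+1}$, then use sharpness and the $d$-invariant inequality to force $\sigma$ to be a changemaker) matches the sketch the paper does give for the related Theorem~\ref{thm:changemakerembedding}; you also correctly flag the combinatorial classification of linear changemaker lattices as the technical heart, which the paper likewise defers entirely to \cite{Gre13}.
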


As Berge noted, it is often convenient to take the perspective of the surgery dual to a doubly primitive knot $K^* \subset K(p) \cong L(p,q)$, the core of the surgery solid torus. Following convention, we refer to the dual of a Berge knot also as a Berge knot, though the ambient manifold serves to prevent confusion. Berge showed in \cite[Theorem 2]{Ber} that $K^*$ is a \emph{simple} knot in $L(p,q)$---that is, letting $H_0 \cup_{T^2} H_1$ denote a genus one Heegaard of splitting of $L(p,q)$, $K^*$ admits an isotopy in $L(p,q)$ such that $K^*\cap H_i$ is contained in a compression disk for $H_i$, $i \in \{0,1\}$, and there is a unique such knot $K^*$ in each homology class in $L(p,q)$. Thus, a Berge knot in a lens space is determined by its homology class.  

Of course, one readily observes that Berge's construction is not unique to $S^3$, and it is readily adapted to produce lens spaces surgeries on knots in any $3$-manifold admitting a genus two Heegaard splitting---consider, for instance, the Poincar\'e homology sphere. In \cite{Tan09}, Tange produced a list of simple knots in lens spaces admitting integer surgeries to $\mathcal P$ that we call \emph{Tange knots}, which, by the same ambiguity as with Berge knots, we also call their surgery duals in $\mathcal P$, which are doubly-primitive. In \cite{Tan10}, Tange made partial progress towards proving that any lens space $L(p,q)$ realized by integer surgery on a knot $K \subset \mathcal P$ with $2g(K)\leq p$ is realized by surgery on a Tange knot. Here, we take up the strategy suggested in the remark following \cite[Conjecture 1.10]{Gre13}, to prove the following.

\begin{thm}\label{thm:integerrealization}
    If $K(p)\cong L(p,q)$ and $p\geq 2g(K)$, then there is a Tange knot $T$ such that $T(p)\cong L(p,q)$ and $K$ has the same knot Floer homology as $T$. 
\end{thm}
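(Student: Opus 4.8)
The plan is to adapt Greene's changemaker lattice method for the lens space realization problem to the setting where the ambient manifold is $\mathcal P$ rather than $S^3$, following the trace-manifold strategy. Suppose $K \subset \mathcal P$ with $K(p) \cong L(p,q)$ and $p \geq 2g(K)$. Attaching a $2$-handle to $\mathcal P \times I$ along $K$ with framing $p$ produces a smooth $4$-manifold $W_p(K)$ with $\partial W_p(K) = -\mathcal P \sqcup L(p,q)$ (up to orientation). Since $\mathcal P$ bounds the negative definite $E_8$-plumbing $X_{E_8}$, we can cap off the $\mathcal P$ end to obtain a closed-up cobordism: set $Z := X_{E_8} \cup W_p(K)$, a $4$-manifold with boundary $L(p,q)$ whose intersection form is negative definite of rank $9$ (the eight from $E_8$ plus one from the $2$-handle), once we verify definiteness using the L-space condition and a Heegaard Floer $d$-invariant/Donaldson-type argument. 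On the other side, $L(p,q)$ also bounds the canonical negative definite linear plumbing $X(p,q)$; gluing $Z$ to $-X(p,q)$ along $L(p,q)$ yields a closed negative definite $4$-manifold, so by Donaldson's Theorem A its intersection form is diagonal, i.e. embeds in $-\bZ^N$ for appropriate $N$. This is precisely the input that earlier in the paper is packaged as the lattice embedding obstruction built on Greene's changemaker lattices and the foundational obstruction from \cite{Cau21}.

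From here the argument is linear-algebraic. The image of $H_2(W_p(K))$ inside $-\bZ^N$ is generated by a single vector $\sigma$ of square $-p$, and the condition that $K$ has genus $g(K) \leq p/2$ together with the structure of knot Floer homology of knots in $\mathcal P$ (the $\mathcal P$-analogue of Ozsváth--Szabó's constraints, as used by Tange and as foreshadowed by the remark after \cite[Conjecture 1.10]{Gre13}) forces $\sigma$ to be a \emph{changemaker vector} in the orthogonal complement of the $E_8$-lattice sitting inside $-\bZ^N$; equivalently, the relevant sublattice is an $E_8 \oplus (\text{changemaker})$-type lattice sitting as the complement of the linear lattice $\Lambda(p,q)$. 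One then runs the changemaker lattice classification (as in Greene's work and its refinement here) to enumerate all possible pairs $(\sigma, \Lambda(p,q))$: each such combinatorial solution corresponds to a vertex basis and hence to a lens space $L(p,q)$ realized by a Tange knot $T$, and the changemaker vector records the knot Floer homology (via the formula relating the Alexander polynomial / torsion coefficients to the changemaker coordinates). Since two knots in $\mathcal P$ with the same $p$-surgery and the same changemaker vector have the same knot Floer homology groups, we conclude $K$ has the same knot Floer homology as the corresponding Tange knot $T$ with $T(p) \cong L(p,q)$.

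Concretely the key steps, in order, are: (1) build $Z = X_{E_8} \cup W_p(K)$ and show it is negative definite, using that $\mathcal P$ and $L(p,q)$ are L-spaces and a $d$-invariant argument to rule out the wrong sign and to pin down the rank; (2) glue in $-X(p,q)$ and invoke Donaldson's theorem to get a diagonal embedding $\Lambda(p,q) \oplus E_8 \hookrightarrow \bZ^N$ with the surgery generator $\sigma$ of norm $p$; (3) use $p \geq 2g(K)$ and the Floer-theoretic constraints on knots in $\mathcal P$ to show $\sigma$ is a changemaker vector relative to the $E_8$ summand; (4) classify all such changemaker embeddings against linear lattices, showing each arises from a Tange knot; (5) match the knot Floer homology of $K$ to that of the Tange knot via the changemaker data. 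I expect step (4)—the combinatorial classification of changemaker embeddings in the presence of the extra $E_8$ summand—to be the main obstacle, since it is where the bulk of the casework lives and where the interaction between the $E_8$-lattice and the changemaker condition must be controlled; step (1), ensuring definiteness and the correct rank of $Z$, is the second most delicate point and is exactly where the hypothesis that $\mathcal P$ bounds a \emph{definite} $4$-manifold is essential.
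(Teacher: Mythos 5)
Your high-level architecture (lattice embedding, a changemaker-type condition forced by $p\ge 2g(K)$ and the torsion coefficients, a combinatorial classification, and a match to Tange's list) is the same as the paper's, and you correctly identify the classification step as where the bulk of the work lives. But the foundational step of your argument --- how you obtain the lattice embedding --- has a genuine gap. The manifold $Z=X_{E_8}\cup W_p(K)$ you build is \emph{not} negative definite: the negative definite $E_8$-plumbing contributes $-E_8$, while the trace of $p$-surgery (with $p>0$) contributes a class of square $+p$, so $Q_Z\cong -E_8\oplus(p)$ has signature $-7$ and rank $9$. No $d$-invariant argument will fix this; reversing orientations just moves the sign problem elsewhere. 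Consequently the closed manifold you form by gluing in $\pm X(p,q)$ is indefinite and Donaldson's Theorem A does not apply. Worse, the conclusion you want from it is impossible on purely lattice-theoretic grounds: $E_8$ is an even unimodular indecomposable lattice, so it admits no embedding into any diagonal lattice $\mathbb{Z}^N$ (a unimodular sublattice would be an orthogonal summand, contradicting uniqueness of decomposition into indecomposables). So the ``diagonal embedding $\Lambda(p,q)\oplus E_8\hookrightarrow \mathbb{Z}^N$'' in your step (2) cannot exist.

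The missing idea is to keep $\mathcal P$ as the boundary rather than capping it off. The paper forms $Z$ by gluing the \emph{orientation-reversed} trace cobordism $W\colon K(p)\to\mathcal P$ to the canonical sharp linear plumbing $X(p,q)$ along $K(p)\cong L(p,q)$; this $Z$ is honestly negative definite with $\partial Z=\mathcal P$ and $b_2(Z)=b_2(X(p,q))+1$. Since the boundary is $\mathcal P$ and not $S^3$, Donaldson is replaced by Scaduto's instanton-theoretic classification (anticipated by Fr\o yshov) of definite forms bounded by $\mathcal P$: $Q_Z\cong -\mathbb{Z}^{m}$ or $-E_8\oplus-\mathbb{Z}^{m}$. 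The Floer inequality with the $4d(\mathcal P)=8$ correction then rules out the diagonal case, landing you in $-E_8\oplus-\mathbb{Z}^{n-7}$ --- and it is precisely because the ambient lattice is \emph{not} diagonal that the new ``$E_8$-changemaker'' combinatorics (Weyl chamber normalization, positive-root bookkeeping) are needed in place of Greene's original changemaker analysis. With that substitution your steps (3)--(5) proceed essentially as you describe.
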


We believe that one may furthermore show, following the argument in \cite[Section 2]{Gre13}, that the surgery duals of $K$ and $T$ in the statement of the theorem are homologous in $H_1(L(p,q))$, though we omit details in this work. 

A simple knot $K \subset L(p,q)$ is said to be \emph{Floer simple}, in the sense that $\rk(\widehat{HFK}(L(p,q),K) = \rk\widehat{HF}(L(p,q)) = p$. In \cite{Hed}, Hedden constructed a knot $T_L$ (and its mirror image $T_R$) in each lens space $L(p,q)$ with the property that $\rk \widehat{HFK}(L(p,q),T_L) = p + 2$. In \cite{Ras07}, Rasmussen showed that the rank of the knot Floer homology of a knot $K \subset L(p,q)$ with an integer surgery to an L-space homology sphere is at most $p + 2$, and he furthermore certified that, up to orientation reversal, $\mathcal P$ is the only L-space homology sphere realized by surgery on a Hedden knot in $L(p,q)$ for $p \leq 38$. In \cite{Bak20}, Baker also constructed an infinite family of knots in $\mathcal P$ with integer lens space surgeries. Baker's construction identifies band surgeries that take the pretzel knot diagram $P(-2,3,5)$ to a two-bridge knot; by the Montesinos trick, these band surgeries lift to integer lens space surgeries on knots in $\mathcal P$. Notably, all but finitely many of the knots arising from Baker's construction are tunnel number two, and therefore none of these are doubly primitive, and, upon further inspection, none of the lens spaces realized by surgery on Baker knots are realized by surgery on either Tange or Hedden knots. While the lens space surgery slopes of Hedden and Baker knots were known at the time of their construction, we remark that a corollary of Theorem \ref{thm:integerrealization} is that if $K \subset \mathcal P$, $K(p) \cong L(p,q)$, and $L(p,q)$ is not surgery on a Tange knot (e.g. $K$ is a Hedden or a Baker knot), then $p = 2g(K)-1$. Obstructing $2g(K)-1$ lens space surgeries falls outside of the purview of $E_8$-changemakers, though we obtain an alternate proof of \cite[Theorem 8]{Bak20}.

\begin{thm}\label{2gminusoneirreducible}
    If $K(p) \cong L(p,q)$ for $p = 2g(K)-1$, then there is a Berge knot $B\subset S^3$ such that $B(p) \cong L(p,q)$ and 
    \[\Delta_K(t) = \Delta_B(t) -(t^{(p-1)/2} + t^{-(p-1)/2}) + (t^{(p+1)/2} + t^{-(p+1)/2}).\]
\end{thm}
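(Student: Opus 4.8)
The plan is to mimic the changemaker-lattice machinery that underlies Greene's lens space realization theorem, but with the relevant input lattice adjusted to account for the fact that $K$ sits in $\mathcal P$ rather than $S^3$, and the fact that the surgery coefficient $p = 2g(K)-1$ is at the very bottom of the L-space range. First I would set up the two natural four-manifolds. On one side, capping $p$-surgery on $K$ with the two-handle cobordism $W_K$ built from the surgery disk gives a four-manifold with boundary $-L(p,q)$ (glued to the $E_8$-filling of $\mathcal P$); the intersection form is negative definite after one reverses orientation appropriately, and the changemaker condition on the generator is exactly what the Heegaard Floer $d$-invariants of $L(p,q)$ force, via the surgery formula and the comparison of the correction terms $d(L(p,q),\mathfrak s)$ with those coming from $\mathcal P$. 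Because $p = 2g(K)-1$, the relevant large-surgery argument is replaced by the ``edge case'' of Rasmussen's rank bound: $\widehat{HFK}(\mathcal P, K)$ sits in the window that forces $V_0(K)$ and the torsion coefficients $t_i(K)$ to be within one of those of a genuine $L$-space knot in $S^3$. On the other side, one recognizes that these constraints are precisely those satisfied by a Berge knot's dual in $L(p,q)$ — a simple knot — so the $E_8$-summand ``decouples'' and what remains is an honest changemaker lattice embedding $(\mathbb Z^{n+1}, \text{std}) \hookrightarrow (\text{linear lattice of } L(p,q))$.

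The key steps, in order, would be: (1) compute the correction terms of $\mathcal P$ and of $K(p) \cong L(p,q)$ and feed them through the mapping-cone/surgery formula to extract the changemaker-type inequalities on the Alexander polynomial coefficients of $K$, carefully tracking the shift caused by $\mathcal P$'s nontrivial $d$-invariant; (2) observe that at $p = 2g(K)-1$ the resulting vector $\sigma = (\sigma_0,\dots,\sigma_n)$ of torsion coefficients is, up to the universal correction term $-(t^{(p-1)/2}+t^{-(p-1)/2}) + (t^{(p+1)/2}+t^{-(p+1)/2})$, a changemaker vector in Greene's sense; (3) invoke Greene's classification of changemaker lattices embedded in linear lattices (Theorem 1.2 of \cite{Gre13} and the lattice-theoretic heart of \cite{Gre13}) to conclude that $L(p,q)$ is realized by $p$-surgery on a Berge knot $B \subset S^3$ with changemaker vector $\sigma$; (4) read off from the matching of changemaker vectors that $\Delta_K(t)$ and $\Delta_B(t)$ differ by exactly the stated correction term, since the torsion coefficients determine the Alexander polynomial of an L-space knot, and $K$ (resp.\ $B$) has the same knot Floer homology as an L-space knot with those torsion coefficients after the $\mathcal P$-shift (resp.\ without it). The $\Delta$ identity is then essentially the statement that subtracting the $\mathcal P$-shift from $\Delta_K$ lands one on $\Delta_B$.

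The main obstacle I expect is step (1)–(2): one must show that the $d$-invariant inequalities coming from a lens-space surgery on a knot \emph{in $\mathcal P$} at the borderline coefficient $p = 2g(K)-1$ still assemble into an \emph{honest} changemaker vector and not something weaker. Away from the edge case ($p \geq 2g(K)$, which is Theorem \ref{thm:integerrealization}) one has genuine room to run the large-surgery/changemaker argument with the $E_8$-lattice appended; but at $p = 2g(K)-1$ the ``top'' torsion coefficient is forced to be exactly $1$ rather than $0$, which is precisely why the universal correction term $-(t^{(p-1)/2}+t^{-(p-1)/2})+(t^{(p+1)/2}+t^{-(p+1)/2})$ appears. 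The delicate point is checking that this single unit of discrepancy, and no more, propagates through all the $\mathbb Z/p$ worth of $\spc$ structures consistently — equivalently, that $K$ and the corresponding Berge knot $B$ have Alexander polynomials differing by that fixed term rather than by some genus-dependent tail. This is a finite but intricate bookkeeping argument with the mapping cone, and it is where the proof of Theorem \ref{thm:twogminusonereducible} (the reducible analogue) will serve as a template, with ``linear lattice of a connected sum of lens spaces'' replaced by ``linear lattice of a single lens space'' throughout. Once that discrepancy is pinned down, the Berge knot $B$ is produced directly by Greene's realization theorem and the $\Delta$ identity follows formally.
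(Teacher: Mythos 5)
Your route is the one the paper actually takes: at $p = 2g(K)-1$ the class of the capped Seifert surface is an ordinary changemaker $\sigma$ in a diagonal lattice, Greene's realization theorem then produces a Berge knot $B$ with $B(p)\cong L(p,q)$ governed by the same $\sigma$, and the term $-4d(\mathcal P) = -8$ in the correction-term inequality raises every torsion coefficient of $K$ by exactly $1$ relative to $B$ on $|i|\le (p-1)/2$, which is equivalent to the stated Alexander polynomial identity. That last step is not the delicate mapping-cone bookkeeping you anticipate: once both surgeries share the changemaker $\sigma$, the two formulas $-8t_i(B) = \max\{\mathfrak c^2\} + (n+1)$ and $-8t_i(K) = \max\{\mathfrak c^2\} + (n+1) - 8$, with the maxima taken over the same sets of characteristic covectors, give the uniform unit discrepancy for free, spin$^{\mathrm c}$ structure by spin$^{\mathrm c}$ structure.

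The genuine gap is in your steps (1)--(2), and it is not where you locate it. Donaldson's theorem is unavailable because the glued-up manifold $Z = W \cup X(p,q)$ has boundary $\mathcal P$, not $S^3$; the indispensable input is the Fr\o yshov--Scaduto classification (Theorem \ref{thm:oneoftwo}), which says $Q_Z$ is either $-\mathbb Z^{n+1}$ or $-E_8 \oplus -\mathbb Z^{n-7}$, and you never invoke it. Moreover, the reason the $E_8$ option is excluded at $p = 2g(K)-1$ is not ``the edge case of Rasmussen's rank bound'': it is that $-E_8\oplus -\mathbb Z^{n-7}$ contains characteristic vectors of square $-(n+1)+8$, and Lemma \ref{lem:generalized_changemaker} applied to such a vector forces $t_i(K)=0$ for some $|i|\le p/2$, contradicting $t_i(K)\ge 1$ for all $|i| < g(K)$. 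This exclusion is exactly Proposition \ref{prop:twogminusoneembedding} (imported from \cite{Cau21}), and without it your assertion that ``the $E_8$-summand decouples'' is unsupported. Two smaller corrections: $\sigma$ is the image of the generator of $Q_W$, not ``the vector of torsion coefficients'' (the latter are recovered from $\sigma$ by the max-pairing formulas), and the embedding runs the other way from what you wrote, namely $\Lambda(p,q)\cong(\sigma)^\perp\subset -\mathbb Z^{n+1}$.
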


\subsection{Changemakers.}

In an influential pair of papers, Greene observed that by combining Donaldson's Diagonalization Theorem and the data of Heegaard Floer \emph{correction terms} (or \emph{d-invariants}), one may address the topological problem of realizing, for example, a lens space as surgery on a knot in $S^3$ by means of a combinatorial heuristic. This heuristic, called a \emph{changemaker lattice embedding}, is derived from the relationship between the Alexander polynomial of a knot in $S^3$: 
\begin{equation}
    \Delta_K(T) = \sum_{i = 0}^\infty a_i (T^i + T^{-i}),
\end{equation} the \emph{torsion coefficients} of $K$:
\begin{equation}
    t_i(K) = \sum_{j = 1}^\infty j\cdot a_{|i|+j},
\end{equation} the correction terms of $K(p)$: 
\begin{equation}
    \{d(K(p), \mathfrak t) \colon\mathfrak t \in \text{Spin}^c(K(p))\},
\end{equation}and the lengths of \emph{characteristic elements} in a suitably chosen negative-definite unimodular lattice $L$:
\begin{equation}
    \text{Char}(L) = \{\mathfrak c \in L \colon \langle \mathfrak c, v\rangle \equiv \langle v, v\rangle \ \mod 2 \text{ for all } v \in L\}.
\end{equation}
In the case of studying lens space surgeries on knots in $S^3$, Greene uses Donaldson's Diagonalization theorem to pin down the lattice $L$ precisely as $-\mathbb Z^{n+1}$, in which case: 
\begin{equation}
    \text{Char}(-\mathbb Z^{n+1}) =  \Big\{\sum_{i= 0}^n\mathfrak c_id_i \colon \mathfrak c_i \equiv 1 \ \mod 2\Big\}
\end{equation} for $\{d_0, \ldots, d_n\}$ an orthonormal basis of $-\mathbb Z^{n+1}$. Greene showed that if $L(p,q)$ is integer surgery on a knot in $S^3$, then the canonical \emph{linear lattice} $\Lambda(p,q)$ is the orthogonal complement to some vector $\sigma \in - \mathbb Z^{n+1}$ with the following curious property.

\begin{defin}\label{def:changemaker}
A vector $\sigma = (\sigma_0, \ldots, \sigma_n) \in -\mathbb Z^{n+1}$ is said to be a \emph{changemaker} if $0\leq \sigma_0 \leq \cdots \leq \sigma_n$, and for all $1\leq i\leq n$
\[\sigma_i \leq 1 + \sum_{j = 0}^{i-1}\sigma_j.
\]
This is equivalent to and derived from the condition that 
$$\{\langle \mathfrak c, \sigma\rangle \colon \mathfrak c \in \{\pm 1\}^{n+1}\} = \{j \in [-|\sigma|_1, |\sigma|_1] \colon j \equiv |\sigma|_1 \ \mod 2\},$$
where $|\sigma|_1$ denotes the $L_1$-norm of $\sigma$.
\end{defin}

\begin{rmk}
    The latter of the two conditions in Definition \ref{def:changemaker} is what generalizes to $E_8$.
\end{rmk}

More precisely, Greene showed the following.

\begin{thm}\label{thm:changemakerembedding}
Let $K \subset S^3$ and suppose that $K(p) \cong L(p,q)$. Then $\Lambda(p,q) \cong (\sigma)^\perp$ for some changemaker $\sigma \in \mathbb Z^{n+1}$. Moreover, 
\[2g(K) = p- |\sigma|_1,\]
and if $K'(p) \cong L(p,q)$, then $\widehat{HFK}(K')\cong \widehat{HFK}(K)$.
\end{thm}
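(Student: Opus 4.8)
The plan is to reconstruct Greene's argument from \cite{Gre13}. First, since $L(p,q)$ is an L-space and $p>0$, the hypothesis $K(p)\cong L(p,q)$ forces $K$ to be an L-space knot; by Ozsv\'ath--Szab\'o this means $\Delta_K(T)=\sum_j(-1)^jT^{n_j}$ for a strictly decreasing sequence of exponents, that $t_i(K)=0$ exactly for $i\ge g(K)$, and that $\widehat{HFK}(K)$ is determined by $\Delta_K$ (equivalently by the torsion coefficients $t_i(K)$, via $a_i=t_{i-1}-2t_i+t_{i+1}$); in particular $p\ge 2g(K)-1$. Now let $W_p(K)$ denote the trace of $p$-surgery, a positive-definite cobordism from $S^3$ to $L(p,q)$ with intersection form $\langle p\rangle$, and let $X=X(p,q)$ be the canonical linear plumbing with $\partial X=-L(p,q)$ whose intersection form is $-\Lambda(p,q)$, where $n:=\rk\Lambda(p,q)$. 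Cap the $S^3$ boundary of $W_p(K)$ with $B^4$, reverse orientations, and glue to $X$ along $-L(p,q)$: the result $Z$ is a closed, negative-definite, unimodular $4$-manifold with $b_2(Z)=n+1$. Donaldson's diagonalization theorem gives $Q_Z\cong-\mathbb Z^{n+1}$, and under this isomorphism $-\Lambda(p,q)$ sits inside as a rank-$n$ sublattice whose orthogonal complement is spanned by a primitive vector $\sigma$; since $\Lambda(p,q)$ and its complement in a unimodular lattice have equal discriminant, $|\sigma|^2=p$, and $\sigma$ is identified up to sign with the class of the surgery $2$-handle. Flipping the overall sign to match the positive-definite convention of Definition~\ref{def:changemaker}, we have $\Lambda(p,q)\cong(\sigma)^\perp$ with $\sigma\in\mathbb Z^{n+1}$.

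It remains to show that $\sigma$ may be taken to be a changemaker and that $2g(K)=p-|\sigma|_1$. Both come from comparing the correction terms $d(L(p,q),\mathfrak t)$ computed two ways. On the one hand, the Ozsv\'ath--Szab\'o rational surgery formula gives, under the standard labeling of $\mathrm{Spin}^c(L(p,q))$ by $i\in\mathbb Z/p$,
\[d(K(p),i)=d(L(p,1),i)-2t_{\min(i,p-i)}(K).\]
On the other hand, $X(p,q)$ is a \emph{sharp} negative-definite filling of $-L(p,q)$ by Ozsv\'ath--Szab\'o, so $d$ is computed by the maximal length of a characteristic covector: transporting this computation into $Z$ and using $\Char(-\mathbb Z^{n+1})=\{\sum\mathfrak c_id_i:\mathfrak c_i\equiv 1\bmod 2\}$ for an orthonormal basis $\{d_i\}$, one gets, for each $\mathfrak t$, an expression for $d(L(p,q),\mathfrak t)$ of the form $\tfrac14\big(b_2(Z)-\min|\mathfrak c|^2\big)$, where $\mathfrak c$ ranges over characteristic vectors of $-\mathbb Z^{n+1}$ whose pairing with $\sigma$ records $\mathfrak t$. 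Equating the two computations yields, for every residue, an identity relating the minimal norm of a characteristic vector in a fixed coset of $(\sigma)^\perp$ to a torsion coefficient $t_{[i]}(K)$.

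The heart of the matter---and the step I expect to be the main obstacle---is to extract Definition~\ref{def:changemaker} from this family of identities. The torsion coefficients $t_i(K)$ are nonnegative, nonincreasing in $i$, vanish for $i\ge g(K)$, and satisfy the convexity $a_i=t_{i-1}-2t_i+t_{i+1}\in\{-1,0,1\}$ coming from the special form of $\Delta_K$; matching these rigid constraints against the possible minimal characteristic-vector lengths in the cosets of $(\sigma)^\perp$ forces first that $\sigma$ has nonnegative, nondecreasing coordinates, and then that $\{\langle\mathfrak c,\sigma\rangle:\mathfrak c\in\{\pm1\}^{n+1}\}$ is precisely $\{j:|j|\le|\sigma|_1,\ j\equiv|\sigma|_1\bmod 2\}$, which is exactly the (equivalent) condition in Definition~\ref{def:changemaker}. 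The same comparison identifies the largest attainable pairing $|\sigma|_1$ with the last coset in which $t_{[i]}(K)$ is still nonzero: tracing this through gives $t_i(K)=0$ precisely for $2i\ge p-|\sigma|_1$, so that, together with $t_i(K)=0\iff i\ge g(K)$, one concludes $2g(K)=p-|\sigma|_1$.

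Finally, the closing assertion requires no new work: if $K'(p)\cong L(p,q)$ as well, then $K'$ is an L-space knot, and the numbers $d(L(p,q),\mathfrak t)$---which depend only on the $3$-manifold---determine all torsion coefficients $t_i$ through the surgery formula above, hence determine $\Delta_{K'}=\Delta_K$ via $a_i=t_{i-1}-2t_i+t_{i+1}$, hence $\widehat{HFK}(K')\cong\widehat{HFK}(K)$ by Ozsv\'ath--Szab\'o's computation of the knot Floer homology of L-space knots.
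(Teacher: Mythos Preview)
Your proposal is correct and follows essentially the same approach as the paper's sketch: both construct $Z$ from the (orientation-reversed) trace cobordism and the sharp filling $X(p,q)$, invoke Donaldson's theorem to diagonalize, and then extract the changemaker condition and the genus formula from the comparison of characteristic-vector lengths in the cosets of $(\sigma)^\perp$ with the torsion coefficients of an L-space knot. The only minor difference is in the final $\widehat{HFK}$ statement, where you argue directly through the $d$-invariants of $L(p,q)$ while the paper phrases the recovery of $\Delta_K$ via the auxiliary quantities $g^{(i)}(K)$ computed from $\sigma$; these are equivalent, since both rest on the surgery formula and sharpness.
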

\begin{proof}[Sketch of proof]
Suppose that $K(p) \cong L(p,q)$. Let $W$ be the orientation reversal of the trace cobordism of $p$-surgery on $K$, and let $X:=X(p,q)$ be the \emph{linear} negative definite 4-manifold bounded by $L(p,q)$ presented by the Kirby diagram in Figure 4. Form the oriented, negative definite 4-manifold $Z$ by gluing $W$ to $X$ along $L(p,q)$. Since $\partial Z = S^3$, Donaldson's Theorem gives us an embedding $Q_W \oplus Q_X \hookrightarrow -\mathbb Z^{n+1}$ for $n = \text{rk} Q_X$. Writing the image of the generator of $Q_W$ under this embedding as $\sigma = (\sigma_0, \ldots, \sigma_n)$, we have that
\begin{equation}\label{eq:pairingwithtorsion}
    \mathfrak c^2 + n+1 \leq -8t_i(K),
\end{equation}
for all $|i|\leq p/2$ and any characteristic element $\mathfrak c$ in $-\mathbb Z^{n+1}$ with $\langle c, \sigma\rangle + p \equiv 2i \ \mod 2p$. Furthermore, for all $|i|\leq p/2$, there is some $\mathfrak c$ attaining equality in (\ref{eq:pairingwithtorsion}). 

That $\sigma$ is a changemaker is derived from (\ref{eq:pairingwithtorsion}), following the preliminary observation that, for $K$ an L-space knot, the sequence $(t_0(K), t_1(K), \ldots )$ is a non-increasing sequence of non-negative integers with $t_i(K) = 0$ if and only if $i\geq g(K)$. Therefore, \[2g(K) = p - \max\{\langle \mathfrak c, \sigma \rangle \colon \mathfrak c^2 = -(n+1)\} = p - |\sigma|_1.\] Similarly, we may compute $g^{(i)}(K) = \min\{j\geq 0 \colon t_j(K) = i\}$ for $i \in \{0, \ldots, t_0(K)\}$ by the formula
\[2g^{(i)}(K) = p - \max\{\langle \mathfrak c, \sigma\rangle \colon \mathfrak c^2 = -(n+1) -8i\},\]
and thereby completely recover $\Delta_K(T)$ from $\sigma$. The knot Floer homology of an L-space knot in an integer homology sphere L-space is completely determined by $\Delta_K(T)$, and therefore if $K(p) \cong K'(p) \cong L(p,q)$, then $\widehat{HFK}(K) \cong \widehat{HFK}(K')$ as bigraded abelian groups. \end{proof}

It follows that if for $K \subset S^3$ we have $K(p) \cong L(p_1,q_1)\# L(p_2, q_2)$, then $p \geq 2g(K)$, and therefore $K$ is a cable of knot $\kappa \subset S^3$ admitting a non-integral surgery to a lens space. Our proof of Theorems \ref{thm:twogminusonereducible} and \ref{2gminusoneirreducible} rests on the following Proposition (cf. \cite[Theorem 3]{Cau21}).

\begin{prop}\label{prop:twogminusoneembedding}
Let $K \subset \mathcal P$ and $p = 2g(K)-1$. If $K(p) \cong L(p_1,q_1)\#L(p_2,q_2)$, then the intersection form of the boundary connected sum of $X(p_1,q_1)$ and $X(p_2,q_2)$ is the orthogonal complement to a changemaker $\sigma\in -\mathbb Z^{n+1}$. \qed
\end{prop}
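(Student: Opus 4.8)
The plan is to run the argument sketched for Theorem~\ref{thm:changemakerembedding}, with $\mathcal P$ in the role of $S^3$ as in \cite{Cau21}, using the equality $p = 2g(K)-1$ to discard the $E_8$-summand that would otherwise pollute the ambient lattice. First I would build the relevant negative-definite $4$-manifold. Let $W$ be the orientation reversal of the trace cobordism of $p$-surgery on $K$, so that $\partial W = \mathcal P \sqcup -K(p)$ and $Q_W = \langle -p\rangle$ (negative definite, since $p>0$); let $X := X(p_1,q_1)\,\natural\,X(p_2,q_2)$ be the negative-definite linear plumbing, with $\partial X = K(p)$ and $Q_X = -\Lambda$, where $\Lambda := \Lambda(p_1,q_1)\oplus\Lambda(p_2,q_2)$ has rank $n$; and glue $W$ to $X$ along $K(p)$. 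Then $Z := W\cup_{K(p)} X$ is negative definite and unimodular with $\partial Z = \mathcal P$ and $b_2(Z) = n+1$, and $Q_Z$ contains $\langle \sigma_0\rangle \oplus Q_X$ at index $p$, where $\sigma_0$ spans the image of $H_2(W)$ and $\sigma_0^2 = -p$. Since $d(\mathcal P) = 2$, the $d$-invariant inequality for the negative-definite filling $Z$ of $\mathcal P$ gives $\mathfrak c^2 + (n+1) \leq 4\,d(\mathcal P) = 8$ for every characteristic vector $\mathfrak c\in Q_Z$, so the largest characteristic square of $Q_Z$ lies in $\{-(n+1),\,7-n\}$; by Elkies's theorem (that $-\mathbb Z^{n+1}$ is the unique negative-definite unimodular lattice of rank $n+1$ with largest characteristic square $-(n+1)$) it equals $-(n+1)$ precisely when $Q_Z\cong -\mathbb Z^{n+1}$, and otherwise equals $7-n$ — the ``$E_8$-changemaker'' scenario of \cite{Cau21}, in which the displayed $d$-invariant bound is sharp.

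To eliminate that scenario I would invoke the correction-term inequality underpinning \cite{Cau21}, the $\mathcal P$-analogue of (\ref{eq:pairingwithtorsion}) in which $d(\mathcal P)=2$ contributes an extra shift of $8$: for every characteristic $\mathfrak c\in Q_Z$, writing $i$ for the integer with $\langle \mathfrak c,\sigma_0\rangle + p \equiv 2i \pmod{2p}$ and $|i|\leq p/2$, one has
\[ \mathfrak c^2 + (n+1) \;\leq\; -8\bigl(t_i(K)-1\bigr), \]
with equality attained for some $\mathfrak c$ for each such $i$. Here $p = 2g(K)-1$ enters decisively: then $g(K)=(p+1)/2$, so every index $i$ with $|i|\leq p/2$ has $|i|\leq g(K)-1$; and since $K$ is an L-space knot, the sequence $(t_0(K),t_1(K),\dots)$ is non-increasing with $t_j(K)=0$ if and only if $j\geq g(K)$, so $t_i(K)\geq 1$ for \emph{every} relevant $i$. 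The displayed inequality then forces $\mathfrak c^2\leq -(n+1)$ for \emph{every} characteristic vector of $Q_Z$, so the largest characteristic square of $Q_Z$ is $-(n+1)$, and hence $Q_Z\cong -\mathbb Z^{n+1}$.

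It remains to extract the changemaker. In $-\mathbb Z^{n+1}$ the sublattice $Q_X = -\Lambda$ is primitive of corank one, so $-\Lambda = \sigma^\perp$ for a primitive $\sigma\in -\mathbb Z^{n+1}$ with $\sigma^2 = -p$, which may be chosen with non-negative coordinates — precisely the assertion that the intersection form of $X(p_1,q_1)\,\natural\,X(p_2,q_2)$ is the orthogonal complement of $\sigma$. That $\sigma$ is a changemaker in the sense of Definition~\ref{def:changemaker} then follows from the computation in the sketch of Theorem~\ref{thm:changemakerembedding}: equality in the displayed inequality, together with the monotone non-increasing sequence $(t_j(K))$ of an L-space knot, recovers $\Delta_K$ from $\sigma$ and forces $\{\langle\mathfrak c,\sigma\rangle : \mathfrak c\in\{\pm1\}^{n+1}\}$ to be the set of all integers of parity $|\sigma|_1$ in $[-|\sigma|_1,\,|\sigma|_1]$, which is the second formulation of the changemaker property. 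The genuine obstacle is the correction-term inequality of the second paragraph: establishing it carries the full lattice-and-$d$-invariant bookkeeping of \cite{Cau21}, now with the $d(\mathcal P)=2$ shift tracked throughout; granting it, the new input for the Proposition is merely the observation that $p=2g(K)-1$ makes every relevant torsion coefficient positive, and so rules out the sharp-$d$-invariant scenario and pins the ambient lattice down as $-\mathbb Z^{n+1}$.
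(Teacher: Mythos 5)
Your proposal is correct and follows essentially the argument the paper intends (it cites \cite[Theorem 3]{Cau21} rather than printing a proof): glue the reversed trace to $X(p_1,q_1)\natural X(p_2,q_2)$, apply Lemma \ref{lem:generalized_changemaker} with $d(\mathcal P)=2$, observe that $p=2g(K)-1$ forces $t_i(K)\geq 1$ for every $|i|\leq p/2$ and hence $\mathfrak c^2\leq -(n+1)$ for all characteristic vectors, which pins $Q_Z$ down as $-\mathbb Z^{n+1}$, and then run Greene's extraction of the changemaker with the torsion coefficients shifted by one. Your use of Elkies's theorem in place of the Scaduto--Fr\o yshov classification is an inessential variation, and the only step you elide is the routine discriminant computation showing $Q_X$ is all of $(\sigma)^\perp$ rather than a finite-index sublattice.
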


\begin{proof}[Proof of Theorems \ref{thm:twogminusonereducible} and \ref{2gminusoneirreducible}]
If $K(p)$ is a lens space or the connected sum of exactly two lens spaces, then let $\sigma \in -\mathbb Z^{n+1}$ be the changemaker associated to the surgery. As observed in \cite{Gre15, Gre13, McC,Cau21}, $\sigma$ determines $\Delta_K(t)$. Now, if $K(p)$ is the connected sum of a pair of lens spaces, the resolution of the cabling conjecture for connected sums of lens spaces in \cite{Gre15} gives us a knot $K'\subset S^3$ such that $K'(p) \cong K(p)$ and $K'$ is either a torus knot or a cable of a torus knot. On the other hand, if $K(p) \cong L(p,q)$, then the resolution of the lens space realization problem in \cite{Gre13} gives us a knot $K'\subset S^3$ such that $K'(p) \cong L(p,q)$ and $K'$ is a Berge knot. In both cases, the changemaker $\sigma$ determines $\Delta_{K'}(t)$, and by the assumption that $p = 2g(K) -1$, we see that 
$$\Delta_K(t) = \Delta_{K'}(t) -(t^{(p-1)/2} + t^{-(p-1)/2}) + (t^{(p+1)/2} + t^{-(p+1)/2}).$$\end{proof}

\subsection{$E_8$-changemakers.} We take the changemaker lattice construction as inspiration in devising an obstruction to realizing an L-space bounding a \emph{sharp} $4$-manifold as $p\geq 2g(K)$ surgery on a knot $K\subset \mathcal P$. In Section \ref{sec:workingin}, we develop the following notion of an $E_8$-\emph{changemaker}---the appropriate generalization of a changemaker in the lattice $-E_8 \oplus -\mathbb Z^{n+1}$---and show that if $p$-surgery on a knot $K \subset \mathcal P$ is an L-space that bounds a sharp (Definition \ref{def:sharp}) 4-manifold $X$, then $Q_X$ is isomorphic to the orthogonal complement to some $E_8$-changemaker in $-E_8\oplus -\mathbb Z^{n+1}$ for $n =b_2(X) - 8$.

For ease of notation, we now define the notion of a \emph{parity interval}.

\begin{defin}\label{def:parityinterval}
    Let $a$ and $k$ be integers with $k \geq 0$. The \emph{parity interval} $PI(a, a + 2k)$ is the set of integers $\{a + 2j \colon 0\leq j \leq k\}$. 
\end{defin}

Recall that if $L$ is a negative-definite, unimodular lattice and $\mathfrak c \in \text{Char}(L)$, then $\langle \mathfrak c, \mathfrak c\rangle \equiv \rk L \ \mod 8$. Define $m(L) = \max \{\langle \mathfrak c, \mathfrak c\rangle \colon \mathfrak c \in \text{Char}(L)\}$. We say that a characteristic vector $\mathfrak c\in L$ is \emph{short} if $\langle \mathfrak c, \mathfrak c\rangle = m(L)$, and \emph{nearly short} if $\langle \mathfrak c, \mathfrak c\rangle = m(L) - 8$. Denote the sets of short and nearly short characteristic vectors in $L$ by $\text{short}(L)$ and $\text{Short}(L)$, respectively. For a vector $v \in L$, let $c(v) := \max\{\langle \mathfrak c, v\rangle \colon \mathfrak c \in \text{short}(L)\}$ and let $C(v) := \max\{\langle \mathfrak c, v\rangle \colon \mathfrak c \in \text{Short}(L)\}$.

We are now ready to define the notion of an $E_8$-\emph{changemaker}.

\begin{defin}\label{def:e8changemaker}
    A vector $\tau = (s, \sigma) \in -E_8 \oplus -\mathbb Z^{n+1}$ is said to be an $E_8$-changemaker if 
    \begin{enumerate}
        \item $PI(-c(\tau), c(\tau)) = \{\langle \mathfrak c, \tau\rangle \colon \mathfrak c \in \text{short}(-E_8 \oplus -\mathbb Z^{n+1})\}$, and
        \item $PI(c(\tau) + 2, C(\tau)) \subset \{\langle \mathfrak c, \tau\rangle \colon \mathfrak c \in \text{Short}(-E_8 \oplus -\mathbb Z^{n+1})\}$.
    \end{enumerate}
    If $L \cong (\tau)^\perp$ for some $E_8$-changemaker $\tau$, then $L$ is said to be an $E_8$-\emph{changemaker lattice}.
\end{defin}

The combinatorial constraints defining the $E_8$-changemaker condition naturally extend those defining the changemaker condition. In fact, writing $\tau = (s, \sigma) \in -E_8 \oplus -\mathbb Z^{n+1}$ and noting that $\text{short}(-E_8 \oplus - \mathbb Z^{n+1}) = \text{short}(-E_8)\oplus\text{short}(-\mathbb Z^{n+1}) = \{0\} \oplus \{\pm 1\}^{n+1}$, in order for $\tau$ to be an $E_8$-changemaker the first thing one finds is that $\sigma$ is a changemaker. Furthermore, the $E_8$-changemaker $\tau$ associated to a surgery $K(p)$ determines $\widehat{HFK}(K)$, and in particular $g(K)$, in much the same way that a changemaker $\sigma$ arising from surgery determines the associated knot's Floer homology groups.

Our lattice embedding obstruction is derived from the following theorem.

\begin{thm}\label{thm:hardeightembedding}
Let $K$ be an L-space knot in $\mathcal P$, and suppose that $p \geq 2g(K)$. If $K(p)$ bounds a sharp $4$-manifold with no torsion in $H_1(X)$, then $Q_X$ embeds as the orthogonal complement to an $E_8$-changemaker $\tau = (s,\sigma)$ and $\widehat{HFK}(\mathcal P, K)$ is determined by $\tau$. In particular, $2g(K) = p - |\sigma|_1$.
\end{thm}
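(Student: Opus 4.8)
The plan is to follow the structure of Greene's proof of Theorem \ref{thm:changemakerembedding}, replacing the round $4$-ball cap by the $E_8$ plumbing. Let $W$ be the orientation reversal of the trace cobordism of $p$-surgery on $K\subset\mathcal P$, so $\partial W = -\mathcal P \sqcup K(p)$, and let $X$ be the given sharp $4$-manifold with $\partial X = K(p)$ and $H_1(X)$ torsion-free. Glue $W$ to $X$ along $K(p)$ and also cap off the $-\mathcal P$ boundary component with the positive-definite $E_8$ plumbing $P_{E_8}$ (the $4$-manifold with $\partial P_{E_8} = -\mathcal P$ and intersection form $E_8$ — but since we are tracking negative-definite forms, cap with the negative-definite $E_8$ plumbing bounding $\mathcal P$ with the orientation as in the paper, adjusting signs appropriately so the resulting closed-up manifold is negative definite). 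This produces a smooth, closed, oriented, negative-definite $4$-manifold $Z$ with $H_1(Z)$ having no $2$-torsion (using torsion-freeness of $H_1(X)$ and the fact that $P_{E_8}$ and $W$ contribute no torsion), so Donaldson's Diagonalization Theorem applies and gives an embedding of intersection forms $-E_8 \oplus Q_X \oplus Q_W \hookrightarrow -\mathbb{Z}^{N}$ where $Q_W = (-p)$ and $N = 8 + b_2(X) + 1$. Here one uses that $X$ is sharp to know $Q_X$ is negative definite of the right rank, and that the $E_8$ summand embeds; a short argument identifies the orthogonal complement of the image of $-E_8 \oplus Q_W$ inside $-\mathbb{Z}^N$ with $Q_X$, so writing $\tau = (s,\sigma)$ for the image of the generator of $Q_W$ together with the $E_8$-part, we get $Q_X \cong (\tau)^\perp$ in $-E_8 \oplus -\mathbb{Z}^{n+1}$ with $n = b_2(X) - 8$.

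The next step is to show $\tau$ is an $E_8$-changemaker, i.e. to verify conditions (1) and (2) of Definition \ref{def:e8changemaker}. This comes from the interplay between $d$-invariants and the Alexander polynomial. Since $K$ is an L-space knot in the L-space homology sphere $\mathcal P$, the torsion coefficients $t_i(K)$ form a non-increasing sequence of non-negative integers vanishing exactly for $i \geq g(K)$, and the surgery formula of Ozsv\'ath--Szab\'o (in the form used in \cite{Cau21}, adapted to $\mathcal P$ using that $\mathcal P$ bounds the sharp $E_8$ plumbing) expresses $d(K(p),\mathfrak{t})$ in terms of $d(\mathcal P)$, a $\mathbb Z^N$-lattice quantity, and the $t_i(K)$. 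Comparing with the $d$-invariants computed from the sharp manifold $X$ via the standard lattice formula $d(K(p),\mathfrak t) = \max_{\mathfrak c}\frac{\mathfrak c^2 + \mathrm{rk}}{4}$ yields, for each $|i| \leq p/2$, an inequality $\mathfrak c^2 + N \leq -8 t_i(K)$ for every characteristic $\mathfrak c \in -\mathbb Z^N$ with $\langle \mathfrak c, \tau\rangle$ in the appropriate residue class mod $2p$, with equality attained for some such $\mathfrak c$. Since $\mathrm{short}(-E_8 \oplus -\mathbb Z^{n+1}) = \{0\}\oplus\{\pm1\}^{n+1}$, the $i = 0$ case forces, via $t_0(K) = g^{(t_0)}$-type bookkeeping, precisely that the set of values $\langle \mathfrak c,\tau\rangle$ for $\mathfrak c$ short fills out the parity interval $PI(-c(\tau),c(\tau))$, which is condition (1); the nearly-short vectors (length $m-8$) enter for the next torsion coefficient and give condition (2). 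This is the same derivation as in \cite{Gre13,Cau21} but bookkept against $-E_8 \oplus -\mathbb Z^{n+1}$ rather than $-\mathbb Z^{n+1}$, and it is spelled out in Section \ref{sec:workingin}.

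Finally, the formula $2g(K) = p - |\sigma|_1$ and the recovery of $\widehat{HFK}(\mathcal P,K)$ from $\tau$ follow formally, exactly as in the sketch of Theorem \ref{thm:changemakerembedding}: one has $2g(K) = p - \max\{\langle \mathfrak c,\tau\rangle : \mathfrak c \in \mathrm{short}\} = p - c(\tau) = p - |\sigma|_1$ (the middle equality because the maximum over short characteristic vectors pairs the $\{\pm 1\}^{n+1}$ part maximally against $\sigma$ and kills the $E_8$ part), and more generally $2g^{(i)}(K) = p - \max\{\langle\mathfrak c,\tau\rangle : \mathfrak c^2 = m - 8i\}$ recovers every $a_i$ in $\Delta_K(t)$, hence the full knot Floer homology of the L-space knot $K$. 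The main obstacle I anticipate is the first part: verifying that the glued manifold $Z$ is genuinely negative definite with $H_1$ free of $2$-torsion so that Donaldson applies, and — more delicately — pinning down that the $-E_8$ summand sits in the diagonalization in a way compatible with the $d$-invariant computation (i.e. that the sharp $4$-manifold's form really does appear as $(\tau)^{\perp}$ with $\tau$ of the form $(s,\sigma)$ and not in some twisted position). Making the orthogonal-complement identification and the mod-$2p$ bookkeeping match up cleanly — rather than the formal Alexander-polynomial recovery at the end — is where the real work lies.
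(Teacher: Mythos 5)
There is a genuine gap, and it is at the very first step. You propose to close up the manifold: glue $W$ to $X$ along $K(p)$ and then cap the remaining $\mathcal P$-boundary with an $E_8$ plumbing so as to apply Donaldson's Diagonalization Theorem to a closed negative definite $4$-manifold. This cannot be done. The manifold $Z = W \cup_{K(p)} X$ has $\partial Z = \mathcal P$, so a cap must be a $4$-manifold $N$ with $\partial N = -\mathcal P$; the negative definite $E_8$ plumbing bounds $\mathcal P$, not $-\mathcal P$, so gluing it on reverses its orientation and contributes $+E_8$, making $Z \cup N$ indefinite. Worse, no negative definite cap exists at all: since $d(-\mathcal P) = -2$, the Ozsv\'ath--Szab\'o inequality $c_1(\mathfrak s)^2 + b_2(N) \leq 4d(-\mathcal P) = -8$ (inequality (\ref{eq:correctiontermbound'}) in the paper, combined with Elkies' theorem on minimal-norm characteristic vectors) obstructs $-\mathcal P$ from bounding any negative definite $4$-manifold with torsion-free $H_1$. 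This is precisely the point at which the $\mathcal P$ case departs from Greene's $S^3$ argument, and it is why the paper invokes Scaduto's Theorem \ref{thm:oneoftwo} (anticipated by Fr\o yshov): one applies a gauge-theoretic classification of definite forms \emph{bounded by} $\mathcal P$ directly to $Z$, concluding $Q_Z \cong -\mathbb Z^{n+1}$ or $-E_8 \oplus -\mathbb Z^{n-7}$, and then rules out the diagonal case using Lemma \ref{lem:generalized_changemaker} together with $d(\mathcal P) = 2$ (no characteristic vector of $-\mathbb Z^{n+1}$ attains $\mathfrak c^2 = -n+7$). Your proposal substitutes a closed-manifold Donaldson argument for this bounded-manifold input, and that substitution fails.

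Two further points, less fatal but worth flagging. First, your verification of the $E_8$-changemaker condition elides the step the paper isolates as Lemma \ref{lem:Shortsmallpairings} and Proposition \ref{prop:s2e8changemaker}: to convert the congruence $\langle \mathfrak c, \tau\rangle + p \equiv 2i \pmod{2p}$ into the equality needed for condition (2) of Definition \ref{def:e8changemaker}, one must bound $|\langle \mathfrak c, \tau\rangle|$ for nearly short $\mathfrak c$, and this requires a case analysis on $|s| \in \{0, 2\} $ versus $|s| \geq 4$ using the dual basis computation with $A^{-1}$; you correctly identify this as "where the real work lies" but do not do it. Second, the upgrade from the embedding $Q_X \hookrightarrow (\tau)^\perp$ to the isomorphism $Q_X \cong (\tau)^\perp$ is carried out in the paper by the discriminant count following Greene's Lemma 3.6 (requiring a vector $w$ with $\langle w, \tau\rangle = 1$, which in the $|\sigma|_1 = 0$ case must be found among the roots of $E_8$); your "short argument" gestures at this but the existence of such a $w$ is not automatic and is part of what the changemaker condition supplies.
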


We then develop the theory of $E_8$-changemaker lattices in order to implement the obstruction implicit in Theorem \ref{thm:hardeightembedding}, and arrive at the following two theorems.

\begin{thm}\label{thm:twosummandhardeightembedding}
Suppose that $\Lambda(2,1)\oplus \Lambda(p,q) \cong (\tau)^\perp$ for some $E_8$-changemaker $\tau$. Then $(p,q) = (7,6)$ or $(27,16)$. Furthermore, if $K(2p) \cong L(2,1) \# L(p,q)$, then $g(K) = p$.
\end{thm}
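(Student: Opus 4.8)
The plan is to prove Theorem \ref{thm:twosummandhardeightembedding} by a careful combinatorial analysis of $E_8$-changemaker lattices whose orthogonal complement splits as a direct sum of two linear lattices. The first reduction is to observe, as noted after Definition \ref{def:e8changemaker}, that if $\tau = (s,\sigma)$ is an $E_8$-changemaker then $\sigma$ is an honest changemaker in $-\mathbb{Z}^{n+1}$; moreover the ambient lattice has a direct summand $-E_8$, so the $-E_8$ must be ``absorbed'' by the complement $\Lambda(2,1)\oplus\Lambda(p,q)$ together with the changemaker vector. Since $\Lambda(2,1) \cong A_1$ is rank one, the interplay between the rank-$8$ summand $-E_8$ and the two linear lattices is highly constrained. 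I would first pin down how the standard basis vectors of $-\mathbb{Z}^{n+1}$ and the $-E_8$ summand distribute across the two linear-lattice factors, using the fact that $\Lambda(2,1)\oplus\Lambda(p,q)$ is a direct sum (so no vector pairs nontrivially across the two factors) to force a rigid ``vertex basis'' structure on the embedding.

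Next I would run the changemaker-lattice classification machinery (the $E_8$-analogue of Greene's analysis in \cite{Gre13, Gre15}, developed in the preceding sections of this paper) on the factor $\Lambda(p,q)$: a linear lattice that is a summand of an $E_8$-changemaker lattice is itself essentially an $E_8$-changemaker lattice for a shorter changemaker, and the combinatorics of which ``tight'' and ``gappy'' coordinates can occur severely restricts the continued-fraction expansion of $p/q$. This is exactly the point where I expect the argument to mirror — but be more delicate than — the $S^3$ case: the extra $-E_8$ block contributes eight standard vectors' worth of length that must be accounted for, and the constraint that one summand is forced to be $\Lambda(2,1)$ eats up a coordinate of $\sigma$ equal to $1$ or $2$ in a prescribed way. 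Chasing through the finitely many shapes of admissible changemakers compatible with a rank-one factor, one is left with a short list of candidate pairs $(p,q)$; direct computation of the linear lattices $\Lambda(7,6)$ and $\Lambda(27,16)$ and verification that they do arise (from the knots $E$ and $C$, via the surgery diagrams in Figures 1 and 2) shows the list is exactly $\{(7,6),(27,16)\}$ and that both are genuinely realized.

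For the genus statement, I would invoke Theorem \ref{thm:hardeightembedding}: if $K(2p) \cong L(2,1)\#L(p,q)$ with $2p \geq 2g(K)$ — and the hypothesis $2p \geq 2g(K)$ is automatic, since a connected sum of lens spaces bounding a definite $4$-manifold forces $p/q \geq 2g(K)-1$ and here the surgery coefficient is an integer $2p$, so in fact $2p \geq 2g(K)$ — then $K$ is an L-space knot in $\mathcal{P}$ and the connected sum $L(2,1)\#L(p,q)$ bounds the sharp $4$-manifold $X(2,1)\natural X(p,q)$ with $Q_X \cong \Lambda(2,1)\oplus\Lambda(p,q)$, which by the first part is $(\tau)^\perp$ for an $E_8$-changemaker $\tau = (s,\sigma)$. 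Then Theorem \ref{thm:hardeightembedding} gives $2g(K) = 2p - |\sigma|_1$, so it remains only to compute $|\sigma|_1$ from the explicit changemaker $\sigma$ realizing $\Lambda(2,1)\oplus\Lambda(7,6)$ or $\Lambda(2,1)\oplus\Lambda(27,16)$; in each case one checks $|\sigma|_1 = 0$, equivalently $\sigma = 0$ and $\tau = (s, \mathbf{0})$ with $s$ a suitable vector of $-E_8$ whose complement is the linear lattice. (Indeed $L(p,q) = K(p/2)$ for $K = E$ or $C$ means the cable $K$ has genus exactly $p$, consistent with $2g(K) = 2p$.)

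The main obstacle I anticipate is the combinatorial case analysis in the middle step: classifying the $E_8$-changemaker lattices that decompose as an orthogonal direct sum with a rank-one factor $\Lambda(2,1)$. Unlike Greene's setting, where linear lattices are characterized by a clean ``standard basis'' condition, here one must simultaneously control the $-E_8$ part — tracking how the short and nearly-short characteristic vectors of $-E_8 \oplus -\mathbb{Z}^{n+1}$ restrict to the complement, and ruling out all the spurious continued-fraction shapes that the parity-interval conditions (1) and (2) of Definition \ref{def:e8changemaker} would otherwise permit. Getting the bookkeeping of ``tight'' versus ``just-right'' versus ``gappy'' coordinates correct across two linear summands, one of which is rigidly $A_1$, is where the real work lies; everything downstream (realizability and the genus formula) is then a matter of exhibiting the two explicit surgeries and a one-line length computation.
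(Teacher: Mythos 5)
Your overall strategy is the paper's: classify the $E_8$-changemaker lattices admitting a $\Lambda(2,1)$ orthogonal summand, then extract the genus from Theorem \ref{thm:hardeightembedding} via $2g(K) = 2p - |\sigma|_1$ with $|\sigma|_1 = 0$. The genus half of your argument is correct as written (including the observation that $2p \geq 2g(K)$ is automatic for parity reasons).

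The gap is in your central step. A linear summand of an $E_8$-changemaker lattice is not ``an $E_8$-changemaker lattice for a shorter changemaker,'' and running the classification machinery on the factor $\Lambda(p,q)$ in isolation gives you no finite list to chase. What actually makes the problem finite is a reduction your proposal never reaches: the classification of decomposable $E_8$-changemaker lattices carried out in Section \ref{sec:identifying} is valid for $n \geq 2$ and produces only the six families corresponding to cables of exceptional fibers, none of which has a $\Lambda(2,1)$ summand; hence a $\Lambda(2,1)$ summand forces $n \in \{-1,0,1\}$. The cases $n=1$ and $n=0$ are then killed by explicit claw arguments (for $n=1$ the $\Lambda(2,1)$ summand must be spanned by $d_1 - d_0$, which forces $(w_1;w_2,w_3,w_5)$ to be a claw), and the remaining case $n=-1$, i.e.\ $\tau = s \in -E_8$ with $\sigma$ empty, is settled by a finite search over the $1003$ nonzero $E_8$-changemakers in $-E_8$, yielding exactly $s^* = (0,0,1,0,0,0,0,0)$ and $s^* = (1,0,0,1,0,0,0,0)$, whose complements are $\Lambda(2,1)\oplus\Lambda(7,6)$ and $\Lambda(2,1)\oplus\Lambda(27,16)$. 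Without the reduction to small $n$, ``chasing through the finitely many shapes of admissible changemakers compatible with a rank-one factor'' is not a finite process, so as it stands the core of your argument does not close. (A further small point: realizability of the two surgeries by the knots $E$ and $C$ is not needed for this theorem, which is a purely lattice-theoretic statement plus the genus formula; it is only needed for Theorem \ref{thm:main}.)
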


\begin{thm}\label{thm:integersurgeryrealization}
If $\Lambda(p,q) \cong (\tau)^\perp$ for some $E_8$-changemaker $\tau$, then there is a Tange knot $T$ such that $T(p) \cong L(p,q)$. 
\end{thm}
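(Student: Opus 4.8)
The plan is to mirror Greene's classification of changemaker linear lattices in \cite{Gre13}, adapted to the $E_8$-setting. By Theorem \ref{thm:changemakerembedding} and the remarks following Definition \ref{def:e8changemaker}, an $E_8$-changemaker $\tau = (s,\sigma)$ with $\Lambda(p,q) \cong (\tau)^\perp$ carries a changemaker vector $\sigma$ in its $-\mathbb Z^{n+1}$ factor, and the $E_8$-component $s \in -E_8$ records the ``extra'' data distinguishing a surgery on a knot in $\mathcal P$ from one on a knot in $S^3$. First I would set up the structural dictionary: a decomposition of the linear lattice $\Lambda(p,q)$ into \emph{standard basis} elements, and a translation of the $E_8$-changemaker condition (1)--(2) into arithmetic constraints on how $s$ and the coordinates $\sigma_i$ interact, analogous to Greene's notion of \emph{tight}, \emph{just-right}, \emph{gappy}, and \emph{abundant} vectors. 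The key point is that conditions (1) and (2) of Definition \ref{def:e8changemaker} force the $E_8$-part $s$ to have a very restricted length and pairing behavior: the short characteristic vectors of $-E_8$ are just $\{0\}$, while the nearly-short ones form a small finite set (the $240$ roots, up to the relevant identification), so $C(\tau) - c(\tau)$ is bounded and this pins down $|s|$.

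Next I would run the case analysis on the structure of $\sigma$ exactly as in \cite{Gre13}, now tracking the $E_8$-contribution throughout. Greene's argument proceeds by induction on $n$, peeling off the largest coordinate $\sigma_n$ and analyzing the ``linear subtree'' it spans in $\Lambda(p,q)$; at each stage the changemaker inequality $\sigma_i \le 1 + \sum_{j<i}\sigma_j$ together with the linear-lattice combinatorics severely limits the possibilities. The new feature is that the pairing $\langle s, s\rangle = \langle \tau,\tau\rangle - \langle\sigma,\sigma\rangle$ must equal $-(p-1) + |\sigma|_1 - \langle\sigma,\sigma\rangle$ reduced appropriately (so that $\tau^\perp$ has the right rank and discriminant), and the nearly-short characteristic vectors of the full lattice must realize the parity interval $PI(c(\tau)+2, C(\tau))$. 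I would show that these extra conditions, rather than opening up new families, actually \emph{eliminate} all the Berge/changemaker families except those on Tange's list: the surviving $(p,q)$ are precisely the linear lattices that appear as orthogonal complements in Tange's construction, which one verifies by direct comparison with the tables in \cite{Tan09}. Finally, for each surviving $(p,q)$ I would exhibit the Tange knot $T$ with $T(p)\cong L(p,q)$ by name, completing the proof.

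The main obstacle I anticipate is the bookkeeping in the induction: Greene's original classification is already a long and delicate case analysis, and here every case must be re-examined with the $E_8$-summand present, which both enlarges the set of candidate embeddings at intermediate stages (since $s$ can absorb part of the relevant pairings) and introduces new incompatibilities that must be checked rather than assumed. Controlling the interaction between the $-E_8$ block and the ``breakable'' vs.\ ``unbreakable'' standard basis elements of $\Lambda(p,q)$—in particular ensuring that a standard basis element cannot have nonzero $E_8$-component without violating condition (1)—is the technical heart of the argument. I expect that once one establishes that $s$ is forced into one of a short list of $-E_8$-vectors (up to automorphism), the remaining analysis reduces to Greene's with only notational overhead, but isolating that list cleanly is where the real work lies.
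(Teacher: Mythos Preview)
Your high-level strategy---classify all linear lattices that arise as $E_8$-changemaker lattices, then match against Tange's tables---is exactly what the paper does. But several structural points in your proposal are off in ways that would derail execution.

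First, your expectation that ``a standard basis element cannot have nonzero $E_8$-component'' is simply wrong. The standard basis $\mathcal S$ of $(\tau)^\perp$ necessarily contains eight vectors $w_1,\ldots,w_8$, each of the form $w_j = -e_j + r + \sum_{i\in A_j}d_i$ with a nontrivial $E_8$-part (and possibly $r$ a positive root when $w_j$ is \emph{loaded}). These $w_j$ form the $E_8$-\emph{extension set} $\mathcal W$, and the standard basis is $\mathcal V\cup\mathcal W$ where $\mathcal V$ is Greene's changemaker basis for $(\sigma)^\perp$. Controlling the $w_j$---their pairings with each other and with the $v_i$---is precisely what drives the classification.

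Second, you have the order of the analysis backwards. You propose to pin down $s$ first and then ``reduce to Greene's with only notational overhead.'' The paper does the opposite: it conditions first on the structure of $\mathcal V$ (all just right; gappy but no tight; $v_t$ tight with $t\geq 2$; $v_1$ tight; $G(\mathcal V)$ disconnected), and within each case shows that the intersection-graph obstructions (claws, heavy triples, incomplete cycles, and a new obstruction called a \emph{sign error}) force most of Greene's changemaker families to admit \emph{no} compatible $E_8$-extension. A posteriori only four tails $\sigma$ survive---$(1,\ldots,1)$, $(1,\ldots,1,n{+}1)$, $(1,2,\ldots,2)$, $(1,1,2,\ldots,2)$---and for each the compatible $s^*$ are enumerated explicitly. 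So the $E_8$ conditions do not ``eliminate Berge families down to Tange's list''; the Tange lens spaces are a genuinely different collection, and the analysis produces them from scratch rather than by filtering Greene's output.

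Third, the paper only carries out the case analysis for $n\geq 2$ and invokes Rasmussen's computer verification (together with the explicit norm bound $\langle\tau,\tau\rangle\leq 100{,}000$ for $n\leq 1$) to handle small ranks; you should anticipate a similar split. Also, ``abundant'' is not part of the vocabulary here---the relevant trichotomy is tight/gappy/just right, plus the loaded/unloaded distinction for the $w_j$.
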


\begin{proof}[Proof of Theorem \ref{thm:integerrealization}]
Let $K \subset \mathcal P$, and suppose that $p\geq 2g(K)$ and $K(p)\cong L(p,q)$. Then $K(p)$ bounds a sharp 4-manifold $X := X(p,q)$, and Theorem \ref{thm:hardeightembedding} implies that $Q_X$ embeds as the orthogonal complement to some $E_8$-changemaker $\tau$ that completely determines $\widehat{HFK}(\mathcal P, K)$. By Theorem \ref{thm:integersurgeryrealization}, there is a Tange knot $T \subset \mathcal P$ such that $T(p) \cong L(p,q)$, so $\widehat{HFK}(\mathcal P, K) \cong \widehat{HFK}(\mathcal P, T)$.    
\end{proof}

In the current work, we demonstrate Theorem \ref{thm:integersurgeryrealization} explicitly only for $\Lambda(p,q)$ with rank $\geq 10$, but contend that a computer aided search of the 1003 non-zero $E_8$-changemakers in $-E_8 \oplus -\mathbb Z^n$ with $n \in \{0, 1, 2\}$ demonstrates the theorem to be true in the case that $7\leq \rk \Lambda(p,q) \leq 9$. There are two reasons for this omission: first, the technical lemmas we use to characterize $E_8$-changemaker embeddings of linear lattices when $n\geq 3$ break down when the diagonal summand is small, so this case requires a different approach than the general one we take; second, Rasmussen, according to the remark at the end of Section 6.3 of \cite{Ras07}, has used a computer to verify that all lens spaces $L(p,q)$ with $p\leq 100,000$ realized by integer surgery on a knot $K\subset \mathcal P$ with $2g(K)\leq p$ are realized by integer surgery on a Tange knot; by explicit computation, the author has found that linear $E_8$-changemaker lattices $\Lambda(p,q)$ with $n \in \{0,1,2\}$ satisfy $p\leq 100,000$. 

\subsection{Beyond $E_8$-changemakers.} The utility of changemakers and $E_8$-changemakers in both constructing and obstructing lens space surgeries from $S^3$ and $\mathcal P$, respectively, suggests that appropriate generalizations of their defining combinatorics may give way to tractable strategies for understanding the more general phenomenon of when Dehn surgery on a knot $K$ in an arbitrary integer homology sphere $Y$ yields the lens space $L(p,q)$. In the case that $Y$ is an L-space with $d(Y) \geq 4$, then, by considering the 4-manifold $Z$ obtained by gluing the trace cobordism from $L(p,q)$ to $Y$ to $X(p,q)$ along $L(p,q)$ in light of data from Floer homology (cf. Lemma \ref{lem:generalized_changemaker}) we are led to consider vectors $\tau$ in negative definite, unimodular lattices of the form $\Lambda \cong -L \oplus -\mathbb Z^{n+1}$, where $L$ has no vectors of length $1$, with the property that 
$$\mathfrak c^2 + \rk(\Lambda) - 4d(Y) \leq -8t_i(K)$$
for all $|i|\leq p/2$ and $\mathfrak c \in \text{Char}(\Lambda)$ such that $\langle \mathfrak c, \tau\rangle + p \equiv 2i \ \mod 2p$, and moreover over for all $|i|\leq p/2$ there is some $\mathfrak c$ which attains equality. If $p \geq 2g(K)$, then we are led to the following definition.

\begin{defin}
    Let $L$ be a positive definite unimodular lattice with no vectors of norm 1, and let $\Lambda = -L \oplus -\mathbb Z^{n}$. Let $$m(\Lambda) = \max\{\langle \mathfrak c,\mathfrak c\rangle \colon \mathfrak c \in \text{Char}(\Lambda)\}.$$ Denote by Char$_i(\Lambda)$ the characteristic elements in $\Lambda$ of norm $m(\Lambda) - 8i$, and for a vector $v \in \Lambda$, let $C_i(v)$ denote $\{\langle \mathfrak c, v\rangle\colon c \in \text{Char}_i(\Lambda)\}$, and let $c_i(v) = \max(C_i(v))$. A vector $\tau \in \Lambda$ is said to be an $L$\emph{-changemaker} if 
\begin{enumerate}
    \item $PI(-c_0(\tau), c_0(\tau)) = C_0(\tau)$, and
    \item $PI(c_{i-1}(\tau) + 2, c_i(\tau)) \subset C_i(\tau)$ for $1 \leq i \leq \frac{\rk(\Lambda) + m(\Lambda)}{8}$. 
\end{enumerate}
A lattice $\mathcal L$ is said to be an $L$\emph{-changemaker lattice} if $\mathcal L \cong (\tau)^\perp$ for some $L$-changemaker $\tau$.
\end{defin} 
\noindent We point out that the definition above coincides with Greene's definition of a changemaker if $L$ is taken to be the trivial lattice, i.e. the lattice of rank 0, and that it agrees with the above definition of an $E_8$-changemaker if $L$ is taken to be $E_8$.

At present, the only known irreducible integer homology sphere L-spaces with non-negative $d$-invariant are $S^3$ and $\mathcal P$. One conceivable strategy for producing a novel irreducible integer homology sphere L-space by way of $L$-changemakers would proceed as follows. First, fix a positive definite unimodular lattice $L$. Next, one may work out the specific combinatorics of the $L$-changemaker condition, as we do here for $L \cong E_8$. One may then attempt to identify a linear lattice $\Lambda(p,q)$ which embeds in $-L\oplus -\mathbb Z^n$ as the orthogonal complement to an $L$-changemaker $\tau$. If there is such a linear lattice, it is conceivable that there is an integer homology sphere L-space $Y$ which bounds a 4-manifold $Z$ with $Q_Z \cong -L\oplus-\mathbb Z^n$ obtained as surgery on a knot in the lens space $L(p,q)$. The data of the embedding $(\tau) \oplus \Lambda \hookrightarrow -L\oplus -\mathbb Z^n$ has the potential to produce $Y$ as in the following example. 

The author discovered the $E_8$-changemaker embedding $\Lambda(2,1) \oplus \Lambda(27, 16) \hookrightarrow -E_8$ prior to discovering the knot $C$; in fact, the explicit embedding of $\Lambda(2,1) \oplus \Lambda(27,16)$ in $-E_8$ as the sublattice generated by $\mathcal S = \{e_3, e_2, e_1-e_4, e_5, e_6, e_7, e_8\}$ was used together with the fact that $E_8$ is generated by $\mathcal S \cup \{e_4\}$ to produce a Kirby diagram of $\mathcal P$ realizing $\mathcal P$ as surgery on a knot in $L(2,1) \# L(27,16)$ (see Figure 3). 

\begin{figure}\label{fig:212716}
    \centering
    \begin{tikzpicture}
    \begin{knot}[clip width = 3, flip crossing = 2, flip crossing = 3, flip crossing = 5, flip crossing = 7, flip crossing = 10, flip crossing = 12, flip crossing = 14, flip crossing = 15]
    \strand[ultra thick] (3,0) to[curve through={(4,1)..(3,2)..(2,1)}] (3,0);
    \strand[ultra thick] (4.6,0) to[curve through={(5.6,1)..(4.6,2)..(3.6,1)}] (4.6,0);
    \strand[ultra thick] (6.2,0) to[curve through={(7.2,1)..(6.2,2)..(5.2,1)}] (6.2,0);
    \strand[ultra thick] (7.8,0) to[curve through={(8.8,1)..(7.8,2)..(6.8,1)}] (7.8,0);
    \strand[ultra thick] (9.4,0) to[curve through={(10.4,1)..(9.4,2)..(8.4,1)}] (9.4,0);
    \strand[ultra thick] (3,-1.6) to [curve through = {(4, -.6)..(3,.4)..(2,-.6)}] (3,-1.6);
    \strand[ultra thick, blue] (1.2,1) to[curve through ={(1.8,.2)..(2.35, .7)..(1.65,1)..(2.35,1.3)..(1.8,1.8)}] (1.2,1);
    \strand[ultra thick] (-.6,1) to[curve through = {(.4,2)..(1.4,1)..(.4,0)}] (-.6,1);
    \node at (.4, 2.2) {$-\Large 2$};
    \node[blue] at (1.78,2.2) {$\Large -2$};
    \node at (3,2.2) {$\Large -4$};
    \node at (4.6, 2.2) {$\Large -2$};
    \node at (6.2,2.2) {$\Large -2$};
    \node at (7.8,2.2) {$\Large -2$};
    \node at (9.4, 2.2) {$\Large -2$};
    \node at (2.1, -1.6) {$\Large -2$};
    \end{knot}
    \end{tikzpicture}
    \caption{A sequence of blow-ups and blow-downs takes this Kirby diagram to the negative definite $E_8$ plumbing. The black unknots correspond to the elements of $\mathcal S$, while the blue unknot corresponds to the simple root $e_4$.}
\end{figure}
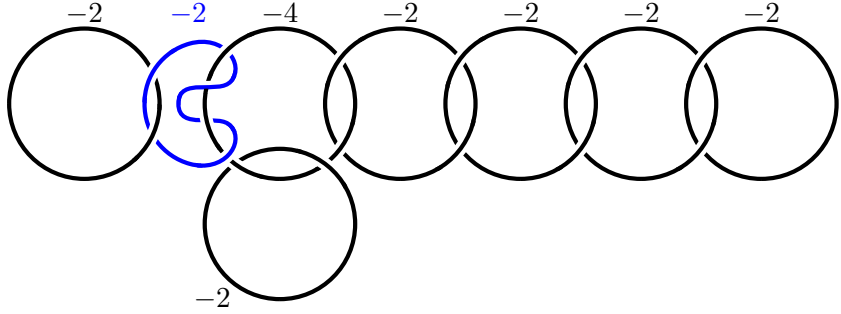

It is conceivable that there are negative-definite unimodular lattices $L$ such that no $L$-changemaker lattice is isomorphic to a sum of linear lattices. In fact, if one can show that no $L$-changemaker lattice is isomorphic to a sum of linear lattices for all lattices $L$ with $\frac{\rk(L) + m(L)}{4} = d$ then one will have shown that no integer homology sphere L-space $Y$ with $d(Y) = d$ may be obtained by surgery on a knot $K$ in a lens space $L(p,q)$ with $p \geq 2g(K)$. We issue the following conjecture, in the spirit of \cite[Conjecture 1]{Ras07}.

\begin{conj}
If $L$ is a definite unimodular lattice and $\Lambda(p,q)$ is an $L$-changemaker lattice, then $L(p,q)$ is realized by integer surgery on a knot enumerated by Berge or Tange. In particular, $L$ is the trivial lattice or $L \cong -E_8$.
\end{conj}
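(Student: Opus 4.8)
The plan is to reduce the conjecture to a finite verification paired with a structural classification, following the template set by Theorems~\ref{thm:integersurgeryrealization} and~\ref{thm:twosummandhardeightembedding}. First, observe that by the $d$-invariant computation implicit in Theorem~\ref{thm:hardeightembedding} (generalized via Lemma~\ref{lem:generalized_changemaker}), if $\Lambda(p,q)$ is an $L$-changemaker lattice, then $L$ is a \emph{positive} definite unimodular lattice with no vectors of norm $1$, so $\rk L \geq 8$ and in fact $\rk L \in \{0\}\cup\{8,12,14,15,16,\dots\}$ by the classification of unimodular lattices of small rank; moreover $\frac{\rk L + m(L)}{4} = d(Y) \geq 0$ must be an even integer, which forces $m(L) = -\rk L$ so that $L$ has no characteristic vectors of norm $< \rk L$ in absolute value, i.e. $L$ is an \emph{even} unimodular lattice. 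Hence $L$ is either trivial or even positive definite unimodular; in ranks $8$ and $16$ these are $E_8$, $E_8\oplus E_8$, and $D_{16}^+$, and one checks $d(Y) = 2$ in rank $8$ and $d(Y) = 4$ in rank $16$.

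The heart of the argument is then the rank-$16$ case: I would show that neither $E_8\oplus E_8$ nor $D_{16}^+$ admits an $L$-changemaker $\tau$ whose orthogonal complement in $-L\oplus -\bZ^{n+1}$ is isomorphic to a linear lattice $\Lambda(p,q)$. For this, one re-runs the combinatorial analysis developed here for $E_8$: the condition that $\sigma$ (the $-\bZ^{n+1}$ part of $\tau$) is a changemaker is automatic, and one studies how the \emph{standard basis} of $\Lambda(p,q)$—the distinguished vectors of norm $\geq 2$ realizing the linear structure—must embed, using the short and nearly-short characteristic vectors of $-L$. The key leverage is that $E_8\oplus E_8$ and $D_{16}^+$ each have a much larger and more rigid set of short characteristic vectors than $-E_8$, and the parity-interval conditions $PI(-c_0(\tau),c_0(\tau)) = C_0(\tau)$ and $PI(c_{i-1}(\tau)+2,c_i(\tau))\subset C_i(\tau)$ become progressively harder to satisfy; I expect one can show that any such embedding forces $\tau$ to lie entirely in one $E_8$ summand (for $E_8\oplus E_8$), contradicting the requirement that $\Lambda(p,q)$ be indecomposable and span the complement, or forces a short vector of norm $1$ in the complement (contradicting linearity) for $D_{16}^+$. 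The genuinely finite part—ranks where the diagonal summand $n$ is small, analogous to the $n\in\{0,1,2\}$ caveat for $E_8$—would be handled by the same computer search, now over $L$-changemakers in $-L\oplus -\bZ^n$ for $L \in \{E_8\oplus E_8, D_{16}^+\}$ and $n$ small; the bound $\rk\Lambda(p,q) = \rk L + n - 1$ together with a length bound on the standard basis keeps this finite.

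For ranks $\geq 24$, the relevant even unimodular lattices proliferate (the Niemeier lattices, then beyond), so a uniform argument is needed rather than case-by-case enumeration. Here the plan is to prove a general \emph{splitting lemma}: if $L$ is even positive definite unimodular and $\Lambda(p,q)$ embeds as $(\tau)^\perp$ in $-L\oplus -\bZ^{n+1}$, then the orthogonal projection of the standard basis of $\Lambda(p,q)$ onto $-L$ must generate a sublattice on which $L$ restricts to something with a norm-$1$ or norm-$2$ vector unless $L$ itself decomposes—essentially because a linear lattice is built from a single ``snake'' of interlocking norm-$2$ and larger vectors, and fitting such a connected configuration into a root-free even lattice forces the snake to stay within the $\bZ^{n+1}$ part plus a bounded piece of $L$, which caps $\rk L$. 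The main obstacle, and where I would expect the real work to lie, is precisely this splitting lemma in large rank: controlling how a linear (indecomposable) lattice can thread through an arbitrary even unimodular lattice without hitting roots requires a genuinely lattice-theoretic input—perhaps an argument via the minimal vectors and the structure of the Weyl group, or a clever use of the fact that $\Lambda(p,q)$ has a vertex basis with a tree-like (indeed path-like) intersection pattern—rather than the explicit characteristic-vector bookkeeping that suffices for $E_8$. If this lemma can be established, the conjecture follows by combining it with the rank-$16$ computation and the already-proven $E_8$ and trivial cases.
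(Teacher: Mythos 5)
The statement you are trying to prove is stated in the paper as a \emph{conjecture}; the paper offers no proof, and the surrounding discussion makes clear it is open --- the author cannot even definitively rule out the single candidate embedding $\Lambda(46,15)\hookrightarrow \Gamma_{16}$, only ``doubts'' it is a $\Gamma_{16}$-changemaker embedding. So there is no argument in the paper to compare yours against; the question is whether your proposal constitutes a proof. It does not, for two reasons.

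First, your opening reduction to even lattices is a non-sequitur. The conjecture is a purely lattice-theoretic statement: no $3$-manifold $Y$ is given, so you cannot invoke $d(Y)$. (The paper's logic runs in the other direction --- \emph{if} a linear $L$-changemaker lattice exists, it is merely ``conceivable'' that a corresponding $Y$ exists.) Even if you recast the constraint lattice-theoretically, the quantity $\frac{\rk L + m(L)}{4}$ is a non-negative even integer for \emph{every} definite unimodular lattice, since $m(L)\equiv -\rk L \pmod 8$ and $m(L)\geq -\rk L$ by Elkies; so the condition excludes nothing, and certainly does not force $m(L)=-\rk L$ (which in fact characterizes $\pm\mathbb{Z}^n$, the opposite of evenness) nor evenness of $L$. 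Odd root-free unimodular lattices such as $D_{12}^{+}$ survive your test and must still be handled. Second, the two steps that would carry the actual content --- the rank-$16$ exclusion and the ``splitting lemma'' for rank $\geq 24$ --- are not proved; your own text flags them as expectations (``I expect one can show,'' ``the plan is to prove,'' ``where I would expect the real work to lie''). The splitting lemma in particular is essentially a restatement of the conjecture itself, and the heuristic offered (a linear lattice is a ``snake'' that cannot thread a root-free even lattice) is not an argument: the paper's $E_8$ analysis shows that deciding whether such a snake fits requires the full characteristic-vector bookkeeping of Sections 4 and 5, and nothing analogous is supplied for larger $L$. What you have is a reasonable research outline with a broken first step, not a proof.
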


On the other hand, one may wonder if some appropriate generalization of the changemaker condition may shed light on surgeries from lens spaces to a non-L-space integer homology sphere $Y$. Every Heegaard genus two integer homology sphere admits a lens space surgery by the doubly primitive construction---might the bootstrapping together of Floer homology and lattice embeddings entirely capture these realization problems as it has for $S^3$ and $\mathcal P$? Since $Y$ is not an L-space, a reexamination of the input from the surgery exact triangle in Heegaard Floer homology that leads to the $L$-changemaker definition is in order, as the following example demonstrates.

Consider the lens space $L(46,15)$, which admits a surgery description in terms of a linear chain of $15$ unknots, all of which are $-2$-framed except for, say, the first one, which bears a $-4$-framing. Then, $-2$-surgery with respect to the blackboard framing on the meridian of the third unknot produces a Seifert-fibered integer homology sphere $Y$ that is not an L-space that bounds the even unimodular lattice of rank 16 called $\Gamma_{16}$. We assert, without supplying proof here, that the obvious embedding $\Lambda(46,15) \hookrightarrow \Gamma_{16}$ implicit in the surgery diagram is not a $\Gamma_{16}$-changemaker embedding. It is important to note that this does not rule out the existence of a $\Gamma_{16}$-changemaker embedding of $\Lambda(46,15)$, though the author doubts that one exists.

\subsection{Organization.}

In Section \ref{sec:lattices}, we recall preliminary observations about lattices recorded in Section 3 of \cite{Gre13}. In Section \ref{sec:inputfromfloer}, we recall input from Floer homology begun in \cite{Cau21} to address $(2g-1)$-surgeries, and bring it to bear on $p \geq 2g$-surgeries, culminating in the notion of an $E_8$-changemaker. In the first part of Section \ref{sec:workingin}, we discuss some essentials required to take a hands-on approach to working with the $E_8$ lattice that are more familiar to number theorists and lattice theorists, as communicated to the author by Daniel Allcock and Richard Borcherds, and that the author suspects are less familiar to low-dimensional topologists. In the second part of Section \ref{sec:workingin}, we develop the input from Floer homology in Section \ref{sec:inputfromfloer} in order to derive combinatorial constraints on $E_8$-changemakers and implement our lattice embedding obstruction. At the end of Section \ref{sec:workingin}, we record some basic observations about linear lattices that are $E_8$-changemaker lattices. In Section \ref{sec:identifying}, we implement the $E_8$-changemaker obstruction to characterize when a linear lattice is realized by the orthogonal complement to an $E_8$-changemaker in $-E_8 \oplus -\mathbb Z^{n+1}$ with $n \geq 2$. In Section \ref{sec:main}, we explicitly identify each of the thirty-eight families of indecomposable linear $E_8$-changemaker lattices identified in Section \ref{sec:identifying} with a Tange family, identify each of the six families of decomposable linear $E_8$-changemaker lattices identified in section \ref{sec:identifying} with a family of surgeries on cables of an exceptional fiber, and prove the main theorems. 

For readers with some familiarity with changemaker lattices, we offer a brief summary of the strategy we employ to obstruct linear lattices from admitting $E_8$-changemaker lattice embeddings. First, as in \cite{Gre13}, we take the perspective of an $E_8$-changemaker $\tau = (s, \sigma)$. In Section \ref{sec:workingin}, we prove that the orthogonal complement to $\tau$ admits an \emph{standard basis} of irreducible vectors that are either \emph{tight}, \emph{gappy}, or \emph{just right}, in much the same way that the orthogonal complement to a changemaker does, and that contains the standard changemaker basis of $(\sigma)^\perp\subset \mathbb Z^{n+1}$. Greene's analysis of changemakers whose orthogonal complements are linear lattices progresses by analyzing first the case when every standard basis element is just right, then the case when there is a gappy vector but no tight vector, and finally the case when there is a tight vector. The result of his analysis showed that the collection of changemaker lattices whose associated \emph{intersection graphs} contained no \emph{claws}, \emph{heavy triples}, or \emph{incomplete cycles} corresponds precisely to the set of orthogonal sums of linear lattices coming from connected sums of lens spaces realized either by lens space surgery on a Berge knot or a reducible surgery on a torus knot or a cable thereof. Our analysis of $E_8$-changemakers whose orthogonal complements are sums of linear lattices thus proceeds by first conditioning on whether the changemaker basis of $(\sigma)^\perp$ contains a gappy or tight vector, deducing characteristics of such bases whose intersection graphs contain no claw, heavy triples, or incomplete cycles, then elucidating how such a changemaker basis may be extended to an $E_8$-\emph{changemaker basis} without introducing any claws, heavy triples, or incomplete cycles. We comment further on our strategy at the beginning of Section \ref{sec:identifying}.

\subsection{Acknowledgments.} The author wishes to thank Daniel Allcock, Ken Baker, John Baldwin, Richard Borcherds, Steve Boyer, Josh Greene, Cameron Gordon, Yi Ni, and Motoo Tange for insightful conversations, valuable encouragement, and probing questions. The author would also like to thank the mathematics graduate student community at Boston College for fostering a warm and welcoming atmosphere, which provided an invaluable reservoir of morale.

\section{Lattices.}\label{sec:lattices}
Here we provide an executive summary, of Section 3 of \cite{Gre13}.

\subsection{Generalities.} Let $V$ be a Euclidean vector space of dimension $n$, and let $\mathcal V$ be an orthonormal basis of $V$. A \emph{lattice} $L$ is the $\mathbb Z$-module of integer linear combinations of elements of a basis $\mathcal B$ for $V$, and we say $\rk L = n$. The inner product $\langle-,-\rangle : V\otimes V \to \mathbb R$ restricts to a positive definite, symmetric, bilinear form on $L$; $L$ is said to be an \emph{integer lattice} if the image of this restricted pairing is contained in $\mathbb Z$. Equivalently, letting $B$ be the $n \times n$ matrix whose columns are the vectors in $\mathcal B$ with respect to the coordinates $\mathcal V$, $L$ is an integer lattice if every entry in the matrix $A = B \cdot B^T$ is an integer. In this case, we define the \emph{dual} of $L$,
\begin{equation*}
    L^* : = \{x \in V | \ \langle x, y \rangle \in \mathbb Z \text{ for all } y \in L\},
\end{equation*}
and the discriminant of $L$, disc$(L)$, is the index $[L^*: L]$.  Note here that the restriction of the inner product of $V$ to $L^*$ is given, with respect to the implicit basis $\mathcal B^*$ of $\text{Hom}(V,\mathbb R)\cong V$ dual to $\mathcal B$, by the matrix $A^{-1}$, thus the vectors in $V$ expressed by the columns of $A^{-1}$ with respect to the basis $\mathcal V$ form a $\mathbb Z$-basis for $L^*$. If now $|\det A| = 1$, in which case we say that $L$ is \emph{unimodular}, then $A\in \text{SL}(2, \mathbb Z)$, hence $L^*$ is an integer lattice. It follows that $L = L^*$ and that $A^{-1} : L \to L^*$ encodes this isomorphism with respect to the bases $\mathcal B$ of $L$ and $\mathcal V^* = \mathcal V$. Henceforth, we work only with integer lattices.

The \emph{norm} of a vector $v \in L$ is $|v|:=\langle v, v\rangle$. A vector $v$ is said to be \emph{reducible} if it can be written as the sum of two non-zero vectors $x$, $y \in L$ with $\langle x, y\rangle \geq 0$. A vector is \emph{irreducible} if it is not reducible. A vector $v$ is \emph{breakable} if $v = x+y$ for some $x, y \in L$ with $|x|, |y| \geq 3$ and $\langle x, y\rangle = -1$. A vector is \emph{unbreakable} if it is not breakable. A lattice $L$ is \emph{decomposable} if it can be written as an orthogonal direct sum $L = L_1 \oplus L_2$ with $L_1, L_2 \neq (0)$. A lattice is \emph{indecomposable} if it is not decomposable. 

Every integer lattice $L$ admits a basis $S = \{v_1, \ldots, v_n\}$ of irreducible vectors. Given such a basis $S$, we define its \emph{pairing graph}
\begin{equation*}
    \hat{G}(S) = (S,E), \ \ E = \{(v_i,v_j)\ | \ i \neq j \text{ and } \langle v_i,v_j\rangle \neq 0\}.
\end{equation*}

\subsection{Linear lattices} 
Let $p> q > 0$ be relatively prime integers. Note that the fraction $p/q$ admits a unique Hirzebruch-Jung continued fraction expansion
\begin{equation*}
    p/q =[a_1,a_2,\ldots,a_n]^-= a_1 - \frac{1}{a_2-\frac{1}{\ddots- \frac{1}{a_n}}}
\end{equation*}
with each $a_i \geq 2$ an integer. The \emph{linear lattice} $\Lambda(p,q)$ is the integer lattice freely generated by the \emph{vertex basis} $V = \{x_1, \ldots, x_n\}$, where 
    \[\langle x_i,x_j \rangle = 
    \begin{dcases}
        -a_i, & \text{ if } i = j; \\
        1, & \text{ if } |i-j| = 1\\
        0, & \text{ if } |i-j| >1 \\
    \end{dcases}.
    \]
Notably, the linear lattices $\Lambda(p,q)$ and $\Lambda(p',q')$ are isomorphic if and only if there exists an orientation preserving homeomorphism $L(p,q)\cong L(p',q')$. 

\begin{prop}[\cite{Ger}]
If $\Lambda(p,q) \cong \Lambda(p', q')$, then $p = p'$ and $q = q'$ or $qq' \equiv 1 (\mod p)$. \qed
\end{prop}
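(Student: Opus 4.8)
The statement to prove is the classical fact (attributed to Gerstein) that the linear lattice $\Lambda(p,q)$ determines the pair $(p,q)$ up to the equivalence $q' \equiv q^{\pm 1} \pmod p$; equivalently, the isomorphism type of $\Lambda(p,q)$ knows the orientation-preserving homeomorphism type of $L(p,q)$. The plan is to recover $p$ and the continued fraction data directly from lattice-theoretic invariants of $\Lambda(p,q)$.

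First I would recover $p$: since $\Lambda(p,q)$ is presented by the tridiagonal matrix $A$ with $-a_i$ on the diagonal and $1$'s on the off-diagonal, one checks by a cofactor expansion / continued-fraction induction that $|\det A| = p$, so $\disc(\Lambda(p,q)) = p$ is an isomorphism invariant. Next I would pin down the vertex basis intrinsically. The key observation is that the vertex basis vectors $x_1,\dots,x_n$ are exactly the irreducible vectors of $\Lambda(p,q)$ of norm $\geq 2$ that sit in a linear chain; more robustly, one shows that any basis of irreducible vectors of $\Lambda(p,q)$ realizing the minimal possible ``total norm'' (or: any basis whose pairing graph $\hat G(S)$ is a path) must, up to sign and reordering, coincide with $\pm V$ with the path orientation. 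This is the combinatorial heart of the argument: one argues that $\Lambda(p,q)$ is indecomposable (for $q \neq p-1$, and handled directly otherwise), that its pairing graph on any irreducible basis is connected, and that a connected graph on $n$ vertices supporting a negative definite rank-$n$ lattice with all pairwise inner products in $\{0,1\}$ and the given discriminant is forced to be a path with the prescribed diagonal entries — using that adding any chord or raising any valence would change the determinant or destroy definiteness.

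Given that the vertex basis is canonical up to the dihedral symmetries of a path (reversing the chain, and global sign), I would then read off the continued fraction: reading the diagonal entries $(a_1,\dots,a_n)$ left-to-right gives $p/q = [a_1,\dots,a_n]^-$, while reading them right-to-left gives $p/q'$ for the unique $q'$ with $qq' \equiv 1 \pmod p$ (this is the standard ``palindrome/reciprocal'' identity for Hirzebruch–Jung continued fractions, which I would cite or verify by a one-line induction relating $[a_1,\dots,a_n]^-$ and $[a_n,\dots,a_1]^-$). Global sign change fixes the $a_i$, so it contributes nothing new. Hence an isomorphism $\Lambda(p,q) \cong \Lambda(p',q')$ forces $p = p'$ (equal discriminants) and then forces the continued fraction of $p'/q'$ to be either $(a_1,\dots,a_n)$ or its reverse, i.e. $q' = q$ or $qq' \equiv 1 \pmod p$.

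The main obstacle is the rigidity step: showing the pairing graph of \emph{every} irreducible basis is a path with the right labels, rather than just the given vertex basis. I expect to handle this by the argument sketched in Section~\ref{sec:lattices} following \cite{Gre13} — an irreducible vector decomposition is essentially unique for these ``thin'' lattices — combined with a direct check ruling out branch vertices (a claw would force a rank-$3$ positive-definite obstruction among $\{0,1\}$-paired vectors of norm $\geq 2$, contradicting definiteness) and ruling out cycles (a cycle in the pairing graph of a rank-$n$ lattice on $n$ generators forces the determinant to drop, contradicting $\disc = p \neq 0$). Everything else is bookkeeping with Hirzebruch–Jung continued fractions.
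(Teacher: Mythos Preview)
The paper does not supply its own proof: the proposition is stated with a citation to \cite{Ger} and closed with \qed. So there is no argument in the paper to compare against; what follows evaluates your proposal on its own.

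Your overall architecture is correct: recover $p$ as the discriminant, argue that the vertex basis is canonical up to the dihedral symmetry of the chain, and then invoke the reversal identity $[a_n,\dots,a_1]^- = p/q'$ with $qq'\equiv 1\pmod p$. The discriminant step and the continued-fraction step are fine.

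The rigidity step, however, has a genuine gap, and the two concrete obstructions you offer are both false as stated. A claw among four vectors of norm $\geq 2$ with pairwise inner products in $\{0,\pm 1\}$ does \emph{not} contradict positive definiteness: the $D_4$ Cartan matrix is exactly such a configuration and is positive definite, so ``a claw would force a rank-$3$ positive-definite obstruction \dots\ contradicting definiteness'' does not go through. Likewise, a cycle in the pairing graph of $n$ linearly independent vectors in a rank-$n$ lattice does not force the determinant to vanish or to drop; you may be conflating this with the fact that $n{+}1$ vectors in rank $n$ are dependent, but that is a different statement. Neither mechanism rules out non-path irreducible bases, so your sketch does not establish that the vertex basis is intrinsically determined.

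What actually carries the argument is the structural input recorded in Proposition~\ref{prop:linearirreducibles} (Corollary~3.5 of \cite{Gre13}): every irreducible vector of $\Lambda(p,q)$ is $\pm[T]$ for some interval $T$ in the vertex basis. Once that is in hand, an isomorphism of linear lattices carries intervals to intervals, and one can characterize the singleton intervals intrinsically (for instance via the abutment/containment relations among intervals), forcing the two tuples $(a_i)$ to agree up to reversal. That interval description, not a bare definiteness or determinant obstruction, is the mechanism that makes the vertex basis canonical; your proposal gestures at ``irreducible vector decomposition is essentially unique for these thin lattices'' but does not supply this mechanism.
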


An interval $T$ in a linear lattice $L$ is a (possibly singleton) set of consecutive vertices $\{x_i, x_{i+1}, \ldots, x_j\}$ in the pairing graph of the vertex basis $V$ for $L$. Let $[T] \in L$ denote the vector $x_i + \ldots + x_j$. Let $T = \{x_i, \ldots, x_j\}$ and $T' = \{x_k, \ldots, x_l\}$ be distinct intervals. We say that $T$ and $T'$ \emph{share a common endpoint} if $i = k$ or $j = l$ and write $T \prec T'$ if $T\subset T'$. We say that $T$ and $T'$ are \emph{consecutive} if $k = j+1$ or $i = l+1$, and write $T \dag T'$. If $T$ and $T'$ either share a common endpoint or are consecutive, then we write $T \sim T'$ and say that $T$ and $T'$ abut. If $T$ and $T'$ do not abut, then either $T\cap T' = \emptyset$, in which case we say that $T$ and $T'$ are distant, or $T\cap T'$ is a proper non-empty subset of both $T$ and $T'$, in which case we write $T\pitchfork T'$. Note that if $T\pitchfork T'$, then the symmetric difference of $T$ and $T'$ is a union of two distant intervals. 

\begin{prop}[Corollary 3.5 of \cite{Gre13}]\label{prop:linearirreducibles} 
Let $L = \bigoplus_kL_k$ denote a sum of linear lattices.
\begin{enumerate}
    \item The irreducible vectors in $L$ take the form $\pm [T]$, where $T$ is an interval in some $L_k$;
    \item each $L_k$ is indecomposable;
    \item if $T \pitchfork T'$, then $[T\setminus T'] \pm [T'\setminus T]$ is reducible;
    \item $[T]$ is unbreakable iff $T$ contains at most one vertex basis element of norm $\geq 3$.\qed
\end{enumerate}

\end{prop}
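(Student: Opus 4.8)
The plan is to deduce all four statements from the combinatorics of the path $x_1 - x_2 - \cdots - x_n$ underlying the vertex basis of a linear lattice, using repeatedly one elementary observation: a vector that splits as a sum of two nonzero orthogonal vectors is reducible, so an irreducible vector of an orthogonal sum $L = \bigoplus_k L_k$ must lie in a single summand $L_k$ (decompose it along the summands; the pieces are orthogonal).

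For (1), I would take an irreducible $v$; by the observation it lies in a single $L_k = \Lambda(p,q)$, so I write $v = \sum_i c_i x_i$ in the vertex basis. If the support $\{i : c_i \neq 0\}$ is a disconnected subset of the path, cutting $v$ at a gap writes it as a sum of two nonzero vectors whose supports are separated by at least one vertex, hence orthogonal (since $\langle x_i, x_j\rangle = 0$ for $|i-j| \geq 2$), so $v$ is reducible; thus the support is an interval $[a,b]$. The substantive step is then to show that if some $|c_i| \geq 2$, or if two coefficients with consecutive indices have opposite signs, then $v$ is still reducible: in each case one produces a splitting $v = x + y$ into nonzero vectors with $\langle x, y\rangle \geq 0$, for instance by cutting $v$ between two consecutive indices whose coefficients disagree in sign, or by peeling an interval class off a vector carrying an interior coefficient $\geq 2$. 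Granting this, $v = \pm(x_a + \cdots + x_b) = \pm[T]$, which is (1).

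Item (3) then drops out: if $T \pitchfork T'$, then after possibly interchanging them $T = [a,c]$ and $T' = [b,d]$ with $a < b \leq c < d$, so $T \setminus T' = [a, b-1]$ and $T' \setminus T = [c+1, d]$ are nonempty intervals separated by at least one vertex; hence $\langle [T\setminus T'], [T'\setminus T]\rangle = 0$, and $[T\setminus T'] \pm [T'\setminus T]$ is reducible. For (2), I would show that a proper orthogonal splitting $L_k = M_1 \oplus M_2$ is impossible for a linear lattice: an irreducible basis of $M_1$ together with one of $M_2$ forms an irreducible basis of $L_k$ (a reduction in $L_k$ of a vector lying in $M_i$ can always be pushed into $M_i$), and its pairing graph is disconnected; but by (1) any irreducible basis of $\Lambda(p,q)$ consists of interval classes, and a basis of $\Lambda(p,q)$ by interval classes necessarily has connected pairing graph -- equivalently, $\Lambda(p,q)$ is the intersection form of a connected negative-definite linear plumbing, hence indecomposable. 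For the forward implication of (4), if $T$ contains two vertices $x_i, x_j$ with $i < j$ of norm $\geq 3$, I would pick an index $k$ with $i \leq k < j$ and split $[T] = [a,k] + [k+1, b]$; the only cross-term is $\langle x_k, x_{k+1}\rangle = -1$, and by the identity $|[S]| = 2 + \sum_{m \in S}(a_m - 2)$ for an interval $S$ each of the two summands -- each containing a vertex of norm $\geq 3$ -- has norm $\geq 3$, so $[T]$ is breakable. For the converse, the same identity gives $|[T]| = 2$ when $T$ has no vertex of norm $\geq 3$ and $|[T]| = a_{m_0}$ when $T$ has a single such vertex $m_0$; since a breakable vector has norm at least $3 + 3 - 2 = 4$, only the configurations with a single vertex of norm $\geq 4$ survive, and these are ruled out by a short finite check.

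The step I expect to be the main obstacle is the coefficient case analysis at the core of (1) -- organizing the reductions so that every vector with interval support that is not $\pm[T]$ is exhibited as reducible, by selecting the right splitting for each pattern of signs and magnitudes of the $c_i$ -- and, to a lesser extent, the finite checks for the extremal configurations in (4). These are precisely the content of the relevant lemmas of \cite[Section 3]{Gre13}, which we follow.
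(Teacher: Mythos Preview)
The paper does not supply its own proof of this proposition: it is quoted verbatim from \cite[Corollary~3.5]{Gre13} and marked with \qed, so there is no in-paper argument to compare against. Your outline is a faithful sketch of Greene's approach in \cite[\S3]{Gre13}, and you correctly identify the two places where real work is needed --- the coefficient case analysis for (1) and the unbreakability direction of (4).

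Two small remarks. For (2), the cleanest route is the one you hint at last: the vertex basis $\{x_1,\dots,x_n\}$ itself consists of irreducibles (by (1), since each $\{x_i\}$ is an interval), and in any orthogonal splitting $L_k = M_1 \oplus M_2$ an irreducible must lie in a single summand; since $\langle x_i,x_{i+1}\rangle \neq 0$, all $x_i$ lie in the same summand. This avoids the claim that \emph{every} interval basis has connected pairing graph, which is less immediate. For the converse of (4), ``a short finite check'' undersells what is needed: ruling out a breaking $[T]=x+y$ when $T$ has a single high-norm vertex requires some control on $x$ and $y$ as lattice elements, not merely the norm identity, and this is where Greene's Lemmas~3.3--3.5 do the work you are deferring to.
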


\section{Input from Floer homology.}\label{sec:inputfromfloer}

In this section, we will use input from Floer homology to begin to define the $E_8$-changemaker property. 

\subsection{Generalizing the notion of a changemaker.}

While there are by now many flavors of Heegaard Floer homology, in this work we invoke only some of the original theory's most basic properties. Recall that to a rational homology 3-sphere $Y$ equipped with a spin$^\text{c}$ structure $\mathfrak t$, Ozv\'ath--Szab\'o associated a non-trivial $\mathbb F_2$-vector space $\widehat{HF}(Y, \mathfrak t)$ and a numerical invariant $d(Y, \mathfrak t) \in \mathbb Q$, called a \emph{correction term}. We denote by $\widehat{HF}(Y)$ the direct sum $$\bigoplus_{\mathfrak t \in \text{Spin}^\text{c}(Y)} \widehat{HF}(Y,\mathfrak t),$$ and we observe that $\dim \widehat{HF}(Y) \geq |\text{Spin}^\text{c}(Y)| = |H^2(Y)|$. We say that $Y$ is an \emph{L-space} if $\widehat{HF}(Y)$ has minimal rank, i.e. $\dim \widehat{HF}(Y) = |H^2(Y)|$, in which case $\dim \widehat{HF}(Y, \mathfrak t) = 1$ for all $\mathfrak t \in \text{Spin}^\text{c}(Y)$. 

To a 4-dimensional cobordism $W:Y_0 \to Y_1$ equipped with a spin$^\text{c}$ structure $\mathfrak s$ restricting to $\mathfrak t_0$ on $Y_0$ and $\mathfrak t_1$ on $Y_1$, the theory associates a homomorphism $\widehat{F}_{W;\mathfrak s}:\widehat{HF}(Y_0, \mathfrak t_0) \to \widehat{HF}(Y_1, \mathfrak t_1)$. Osv\'ath--Szab\'o showed that if $W$ is negative-definite, then 
\begin{equation}\label{eq:correctiontermbound}
    4d(Y_0, \mathfrak t_0) + c_1(\mathfrak s)^2 + b_2(W) \leq 4d(Y_1,\mathfrak t_1).
\end{equation}

Note that if $Y$ bounds a negative definite 4-manifold $X$, we may take $W := X \setminus B^4$ to be a negative definite cobordism from $S^3$ to $Y$ and obtain, upon noting that $d(S^3, \mathfrak t) = 0$ for the unique $\mathfrak t \in \text{Spin}^{\text{c}}(S^3)$, that
\begin{equation}\label{eq:correctiontermbound'}
    c_1(\mathfrak s)^2 + b_2(X) \leq 4d(Y,\mathfrak t),
\end{equation}
for every $\mathfrak s \in \text{Spin}^\text{c}(X)$ extending $\mathfrak t \in \text{Spin}^\text{c}(Y)$.

\begin{defin}\label{def:sharp}
A negative definite cobordism $W:Y_0 \to Y_1$ is \emph{sharp} if, for every $\mathfrak t_0 \in \text{Spin}^\text{c}(Y_0)$ and $\mathfrak t_1 \in \text{Spin}^\text{c}(Y_1)$, there exists some extension $\mathfrak s \in \text{Spin}^\text{c}(X)$ attaining equality in the bound (\ref{eq:correctiontermbound}). Furthermore, a negative definite $4$-manifold $X$ with connected boundary is said to be sharp if $W:= X\setminus B^4$ is.
\end{defin}

\begin{figure}
    \centering
    \begin{tikzpicture}[scale=.8]
    \begin{knot}[clip width = 5, flip crossing = 1, flip crossing = 3, flip crossing = 6]
    \strand[ultra thick] (0,0) to[curve through={(-1,1)..(0,2)..(1,1)}] (0,0);
    \strand[ultra thick] (1.3,0) to[curve through={(.3,1)..(1.3,2)..(2.3,1)}](1.1,0);
    \strand[ultra thick]  (2.6 + .5,1-0.86602540378) to[curve through={(2.6,0)..(1.6,1)..(2.6,2)}] (2.6 + 0.5, 1+0.86602540378);
    \node at (3.6,1){$\Large \cdots$};
    \strand[ultra thick] (3.6+.5, 1-0.86602540378) to[curve through={(4.6,0)..(5.6,1)..(4.6,2)}] (3.6+.5,1+0.86602540378);
    \strand[ultra thick] (5.9,0) to[curve through={(4.9,1)..(5.9,2)..(6.9,1)}](5.9,0);
    \node at (0,2.35) {$\Large -a_1$};
    \node at (1.3,2.35) {$\Large -a_2$};
    \node at (2.6, 2.35) {$\Large -a_3$};
    \node at (4.6, 2.35) {$\Large -a_{n-1}$};
    \node at (5.9,2.35){$\Large -a_n$};
    \end{knot}
    \end{tikzpicture}
    \caption{The canonical sharp $4$-manifold with boundary $L(p,q)$, $X(p,q)$.}
    \label{fig:Xpq}
\end{figure}
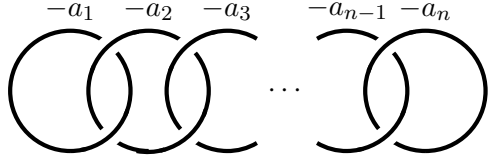

The atomic sharp $4$-manifold for us at present is the manifold $X(p,q)$ with boundary $L(p,q)$, which we now construct. Take the Hirzebruch-Jung continued fraction expansion of the reduced improper fraction $p/q = [a_1, \ldots, a_n]^-$; $X(p,q)$ is the $4$-manifold prescribed by the Kirby diagram in Figure 4. Observe that the intersection form $Q_{X(p,q)}$, which is presented by the linking matrix of the Kirby diagram in Figure 4, is isomorphic to the linear lattice $\Lambda(p,q)$. Ozsv\'ath--Szab\'o showed that $X(p,q)$ is sharp, hence the boundary connected sum $\natural_{i=1}^n X(p_i,q_i)$, whose intersection form is the orthogonal sum $\oplus_{i=1}^n \Lambda(p_i,q_i)$, is also sharp. 

Now, let $X$ be a negative definite $4$-manifold with $H_1(X)$ torsion-free and $b_2(X) = n$, and suppose that $K(p) \cong \partial X$ for some knot $K$ in an integer homology sphere L-space $Y$, e.g. $\mathcal P$, and some integer $p\geq 1$. Form the negative definite $4$-manifold $Z$ with $\partial Z \cong Y$ by identifying the orientation reversal of the trace cobordism of $p$-surgery on $K$, which we call $W$, and $X$ along their common boundary component $K(p) \cong \partial X$. Let $\tau \in Q_Z$ denote the class of the generator of $Q_W$, i.e. a Seifert surface of $K$ in $Y$ capped off with the core of the $2$-handle in $W$, under the image of inclusion into $Q_Z$. 

In \cite{Cau21}, the author showed the following.

\begin{lem}\label{lem:generalized_changemaker}
Let $K$ denote an L-space knot in an L-space integer homology sphere $Y$, and suppose that $K(p)$ bounds a smooth, negative definite 4-manifold $X$ with $H_1(X)$ torsion-free. Then
\begin{equation} \label{eq:generalized_changemaker}
    \mathfrak c^2 + (n+1)-4d(Y)\leq -8t_i(K)
\end{equation}
for all $|i|\leq p/2$ and $\mathfrak c \in \text{Char}(Q_Z)$ such that $\langle \mathfrak c, \tau\rangle + p\equiv 2i \ \mod 2p$. Furthermore, if $X$ is sharp, then for every $|i|\leq p/2$ there exists $\mathfrak c$ attaining equality in (\ref{eq:generalized_changemaker}).\qed
\end{lem}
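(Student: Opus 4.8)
The plan is to establish Lemma \ref{lem:generalized_changemaker} by running the standard ``glue the trace cobordism to a definite filling'' argument, specialized to the situation where the ``ambient'' homology sphere $Y$ is allowed to have positive $d$-invariant (hence the extra $-4d(Y)$ term), and where $K$ is an L-space knot so that the relevant correction-term data of $K(p)$ is governed by the torsion coefficients $t_i(K)$. First I would record the two ingredients that feed into the inequality. The first is the Ozsv\'ath--Szab\'o correction-term inequality (\ref{eq:correctiontermbound}) applied to the negative definite cobordism $Z \setminus B^4 \colon S^3 \to Y$ obtained by deleting a ball from $Z$; this gives, for each $\mathfrak s \in \text{Spin}^\text{c}(Z)$ restricting to some $\mathfrak t \in \text{Spin}^\text{c}(Y)$,
\[
c_1(\mathfrak s)^2 + b_2(Z) \leq 4 d(Y, \mathfrak t),
\]
and since $Y$ is an integer homology sphere L-space, $\text{Spin}^\text{c}(Y)$ is a singleton and $d(Y,\mathfrak t) = d(Y)$. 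Here $b_2(Z) = b_2(W) + b_2(X) = 1 + n$, so the left side is $c_1(\mathfrak s)^2 + n + 1$, and writing $\mathfrak c$ for the Poincar\'e dual of $c_1(\mathfrak s)$ inside the negative definite lattice $Q_Z$, the inequality reads $\mathfrak c^2 + (n+1) - 4d(Y) \leq 0$ — but this is not yet the sharp statement we want. The second ingredient is the computation, due to Ozsv\'ath--Szab\'o, of $d(K(p),\mathfrak t)$ in terms of the $d$-invariants of $p$-surgery on the unknot together with the torsion coefficients $t_i(K)$ of an L-space knot; concretely, under the standard affine identification $\text{Spin}^\text{c}(K(p)) \cong \mathbb Z / p\mathbb Z$ one has $d(K(p), i) = d(L(p,1), i) - 2 t_{\min(i, p-i)}(K)$ for an appropriately normalized labelling.

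Next I would splice these together along $K(p) = \partial X = \partial W$. Given $\mathfrak c \in \text{Char}(Q_Z)$ with $\langle \mathfrak c, \tau \rangle + p \equiv 2i \pmod{2p}$, restrict the corresponding $\text{spin}^\text{c}$ structure to the two pieces $W$ and $X$. On the $W$ side, $W$ is (the reversal of) the trace cobordism, so the relative first Chern class is controlled by the single generator $\tau$, and the congruence condition $\langle \mathfrak c, \tau\rangle + p \equiv 2i \pmod{2p}$ is exactly the bookkeeping that pins down which $\text{spin}^\text{c}$ structure $\mathfrak t_i \in \text{Spin}^\text{c}(K(p))$ we land in. Applying (\ref{eq:correctiontermbound}) to $W$ regarded as a negative definite cobordism $S^3 \to K(p)$ (after removing a ball), or equivalently running the adjunction/blow-up formula for the $2$-handle cobordism, yields a lower bound on $d(K(p), \mathfrak t_i)$ minus a contribution from $\mathfrak c|_W$, and then applying (\ref{eq:correctiontermbound}) to $X \colon K(p) \to Y$ with the $\text{spin}^\text{c}$ structure $\mathfrak c|_X$ propagates the inequality across to $d(Y)$. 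Chaining the two and substituting the formula $d(K(p),\mathfrak t_i) = (\text{unknot term}) - 2 t_i(K)$, the unknot terms cancel against the $b_2$ and the universal surgery-on-the-unknot contribution, leaving precisely
\[
\mathfrak c^2 + (n+1) - 4d(Y) \leq -8 t_i(K).
\]
The range $|i| \leq p/2$ is just the statement that every residue class mod $p$ is represented by such an $i$, and that $t_i(K) = t_{-i}(K) = t_{p - i}(K)$.

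For the sharpness clause, I would invoke the hypothesis that $X$ is sharp: by Definition \ref{def:sharp}, for each pair $(\mathfrak t_i, \mathfrak t_Y)$ there is an extension over $X$ attaining equality in (\ref{eq:correctiontermbound}) for the $X$-piece. On the $W$-side, the analogous equality is automatic because for an L-space knot the surgery formula is genuinely an equality — the large-surgery formula of Ozsv\'ath--Szab\'o computes $d(K(p),\mathfrak t_i)$ exactly, and one can always choose $\mathfrak c|_W$ realizing the relevant maximum of $\langle \mathfrak c, \tau\rangle$ subject to the mod $2p$ constraint. Assembling the two equalities and checking that the chosen extensions over $W$ and over $X$ agree on $K(p)$ (which is where one uses $H_1(X)$ torsion-free, so that a $\text{spin}^\text{c}$ structure on $K(p)$ extending over both glues to one on $Z$) produces the $\mathfrak c$ attaining equality in (\ref{eq:generalized_changemaker}). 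The main obstacle, and the step deserving genuine care rather than citation, is the bookkeeping of the affine $\text{spin}^\text{c}$ labellings: one must verify that the congruence $\langle \mathfrak c, \tau \rangle + p \equiv 2i \pmod{2p}$ is the correct translation of ``$\mathfrak c$ restricts to $\mathfrak t_i$ on $K(p)$'' under the identifications used in the surgery formula, including getting the signs and the shift by $p$ right (this is the same normalization subtlety that appears in Greene's proof of Theorem \ref{thm:changemakerembedding}), and that the torsion-coefficient normalization matches. Since the lemma is quoted from \cite{Cau21}, in the present paper I would simply cite it; were I proving it here I would isolate this labelling compatibility as its own sub-lemma and otherwise treat the cobordism-gluing and the two applications of (\ref{eq:correctiontermbound}) as routine.
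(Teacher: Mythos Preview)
The paper does not prove this lemma; it is cited from \cite{Cau21} and marked \qed, so there is no in-paper proof to compare against. Your outline is the standard argument and is correct in structure, with one concrete slip: you have the roles of $W$ and $X$ interchanged in the second paragraph. In the paper's setup $\partial X = K(p)$, so it is $X \setminus B^4$ that furnishes the negative definite cobordism $S^3 \to K(p)$, while $W$ is the orientation-reversed $2$-handle trace and is a cobordism between $K(p)$ and $Y$ --- it never touches $S^3$. Relatedly, the surgery formula you quote, $d(K(p),i) = d(L(p,1),i) - 2t_i(K)$, is the $Y = S^3$ version; for a general integer homology sphere L-space the exact large-surgery formula picks up an additive $d(Y)$, and that shift (rather than a second application of (\ref{eq:correctiontermbound}) to the piece ending at $Y$) is the source of the $-4d(Y)$ in (\ref{eq:generalized_changemaker}). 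With those two corrections your sketch is the argument in \cite{Cau21}.
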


Recall also the following theorem of Scaduto, anticipated by Fr\o yshov (\cite{Fro96}).

\begin{thm}[Corollary 1.4 of \cite{Sca18}]\label{thm:oneoftwo}
If $Z$ is a negative definite 4-manifold with no 2-torsion in its homology and $\partial Z = \mathcal P$, then $Q_Z \cong -\mathbb Z^{n} \ (n \geq 1)$ or  $Q_Z \cong -E_8 \oplus -\mathbb Z^n \ (n \geq 0)$.\qed    
\end{thm}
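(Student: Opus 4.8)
My plan is to reduce the statement to a classification question about positive-definite unimodular lattices, feed in the correction-term (equivalently Fr\o yshov) bound for $\mathcal P$, and close with lattice theory, following the strategy of \cite{Sca18}. Set $L = -Q_Z$, a positive-definite unimodular lattice of rank $n = b_2(Z)$; the hypothesis that $H_*(Z)$ has no $2$-torsion is a technical condition ensuring that the instanton-theoretic input below---in particular the analysis of reducible connections---applies without modification. Since $\mathcal P$ bounds the negative-definite $E_8$-plumbing, which is sharp, inequality (\ref{eq:correctiontermbound'}) applied to that plumbing forces $d(\mathcal P) = 2$ (equivalently, the Fr\o yshov invariant \cite{Fro96} satisfies $h(\mathcal P) = 1$); re-applying (\ref{eq:correctiontermbound'}) to $Z$ itself then gives $c_1(\mathfrak s)^2 + n \le 8$ for every $\mathfrak s \in \text{Spin}^\text{c}(Z)$, that is,
\[
\langle w, w\rangle \ge n - 8 \qquad \text{for every characteristic vector } w \in L .
\]
Also $n \ge 1$: a $\mathbb Z$-homology sphere bounding a $\mathbb Z$-homology ball would have vanishing $d$-invariant.

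Next I would run the lattice dichotomy. (Donaldson's diagonalization theorem does not apply directly, since $Z$ has nonempty boundary.) Characteristic vectors of a rank-$n$ positive-definite unimodular lattice have norm $\equiv n \pmod 8$, and by Elkies' theorem the minimal characteristic norm is at most $n$, with equality if and only if the lattice is $\mathbb Z^n$. Together with the displayed bound, the minimal characteristic norm of $L$ is therefore $n$ or $n-8$. If it is $n$, then $L \cong \mathbb Z^n$ and $Q_Z \cong -\mathbb Z^n$ (with $n \ge 1$), and we are done. So it remains to handle the case in which $L$ has minimal characteristic norm exactly $n-8$, where we want to conclude $L \cong E_8 \oplus \mathbb Z^{n-8}$.

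This last case is the main obstacle, because the numerical bound does not by itself isolate $E_8 \oplus \mathbb Z^{n-8}$: both $E_8 \oplus \mathbb Z^{n-8}$ and, for $n \ge 12$, the lattice $D_{12}^{+} \oplus \mathbb Z^{n-12}$ have minimal characteristic norm $n-8$, so something finer than the Fr\o yshov inequality is needed to rule out the latter (and any further sporadic examples). One could appeal to Elkies' classification of unimodular lattices with \emph{long shadow}---those whose minimal characteristic norm is $n-8$---which yields a short explicit list, and then exclude the non-$E_8$ entries by hand; but with no a priori bound on $n$ this still demands a uniform exclusion argument. The route I would take, which is the one carried out in \cite{Sca18}, is instead to sharpen the gauge-theoretic input: using that the instanton Floer homology $I_*(\mathcal P)$ has rank $2$ and is supported in known gradings (Fintushel--Stern), one studies the moduli space of finite-energy anti-self-dual connections on $Z$ with a cylindrical end, whose reducible locus is indexed by the characteristic vectors of $L$ of minimal norm $n-8$ and whose noncompact ends are governed by $I_*(\mathcal P)$; counting its boundary produces a congruence among these minimal characteristic vectors that $E_8 \oplus \mathbb Z^{n-8}$ satisfies but $D_{12}^{+} \oplus \mathbb Z^{n-12}$ violates. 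Setting up this count correctly---and verifying that it forces $L$ to split off an orthogonal $E_8$ summand---is where essentially all of the work lies, and the no-$2$-torsion hypothesis is exactly what keeps the relevant reducible loci unobstructed.

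Finally I would assemble the conclusion in the remaining case. Once $L \cong E_8 \oplus L'$ with $L'$ positive-definite unimodular of rank $n-8$, observe that since $E_8$ is even every characteristic vector of $E_8 \oplus L'$ has the form $w_1 \oplus w_2$ with $w_1 \in 2E_8$ and $w_2$ characteristic in $L'$; hence the minimal characteristic norm of $L'$ equals that of $L$, namely $n - 8 = \rk(L')$. Elkies' theorem then gives $L' \cong \mathbb Z^{n-8}$, so $Q_Z \cong -E_8 \oplus -\mathbb Z^{n-8}$, completing the dichotomy.
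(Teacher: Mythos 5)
This statement is imported verbatim as Corollary~1.4 of \cite{Sca18} (anticipated by Fr\o yshov) and the paper supplies no proof of its own, so the relevant comparison is with Scaduto's argument, which your sketch outlines faithfully: the correction-term/Fr\o yshov bound plus Elkies' theorem handles the diagonal case, and you correctly identify that the bound alone cannot separate $E_8\oplus\mathbb Z^{n-8}$ from the other long-shadow lattices such as $D_{12}^{+}\oplus\mathbb Z^{n-12}$, so the essential content is the instanton-moduli-space count that you (like the paper) defer to \cite{Sca18}. Your reduction of the final case to Elkies via the evenness of $E_8$ is correct, so the proposal is a sound outline taking essentially the same route, with the understanding that the gauge-theoretic core remains a citation rather than a proof.
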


Supposing now that $Y=\mathcal P$, that $p\geq 2g(K)$, and that $X$ is sharp, then the intersection form of the $4$-manifold $Z = W \cup X$ is either $-\mathbb Z^{n + 1}$ or $-E_8 \oplus -\mathbb Z^{n-7}$. Then, by (\ref{eq:generalized_changemaker}) and the observation that $t_i(K) = 0$ if and only if $i \geq g(K)$, it follows that $Q_Z\not\cong -\mathbb Z^{n+1}$ since there is no vector in $\Char(-\mathbb Z^{n + 1})$ attaining the equality $\mathfrak c^2=-n+7$. We conclude, then, that $Q_Z \cong -E_8\oplus-\mathbb Z^{n-7}$. We write $\tau = (s,\sigma)\in -E_8\oplus -\mathbb Z^{n-7}$ for the image of the generator of $Q_W$ under inclusion into $H_2(Z)$ where $s \in -E_8$ and $\sigma \in -\mathbb Z^{n-7}$. Noting that $\text{short}(-E_8 \oplus -\mathbb Z^{n-7}) = \{0\} \oplus \{\pm 1\}^{n-7}$, and that $|\langle \mathfrak c, \sigma\rangle|\leq |\sigma|_1 \leq |\sigma| \leq p$, the sharpness of $X$ implies the equality of the following two sets
\begin{equation}\label{eq:changemaker_tail_truncated}
\{\langle(0,\mathfrak c'), (s, \sigma)\rangle \ \colon \ \mathfrak c' \in \{\pm 1\}^{n-7}\} = PI(2g(K) - p, 2g(K)+p).
\end{equation}
It follows from (\ref{eq:changemaker_tail_truncated}) that $\sigma$, after a suitable isometry of $-E_8 \oplus -\mathbb Z^{n-7}$ putting $\sigma$ in the first orthant of $-\mathbb Z^{n-7}$, is a changemaker, and that $2g(K) = p-c(\tau) = p - |\sigma|_1$. 

We may deduce restrictions on $s$ by considering (\ref{eq:generalized_changemaker}) for characteristic vectors in the set Short$(-E_8 \oplus -\mathbb Z^{n+1})$. We assert the following lemma now, but postpone its proof until after we have laid some groundwork for working in the $E_8$ lattice in the next section.

\begin{lem}\label{lem:Shortsmallpairings}
If $\tau = (s, \sigma)$ arises as above, and $\langle s, s\rangle \leq -4$, then $|\langle \mathfrak c, \tau \rangle|\leq p$ for all $\mathfrak c \in \text{Short}(-E_8 \oplus -\mathbb Z^{n-7})$. Moreover, for any $\mathfrak c \in \text{Short}(-E_8 \oplus -\mathbb Z^{n-7})$ and $|i|\leq p/2$, if $\langle \mathfrak c, \tau\rangle + p \equiv 2i \ \mod 2p$, then $\langle \mathfrak c, \tau \rangle + p = 2i$.
\end{lem}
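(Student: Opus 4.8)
The plan is to get the inequality $|\langle\mathfrak c,\tau\rangle|\le p$ from a reflection argument against $\tau$, and then to extract the second assertion from this bound by a parity count. First I would record what is available. Write $\tau=(s,\sigma)$. As set up before the statement, $\tau$ is the image of a generator of $Q_W\cong(-p)$, so $\langle\tau,\tau\rangle=-p$; and $-E_8\oplus-\mathbb Z^{n-7}$ has rank $n+1$, whence $m(-E_8\oplus-\mathbb Z^{n-7})=-(n-7)$ and the nearly short vectors are exactly the characteristic vectors of norm $-(n+1)$. From $\Char(-E_8)=2(-E_8)$ and the fact that $\Char(-\mathbb Z^{n-7})$ is the set of odd vectors, a short vector of $-E_8\oplus-\mathbb Z^{n-7}$ has vanishing $-E_8$-component, and for a nearly short $\mathfrak c=(\mathfrak c_s,\mathfrak c_\sigma)$ one has $\langle\mathfrak c_s,\mathfrak c_s\rangle\in\{0,-8\}$: either $\mathfrak c_s=0$ while $\mathfrak c_\sigma$ has a single entry $\pm3$ and the rest $\pm1$, or $\mathfrak c_s$ is twice a root while $\mathfrak c_\sigma$ has all entries $\pm1$. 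This is precisely the enumeration made possible by the $E_8$ groundwork laid earlier in the section.

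Next I would fix a nearly short $\mathfrak c$ and consider the vectors $\mathfrak c\pm2\tau$. These are characteristic, being translates of $\mathfrak c$ by an element of $2(-E_8\oplus-\mathbb Z^{n-7})$. Their $-E_8$-components $\mathfrak c_s\pm2s$ are nonzero: otherwise $\mathfrak c_s=\mp2s$, forcing $\langle\mathfrak c_s,\mathfrak c_s\rangle=4\langle s,s\rangle\le-16$, contrary to $\langle\mathfrak c_s,\mathfrak c_s\rangle\in\{0,-8\}$. Hence $\mathfrak c\pm2\tau$ is not short; since every characteristic vector of $-E_8\oplus-\mathbb Z^{n-7}$ has norm $\equiv-(n+1)\bmod 8$, the norms available below $m=-(n-7)$ begin at $-(n+1)$, so $\langle\mathfrak c\pm2\tau,\mathfrak c\pm2\tau\rangle\le-(n+1)$. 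Since $\langle\mathfrak c+2\tau,\mathfrak c+2\tau\rangle=-(n+1)+4\langle\mathfrak c,\tau\rangle-4p$, this bound yields $\langle\mathfrak c,\tau\rangle\le p$, and the parallel expansion of $\langle\mathfrak c-2\tau,\mathfrak c-2\tau\rangle$ yields $\langle\mathfrak c,\tau\rangle\ge-p$. That settles the first assertion.

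For the second assertion I would argue as follows. Because $\mathfrak c$ is characteristic, $\langle\mathfrak c,\tau\rangle\equiv\langle\tau,\tau\rangle=-p\bmod 2$, so $\langle\mathfrak c,\tau\rangle+p$ is even and, by the first part, lies in $[0,2p]$; as $|i|\le p/2$ forces $2i\in[-p,p]$, the integer $(\langle\mathfrak c,\tau\rangle+p)-2i$ is a multiple of $2p$ lying in $[-p,3p]$, hence is $0$ or $2p$, and with the normalization of the index $i$ in force in Lemma~\ref{lem:generalized_changemaker} it must be $0$, i.e.\ $\langle\mathfrak c,\tau\rangle+p=2i$.

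The step I expect to be the main obstacle is the one in the second paragraph: recognizing that the hypothesis $\langle s,s\rangle\le-4$ is precisely the statement that $s$ is neither zero nor a root of $E_8$, and that this is exactly what keeps $\mathfrak c\pm2\tau$ out of the short locus --- after which the rigidity of characteristic norms modulo $8$ supplies the required drop of $8$ at no extra cost. Everything else is bookkeeping, which is why the classification of short and nearly short characteristic vectors of $-E_8\oplus-\mathbb Z^{n-7}$ obtained earlier in the section is the essential input.
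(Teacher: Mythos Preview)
Your argument for the first assertion is correct and is a genuinely different route from the paper's. The paper places $s$ in the fundamental Weyl chamber, writes $C(\tau)$ explicitly as $\langle 2R_{120},s\rangle+|\sigma|_1$ in the dual-basis coordinates $s^*_i$, and then compares this against $p=|\tau|$ using the diagonal entries of $A^{-1}$ to get a coordinate-by-coordinate inequality forcing $s^*_i=0$ for $i\le7$ and $s^*_8\le1$ whenever $C(\tau)>p$. Your reflection argument via $\mathfrak c\pm2\tau$ is more conceptual: it uses the hypothesis $\langle s,s\rangle\le-4$ exactly once, to rule out $\mathfrak c_s=\mp2s$, and then lets the $\bmod\,8$ rigidity of characteristic norms do the rest. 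What the paper's approach buys is that it fits seamlessly into the dual-basis machinery already set up for Lemma~\ref{lem:hardeightcombinatorial}; what yours buys is independence from that machinery and a cleaner explanation of why the threshold is precisely $\langle s,s\rangle\le-4$.

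One caution on the second assertion: your observation that the difference $(\langle\mathfrak c,\tau\rangle+p)-2i$ is a multiple of $2p$ in $[-p,3p]$, hence $0$ or $2p$, is correct, but the appeal to a ``normalization of the index $i$'' in Lemma~\ref{lem:generalized_changemaker} does not by itself exclude $2p$. The case $2p$ corresponds to $\langle\mathfrak c,\tau\rangle\in(0,p]$ and $i\in[-p/2,0]$, which is not ruled out \emph{a priori}. The paper's proof does not address this either---it stops at $C(\tau)\le p$---and the downstream application (establishing \eqref{eq:hard_eight_inclusion}) is unaffected because one can always replace $\mathfrak c$ by $-\mathfrak c$ to land in the range where the difference is $0$. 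So this is a cosmetic gap shared with the paper, not a defect in your approach.
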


\begin{rmk}\label{ref:rmkaboutsmalls}
If $\langle s, s \rangle \geq -2$, then either $s = 0$ or $\langle s, s\rangle = -2$. We will see upon explicit inspection in Section \ref{sec:workingin}, after we have a handle on working with $E_8$, that $\tau$ is in fact an $E_8$-changemaker regardless of whether or not $\langle s, s\rangle \leq -4$. 
\end{rmk}

Let $g^{(1)}(K)=\min\{i\geq 0\ \colon \ t_i(K)=1\}$. Then, assuming $X$ is sharp, Lemmas \ref{lem:generalized_changemaker} and \ref{lem:Shortsmallpairings} together imply that if $\langle s, s \rangle \leq -4$, then $2g^{(1)} = p - C(\tau)$, and
\begin{equation}\label{eq:hard_eight_inclusion}
PI(-C(\tau), -c(\tau) -2) \subset \{\langle \mathfrak c, \tau \rangle \colon \mathfrak c \in \text{Short}(-E_8 \oplus -\mathbb Z^{n-7})\},
\end{equation}
i.e. for any $j$ in the parity interval $PI(c(\tau) + 2, C(\tau))$, there is some $\mathfrak c \in \text{Char}(-E_8\oplus -\mathbb Z^{n-7})$ with norm $-(n+1)$ such that $\langle \mathfrak c, \tau\rangle = j$. 

In summary, we arrive at the definition of an $E_8$-changemaker as in Definition \ref{def:e8changemaker}, and obtain the following proposition.

\begin{prop}\label{prop:E8changemakerembedding}
For $K \subset \mathcal P$ an L-space knot and $p \geq 2g(K)$, if $K(p)$ bounds a sharp $4$-manifold $X$ with $H_1(X)$ torsion-free, then $Q_X$ embeds in the orthogonal complement to an $E_8$-changemaker $\tau$.
\end{prop}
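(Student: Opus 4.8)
The plan is to assemble the ingredients built up in this section and verify Definition \ref{def:e8changemaker} directly. Fix an L-space knot $K \subset \mathcal{P}$ with $p \geq 2g(K)$, and suppose $K(p) \cong \partial X$ for a sharp negative definite $X$ with $H_1(X)$ torsion-free and $b_2(X) = n$. First I would form the closed negative definite $4$-manifold $Z = W \cup_{K(p)} X$, where $W$ is the orientation reversal of the trace of $p$-surgery on $K$, so that $\partial Z \cong \mathcal{P}$, $Q_W = (\tau)$ with $\langle \tau, \tau\rangle = -p$, and $\tau$ is orthogonal to the image of $Q_X$ in $Q_Z$. Since $K(p)$ is a rational homology sphere, a Mayer--Vietoris computation shows that $Q_W \oplus Q_X$ sits in $Q_Z$ with finite index, and hence that $Q_X$ embeds into the orthogonal complement $(\tau)^\perp$ computed inside $Q_Z$. (A discriminant count, using unimodularity of $Q_Z$ and $|H_1(K(p))| = p$, in fact upgrades this to an isomorphism $Q_X \cong (\tau)^\perp$, but we only claim the embedding.)

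Next I would pin down $Q_Z$. By Theorem \ref{thm:oneoftwo}, $Q_Z \cong -\mathbb{Z}^{n+1}$ or $Q_Z \cong -E_8 \oplus -\mathbb{Z}^{n-7}$. The first possibility is excluded exactly as in the discussion preceding the proposition: because $p \geq 2g(K)$, the index $i = g(K)$ lies in $|i| \leq p/2$ and has $t_i(K) = 0$, so the equality clause of Lemma \ref{lem:generalized_changemaker} (available since $X$ is sharp) would produce a characteristic vector of $Q_Z$ of norm $-n + 7 = m(-\mathbb{Z}^{n+1}) + 8$, which exceeds the largest characteristic norm $m(-\mathbb{Z}^{n+1}) = -(n+1)$ and is therefore absurd. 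Hence $Q_Z \cong -E_8 \oplus -\mathbb{Z}^{n-7}$, and I write $\tau = (s, \sigma)$ with $s \in -E_8$ and $\sigma \in -\mathbb{Z}^{n-7}$.

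It then remains to verify the two clauses of Definition \ref{def:e8changemaker} for $\tau$. Since $\mathrm{short}(-E_8 \oplus -\mathbb{Z}^{n-7}) = \{0\} \oplus \{\pm 1\}^{n-7}$ and $|\sigma|_1 \leq |\sigma| \leq p$, feeding these characteristic vectors into the sharpness of $X$ yields the set equality recorded in (\ref{eq:changemaker_tail_truncated}); after applying an isometry of $-E_8 \oplus -\mathbb{Z}^{n-7}$ that places $\sigma$ in the first orthant with non-decreasing coordinates, this says precisely that $\sigma$ is a changemaker, that $c(\tau) = |\sigma|_1$, and that $2g(K) = p - |\sigma|_1$ — which is clause (1). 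For clause (2), assume first that $\langle s, s\rangle \leq -4$. Then Lemma \ref{lem:Shortsmallpairings} guarantees that every nearly short characteristic vector $\mathfrak{c}$ has $|\langle \mathfrak{c}, \tau\rangle| \leq p$, with $\langle \mathfrak{c}, \tau\rangle + p = 2i$ (not merely $\equiv 2i \bmod 2p$) whenever $|i| \leq p/2$; running the equality clause of Lemma \ref{lem:generalized_changemaker} at the indices $i$ with $t_i(K) = 1$ (which lie in $|i| \leq p/2$ whenever they exist) then forces every element of $PI(c(\tau) + 2, C(\tau))$ to occur as $\langle \mathfrak{c}, \tau\rangle$ for some nearly short $\mathfrak{c}$, which is the inclusion (\ref{eq:hard_eight_inclusion}), equivalently clause (2). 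So $\tau$ is an $E_8$-changemaker and $Q_X$ embeds in $(\tau)^\perp$, as claimed.

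The one case the above does not reach is $\langle s, s\rangle \in \{0, -2\}$, the only remaining possibility by definiteness of $E_8$. As flagged in Remark \ref{ref:rmkaboutsmalls}, I would settle it by direct computation once the tools of Section \ref{sec:workingin} for enumerating short and nearly short vectors of $-E_8$ are in hand: there are only finitely many $s \in -E_8$ with $\langle s, s\rangle \in \{0, -2\}$ up to the Weyl group, and for each one the nearly short pairings with $\tau$ can be listed explicitly and checked against clause (2). This small-norm case, together with the proof of Lemma \ref{lem:Shortsmallpairings} itself, is the genuine obstacle: it is the step where one must compute concretely inside the $E_8$ lattice rather than argue abstractly, which is exactly why both are postponed to the next section.
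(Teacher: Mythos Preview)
Your proposal is correct and follows essentially the same approach as the paper: form $Z$, invoke Theorem \ref{thm:oneoftwo} to identify $Q_Z$, rule out $-\mathbb{Z}^{n+1}$ using the sharpness equality at $i = g(K)$, verify clause (1) via (\ref{eq:changemaker_tail_truncated}), and verify clause (2) for $\langle s, s\rangle \leq -4$ via Lemma \ref{lem:Shortsmallpairings} combined with the sharpness equality at the indices with $t_i(K) = 1$.

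The one place where the paper differs from your plan is the treatment of the small-norm cases. You propose to defer both $s = 0$ and $\langle s, s\rangle = -2$ to explicit $E_8$ computations in Section \ref{sec:workingin}. The paper instead handles $s = 0$ inline, with a purely $\mathbb{Z}^{n+1}$ argument that requires no $E_8$ machinery at all: when $s = 0$, the nearly short pairings come only from $\mathrm{Short}(-\mathbb{Z}^{n-7})$, whose maximum against $\sigma$ is $|\sigma|_1 + 2\sigma_n$; one then uses that the truncated vector $\sigma' = (\sigma_0,\ldots,\sigma_{n-1})$ is itself a changemaker to realize every value in $PI(3\sigma_n - |\sigma'|_1,\, 3\sigma_n + |\sigma'|_1)$, and the changemaker inequality $\sigma_n \leq |\sigma'|_1 + 1$ shows this interval reaches down to $|\sigma|_1 + 2$. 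Only the case $\langle s, s\rangle = -2$ is genuinely deferred (to Proposition \ref{prop:s2e8changemaker}), where the Weyl group is used to reduce to a single root. Your plan would work too, but it slightly overstates what is needed for $s = 0$.
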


\begin{proof}
    Let $\tau = (s, \sigma) \in -E_8 \oplus -\mathbb Z^{n+1}$, and let $\{d_0, \ldots, d_n\}$ be an orthonormal basis for $-\mathbb Z^{n+1}$.
    
    The discussion following Lemma \ref{lem:Shortsmallpairings} proves the proposition in the case that $\langle s, s\rangle \leq -4$. 
    
    Note that if $s = 0$, then 
    $$\{\langle \mathfrak c, \tau \rangle \colon \text{Short}(-E_8 \oplus -\mathbb Z^{n-7})\} = \{\langle \mathfrak c, \sigma\rangle \colon \mathfrak c \in \text{Short}(-\mathbb Z^{n+1})\}.$$
    Observe that 
    $$\max\{\langle \mathfrak c, \sigma\rangle \colon \mathfrak c \in \text{Short}(-\mathbb Z^{n-7})\} = |\sigma|_1 + 2 \sigma_n.$$
    We will now show that $PI(|\sigma|_1 + 2, |\sigma|_1 + 2\sigma_{}) \subset \{\langle \mathfrak c, \sigma\rangle \colon \mathfrak c \in \text{Short}(-\mathbb Z^{n+1})\}$. Let $\sigma' = (\sigma_0, \ldots, \sigma_{n-1}) \in - \mathbb Z^{n+1}$, and note that $\sigma'$ is a changemaker, thus, for every $0\leq k \leq |\sigma'|_1$, there is some $A \subset \{0, \ldots, n-1\}$ such that $\sum_{i \in A} \sigma_i = k$. Therefore, $PI(3\sigma_n - |\sigma'|_1, 3\sigma_n + |\sigma'|_1) \subset \{\langle \mathfrak c, \sigma\rangle \colon \mathfrak c \in \text{Short}(-\mathbb Z^{n-7})\}$. Observe now that
    \begin{equation*}
    \begin{split}
        3\sigma_n - |\sigma'|_1 & \leq \sigma_n + |\sigma|_1 + 1 - |\sigma'|_1\\
        & \leq \sigma_n- (\sigma_0 + \ldots + \sigma_{n-1}) + |\sigma|_1 + 1\\
        &\leq 1 + |\sigma|_1 + 1,
    \end{split}
    \end{equation*}
    and the proof in the case that $s = 0$ is complete.

    We complete the proof of the proposition in Proposition \ref{prop:s2e8changemaker} once we have a handle on working in $E_8$.
\end{proof}

In the front matter, we have chosen to adhere to the convention that the lens space $L(p,q)$ is $-p/q$-surgery on the unknot in $S^3$. This convention precipitates a preference for working with negative-definite $4$-manifolds. However, as the arguments henceforth are primarily lattice-theoretic and combinatorial in nature, we ask the reader to keep in mind that the analysis on the negative-definite $4$-manifolds arising from our topological considerations may be carried out on the corresponding positive-definite lattices by means of reversing the orientation of every $4$-manifold we have constructed so far. In what follows, every lattice will be positive definite, and in particular we will abuse notation and write $\Lambda(p,q)$ to refer to the intersection form of $-X(p,q)$. 

\section{Working in the $E_8$ lattice.}\label{sec:workingin}
The $E_8$ integer lattice is the unique even, unimodular lattice of rank 8. It is realized by restricting the dot product on $\mathbb R^8$ to the subset \begin{equation} \label{eq:E8realization}
    \Big\{x \in \mathbb Z^8 \cup \Big(\mathbb Z + \frac 1 2\Big)^8|\sum_{i=0}^8 x_i \equiv 0\ \mod 2\Big\}.
\end{equation}It admits the following Gram matrix with respect to the basis specified in Figure 5, which we fix as the preferred basis throughout this section:
\[ A=
\begin{bmatrix}
2&-1&-1&0&-1&0&0&0\\
-1&2&0&0&0&0&0&0\\
-1&0&2&-1&0&0&0&0\\
0&0&-1&2&0&0&0&0\\
-1&0&0&0&2&-1&0&0\\
0&0&0&0&-1&2&-1&0\\
0&0&0&0&0&-1&2&-1\\
0&0&0&0&0&0&-1&2
\end{bmatrix}.
\]
\begin{figure}
    \centering
\begin{tikzpicture}[main_node/.style={circle,fill=blue!20,draw,minimum size=1em,inner sep=.8pt]}]

    \node[main_node] (4) at (0,0) {$e_4$};
    \node[main_node] (3) at (1, 0)  {$e_3$};
    \node[main_node] (1) at (2, 0) {$e_1$};
    \node[main_node] (5) at (3, 0) {$e_5$};
    \node[main_node] (6) at (4, 0) {$e_6$};
    \node[main_node] (7) at (5, 0) {$e_7$};
    \node[main_node] (8) at (6, 0) {$e_8$};
    \node[main_node] (2) at (2, -1.3) {$e_2$};

    \draw (4) -- (3) -- (1) -- (5) -- (6) -- (7) -- (8);
    \draw (1) -- (2);
\end{tikzpicture}
    \caption{A basis of simple roots for the $E_8$ lattice.}
    \label{fig:E_8Dynkin}
\end{figure}
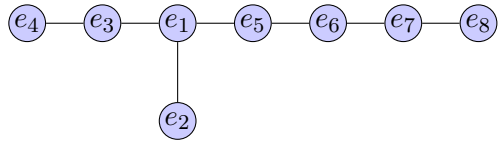

Many low-dimensional topologists are at least familiar with the $E_8$ lattice, though we suspect many are unfamiliar with performing explicit computations in in this lattice. In what follows, we outline a perspective on $E_8$ that significantly eases our computational load. We are grateful to Daniel Allcock and Richard Borcherds for sharing the following perspective.

\subsection{The fundamental Weyl chamber of $E_8$.}
A \textit{root} is any vector $v\in E_8$ with $\langle v, v \rangle = 2$, of which $E_8$ is known to have 240. We refer to the basis elements specified in Figure 5 as \textit{simple roots}. Let $R$ be one of the 240 roots in $E_8$ and write $R = \sum_{i=1}^8a_i e_i$. Then either $a_i \geq 0$ for all $i \in \{1, \ldots, 8\}$ or $a_i \leq 0$ for all $i \in \{1, \ldots, 8\}$, and $R$ is said to be a \emph{positive root} or \emph{negative root} accordingly. Denote by $\mathcal R_+$ the set of positive roots. The set of positive roots admits a partial order: $\sum_{i=1}^8a_ie_i \leq \sum_{i=1}^8b_ie_i$ if $a_i\leq b_i$ for all $i \in \{1,\ldots, 8\}$. 

For the standard embedding $E_8\subset \mathbb R^8$ specified in (\ref{eq:E8realization}), the \textit{Weyl group} of $E_8$ is the group of linear transformations of $\mathbb R^8$ generated by the reflections through each of the 120 hyperplanes in $\mathbb R^8$ orthogonal to one of the 120 positive roots in $E_8$. The \textit{fundamental Weyl chamber}---a fundamental domain for the action of the Weyl group on $E_8$---is the subset

\[
\mathcal C = \{v\in E_8|\langle v, e_i\rangle \geq 0 \text{ for all } 1\leq i \leq 8\}.
\]

As $E_8$ is unimodular, there is a natural isometry $E_8\overset \sim \to E_8^*: = \{v^* \in \text{Hom}(\mathbb R^8,\mathbb R)\colon v^*(x) \in \mathbb Z \text{ for all }x \in E_8\}$ given by $v \mapsto \langle v, -\rangle$. Let $\{e_1^*,\ldots,e_8^*\}$ denote the basis of $E_8^*$ such that $e_i^*(e_j) = \delta_{ij}$, and for a vector $v\in E_8$ let $v^*_i= e_i^*(v)$ for $i \in \{1,\ldots, 8\}$. Then, $v^* = \sum_{i = 1}^8v^*_ie_i^*$. A vector $s \in E_8$ is in $\mathcal C$ if and only if $s^*_i\geq 0$ for all $i \in \{1,\ldots, 8\}$.

Recall that if $p\geq 2g(K)$ and $K(p)$ is an L-space bounding a sharp 4-manifold $X$ with $b_2(X) = n \geq 7$ and $H_1(X)$ torsion-free, then there exists a full rank embedding $Q_X \subset (\tau)^\perp$ for some $\tau \in E_8 \oplus \mathbb Z^{n-7}$, and $\tau$ is an $E_8$-changemaker if $\langle s, s\rangle \geq 4$. Let us now fix a possibly empty orthonormal basis $\{d_0, \ldots, d_{n-8}\}$ for $\mathbb Z^{n-7}$. By applying an appropriate isometry of $E_8 \oplus \mathbb Z^{n-7}$, we may arrange that $\tau = (s, \sigma) \in E_8 \oplus \mathbb Z^{n-7}$ has $s \in \mathcal C$ and $0\leq \sigma_0\leq \ldots \leq \sigma_{n-8}$. 

By restricting our attention to the fundamental Weyl chamber, we may readily deduce some combinatorial constraints on $s$ imposed by $(\ref{eq:hard_eight_inclusion})$. The set $\text{Short}(E_8\oplus \mathbb Z^{n -7})$ partitions naturally as $\big(\text{Short}(E_8) \oplus \text{short}(\mathbb Z^{n-7})\big) \coprod \big( \text{short}(E_8) \oplus \text{Short}(\mathbb Z^{n-7})\big)$. Explicitly, we have a decomposition of $\text{Short}(E_8 \oplus \mathbb Z^{n-7})$ into: $$\mathfrak C_1:=\text{Short}(E_8) \oplus \text{short}(\mathbb Z^{n-7}) =  \{(2R, \chi) \ \colon \ |R| = 2, \chi \in \{\pm 1\}^{n-7}\},\ \text{and}$$  $$\mathfrak C_2:= \text{short}(E_8) \oplus \text{Short}(\mathbb Z^{n-7}) = \{(0,\chi + 2\langle\chi, d_i\rangle d_i): \chi \in \{\pm 1\}^{n-7}\},$$
or, prosaically, $\mathfrak C_2$ is the set of vectors where the $E_8$ coordinates are all 0, and all but one of the $\mathbb Z^{n-7}$ coordinates are $\pm 1$, and the remaining coordinate is $\pm 3$. Notice that if $|\sigma|_1 = 0$, then $2 \in \{\langle \mathfrak c, \tau \rangle \colon \mathfrak c \in \text{Short}(E_8)\}$, thus there is some root $R \in E_8$ with $\langle R, \tau\rangle = 1$.

\begin{proof}[Proof of Theorem \ref{thm:hardeightembedding}.]
Suppose that $p\geq 2g(K)$, and that $K(p)$ bounds a sharp $4$-manifold $X$ with no torsion in $H_1(X)$. Then, by Proposition \ref{prop:E8changemakerembedding}, $Q_X$ embeds in the orthogonal complement to some $E_8$-changemaker $\tau = (s, \sigma) \in E_8 \oplus \mathbb Z^{n+1}$ $(n \geq -1)$. If $|\sigma|_1 \geq 1$, then $\sigma_i = 1$ for some $0 \leq i \leq n$; let $w = d_i$. If $|\sigma|_1 = 0$, then there is some root $R \in E_8\oplus \{(0)\}$ such that $\langle R, \tau \rangle = 1$; let $w = R$. Following \cite[Lemma 3.6]{Gre13}, consider the map $\varphi : E_8 \oplus \mathbb Z^{n+1} \to \mathbb Z/|\tau| \mathbb Z$ given by $\varphi(v) = \langle v, \tau\rangle \ \mod |\tau|$, which is onto since $\langle w, \tau\rangle = 1$. Therefore, the lattice $\mathfrak K = \ker(\varphi)$ has discriminant $[E_8 \oplus \mathbb Z^{n+1} \colon \mathfrak K]^2 = |\tau|^2$. On the other hand, since $\mathfrak K = (\tau)^\perp \oplus (\tau)$, we have that $\text{disc}(\tau)^\perp = \text{disc}(\mathfrak K)/|\tau| = |\tau|$. It follows $Q_X \cong (\tau)^\perp$ since $Q_X$ is a full rank sublattice of $(\tau)^\perp$ and $\text{disc}(Q_X) = \text{disc}(\tau)^\perp = p$.
\end{proof}

In order to prove our main theorems, and in keeping with the conventions of the study of changemaker lattices established in \cite{Gre13}, we seek to understand exactly when the orthogonal complement to an $E_8$-changemaker $\tau = (s, \sigma)\in E_8 \oplus \mathbb Z^{n+1}$ for $n \geq 2$ is a linear lattice. In the next section, we will see how the partial order on $\mathcal R_+$ readily leads to desirable constraints on $s$ for $\tau = (s, \sigma)\in E_8 \oplus \mathbb Z^{n+1}$ an $E_8$-changemaker and informs an algorithm for producing the \emph{standard basis} of $(\tau)^\perp$.

\subsection{$E_8$-changemaker lattices.}

Ultimately, we are interested in constructing a basis $\mathcal S$ for the orthogonal complement of an $E_8$-changemaker $\tau = (s, \sigma) \in E_8\oplus \mathbb Z^{n+1}$. We have already established that $\sigma$ is a changemaker with $0 \leq \sigma_0 \leq \ldots \leq \sigma_n$, and in fact $\sigma_0 = 1$ if $(\tau)^\perp \cong \oplus_{i=1}^k \Lambda(p_i, q_i)$ in light of Theorem \ref{thm:hardeightembedding} since no $\Lambda(p_i,q_i)$ contains a $\mathbb Z$ summand. The upshot of the fact that $\sigma$ is a changemaker is that we may construct a basis for $(\tau)^\perp$ that includes the $n$ \emph{changemaker basis} elements of $(\sigma)^\perp \subset \mathbb Z^{n+1}$, which we denote by $\mathcal V$. The task at hand is now to extend this set of vectors to a basis for all of $(\tau)^\perp$, which will consist of the vectors in $\mathcal V$ together with $8$ additional vectors, each one corresponding a unique simple root of $E_8$; we denote this set of $8$ vectors by $\mathcal W$. The following lemma establishes constraints on $s$ imposed by (\ref{eq:hard_eight_inclusion}) that allow us to construct $\mathcal W$ favorably. 

\begin{figure}
    \centering
\includegraphics[height=5.5in]{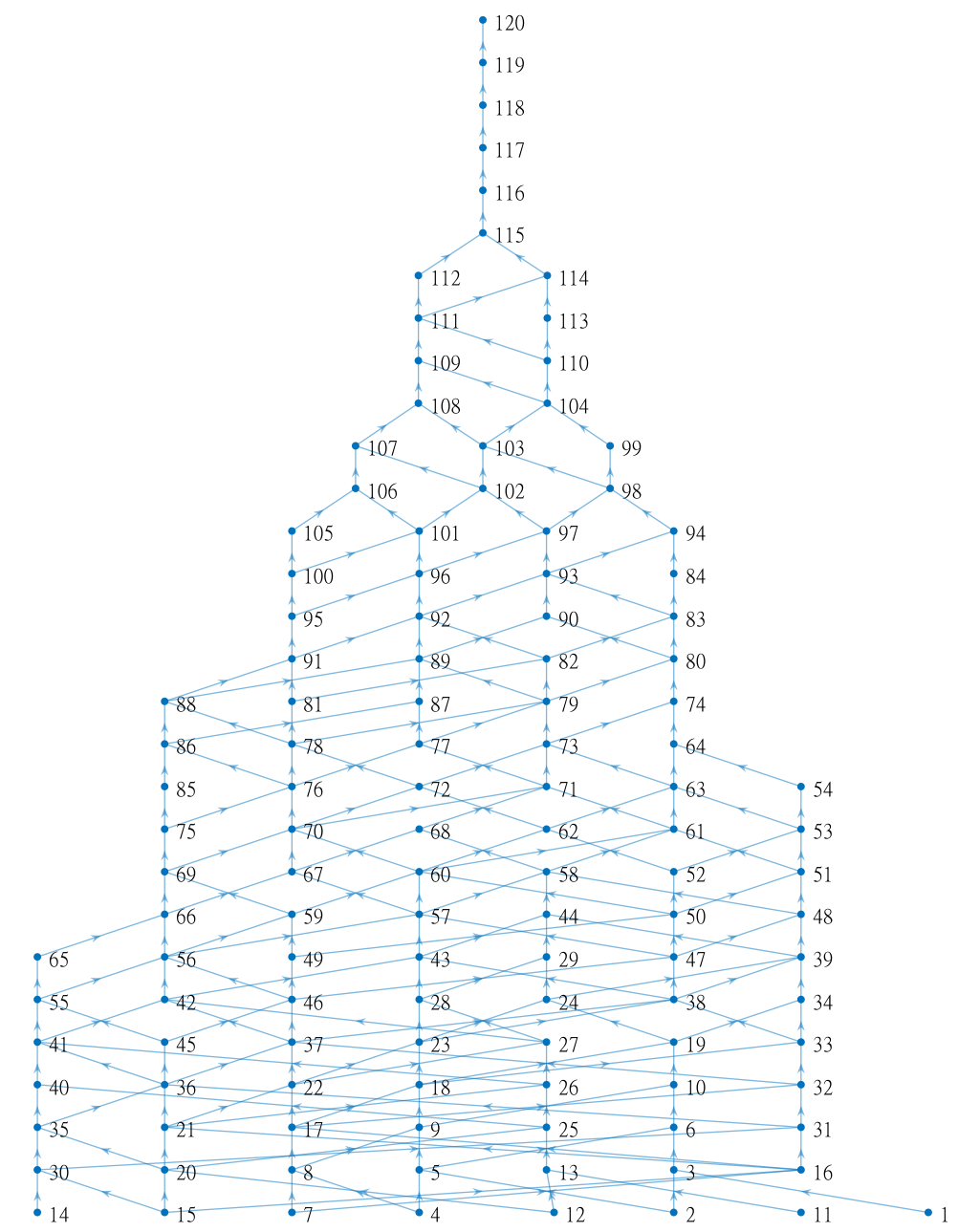}
    \caption{A Hasse diagram giving the partial order on the set of positive roots in $E_8$. Nodes are labeled by positive roots according to the table in Figure 7.}
    \label{fig:E_8Hasse}
\end{figure}

\begin{figure}
    \centering
\begin{tabular}{ | c | c| c | }
 \hline 
$R_{1}=(0, 0, 0, 0, 0, 0, 0, 1)$ & $R_{41}=(1, 1, 1, 1, 1, 0, 0, 0)$ & $R_{81}=(3, 1, 2, 1, 3, 2, 1, 0)$\\ 
\hline
$R_{2}=(0, 0, 0, 0, 0, 0, 1, 0)$ & $R_{42}=(1, 1, 1, 1, 1, 1, 0, 0)$ & $R_{82}=(3, 1, 2, 1, 3, 2, 1, 1)$\\ 
\hline
$R_{3}=(0, 0, 0, 0, 0, 0, 1, 1)$ & $R_{43}=(1, 1, 1, 1, 1, 1, 1, 0)$ & $R_{83}=(3, 1, 2, 1, 3, 2, 2, 1)$\\ 
\hline
$R_{4}=(0, 0, 0, 0, 0, 1, 0, 0)$ & $R_{44}=(1, 1, 1, 1, 1, 1, 1, 1)$ & $R_{84}=(3, 1, 2, 1, 3, 3, 2, 1)$\\ 
\hline
$R_{5}=(0, 0, 0, 0, 0, 1, 1, 0)$ & $R_{45}=(2, 1, 1, 0, 1, 0, 0, 0)$ & $R_{85}=(3, 2, 2, 1, 2, 1, 0, 0)$\\ 
\hline
$R_{6}=(0, 0, 0, 0, 0, 1, 1, 1)$ & $R_{46}=(2, 1, 1, 0, 1, 1, 0, 0)$ & $R_{86}=(3, 2, 2, 1, 2, 1, 1, 0)$\\ 
\hline
$R_{7}=(0, 0, 0, 0, 1, 0, 0, 0)$ & $R_{47}=(2, 1, 1, 0, 1, 1, 1, 0)$ & $R_{87}=(3, 2, 2, 1, 2, 1, 1, 1)$\\ 
\hline
$R_{8}=(0, 0, 0, 0, 1, 1, 0, 0)$ & $R_{48}=(2, 1, 1, 0, 1, 1, 1, 1)$ & $R_{88}=(3, 2, 2, 1, 2, 2, 1, 0)$\\ 
\hline
$R_{9}=(0, 0, 0, 0, 1, 1, 1, 0)$ & $R_{49}=(2, 1, 1, 0, 2, 1, 0, 0)$ & $R_{89}=(3, 2, 2, 1, 2, 2, 1, 1)$\\ 
\hline
$R_{10}=(0, 0, 0, 0, 1, 1, 1, 1)$ & $R_{50}=(2, 1, 1, 0, 2, 1, 1, 0)$ & $R_{90}=(3, 2, 2, 1, 2, 2, 2, 1)$\\ 
\hline
$R_{11}=(0, 0, 0, 1, 0, 0, 0, 0)$ & $R_{51}=(2, 1, 1, 0, 2, 1, 1, 1)$ & $R_{91}=(3, 2, 2, 1, 3, 2, 1, 0)$\\ 
\hline
$R_{12}=(0, 0, 1, 0, 0, 0, 0, 0)$ & $R_{52}=(2, 1, 1, 0, 2, 2, 1, 0)$ & $R_{92}=(3, 2, 2, 1, 3, 2, 1, 1)$\\ 
\hline
$R_{13}=(0, 0, 1, 1, 0, 0, 0, 0)$ & $R_{53}=(2, 1, 1, 0, 2, 2, 1, 1)$ & $R_{93}=(3, 2, 2, 1, 3, 2, 2, 1)$\\ 
\hline
$R_{14}=(0, 1, 0, 0, 0, 0, 0, 0)$ & $R_{54}=(2, 1, 1, 0, 2, 2, 2, 1)$ & $R_{94}=(3, 2, 2, 1, 3, 3, 2, 1)$\\ 
\hline
$R_{15}=(1, 0, 0, 0, 0, 0, 0, 0)$ & $R_{55}=(2, 1, 1, 1, 1, 0, 0, 0)$ & $R_{95}=(4, 2, 2, 1, 3, 2, 1, 0)$\\ 
\hline
$R_{16}=(1, 0, 0, 0, 1, 0, 0, 0)$ & $R_{56}=(2, 1, 1, 1, 1, 1, 0, 0)$ & $R_{96}=(4, 2, 2, 1, 3, 2, 1, 1)$\\ 
\hline
$R_{17}=(1, 0, 0, 0, 1, 1, 0, 0)$ & $R_{57}=(2, 1, 1, 1, 1, 1, 1, 0)$ & $R_{97}=(4, 2, 2, 1, 3, 2, 2, 1)$\\ 
\hline
$R_{18}=(1, 0, 0, 0, 1, 1, 1, 0)$ & $R_{58}=(2, 1, 1, 1, 1, 1, 1, 1)$ & $R_{98}=(4, 2, 2, 1, 3, 3, 2, 1)$\\ 
\hline
$R_{19}=(1, 0, 0, 0, 1, 1, 1, 1)$ & $R_{59}=(2, 1, 1, 1, 2, 1, 0, 0)$ & $R_{99}=(4, 2, 2, 1, 4, 3, 2, 1)$\\ 
\hline
$R_{20}=(1, 0, 1, 0, 0, 0, 0, 0)$ & $R_{60}=(2, 1, 1, 1, 2, 1, 1, 0)$ & $R_{100}=(4, 2, 3, 1, 3, 2, 1, 0)$\\ 
\hline
$R_{21}=(1, 0, 1, 0, 1, 0, 0, 0)$ & $R_{61}=(2, 1, 1, 1, 2, 1, 1, 1)$ & $R_{101}=(4, 2, 3, 1, 3, 2, 1, 1)$\\ 
\hline
$R_{22}=(1, 0, 1, 0, 1, 1, 0, 0)$ & $R_{62}=(2, 1, 1, 1, 2, 2, 1, 0)$ & $R_{102}=(4, 2, 3, 1, 3, 2, 2, 1)$\\ 
\hline
$R_{23}=(1, 0, 1, 0, 1, 1, 1, 0)$ & $R_{63}=(2, 1, 1, 1, 2, 2, 1, 1)$ & $R_{103}=(4, 2, 3, 1, 3, 3, 2, 1)$\\ 
\hline
$R_{24}=(1, 0, 1, 0, 1, 1, 1, 1)$ & $R_{64}=(2, 1, 1, 1, 2, 2, 2, 1)$ & $R_{104}=(4, 2, 3, 1, 4, 3, 2, 1)$\\ 
\hline
$R_{25}=(1, 0, 1, 1, 0, 0, 0, 0)$ & $R_{65}=(2, 1, 2, 1, 1, 0, 0, 0)$ & $R_{105}=(4, 2, 3, 2, 3, 2, 1, 0)$\\ 
\hline
$R_{26}=(1, 0, 1, 1, 1, 0, 0, 0)$ & $R_{66}=(2, 1, 2, 1, 1, 1, 0, 0)$ & $R_{106}=(4, 2, 3, 2, 3, 2, 1, 1)$\\ 
\hline
$R_{27}=(1, 0, 1, 1, 1, 1, 0, 0)$ & $R_{67}=(2, 1, 2, 1, 1, 1, 1, 0)$ & $R_{107}=(4, 2, 3, 2, 3, 2, 2, 1)$\\ 
\hline
$R_{28}=(1, 0, 1, 1, 1, 1, 1, 0)$ & $R_{68}=(2, 1, 2, 1, 1, 1, 1, 1)$ & $R_{108}=(4, 2, 3, 2, 3, 3, 2, 1)$\\ 
\hline
$R_{29}=(1, 0, 1, 1, 1, 1, 1, 1)$ & $R_{69}=(2, 1, 2, 1, 2, 1, 0, 0)$ & $R_{109}=(4, 2, 3, 2, 4, 3, 2, 1)$\\ 
\hline
$R_{30}=(1, 1, 0, 0, 0, 0, 0, 0)$ & $R_{70}=(2, 1, 2, 1, 2, 1, 1, 0)$ & $R_{110}=(5, 2, 3, 1, 4, 3, 2, 1)$\\ 
\hline
$R_{31}=(1, 1, 0, 0, 1, 0, 0, 0)$ & $R_{71}=(2, 1, 2, 1, 2, 1, 1, 1)$ & $R_{111}=(5, 2, 3, 2, 4, 3, 2, 1)$\\ 
\hline
$R_{32}=(1, 1, 0, 0, 1, 1, 0, 0)$ & $R_{72}=(2, 1, 2, 1, 2, 2, 1, 0)$ & $R_{112}=(5, 2, 4, 2, 4, 3, 2, 1)$\\ 
\hline
$R_{33}=(1, 1, 0, 0, 1, 1, 1, 0)$ & $R_{73}=(2, 1, 2, 1, 2, 2, 1, 1)$ & $R_{113}=(5, 3, 3, 1, 4, 3, 2, 1)$\\ 
\hline
$R_{34}=(1, 1, 0, 0, 1, 1, 1, 1)$ & $R_{74}=(2, 1, 2, 1, 2, 2, 2, 1)$ & $R_{114}=(5, 3, 3, 2, 4, 3, 2, 1)$\\ 
\hline
$R_{35}=(1, 1, 1, 0, 0, 0, 0, 0)$ & $R_{75}=(3, 1, 2, 1, 2, 1, 0, 0)$ & $R_{115}=(5, 3, 4, 2, 4, 3, 2, 1)$\\ 
\hline
$R_{36}=(1, 1, 1, 0, 1, 0, 0, 0)$ & $R_{76}=(3, 1, 2, 1, 2, 1, 1, 0)$ & $R_{116}=(6, 3, 4, 2, 4, 3, 2, 1)$\\ 
\hline
$R_{37}=(1, 1, 1, 0, 1, 1, 0, 0)$ & $R_{77}=(3, 1, 2, 1, 2, 1, 1, 1)$ & $R_{117}=(6, 3, 4, 2, 5, 3, 2, 1)$\\ 
\hline
$R_{38}=(1, 1, 1, 0, 1, 1, 1, 0)$ & $R_{78}=(3, 1, 2, 1, 2, 2, 1, 0)$ & $R_{118}=(6, 3, 4, 2, 5, 4, 2, 1)$\\ 
\hline
$R_{39}=(1, 1, 1, 0, 1, 1, 1, 1)$ & $R_{79}=(3, 1, 2, 1, 2, 2, 1, 1)$ & $R_{119}=(6, 3, 4, 2, 5, 4, 3, 1)$\\ 
\hline
$R_{40}=(1, 1, 1, 1, 0, 0, 0, 0)$ & $R_{80}=(3, 1, 2, 1, 2, 2, 2, 1)$ & $R_{120}=(6, 3, 4, 2, 5, 4, 3, 2)$\\ 
\hline
\end{tabular} 
    \caption{A list of the 120 positive roots in $E_8$ expressed with respect to the basis of simple roots given in Figure 5.}
    \label{fig:positiverootslist}
\end{figure}

\begin{lem}\label{lem:hardeightcombinatorial}
Let $\tau = (s, \sigma)$ be an $E_8$-changemaker. Then,
\begin{enumerate}
    \item $s^*_i \leq |\sigma|_1 + 1$ for $i \in \{1, 5, 6, 7, 8\}$;
    \item $s^*_2\leq |\sigma|_1 + 1$ or $s^*_3 \leq |\sigma|_1 + 1$;
    \item if $s^*_2 > |\sigma|_1 + 1$, then\begin{enumerate}
        \item $s^*_2\leq s^*_3 + s^*_4 + |\sigma|_1+1$, and
        \item $s^*_2 \leq s^*_5 + 2s^*_6 + 2s^*_7 + s^*_8 + |\sigma|_1 + 1$;
    \end{enumerate}
    \item if $s^*_3 > |\sigma|_1 + 1$, then
    \begin{enumerate}
        \item $s^*_3 \leq s^*_2 + |\sigma|_1 + 1$, and
        \item $s^*_3 \leq s^*_5 + s^*_6 + s^*_7 + s^*_8 + |\sigma|_1 + 1$
    \end{enumerate}
    \item if $s^*_4 > |\sigma|_1 + 1$, then $s^*_4 \leq s^*_2 + s^*_1 + s^*_5 + s^*_6 + s^*_7 + s^*_8 + |\sigma|_1 +1$;
    \item and if $s^*_2 > |\sigma|_1 + 1$ and $s^*_4> |\sigma|_1 + 1$, then
    \begin{enumerate}
        \item $s^*_2\leq s^*_3 + |\sigma|_1 + 1$, and
        \item $s^*_4 \leq s^*_1 + s^*_5 + s^*_6 + s^*_7 + s^*_8 + |\sigma|_1 + 1$.
    \end{enumerate}
\end{enumerate}
\end{lem}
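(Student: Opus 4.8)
The plan is to wring all six families of inequalities out of part (2) of Definition~\ref{def:e8changemaker} --- that every integer of $PI(c(\tau)+2,C(\tau))$ is realized as $\langle \mathfrak c,\tau\rangle$ for some $\mathfrak c\in\Short(E_8\oplus\mathbb Z^{n+1})$ --- by feeding it the combinatorics of the positive root poset of $E_8$ recorded in Figures~\ref{fig:E_8Hasse}--\ref{fig:positiverootslist}. First I would assemble the relevant data. Since $\text{short}(E_8)=\{0\}$ we have $c(\tau)=|\sigma|_1$; part (1) of Definition~\ref{def:e8changemaker} forces $\sigma$ to be a changemaker, so the pairings from $\mathfrak C_1$ are exactly $\{2\langle R,s\rangle+j : R\in\mathcal R_+\cup(-\mathcal R_+),\ j\in PI(-|\sigma|_1,|\sigma|_1)\}$, while the pairings from $\mathfrak C_2$ (one coordinate $\pm3$) lie in $[-2|\sigma|_1-1,\,2|\sigma|_1+1]$; moreover, using that the initial segment $(\sigma_0,\dots,\sigma_{n-1})$ is a changemaker and that $2\sigma_n\le|\sigma|_1+1$, the vectors of $\mathfrak C_2$ with the $\pm3$ in the last coordinate already realize every integer of $PI(|\sigma|_1+2,|\sigma|_1+2\sigma_n)$. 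Finally, because $s\in\mathcal C$, the map $R\mapsto\langle R,s\rangle$ is order preserving on $\mathcal R_+$ and attains its maximum at the highest root $\theta=R_{120}$, so $C(\tau)=\max\{2\langle\theta,s\rangle+|\sigma|_1,\ |\sigma|_1+2\sigma_n\}$. If the second term is the larger, then $s_i^*\le\langle\theta,s\rangle\le\sigma_n\le|\sigma|_1+1$ for every $i$ and all six conclusions hold; likewise whenever the relevant ``second-from-top'' root $R$ encountered below has $\langle R,s\rangle\le\sigma_n$, the estimate $s_i^*\le\langle R,s\rangle$ (valid since every coordinate of that root is positive) makes the conclusions trivial. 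So I would reduce to the case where $\langle\theta,s\rangle$ comfortably exceeds $\sigma_n$.

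In that case, for the intervals $[2\langle R,s\rangle-|\sigma|_1,\,2\langle R,s\rangle+|\sigma|_1]$ ($R\in\mathcal R_+$) to cover the part of $PI(|\sigma|_1+2,C(\tau))$ above the block handled by $\mathfrak C_2$, the $s$-values of roots must have no large gap: inspecting the target $2\langle R,s\rangle-|\sigma|_1-2$ just below a root value, the next-lower root value must lie within $|\sigma|_1+1$ of it. Now I would turn to the poset. The six roots $R_{115}\lessdot R_{116}\lessdot R_{117}\lessdot R_{118}\lessdot R_{119}\lessdot R_{120}$ are the unique roots of heights $24,\dots,29$, form a chain, and, being of top height, dominate every other root; hence $\langle R_{115},s\rangle<\cdots<\langle\theta,s\rangle$ are the six largest root values with no other root value strictly between consecutive ones. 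From Figure~\ref{fig:positiverootslist} the ascending consecutive differences along this chain are $s_1^*,s_5^*,s_6^*,s_7^*,s_8^*$, so the no-large-gap condition immediately yields conclusion (1). One step further down, $R_{115}$ covers exactly the two height-$23$ roots $R_{112}=R_{115}-e_2$ and $R_{114}=R_{115}-e_3$, so the largest root value below $\langle R_{115},s\rangle$ equals $\langle R_{115},s\rangle-\min(s_2^*,s_3^*)$, and the same condition forces $\min(s_2^*,s_3^*)\le|\sigma|_1+1$, which is conclusion (2). Conclusions (3)--(6) come from continuing the descent through the short branch $e_2,e_3,e_4$: there the poset genuinely branches, so when $s_2^*$ (or $s_4^*$, or both) exceeds $|\sigma|_1+1$ the jump of that size must be subdivided by a competing root on a parallel chain, and the possible subdivisions are precisely the right-hand sides in (3)--(6). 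For instance, with $R_{112}=(5,2,4,2,4,3,2,1)$ the largest root having $a_2=2$ and $R_{113}=(5,3,3,1,4,3,2,1)$, one has $R_{113}-R_{112}=e_2-e_3-e_4$, so $\langle R_{113},s\rangle-\langle R_{112},s\rangle=s_2^*-s_3^*-s_4^*$, and coverage of the target just above $2\langle R_{112},s\rangle+|\sigma|_1$ forces this to be at most $|\sigma|_1+1$; that is (3)(a), vacuous exactly when $R_{113}$ lies below $R_{112}$. The remaining inequalities (3)(b), (4), (5), (6) arise the same way from the competing roots along the long arm $e_5,\dots,e_8$ (which is where the coefficients $1,2,2,1$ and $1,1,1,1$ visible in $\theta$ enter) and from the $e_2\leftrightarrow e_3$ exchange, all located in Figures~\ref{fig:E_8Hasse}--\ref{fig:positiverootslist}.

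The main obstacle is exactly this last bookkeeping: for each of $e_2,e_3,e_4$ and each bad pair of consecutive coefficient levels one must enumerate every root whose $s$-value can fall into the potential gap, verify that the only ones that can are the competing roots named above, and confirm that the offending target is not rescued by a $\mathfrak C_2$-vector or by some further root --- this is where the explicit Hasse diagram and root table are genuinely needed rather than abstract root-system facts. Two subordinate technical nuisances are the interface near $|\sigma|_1+2\sigma_n$ between the $\mathfrak C_1$- and $\mathfrak C_2$-covered ranges (so that the targets chosen really lie in $PI(|\sigma|_1+2,C(\tau))$ and really are not $\mathfrak C_2$-values), and the degenerate case $|\sigma|_1=0$, where $\mathfrak C_2$ is empty and one argues directly from $\{2\langle R,s\rangle\}$ together with the root of $s$-value $1$ supplied in the discussion preceding the proof of Theorem~\ref{thm:hardeightembedding}; a short separate argument also handles $s$ lying on a wall of $\mathcal C$, where ``$\langle\,\cdot\,,s\rangle$ strictly increasing up the chain'' can fail.
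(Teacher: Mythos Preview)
Your proposal is correct and follows essentially the same approach as the paper: both derive each inequality from the covering condition in Definition~\ref{def:e8changemaker}(2) by locating, in the Hasse diagram of $\mathcal R_+$, the specific pair of roots whose $s$-value gap must be bridged, and both begin with the chain $R_{115}\lessdot\cdots\lessdot R_{120}$ for part (1) and the pair $R_{112},R_{114}$ below $R_{115}$ for part (2). The paper simply carries out the bookkeeping you flag as the main obstacle, naming for each of (3)--(6) the exact root pair (e.g.\ $R_{84},R_{85}$ for (3)(b), $R_{99},R_{100}$ for (4)(b), $R_{113},R_{105}$ for (5), $R_{110},R_{105}$ for (6)(b)) and invoking the ``unique minimum of $\{R:R\not\le R_k\}$'' structure you anticipate; your treatment of the $\mathfrak C_1$/$\mathfrak C_2$ interface and the $|\sigma|_1=0$ degeneration is in fact more explicit than the paper's.
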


\begin{proof}
For a list of the 120 positive roots in $E_8$ expressed in coordinates with respect to the basis of simple roots in Figure 5, see Figure 7. For a visualization of the partial order on $\mathcal R_+$, see Figure 6. 

Let $\tau = (s, \sigma)\in E_8 \oplus \mathbb Z^{n+1}$ be an $E_8$ changemaker. Note that for any root $R = \sum_{i=1}^8 a_i e_i \in E_8$, $\langle R, s\rangle = \sum_{i = 1}^8 a_i s^*_i$. Then for any $\mathfrak c = (2R, \chi) \in \mathfrak C_1$, we have $\langle \mathfrak c, \tau\rangle = \sum_{i = 1}^8a_i s^*_i + \chi \cdot \sigma$, where here $\cdot$ denotes the standard dot product. In particular, we have

\begin{equation*} \label{eq1}
\begin{split}
    \max\{\langle \mathfrak c, \tau\rangle \ \colon \ \mathfrak c \in \mathfrak C_1\} &= \langle 2R_{120}, s\rangle + |\sigma|_1\\ 
    &=2(6s^*_1 + 3s^*_2 + 4s^*_3 + 2 s^*_4 + 5s^*_5 + 4s^*_6 + 3s^*_7 + 2s^*_8) + |\sigma|_1.
\end{split}
\end{equation*}

On the other hand,
\begin{equation*} \label{eq2}
\begin{split}
    \max\{\langle \mathfrak c, \tau\rangle \ \colon \ \mathfrak c \in \mathfrak C_2\} &= |\sigma|_1+2\sigma_n\\
    &\leq 2|\sigma|_1 +1.
\end{split}
\end{equation*}

Notice that if $\max\{\langle \mathfrak c, \tau \rangle \ \colon \ \mathfrak c \in \mathfrak C_2\} \geq \langle (2R_{44}, (1, \ldots, 1)), (s, \sigma) \rangle$, then $2(s^*_1 + \ldots + s^*_8)\leq 2|\sigma|_1 +1$, in which case $s^*_i \leq |\sigma|_1 + 1$ for $i = 1, \ldots 8$ and the lemma follows. We now assume that $2\sigma_n < s^*_1 + \ldots + s^*_8$. 

Since $\sigma$ is a changemaker, it is clear that for any $1\leq i \leq 120$, for all $j \in [\langle 2R_i, s\rangle -|\sigma|_1,\langle 2R_i, s\rangle + |\sigma|_1]$ with $j \equiv \langle \tau, \tau\rangle \ \mod 2$ there is some $\mathfrak c \in \mathfrak C_1$ such that $\langle c, \tau\rangle = j$. Therefore, we have \begin{equation}
    \{j \in [|\sigma|_1 + 2, \langle 2R_{120}, s\rangle + |\sigma|_1]\ \colon \ j \equiv \langle \tau , \tau \rangle\  \mod 2\}\subset \bigcup_{i\geq 44}[\langle 2R_i, s\rangle - |\sigma|_1, \langle 2R_i, s\rangle + |\sigma|_1].
\end{equation}
Upon consulting the top 6 vertices of Figure 6, we see that\begin{equation}
    \langle 2R_i, s\rangle + |\sigma|_1 + 2 \geq \langle 2R_{i+1}, s\rangle - |\sigma|_1 \text{ for } 115 \leq i \leq 119,
\end{equation}
which gives us (1) of the lemma.

Part (2) of the lemma follows from observing vertices 112, 114, and 115 in Figure 6, i.e.\begin{equation}
    \langle 2R_{112}, s\rangle + |\sigma|_1 + 2 \geq \langle 2R_{115}, s\rangle -|\sigma|_1 \text{ or } \langle 2R_{114}, s\rangle + |\sigma|_1 + 2 \geq \langle 2R_{115}, s\rangle -|\sigma|_1
\end{equation}

Part (3)(a) comes from noting that if $R< R_{113}$, then $R<R_{112}$, so if $s^*_2>|\sigma|_1 + 1$, then \begin{equation}
    \langle 2R_{112}, s\rangle + |\sigma|_1 + 2 \geq \langle 2R_{113}, s\rangle -|\sigma|_1.
\end{equation}
To establish (3)(b), note that $R_{85}$ is the unique minimum of $\{R\in \mathcal R_+\ \colon \ R\not \leq R_{84}\}$, so that if $s^*_2 > |\sigma|_1 + 1$, then \begin{equation}
    \langle 2R_{84}, s\rangle + |\sigma|_1 + 2 \geq \langle 2R_{85}, s\rangle -|\sigma|_1.
\end{equation}

To see (4)(a), note that if $s^*_3> |\sigma|_1 + 1$, then 
\begin{equation}
    \langle 2R_{114}, s\rangle + |\sigma|_1 + 2 \geq \langle 2R_{112}, s\rangle - |\sigma|_1.
\end{equation}
To see (4)(b), note that $R_{100}$ is the unique minimum of $\{R \in \mathcal R_+\ \colon \ R \not \leq R_{99}\}$, so that if $s^*_3 > |\sigma|_1 + 1$, then\begin{equation}
    \langle 2R_{99}, s\rangle + |\sigma|_1 + 2 \geq \langle 2R_{100}, s\rangle -|\sigma|_1.
\end{equation}

To see (5), observe that $R_{105}$ is the unique minimum of $\{R\in \mathcal R_+ \ \colon \ R\not \leq R_{113}\}$, so in the even $s^*_4 > |\sigma|_1 + 1$ it must be that\begin{equation}
    \langle 2R_{113}, s\rangle + |\sigma|_1 + 2 \geq \langle 2R_{105}, s\rangle  -|\sigma|_1.
\end{equation}

To see (6)(a), note that if both $s^*_2 > s^*_3 + |\sigma|_1 + 1$ and $s^*_4 > |\sigma|_1 + 1$, then $\langle 2R_{114}, s\rangle -|\sigma|_1 -2 \not \in \{\langle \mathfrak c, \tau \rangle \ \colon \ \mathfrak c \in \mathfrak C_1\}$. Therefore if $s^*_2$ and $s^*_4$ are both greater than $|\sigma|_1 + 1$, it must be that $s^*_2 \leq s^*_3 + |\sigma|_1+1$. 
To see (6)(b), note that $R_{105}$ is the unique minimum of $\{R \in \mathcal R_+ \ \colon \ R \not \leq R_{110}\}$, so that if $s^*_2 > |\sigma|_1 + 1$ and $s^*_4 > |\sigma|_1 + 1$, then \begin{equation}
    \langle 2R_{110}, s\rangle + |\sigma|_1 + 2 \geq \langle 2R_{105}, s\rangle -|\sigma|_1.
\end{equation}
\end{proof}

\begin{proof}[Proof of Lemma \ref{lem:Shortsmallpairings}.]
Let $\tau = (s, \sigma) \in E_8 \oplus \mathbb Z^{n+1}$, with $\sigma$ a changemaker and $|\tau| = p \geq |\sigma|+ 4$. We may arrange so that $s^*_i \geq 0$ for all $i \in \{1, \ldots, 8\}$ by applying an isometry of $E_8$ that places $s$ in the fundamendtal Weyl chamber of $E_8$. If $C(\tau) = |\sigma|_1 + 2\sigma_n$ and $C(\tau)> p$, then $|\sigma|_1 + 2\sigma_n > |\sigma|$, so either $\sigma = (0, \ldots, 0, 1, \ldots, 1)$ and $s= 0$, or $\sigma = (0, \ldots, 0, 1, \ldots, 1, 2)$ and $s^* = (0, 0, 0, 0, 0, 0, 0, 1)$, in which case $|s| = 2$. We may therefore assume that $$C(\tau) = 12s^*_1 + 6s^*_2 + 8s^*_3 + 4s^*_4 + 10s^*_5 + 8s^*_6 + 6s^*_7 + 4s^*_8 + |\sigma|_1.$$ Upon consulting the diagonal entries of the matrix

\[A^{-1} = 
\begin{bmatrix}
30&15&20&10&24&18&12&6\\
15&8&10&5&12&9&6&3\\
20&10&14&7&16&12&8&4\\
10&5&7&4&8&6&4&2\\
24&12&16&8&20&15&10&5\\
18&9&12&6&15&12&8&4\\
12&6&8&4&10&8&6&3\\
6&3&4&2&5&4&3&2
\end{bmatrix},
\]which records the pairings $(A^{-1})_{ij} = \langle e_i^*, e_j^*\rangle$ of the basis of $E_8$ dual to the choice of simple roots $\{e_1, \ldots, e_8\}$, we see that $$p \geq 30(s^*_1)^2 + 8(s^*_2)^2 + 14(s^*_3)^2 + 4(s^*_4)^2 + 20 (s^*_5)^2 + 12(s^*_6)^2 + 6(s^*_7)^2 + 2(s^*_8)^2 + |\sigma|.$$

Since $|\sigma|\geq |\sigma|_1$, if $C(\tau)>p$, we must have $s^*_i = 0$ for all $1\leq i \leq 7$ and $s^*_8 \leq 1$, in which case $|s| \leq 2$. Conclude that if $|s|\geq 4$, then $C(\tau) \leq p$. 
\end{proof}

\begin{prop}\label{prop:s2e8changemaker}
If $|s| = 2$, then $\tau$ is an $E_8$-changemaker.
\end{prop}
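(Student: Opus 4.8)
The plan is to identify $s$ exactly and then check the two clauses of Definition \ref{def:e8changemaker} by hand. Since we have arranged $s \in \mathcal C$, the hypothesis $|s| = 2$ says that $s$ is a dominant root of $E_8$, and there is only one such root, the highest root $R_{120}$: if $R$ is a root with $\langle R, e_i\rangle \ge 0$ for all $i$, then $R$ is positive, $R_{120} - R$ is a non-negative integral combination of simple roots, and pairing $R_{120} - R$ against the dominant vector $R_{120} + R$ (whose inner product with it is $\langle R_{120},R_{120}\rangle - \langle R,R\rangle = 0$) forces $R_{120} - R = 0$. Hence $s = R_{120}$, so $s^* = (0,0,0,0,0,0,0,1)$; in particular $\langle e_8, s\rangle = 1$. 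Clause (1) is then immediate: $\text{short}(E_8 \oplus \mathbb Z^{n+1}) = \{0\} \oplus \{\pm 1\}^{n+1}$, so $\{\langle \mathfrak c, \tau\rangle : \mathfrak c \in \text{short}(E_8\oplus\mathbb Z^{n+1})\} = \{\langle \chi, \sigma\rangle : \chi \in \{\pm1\}^{n+1}\} = PI(-|\sigma|_1, |\sigma|_1)$ because $\sigma$ is a changemaker, and this is $PI(-c(\tau), c(\tau))$ since $c(\tau) = |\sigma|_1$.

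For clause (2) I would first evaluate $C(\tau)$ via the decomposition $\text{Short}(E_8 \oplus \mathbb Z^{n+1}) = \mathfrak C_1 \sqcup \mathfrak C_2$. Cauchy--Schwarz for roots gives $\max\{\langle \mathfrak c, \tau\rangle : \mathfrak c \in \mathfrak C_1\} = 2\langle R_{120}, s\rangle + |\sigma|_1 = 4 + |\sigma|_1$, and directly $\max\{\langle \mathfrak c, \tau\rangle : \mathfrak c \in \mathfrak C_2\} = |\sigma|_1 + 2\sigma_n$, so $C(\tau) = |\sigma|_1 + \max(4, 2\sigma_n)$. If $\sigma_n \ge 2$, then $C(\tau) = |\sigma|_1 + 2\sigma_n$ and the argument from the $s = 0$ case of the proof of Proposition \ref{prop:E8changemakerembedding} goes through unchanged using only $\mathfrak C_2$: with $\sigma' = (\sigma_0, \dots, \sigma_{n-1})$, again a changemaker, the vectors $(0, (\chi'', 3)) \in \mathfrak C_2$ (with the $3$ in the last slot) realize every value in $PI(3\sigma_n - |\sigma'|_1,\, 3\sigma_n + |\sigma'|_1)$; the top end is $|\sigma|_1 + 2\sigma_n = C(\tau)$ and the bottom end is at most $|\sigma|_1 + 2$ precisely because $2\sigma_n \le |\sigma'|_1 + 1$, so $PI(c(\tau) + 2, C(\tau))$ lies inside the set of $\text{Short}$-pairings.

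If instead $\sigma_n \le 1$ (including the degenerate case $\sigma = 0$), then $C(\tau) = |\sigma|_1 + 4$ and $PI(c(\tau)+2, C(\tau))$ is just $\{|\sigma|_1 + 2,\, |\sigma|_1 + 4\}$. These two values are realized by the $\mathfrak C_1$-vectors $(2e_8, (1,\dots,1))$ and $(2R_{120}, (1,\dots,1))$: indeed $\langle (2e_8,(1,\dots,1)), \tau\rangle = 2\langle e_8, R_{120}\rangle + |\sigma|_1 = |\sigma|_1 + 2$ and $\langle (2R_{120},(1,\dots,1)), \tau\rangle = 2\langle R_{120},R_{120}\rangle + |\sigma|_1 = |\sigma|_1 + 4$ (dropping the diagonal coordinates if $\sigma$ is empty). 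This verifies clause (2), so $\tau$ is an $E_8$-changemaker; together with the cases $|s| \ge 4$ and $s = 0$ treated above, this also finishes the proof of Proposition \ref{prop:E8changemakerembedding}.

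The only step demanding real care is the subcase $\sigma_n \ge 2$: there $C(\tau)$ can genuinely exceed $4 + |\sigma|_1$, so the $E_8$ summand contributes nothing near the top of the parity interval and one must rely entirely on the diagonal changemaker combinatorics — but this is exactly what the $s = 0$ analysis already supplies, with the changemaker inequality $2\sigma_n \le |\sigma'|_1 + 1$ being precisely what makes the $\mathfrak C_2$-sweep descend all the way to $c(\tau) + 2$. Identifying $s$ with $R_{120}$ and patching in the two low values via $e_8$ when $\sigma_n \le 1$ are routine, so I do not anticipate a serious obstacle.
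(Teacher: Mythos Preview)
Your argument is correct and follows essentially the same route as the paper's proof: identify $s$ with the highest root $R_{120}$, split into the cases $C(\tau) = |\sigma|_1 + 4$ (i.e.\ $\sigma_n \le 1$) and $C(\tau) = |\sigma|_1 + 2\sigma_n$, and in the latter case reuse the $s = 0$ diagonal analysis verbatim. One small slip: the inequality you quote as ``$2\sigma_n \le |\sigma'|_1 + 1$'' should read $\sigma_n \le |\sigma'|_1 + 1$ (the changemaker condition), which is what actually gives $3\sigma_n - |\sigma'|_1 \le |\sigma|_1 + 2$; since you are explicitly invoking the already-established $s=0$ computation this does not affect the validity.
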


\begin{proof}
    If $|s| = 2$, then we may take $s = R_{120}$ (cf. Figure 7), which is the unique root in the fundamental Weyl chamber of $E_8$. It follows that either $C(\tau) = |\sigma|_1 + 4$, in which case $\sigma_n \leq 1$, or $C(\tau) = |\sigma|_1 + 2\sigma_n$, and in either case we have $PI(|\sigma|_1 + 2,C(\tau))\subset \{\langle \mathfrak c, \tau\rangle \colon \mathfrak c \in \text{Short}(-E_8 \oplus -\mathbb Z^{n+1})\}$.
\end{proof}

\subsection{The standard basis of an $E_8$-changemaker lattice.}
We are now ready to describe the standard basis $\mathcal S$ of irreducible vectors of an $E_8$-changemaker lattice $(\tau)^\perp$. We first note that, by a computer search, there are 1003 non-zero $E_8$-changemakers in $E_8$. The author developed an algorithm for producing a standard basis of irreducible vectors for an $E_8$-changemaker lattice in $E_8$, but feels that it does not bear mentioning further in light of the following proposition. 

\begin{prop}\label{prop:normboundnlessthan2}
If $\tau = (s, \sigma) \in E_8 \oplus \mathbb Z^{n+1}$ is an $E_8$-changemaker and $n \in \{-1,0,1\}$, then $\langle \tau, \tau \rangle \leq 100,000$.
\end{prop}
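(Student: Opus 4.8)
The plan is to bound $\langle\tau,\tau\rangle = |s|^2 + |\sigma|_1$-type quantities directly by brute enumeration, exploiting that the $E_8$-changemaker conditions force $s$ to lie in a small region of the fundamental Weyl chamber and force $\sigma$ to be short when $n$ is small. Concretely, write $\tau = (s,\sigma)$ with $s$ in the fundamental Weyl chamber (so $s^*_i \geq 0$) and $0 \leq \sigma_0 \leq \cdots \leq \sigma_n$; since $n \in \{-1,0,1\}$, the changemaker $\sigma$ is one of a very short list: $\sigma$ is empty, $\sigma = (1)$, $\sigma = (1,1)$, or $\sigma = (1,2)$ (the changemaker inequality $\sigma_1 \leq 1 + \sigma_0$ with $\sigma_0 = 1$ allows only $\sigma_1 \in \{1,2\}$; also the degenerate $\sigma = (0,\dots)$ cases but those only lower $|\sigma|_1$). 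In every case $|\sigma|_1 \leq 3$ and $|\sigma| \leq 5$.

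The key step is then to bound $|s|$. I would apply Lemma~\ref{lem:hardeightcombinatorial} with $|\sigma|_1 \leq 3$: parts (1)--(6) say that each $s^*_i$ is either at most $|\sigma|_1 + 1 \leq 4$ or bounded by a linear combination of the \emph{other} $s^*_j$ plus $|\sigma|_1+1$. Combined with the quadratic lower bound $|s| = \sum_{i,j} s^*_i s^*_j (A^{-1})_{ij}$ from the explicit inverse Gram matrix $A^{-1}$ displayed in the proof of Lemma~\ref{lem:Shortsmallpairings} (whose smallest diagonal entry is $2$ and whose entries are all positive), any vector with some large $s^*_i$ has large norm, so the inductive/case structure of Lemma~\ref{lem:hardeightcombinatorial} collapses to a finite, explicitly bounded set of candidate dual-coordinate vectors $(s^*_1,\dots,s^*_8)$. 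From a crude bound like $s^*_i \leq$ (small constant) $\cdot |\sigma|_1$ for all $i$ that the lemma yields after resolving the finitely many cases (the "large $s^*_i$" branches are ruled out because they force $|s|$ too large relative to what Short-pairings permit — indeed the argument in Proposition~\ref{prop:s2e8changemaker} and Lemma~\ref{lem:Shortsmallpairings} already show $|s| \leq 2$ in the borderline regimes), one gets $|s|$ bounded by an absolute constant, and hence $\langle\tau,\tau\rangle = |s| + |\sigma| \leq$ a constant far below $100{,}000$.

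Alternatively, and more robustly, I would simply invoke the computer search already mentioned in the paper: there are exactly $1003$ non-zero $E_8$-changemakers in $E_8 \oplus \mathbb Z^{n+1}$ for $n \in \{0,1,2\}$, and one checks directly from that enumeration that all of them have norm at most $100{,}000$ (in fact vastly smaller). Since the $n \in \{-1,0,1\}$ case is a sub-case of $n \in \{0,1,2\}$ after accounting for the empty diagonal part, the proposition follows. I would present the conceptual argument (Lemma~\ref{lem:hardeightcombinatorial} plus the quadratic form bound plus the four-element list of small changemakers) as the main line and remark that it is corroborated by the finite computer search.

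**The main obstacle** is making the case analysis of Lemma~\ref{lem:hardeightcombinatorial} genuinely finite and airtight: the lemma's bounds on any single $s^*_i$ involve the \emph{other} coordinates, so one must verify that the "large coordinate" branches (where $s^*_2$, $s^*_3$, or $s^*_4$ exceeds $|\sigma|_1 + 1$) cannot propagate to produce an infinite family. This is where one must carefully combine the linear constraints (3)(b), (4)(b), (5), (6)(b) — which bound a large coordinate by a sum of the \emph{small} coordinates plus $|\sigma|_1+1$ — with the positivity and size of the $A^{-1}$ entries to force an absolute ceiling on $\max_i s^*_i$, after which everything is a finite check. The bookkeeping is routine but genuinely needs the explicit $A^{-1}$ and the Hasse diagram data; I would lean on the computer verification as the clean certificate.
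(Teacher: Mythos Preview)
Your main line is the paper's: bound $|\sigma|_1 \leq 3$ for $n \leq 1$, feed that into Lemma~\ref{lem:hardeightcombinatorial} to cap each $s^*_i$ by an explicit multiple of $|\sigma|_1+1$, and then evaluate $|s|$ via $A^{-1}$. The paper simply finishes what you sketch: from the lemma it extracts $s^*_j \leq |\sigma|_1+1$ for $j\in\{1,5,6,7,8\}$, $s^*_2 \leq 3(|\sigma|_1+1)$, $s^*_3 \leq 2(|\sigma|_1+1)$, $s^*_4 \leq 7(|\sigma|_1+1)$ (plus two conditional refinements), identifies the maximizer $\sigma=(1,2)$, $s^*=(4,4,8,28,4,4,4,4)$, and computes $\langle\tau,\tau\rangle = 25{,}541$.

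Two corrections to your side remarks. First, the figure $1003$ in the paper is the count of non-zero $E_8$-changemakers in $E_8$ itself (the case $n=-1$), not in $E_8\oplus\mathbb Z^{n+1}$ for $n\in\{0,1,2\}$; so your alternative computer-search route as stated does not cover $n=0,1$ and cannot serve as the clean certificate you want. Second, Lemma~\ref{lem:Shortsmallpairings} and Proposition~\ref{prop:s2e8changemaker} do not show $|s|\leq 2$ in any regime relevant here; they run in the opposite direction (assuming $|s|$ small or large and drawing consequences for the pairings), so that parenthetical should be dropped. Neither error affects your main argument, but both undercut the shortcuts you offer in lieu of carrying out the computation.
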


\begin{proof}
By Lemma \ref{lem:hardeightcombinatorial}, we have that $s^*_j \leq |\sigma|_1 + 1$ for $j \in \{1,5,6,7,8\}$, $s^*_2\leq 3|\sigma|_1 + 3$, $s^*_3\leq 2|\sigma|_1 + 2$, $s^*_4 \leq 7|\sigma|_1 + 7$, $s^*_2 \leq |\sigma|_1 + 1$ or $s^*_3\leq |\sigma|_1 + 1$, and $s^*_2 \leq 2|\sigma|_1 + 2$ or $s^*_4 \leq 6|\sigma|_1 +6$, where we understand $|\sigma|_1$ to be $0$ if $n = -1$. Recall the Gram matrix for $E_8$, 
\[A =
\begin{bmatrix}
2&-1&-1&0&-1&0&0&0\\
-1&2&0&0&0&0&0&0\\
-1&0&2&-1&0&0&0&0\\
0&0&-1&2&0&0&0&0\\
-1&0&0&0&2&-1&0&0\\
0&0&0&0&-1&2&-1&0\\
0&0&0&0&0&-1&2&-1\\
0&0&0&0&0&0&-1&2
\end{bmatrix},
\]

and note that 

\[A^{-1} = 
\begin{bmatrix}
30&15&20&10&24&18&12&6\\
15&8&10&5&12&9&6&3\\
20&10&14&7&16&12&8&4\\
10&5&7&4&8&6&4&2\\
24&12&16&8&20&15&10&5\\
18&9&12&6&15&12&8&4\\
12&6&8&4&10&8&6&3\\
6&3&4&2&5&4&3&2
\end{bmatrix}
\]

facilitates the computation, for $s = \sum_{i = 1}^8s_i e_i$,

\[
\langle s, s\rangle = \begin{bmatrix}
    s_1 & \cdots & s_8
\end{bmatrix}\cdot A \cdot \begin{bmatrix}
    s_1 \\
    \vdots\\
    s_8
\end{bmatrix} = \begin{bmatrix}
    s_1 & \cdots & s_8
\end{bmatrix}\cdot A(A^{-1}A) \cdot \begin{bmatrix}
    s_1 \\
    \vdots\\
    s_8
\end{bmatrix} =\begin{bmatrix}
    s^*_1 & \cdots & s^*_8
\end{bmatrix}\cdot A^{-1} \cdot \begin{bmatrix}
    s^*_1 \\
    \vdots\\
    s^*_8
\end{bmatrix}.
\]
It is then straightforward to certify that $\sigma = (1,2)$ and $s^* = (4, 4, 8, 28, 4, 4, 4, 4)$ maximizes the quantity $\langle \tau, \tau\rangle$ for $\tau = (s, \sigma) \in E_8 \oplus \mathbb Z^{n+1}$ with $n \in \{-1, 0, 1\}$, and in particular
\[
\begin{bmatrix}
    4 & 4 & 8 & 28 & 4 & 4 & 4 & 4
\end{bmatrix} \cdot A^{-1} \cdot \begin{bmatrix}
    4\\
    4\\ 
    8\\
    28\\
    4\\
    4\\
    4\\
    4\\
\end{bmatrix} + 1^2 + 2^2 = 25,541 \leq 100,000.
\]
\end{proof}

As Rasmussen has computational proof \cite[Section 6.3]{Ras07} that every lens space $L(p,q)$ that is realized by surgery on a knot $K \subset \mathcal P$ with $2g(K)\leq p \leq 100,000$ is realized by surgery on a Tange knot, we will not in this work address when $\Lambda(p,q) \cong (\tau)^\perp$ for some $\tau \in E_8 \oplus \mathbb Z^{n+1}$ for $n \in \{-1,0,1\}$. However, we remain interested in when $(\tau)^\perp \cong \Lambda(p_1,q_1) \oplus \Lambda(p_2,q_2)$ for all $n \geq -1$, but postpone the discussion of this case until Section \ref{sec:main}.

Suppose now that $n\geq 2$ and $\tau \in E_8 \oplus \mathbb Z^{n+1}$ is an $E_8$-changemaker. We construct a basis for the $E_8$-changemaker lattice $ L = (\tau)^\perp \subset E_8\oplus\mathbb Z^{n+1}$ as follows. Fix an index $1 \leq j \leq n$, and suppose that $\sigma_j = 1 + \sum_{i=0}^{j-1}\sigma_i$. In this case, set $v_j = -d_j + 2d_0 + \sum_{i=1}^{j-1} d_i$. Otherwise, $\sigma_j \leq \sum_{i=0}^{j-1}\sigma_i.$ It follows that there exists a subset $A\subset \{0, \ldots, j-1\}$ such that $\sigma_j = \sum_{i\in A}\sigma_i$. Amongst all such subsets, choose the one maximal with respect to the total order $<$ on subsets of $\{0,1,\ldots, n\}$ defined by declaring $A'<A$ if the largest element in $(A \cup A')\setminus(A\cap A')$ lies in $A$; equivalently, $A' < A$ if $\sum_{i\in A'}2^i<\sum_{i\in A}2^i.$ Then set $v_j = -d_j + \sum_{i\in A}d_i \in L$. If $v = -d_j + \sum_{i\in A'}d_i$ for some $A'<A$, then write $v\ll v_j$. We call the set $\mathcal V:=\{v_1, \ldots, v_n\}$ the \emph{changemaker basis} for $\tau$.

Now fix an index $1\leq j \leq 8$. If $s^*_j = |\sigma|_1 + 1$, then set $w_j = -e_j + 2d_0 + \sum_{i = 1}^n d_i$. If $s^*_j \leq |\sigma|_1$, then set $w_j = -e_j + \sum_{i\in A}d_i$ for $A$ maximal such that $\sum_{i \in A}\sigma_i= s^*_j$. If $s^*_j > |\sigma|_1 + 1$, then $j \in \{2,3,4\}$, and producing $w_j$ requires more care. Lemma \ref{lem:hardeightcombinatorial} ensures that if $s^*_j > |\sigma|_1 + 1$, then there is an especially simple root $r$ such that $0\leq -e_j + r\leq |\sigma|_1 +1$ for which we may then make change with $\sigma$. In this case, we write $w_j|_{E_8} := -e_j + r$. 

The simplest case to deal with is when $s^*_3 > |\sigma|_1 + 1$. By Lemma \ref{lem:hardeightcombinatorial}, we observe that $s^*_2 \leq |\sigma|_1 + 1$ and $|\sigma|_1| + 1 < s^*_3 \leq s^*_2 + |\sigma|_1 + 1$. In particular, $0 < s^*_3 - s^*_2 \leq |\sigma|_1 + 1$. If $s^*_3 - s^*_2 = |\sigma|_1 + 1$, then set $w_3 = -e_3 + e_2 + 2d_0 + \sum_{i = 1}^n d_i$. If $s^*_3 - s^*_2 \leq |\sigma|_1$, then set $w_3 = -e_3 + d_2 + \sum_{i \in A}d_i$ with $A$ maximal such that $\sum_{i\in A} = s^*_3 - s^*_2$.

Consider now the case when $s^*_2 > |\sigma|_1 + 1$. By Lemma \ref{lem:hardeightcombinatorial}, we observe that $s^*_3 \leq |\sigma|_1 + 1$ and $|\sigma|_1| + 1 < s^*_2 \leq s^*_3 + s^*4 + |\sigma|_1 + 1$. If $s^*_3 = 0$, then set $w_2 = -e_2 + e_4 + 2d_0 + \sum_{i=1}^n d_i$ or $w_2 = -e_2 + e_4 + \sum_{i \in A} d_i$ as appropriate. If $s^*_3 > 0$, then in the event that $s^*_4 = 0$ or $s^*_2 -s^*_3 - s^*_4 < 0$, set $w_2 = -e_2 + e_3 + 2d_0 + \sum_{i=1}^nd_i$ or $w_2 = -e_2 + e_3 + \sum_{i\in A}d_i$ as appropriate, and in the event that $s^*_4 > 0$ and $0\leq s^*_2 -s^*_3 -s^*_4$, set $w_2 = -e_2 + e_3 + e_4 + 2d_0 + \sum_{i=1}^nd_i$ or $w_2 = -e_2 + e_3 + e_4 + \sum_{i\in A}d_i$ as appropriate. Note that it is possible that $\langle w_2, d_i\rangle = 0$ for all $0\leq i \leq n$, and in this case $w_2 = -e_2 + e_3 + e_4$.

Consider lastly the case when $s^*_4 > |\sigma|_1 + 1$. By Lemma \ref{lem:hardeightcombinatorial}, we observe that $s^*_4 \leq s^*_2 + s^*_1 + s^*_5 + s^*_6 + s^*_7 + s^*_8 + |\sigma|_1 + 1$. We will describe an iterative algorithm to produce $w_4$ where we first produce $w_4|_{E_8}$. First, set $w_4|_{E_8} = -e_4$. Proceed in steps, as follows.

\begin{itemize}
    \item Step 1: If $0 < s^*_2 \leq |\sigma|_1 + 1$, set $w_4|_{E_8} = w_4|_{E_8} + e_2$.\\
    \item Step 2: If $\langle w_4|_{E_8},\tau \rangle < \langle w_4|_{E_8} + e_1, \tau \rangle \leq 0$ or $|w_4|_{E_8}| = 4$ and $\langle w_4|_{E_8}, \tau\rangle < \langle w_4|_{E_8} + e_j, \tau\rangle \leq 0$ for some $j \in \{1,5,6,7,8\}$, then set $w_4|_{E_8} = w_4|_{E_8} + e_1$, else proceed to Step 7.\\
    \item Step 3: If $\langle w_4|_{E_8},\tau \rangle < \langle w_4|_{E_8} + e_5, \tau \rangle \leq 0$ or $|w_4|_{E_8}| = 4$ and $\langle w_4|_{E_8}, \tau\rangle < \langle w_4|_{E_8} + e_j, \tau\rangle \leq 0$ for some $j \in \{5,6,7,8\}$, then set $w_4|_{E_8} = w_4|_{E_8} + e_5$, else proceed to Step 7.\\
    \item Step 4: If $\langle w_4|_{E_8},\tau \rangle < \langle w_4|_{E_8} + e_6, \tau \rangle \leq 0$ or $|w_4|_{E_8}| = 4$ and $\langle w_4|_{E_8}, \tau\rangle < \langle w_4|_{E_8} + e_j, \tau\rangle \leq 0$ for some $j \in \{6,7,8\}$, then set $w_4|_{E_8} = w_4|_{E_8} + e_6$, else proceed to Step 7.\\
    \item Step 5: If $\langle w_4|_{E_8},\tau \rangle < \langle w_4|_{E_8} + e_7, \tau \rangle \leq 0$ or $|w_4|_{E_8}| = 4$ and $\langle w_4|_{E_8}, \tau\rangle < \langle w_4|_{E_8} + e_j, \tau\rangle \leq 0$ for some $j \in \{7,8\}$, then set $w_4|_{E_8} = w_4|_{E_8} + e_7$, else proceed to Step 7.\\
    \item Step 6: If $\langle w_4|_{E_8},\tau \rangle < \langle w_4|_{E_8} + e_8, \tau \rangle \leq 0$ or $|w_4|_{E_8}| = 4$ and $\langle w_4|_{E_8}, \tau \rangle < \langle w_4|_{E_8} + e_8, \tau\rangle \leq 0$, then set $w_4|_{E_8} = w_4|_{E_8} + e_8$.\\
    \item Step 7: If $\langle w_4|_{E_8}, \tau \rangle = |\sigma|_1 + 1$, then set $w_4 = w_4|_{E_8} + 2d_0 + \sum_{i = 1}^nd_i$, else set $w_4 = w_4|_{E_8} + \sum_{i \in A} d_i$ for $A$ maximal such that $\langle w_4|_{E_8}, \tau \rangle = - \sum_{i \in A} \sigma_i$.
    
\end{itemize}       

\begin{rmk}
    Note that it is possible that $\langle w_4, d_i\rangle = 0$ for all $0 \leq i \leq n$.
\end{rmk}

We call the set $\mathcal W:=\{w_1, \ldots, w_8\}$ an $E_8$-\emph{extension set} for $\tau$.

The vectors $w_1,\ldots, w_8,v_1,\ldots, v_n$ are clearly linearly independent. The fact that they span $L$ is straightforward to verify, too: conditioning on whether $s^*_2> |\sigma|_1 + 1,s^*_3 > |\sigma|_1 + 1$, and $s^*_4> |\sigma|_1 +1$, add suitable multiples of $w_2,w_3,w_4$ in turn, followed by suitable multiples of $w_1,w_5,\ldots w_8, v_n,\ldots, v_1$ in turn to a given $x \in L$ to produce a sequence of vectors that converges to $0$.

\begin{defin}
The set $S= \mathcal V \cup \mathcal W$ constitutes the \emph{standard basis} for $L$. In keeping with \cite[Definition 3.11]{Gre13}, a vector $v \in \mathcal S$ is
\begin{itemize}
    \item \emph{tight} if the projection of $v$ onto $\mathbb Z^{n+1}$ is $2d_0 + \sum_{i = 1}^n d_i$;
    \item \emph{gappy} if the projection of $v$ onto $\mathbb Z^{n+1}$ is $\sum_{i\in A}d_i$, $A\neq \emptyset$, and $A$ does not consist of consecutive integers; 
    \item \emph{just right} if the projection of $v$ onto $\mathbb Z^{n+1}$ is $\sum_{i \in A}d_i$ and $A = \emptyset$ or $A$ consists of consecutive integers.
    \end{itemize}
    
    A \emph{gappy index} for a gappy vector $v_j$ is an index $k \in A$ such that $k + 1\not \in A\cup \{j\}$. A \emph{gappy index} for a gappy vector $w_j$ is an index $k \in A$, $k < n$, such that $k + 1 \not\in A$. A vector $w_j\in \mathcal W$ is \emph{loaded} if $s^*_j > |\sigma|_1 + 1$, and otherwise it is \emph{unloaded}.
\end{defin} 
Every vector in $\mathcal S$ is either tight, gappy, or just right (exclusively). At most one vector in each of the sets $\{w_2, w_3\}$, $\{w_4\}$ is loaded, and no other basis elements are loaded.

\begin{lem}\label{lem:irreducible}
The standard basis elements of an $E_8$-changemaker lattice are irreducible.
\end{lem}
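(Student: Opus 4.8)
The plan is to argue by contradiction: suppose some standard basis element $v \in \mathcal S$ is reducible, so $v = x + y$ with $x, y \in L$ nonzero and $\langle x, y \rangle \geq 0$. Since $|v| = |x| + |y| + 2\langle x, y\rangle$ and each norm is a positive integer, we get $|x|, |y| < |v|$. I would first dispose of the vectors in $\mathcal V$ by citing \cite[Lemma 3.11 or its proof]{Gre13} almost verbatim: the projection $\pi$ of $L$ onto $\mathbb Z^{n+1}$ sends $v_j$ to $-d_j + (\text{sum of } d_i, i < j)$, and a splitting $v_j = x + y$ would project to a splitting in $\mathbb Z^{n+1}$ with non-negative cross term, which Greene's changemaker analysis rules out (the coefficient $-1$ on $d_j$ must sit entirely in $x$ or $y$, forcing the other summand to be a non-negative combination of $d_0, \dots, d_{j-1}$, which contradicts the maximality built into the choice of $A$ — a vector $-d_j + \sum_{i \in A'} d_i$ with $A' < A$ pairs too negatively with $\tau$, or fails to lie in $(\sigma)^\perp$). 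So the new content is entirely about the eight vectors in $\mathcal W$.

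For $w_j \in \mathcal W$, write $w_j = (r_j, \eta_j) \in E_8 \oplus \mathbb Z^{n+1}$ where $r_j = w_j|_{E_8}$ is $-e_j$ or $-e_j$ plus a small positive root (as dictated by the algorithm), and $\eta_j$ is the $\mathbb Z^{n+1}$-part. Suppose $w_j = x + y$ with $\langle x, y\rangle \geq 0$, and write $x = (x', x'')$, $y = (y', y'')$. Then $r_j = x' + y'$ in $E_8$ and $\langle x', y'\rangle + \langle x'', y''\rangle \geq 0$. The key point is that $r_j$ has very small norm: $|r_j| \in \{2, 3, 4\}$ (it is $-e_j$, or $-e_j$ plus one or two simple roots forming a connected subdiagram, so $|{-e_3 + e_2}| = 2$, $|{-e_2 + e_3 + e_4}| = 2$, etc., and the norm-$4$ possibilities appear only for $w_4$). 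I would enumerate, in the $E_8$ lattice, all ways to write such a small-norm vector $r_j$ as an ordered sum $x' + y'$; because $E_8$ has no vectors of norm $1$ and only $240$ roots, this is a short finite check, and in each decomposition one computes $\langle x', y'\rangle$. When $|r_j| = 2$ the only decompositions into nonzero vectors have $\langle x', y'\rangle \leq -1$ (two roots summing to a root, or a root plus a norm-$4$ vector, etc.), and when $|r_j| \in \{3,4\}$ one gets $\langle x', y'\rangle \geq -2$; combined with $\langle x'', y''\rangle \geq -\langle x', y'\rangle$ this forces the $\mathbb Z^{n+1}$-parts to pair non-negatively (or at worst $\geq -1$). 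Then I invoke exactly the changemaker/projection argument from Greene: $\eta_j$ is one of $2d_0 + \sum_{i=1}^n d_i$ or $\sum_{i \in A} d_i$ with $A$ chosen maximally, and such a vector cannot split in $\mathbb Z^{n+1}$ with cross term $\geq -1$ in a way compatible with the $E_8$-splitting unless one of $x, y$ is zero — here the maximality of $A$ in the construction of $w_j$ (Lemma \ref{lem:hardeightcombinatorial} guaranteeing $r$ is ``especially simple'' so that $0 \leq -e_j + r \leq |\sigma|_1 + 1$) is what rules out the would-be decomposition, just as it does for the $v_j$.

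The main obstacle, and where I would spend the most care, is the loaded vector $w_4$ when $|r_4| = 4$: here $r_4 = -e_4 + e_a + e_b + e_c + e_d$ for a connected four-element sub-Dynkin-diagram, the norm is $4$, and there are genuinely several ways to split $r_4$ in $E_8$ (e.g. as a sum of two roots whose cross term is $0$, which would be dangerous). I would handle this by checking that the algorithm producing $w_4$ never outputs such a ``balanced'' $r_4$: the steps add $e_i$'s one at a time only while the pairing $\langle w_4|_{E_8}, \tau\rangle$ strictly increases and stays $\leq 0$, which constrains $r_4$ to lie along a path in the Dynkin diagram starting at $e_4$ and hence to be of the form $-e_4 + e_2$, $-e_4 + e_1 + (\text{tail of } e_5 e_6 e_7 e_8)$, etc. — never a configuration that decomposes into two orthogonal roots. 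For these ``path'' vectors one verifies directly that any nontrivial $E_8$-splitting has strictly negative cross term, so $\langle x'', y''\rangle \geq 1 > 0$ is impossible given that $\eta_4$ (a single $0$/$1$-combination of $d_i$'s, possibly with a $2d_0$) has no such positively-paired splitting. Having checked all eight cases, we conclude no $w_j$ is reducible, completing the lemma. $\qed$
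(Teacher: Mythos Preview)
There is a genuine gap in the loaded case, together with some factual errors that signal the difficulty.

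First, the small errors: $E_8$ is an even lattice, so $|r_j| = 3$ never occurs, and your examples are miscalculated --- for instance $\langle e_2, e_3\rangle = 0$ in the chosen Dynkin diagram, so $|-e_3 + e_2| = 4$, not $2$, and likewise $|-e_2 + e_3 + e_4| = 4$. In fact every loaded $w_j$ has $|w_j|_{E_8}| = 4$. Also, for $v_j \in \mathcal V$ you cannot simply project to $\mathbb Z^{n+1}$: a putative splitting $v_j = x + y$ could have $x|_{E_8} = z = -y|_{E_8}$ nonzero, and the paper disposes of this by first showing $\sum x_i y_i \leq 1$ (from Greene) and then using $|z| \geq 2$ for $z \neq 0$ to force $z = 0$.

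The main gap is your assertion that for the loaded $w_j$'s, ``any nontrivial $E_8$-splitting has strictly negative cross term.'' This is false. Writing $x|_{E_8} = -e_j + r + z$ and $y|_{E_8} = -z$, the cross term $\langle -e_j + r + z, -z\rangle$ can equal $0$ for many roots $z$: precisely when $\langle e_j, z\rangle = 1$ and $\langle r, z\rangle = -1$ (and also when $|z| = 4$ there are further cases with cross term $-1$ that must be handled in the tight case). The paper cannot rule these out by $E_8$ structure alone; instead it invokes Proposition~\ref{prop:technicalpairings}, a computer-verified statement that for each such $z$ one has $\langle z, \tau\rangle > |\sigma|_1 + 1$ (or a similar inequality), which then forces some $y_k \geq 2$ and some $x_l \leq -1$ in the $\mathbb Z^{n+1}$-part, contradicting $\langle x, y\rangle \geq 0$. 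In other words, the obstruction lives in the interaction between the $E_8$-splitting and the changemaker $\tau$, not in $E_8$ alone, and this is exactly the content you are missing. Your sketch would need an analogue of Proposition~\ref{prop:technicalpairings} (or a direct replacement argument) to close.
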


In order to establish Lemma \ref{lem:irreducible}, we will make use of the following structural proposition regarding vectors of norm at most 4 in $E_8$ and their pairings against loaded elements in the $E_8$ extension set $\mathcal W$. 

\begin{prop}\label{prop:technicalpairings}
Let $w_j$ be a loaded vector in the standard basis of the $E_8$-changemaker lattice $(\tau)^\perp \subset E_8 \oplus \mathbb Z^n$ with $w_j|_{E_8} = -e_j + r$ for some positive root $r$, and let $z\in E_8 \oplus \{0\}^n$. 
\begin{enumerate}
    \item If $z$ is a positive root and $\langle -e_j + r + z, -z\rangle = 0$, then $\langle z, \tau\rangle > |\sigma|_1 + 1$. 
    \item If $z$ is a positive root and $\langle -e_j + r + z, -z\rangle = -1$, then $\langle z, \tau \rangle = 0$, $\langle z, \tau \rangle > |\sigma|_1 + 1$, or $\langle -e_j + r + z, \tau\rangle > 0$.
    \item If $|z| = 4$ and $\langle -e_j + r + z, -z \rangle = -1$, then either $\langle z, \tau\rangle \leq 0$, or $\langle z, \tau\rangle > |\sigma|_1 + 1$, or $\langle -e_j + r + z, \tau \rangle = 0$ and $|-e_j + r + z| = 2$, or $\langle -e_j + r + z, \tau\rangle > 0$.
\end{enumerate}
\end{prop}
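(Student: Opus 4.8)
The plan is to prove Proposition~\ref{prop:technicalpairings} by a finite case analysis over the possible loaded vectors, powered by the positivity coming from the fundamental Weyl chamber. Recall that a loaded $w_j$ has $j\in\{2,3,4\}$ and $w_j|_{E_8}=-e_j+r$ with $r$ a positive root orthogonal to $e_j$, so $|w_j|_{E_8}|=4$ and $\langle w_j|_{E_8},\tau\rangle=\langle w_j|_{E_8},s\rangle\in[-(|\sigma|_1+1),0]$ by construction; moreover there are only finitely many shapes for $r$, namely $e_4$, $e_3$, $e_3+e_4$ (when $j=2$), $e_2$ (when $j=3$), and, when $j=4$, a connected subpath of $e_2-e_1-e_5-e_6-e_7-e_8$ built by Steps 1--6 of the algorithm. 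I would first record, for each shape, the subcase hypotheses that produced it (for instance $s^*_3=0$ when $r=e_4$; $s^*_2<s^*_3+s^*_4$ versus $s^*_2\ge s^*_3+s^*_4$ in the two $j=2$ cases involving $e_3$; the ``else proceed to Step 7'' exit conditions of Steps 2--7 when $j=4$), alongside the global bounds of Lemma~\ref{lem:hardeightcombinatorial}, especially $s^*_i\le|\sigma|_1+1$ for $i\in\{1,5,6,7,8\}$.

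The mechanism is as follows. First rewrite the hypotheses on $z$ in terms of $w:=w_j|_{E_8}$ using $\langle -e_j+r+z,-z\rangle=-\langle w,z\rangle-|z|^2$: part (1) becomes $\langle w,z\rangle=-2$ (so $|w+z|=2$), part (2) becomes $\langle w,z\rangle=-1$ with $|w+z|=4$, and part (3) becomes $\langle w,z\rangle=-3$, which forces $|w+z|=2$ automatically (so the clause $|-e_j+r+z|=2$ in (3) is free). Since $s$ lies in the fundamental Weyl chamber, $\langle z,\tau\rangle=\langle z,s\rangle=\sum_i z_i s^*_i$ for $z=\sum_i z_i e_i$; when $z$ is a positive root this is $\ge z_j s^*_j$, so loadedness ($s^*_j>|\sigma|_1+1$) gives the conclusion outright whenever $z_j\ge1$. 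For parts (1) and (2) it thus suffices to treat $z_j=0$: then $\langle e_j,z\rangle=-\sum_{i\sim j}z_i\le0$ and $\langle r,z\rangle\ge-1$ (positive roots pair to at least $-1$), whence $\langle w,z\rangle\ge-1$. This immediately settles part (1). In part (2) it forces $\sum_{i\sim j}z_i=0$ and $\langle r,z\rangle=-1$, which in each shape of $r$ leaves only a short explicit list of roots $z$ (typically $e_3$, $e_4$, $e_2$, or a short chain $e_1+e_5+\cdots$); for each, one computes $\langle z,\tau\rangle$ directly and observes that either $\langle z,\tau\rangle=0$, or $\langle z,\tau\rangle>|\sigma|_1+1$, or $\langle w+z,\tau\rangle=-s^*_j+\langle r+z,s\rangle$ is forced to be positive by precisely the subcase inequality, or the Step 2--7 exit condition, that pinned down $r$ in the first place.

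Part (3) is the step I expect to be the main obstacle, because there $z$ need only have norm $4$, so it may have coordinates of mixed sign and the clean bound $\langle z,\tau\rangle\ge z_j s^*_j$ is unavailable. I would normalize using the fact that every norm-$4$ vector of $E_8$ is a sum of two orthogonal roots $z=\alpha+\beta$; then $\langle w,\alpha\rangle+\langle w,\beta\rangle=-3$ forces, after relabeling, $\langle w,\alpha\rangle=-2$ and $\langle w,\beta\rangle=-1$, reducing matters to one application of the part (1)/(2) analysis for $\alpha$ followed by a single further root $\beta$. Splitting on the sign of the $e_j$-coordinate of $z$, the negative case yields the new alternative $\langle z,\tau\rangle\le0$, the positive case yields $\langle z,\tau\rangle>|\sigma|_1+1$ as before, and the zero case is again governed by the subcase relation, now producing $\langle w+z,\tau\rangle\ge0$ with equality exactly when $w+z$ is a root orthogonal to $\tau$. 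The fiddliest bookkeeping is for $j=4$: one must match the exit conditions of Steps 2--7 --- the clauses $\langle w_4|_{E_8},\tau\rangle<\langle w_4|_{E_8}+e_k,\tau\rangle\le0$ and $|w_4|_{E_8}|=4$ --- to the statement that no single admissible simple root improves the pairing while keeping it nonpositive, and then conclude that ``$\langle w+z,\tau\rangle<0$ together with $\langle z,\tau\rangle>0$'' would contradict the greedy choices the algorithm made. Throughout, the root list in Figure~7 and the Hasse diagram in Figure~6 keep the finitely many sub-verifications honest; where the casework threatens to get out of hand, it can be delegated to a computer search over the roots and norm-$4$ vectors of $E_8$.
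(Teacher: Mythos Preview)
The paper's ``proof'' of this proposition is purely computational: a Jupyter notebook enumerates all loaded shapes $-e_j+r$, all positive roots, and all norm-$4$ vectors of $E_8$, and checks the three assertions directly. Your proposal is genuinely different and more conceptual, and it is worth recording what it buys.

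For parts (1) and (2) your argument is clean and correct. Writing $w=-e_j+r$ and using $\langle w+z,-z\rangle=-\langle w,z\rangle-|z|$ gives $\langle w,z\rangle=-2$ in (1) and $-1$ in (2). Since $s$ lies in the fundamental Weyl chamber and $z$ is a positive root, $\langle z,\tau\rangle=\sum_i z_i s^*_i\ge z_j s^*_j$, so $z_j\ge1$ finishes immediately. When $z_j=0$ you correctly observe $\langle e_j,z\rangle=-\sum_{i\sim j}z_i\le 0$ and $\langle r,z\rangle\ge-1$ (distinct positive roots in a simply-laced system pair to $\ge-1$), whence $\langle w,z\rangle\ge-1$; this kills (1) outright, and in (2) forces $z_i=0$ for $i\sim j$ together with $\langle r,z\rangle=-1$, leaving a handful of explicit $z$'s whose disposition is governed exactly by the branching inequality that selected $r$ in the construction. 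This is a real improvement over a blind enumeration.

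For part (3) your reduction is not as tight as you suggest. The decomposition $z=\alpha+\beta$ into orthogonal roots of $E_8$ exists, but $\alpha$ and $\beta$ need not be \emph{positive} roots, so the key inequality $\langle\alpha,\tau\rangle\ge\alpha_j s^*_j$ fails, and the ``splitting on the sign of the $e_j$-coordinate of $z$'' does not by itself yield $\langle z,\tau\rangle\le0$ in the negative case (other coordinates of $z$ can be large and positive). Your proposed endgame --- matching the greedy exit conditions of Steps~2--7 to the claim that no admissible $z$ strictly improves $\langle w,\tau\rangle$ while keeping it nonpositive --- is the right idea, but it is not obvious that a single step of the algorithm controls a norm-$4$ perturbation rather than just a simple-root one. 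In practice you end up with a finite check over norm-$4$ vectors for each loaded shape, which is exactly what the paper's notebook does. So: your approach handles (1) and (2) by hand and reduces (3) to a smaller computer search, whereas the paper simply runs the computer search for all three parts.
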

\begin{proof}[Sketch of proof]
The proposition is established by explicit computation in a Jupyter notebook, available at \url{https://github.com/caudellj/LoadedPairings}.
\end{proof}

\begin{proof}[Proof of Lemma \ref{lem:irreducible}]

Choose a standard basis element $v_j \in S$ and suppose that $v_j = x+y$ for $x, y \in L$ with $\langle x, y\rangle \geq 0$. In order to prove that $v_j$ is irreducible, it stands to show that one of $x$ and $y$ equals $0$. Since $v_j|_{E_8} = 0$, write $x = z + \sum_{i = 0}^nx_i d_i$ and $y = -z + \sum_{i=0}^n y_id_i$ for some $z \in E_8$. The proof of \cite[Lemma 3.13]{Gre13} establishes the inequality $\sum_{i=0}^nx_i y_i \leq 1$, so we may conclude that $z=0$ since $-\langle z, z\rangle \leq -2$ for all $z \in E_8\setminus \{0\}$. That $v_j$ is irreducible then follows from the same argument as in the proof of \cite[Lemma 3.13]{Gre13}.

Now choose a standard basis element $w_j \in S$.

\emph{Case 1}. Suppose that $w_j$ is not loaded and not tight. Write $x = -e_j + z + \sum_{i=1}^n x_id_i$ and $y = -z + \sum_{i = 1}^n y_id_i$ for some $z \in E_8$. Since $x_i + y_i \in \{0, 1\}$ for all $i$, we have $x_iy_i \leq 0$ for all $i$. Observe that $\langle -e_j + z, -z\rangle =\langle e_j, z\rangle -\langle z,z\rangle \leq 0$, with equality if and only if $z \in \{0, e_j\}$. If $w_j$ is reducible, it must be that $z \in \{0,e_j\}$ and $\langle x, y\rangle = 0$, and so $0\leq x_i,y_i\leq 1$ for all $i$. Therefore $x = 0$ or $y=0$, as desired.

\emph{Case 2}. Suppose that $w_j$ is loaded and not tight. According to our algorithm for producing standard basis elements, the projection of of $w_j$ onto $E_8$ is $-e_j + r$, where $r$ is some positive root and $\langle -e_j+ r, -e_j + r\rangle = 4$. In this case, write $ x = -e_j + r + z+ \sum_{i=1}^{n}x_id_i$ and $y = -z + \sum_{i = 1}^n y_id_i$ for some $z \in E_8 \oplus \{0\}^{n+1}$. Then $$\langle x, y\rangle = \langle -e_j + r,- z\rangle - \langle z, z\rangle + \sum_{i=0}^nx_iy_i.$$

Since $\langle w_j, d_i\rangle \in \{0,1\}$ for all $0\leq i\leq n$, it follows that $\sum_{i = 1}^nx_iy_i\leq 0$, as in Case 1. Observe that
\begin{equation}
    \langle -e_j+ r,-z \rangle - \langle z, z\rangle \leq \sqrt{\langle -e_j + r, -e_j + r\rangle \langle z, z\rangle}-\langle z, z\rangle = 2\sqrt{|z|}-|z|,
\end{equation}
it follows that $|z|\leq 4$. 

If $|z| = 4$, then we must have $\langle -e_j+ r,- z\rangle = 4$, i.e. $z = e_j - r$. Then $x = 0$ or $y = 0$. 

If $|z| = 2$, then $\langle -e_j, -z\rangle +\langle r, -z\rangle - |z| \leq 0$ and $z \in \mathcal R_+$. If we have equality, then it follows that $\langle e_j, z\rangle = 1 = -\langle r, z\rangle$, or else $z \in \{ e_j, -r\}$, in which case $x = 0$ or $y = 0$. Since $z$ is a positive root and $\langle e_j, z\rangle  = 1$, it follows that $\langle z, \tau\rangle \geq s^*_j \geq |\sigma|_1 + 1$ so that $y_k \geq 2$ for some $k$ and that $\langle -e_j + r + z, \tau \rangle \geq 1$ so that $x_l\leq -1$ for some $l$. However, then $\langle x, y\rangle \leq x_ky_k + x_l y_l \leq -3$, a contradiction.

\emph{Case 3}. Suppose that $w_j$ is not loaded and tight. In this case, write $x = -e_j + z + \sum_{i=1}^n x_id_i$ and $y = -z + \sum_{i = 0}^n y_id_i$. In contrast with the case where $w_j$ is not loaded and not tight, $\sum_{i = 0}^nx_iy_i \leq 0$ unless $x_1 = y_1 = 1$ and $x_iy_i = 0$ for all $i \neq 0$. If $\sum_{i=0}^nx_iy_i \leq 0$, then the argument in the case $w_j$ is not loaded and not tight shows that $w_j$ is irreducible. If $\sum_{i=0}^nx_iy_i = 1$, then $\langle -e_j + z, -z\rangle \geq -1$, and therefore $|z| = 2$. Now since $y_i \geq 0$ for all $i$, $z$ must be a positive root. Since $\langle e_j, e_i\rangle \leq 0$ for all $i\neq j$, it must that $z_j \geq 1$, in which case $\langle z, \tau \rangle \geq |\sigma|_1 + 1$. Then $y_k \geq 2$ for some $k\geq 2$, in which case $\langle x, y\rangle \leq -2$.

\emph{Case 4}. Suppose that $w_j$ is loaded and tight. Write $x = -e_j + r + z + \sum_{i=0}^nx_id_i$ and $y = -z + \sum_{i=0}^ny_id_i$. As in Case 3, we may assume that $x_1 = y_1 = 1$, and $x_iy_i = 0$ for all $i \neq 1$, and so we seek to rule out the existence of a $z\in E_8$ such that $\langle -e_j + r + z, -z\rangle = -1$ and $\langle x, y \rangle = 0$. If $|z|\geq 6$, then $\langle -e_j + r + z, -z \rangle < -1$, so conclude that $|z| \leq 4$. 

Supposing that $|z| = 4$, then $\langle -e_j + r + z, -z\rangle = -1$, and supposing that $|z| = 2$, then $\langle -e_j + r + z, -z \rangle = -1$. In both cases, Proposition \ref{prop:technicalpairings} tells us that there are no such $z \in E_8\oplus \{0\}^n$ that yield $x, y \in (\tau)^\perp$ such that $w_j = x+y$ and $\langle x, y\rangle \geq 0$.
\end{proof}

\begin{lem}
If a standard basis element is not tight, it is unbreakable.
\end{lem}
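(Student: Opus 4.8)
The plan is to rerun the case analysis from the proof of Lemma~\ref{lem:irreducible} under the weaker hypothesis that a non-tight standard basis element $v$ can be written $v = x + y$ with $\langle x, y\rangle = -1$ and $|x|, |y| \geq 3$, the goal being to show that no such decomposition exists --- equivalently, that every decomposition $v = x+y$ with $\langle x, y\rangle = -1$ has $\min(|x|,|y|) \leq 2$. As in that proof, I would split with respect to $E_8 \oplus \mathbb Z^{n+1}$: set $w_0 := v|_{E_8}$, so that $|w_0| = 0$ when $v \in \mathcal V$, $|w_0| = 2$ when $v$ is an unloaded element of $\mathcal W$, and $|w_0| = 4$ when $v$ is loaded; write $x = u + \sum_i x_i d_i$ and $y = (w_0 - u) + \sum_i y_i d_i$ with $u \in E_8$.

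The first step is to bound $u$. Because $v$ is not tight, each coordinate $x_i + y_i = \langle v, d_i\rangle$ lies in $\{-1,0,1\}$ (if $v \in \mathcal V$) or $\{0,1\}$ (if $v \in \mathcal W$), so $x_i y_i \leq 0$ for every $i$, and hence
\[
-1 \;=\; \langle x, y\rangle \;\leq\; \langle u, w_0 - u\rangle \;=\; \langle u, w_0\rangle - |u| \;\leq\; \sqrt{|u|\,|w_0|} - |u|
\]
by Cauchy--Schwarz; applying the same inequality to $w_0 - u$, I conclude $u = 0$ when $|w_0| = 0$, $|u| \in \{0,2\}$ when $|w_0| = 2$, and $|u| \in \{0,2,4\}$ when $|w_0| = 4$ (and symmetrically for $w_0 - u$). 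So in every case $u$ and $w_0 - u$ are $0$, roots, or norm-$4$ vectors.

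Next I would handle the case $u = 0$, where the decomposition is diagonal up to the fixed $E_8$-shift $w_0$. If $v \in \mathcal V$ this is exactly the situation of Greene's changemaker bases, for which \cite{Gre13} already proves that non-tight basis vectors are unbreakable, and since $\mathcal V$ is the changemaker basis of $(\sigma)^\perp$ we are done. If $v = w_j \in \mathcal W$, then $\sum_i x_i y_i = -1$ isolates a single coordinate $i_0$ with $\{x_{i_0},y_{i_0}\} = \{1,-1\}$ and $\langle w_j, d_{i_0}\rangle = 0$, and the remaining support of $w_j|_{\mathbb Z^{n+1}}$ is partitioned between $x$ and $y$; pairing with $\tau$ and using that $s$ lies in the fundamental Weyl chamber forces the piece not carrying $-e_j$ to take the form $-d_{i_0} + \sum_{i\in S} d_i$ with $\sum_{i\in S}\sigma_i = \sigma_{i_0}$, and then the maximality built into the construction of $w_j$ together with monotonicity of $\sigma$ forces $|S| \leq 1$, so that piece has norm $\leq 2$.

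The remaining cases have $|w_0| \geq 2$ (so $v \in \mathcal W$) with $u$ a nonzero root or norm-$4$ vector. When $|w_0| = 2$, i.e. $v$ is unloaded and $w_0 = -e_j$, both $u$ and $w_0 - u$ are roots; writing each as a positive root or its negative, the subcase where the signs agree is ruled out because a simple root is not a sum of two positive roots, and in the mixed-sign subcase a short pairing-with-$\tau$ argument (again using $s \in \mathcal C$) forces one piece to have norm $2$. When $|w_0| = 4$, i.e. $v$ is loaded and $w_0 = -e_j + r$, I would enumerate the finitely many ways to write $w_0$ as a sum of two vectors of norm in $\{0,2,4\}$ and dispatch each using Proposition~\ref{prop:technicalpairings} exactly as in Case~4 of the proof of Lemma~\ref{lem:irreducible}. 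This last case is the main obstacle: the hypothetical decomposition can a priori leave a root on each side, and ruling this out requires the same delicate enumeration of norm-$\leq 4$ vectors in $E_8$ and their pairings against loaded extension vectors that underlies Proposition~\ref{prop:technicalpairings}, for which a computer-assisted verification is the cleanest route.
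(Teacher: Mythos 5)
Your proposal is correct and follows essentially the same route as the paper's own proof: bound the $E_8$-component of each summand using $x_iy_i\leq 0$ together with Cauchy--Schwarz, reduce the diagonal case to Greene's unbreakability lemma (for $\mathcal V$) and the maximality of $A$ plus monotonicity of $\sigma$ (for $\mathcal W$), rule out the root case by pairing with $\tau$ using $s\in\mathcal C$, and invoke Proposition \ref{prop:technicalpairings} for the loaded case. No substantive differences or gaps to report.
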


\begin{proof}
Suppose that $v_i$ is not tight. If $v_i$ is breakable, then we may write $v_i = x+y$ with $x = z +\sum_{i=1}^nx_id_i$ and $y = -z + \sum_{i = 1}^n y_id_i$ for some $z \in E_8$ such that $|x|\geq 3$, $|y|\geq 3$, and $\langle x, y\rangle =-1$. Then $$\langle x, y \rangle = \langle z, -z\rangle + \sum_{i = 1}^n x_iy_i\leq \langle z, -z\rangle\leq -2 \text{ if } z \neq 0,$$ so we may conclude that $z = 0$. That $v_i$ is unbreakable then follows from the proof of \cite[Lemma 3.15]{Gre13}. 

Suppose that $w_j$ is unloaded and not tight. Write $w_j = x+y$ with $x = -e_j  + z + \sum_{i = 0}^nx_id_i$ and $y = -z + \sum_{i=0}^n y_i d_i$ for some $z \in E_8$ such that $|x|\geq 3$, $|y|\geq 3$, and $\langle x, y\rangle = -1$. But since $$\langle x, y \rangle = \langle -e_j + z , -z\rangle + \sum_{i = 0}^nx_iy_i,$$
it must be that either (1) $\langle -e_j  + r + z, -z\rangle = -1$ and $x_iy_i\geq 0$ for $i = 0,\ldots, n$, or (2) $\langle -e_j + r + z, -z\rangle = 0$ and $x_ky_k = -1$ for a unique index $k$ and $x_iy_i = 0$ for all other indices. In either scenario, we must have that $|z| \leq 2$ since $w_j|_{E_8} = -e_j$. In (1), we must have $x_i,y_i \in \{0,1\}$ for $i = 1, \ldots, n$ and $\langle z , \tau\rangle \geq 1$, and in particular $|z| = 2$. Then since $|z| = 2$ and $\langle -e_j , -z\rangle = 1$, we must have $z_j \geq 1$, in which case $\langle -e_j + z, \tau \rangle \geq 0$ and therefore $x_i = 0$ for $i = 1, \ldots, n$. But then $|x| = \langle -e_j + z, -e_j + z\rangle = 2$. In (2), without loss of generality we may assume $z = 0$, so that $y = \sum_{i = 1}^ny_id_i$, $y_k = -1$ (and so $x_k = 1$, and $y_i = 1$ if and only if $i \in \text{supp}(w_j)$. Then either $|y| = 2$ or $k = \max(\text{supp}(y))$, in which case $w_j << x$.

Suppose that $w_j$ is loaded and not tight. Write $w_j = x+y$ with $x = -e_j  + r + z + \sum_{i = 0}^nx_id_i$ and $y = -z + \sum_{i=0}^n y_i d_i$ for some $z \in E_8$. Then, as in the case $w_j$ is unloaded, either (I) $\langle -e_j + z, -z\rangle = -1$ and $x_iy_i\geq 0$ for $i = 0,\ldots, n$, or (II) $\langle -e_j + z, -z\rangle = 0$ and $x_ky_k = -1$ for a unique index $k$ and $x_iy_i = 0$ for all other indices. In (I), we must then have $x_i,y_i \in \{0,1\}$ for $i = 0,1, \ldots, n$ and (I.A) $|z| = 2$ and $\langle -e_j + r + z, -z\rangle = -1$ or (I.B) $|z| = 4$ and $\langle -e_j + r + z, -z\rangle = -1$. Items (2) and (3) of Proposition \ref{prop:technicalpairings} show that there are no such $z$ satisfying $w_j = x+y$, $x, y \in (\tau)^\perp$, $|x|,|y|\geq 3$, and $\langle x, y\rangle = -1$ for any $E_8$-changemaker $\tau$ and loaded, non-tight $w_j$. In (II), we are in the hypotheses of item (1) of Proposition \ref{prop:technicalpairings}, which shows that there are again no such $z$.
\end{proof}

\subsection{Standard basis elements and intervals.}

Taken together, (1) of Proposition \ref{prop:linearirreducibles} and Lemma \ref{lem:irreducible} then imply the following about the standard basis elements of $(\tau)^\perp$. 

\begin{prop}
For each standard basis element $v \in \mathcal S$, there is some interval $T(v)$ such that $v = \epsilon(v)[T(v)]$ for some $\epsilon(v)\in \{\pm 1\}$. \qed
\end{prop}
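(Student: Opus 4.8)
The plan is to deduce the statement immediately from the two results just cited, so the proof amounts to assembling them correctly. First I would make explicit the standing hypothesis in force at this point: we are analyzing an $E_8$-changemaker $\tau = (s,\sigma) \in E_8 \oplus \mathbb Z^{n+1}$ with $n \geq 2$ whose orthogonal complement is a sum of linear lattices, so by Theorem \ref{thm:hardeightembedding} we have $(\tau)^\perp \cong \bigoplus_k \Lambda(p_k,q_k)$ (which is $\Lambda(p,q)$ or $\Lambda(p_1,q_1)\oplus\Lambda(p_2,q_2)$ in the cases relevant to the main theorems). Fix once and for all an isometry $\Phi$ from $(\tau)^\perp$ onto this sum of linear lattices, the latter equipped with its vertex basis; this is the identification that henceforth gives meaning to the word ``interval'' for vectors of $(\tau)^\perp$.

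Next I would invoke Lemma \ref{lem:irreducible}: every element $v$ of the standard basis $\mathcal S = \mathcal V \cup \mathcal W$ is irreducible in $(\tau)^\perp$. Irreducibility is a purely lattice-theoretic property, hence is preserved by the isometry $\Phi$, so $\Phi(v)$ is an irreducible vector of $\bigoplus_k \Lambda(p_k,q_k)$. Now apply part (1) of Proposition \ref{prop:linearirreducibles}: the irreducible vectors of a sum of linear lattices are precisely the vectors $\pm[T]$ with $T$ an interval in some summand $\Lambda(p_k,q_k)$. Thus there are an interval $T$ and a sign $\epsilon(v)\in\{\pm 1\}$ with $\Phi(v) = \epsilon(v)[T]$, and setting $T(v) := \Phi^{-1}(T)$ yields $v = \epsilon(v)[T(v)]$, as required.

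There is essentially no obstacle to clear here, since all the real content has already been spent in establishing Lemma \ref{lem:irreducible}. The only point demanding any care is bookkeeping: one must be content to \emph{define} intervals (and the bracket $[\cdot]$) in $(\tau)^\perp$ via the fixed identification $\Phi$ with the vertex basis of $\bigoplus_k \Lambda(p_k,q_k)$, since $(\tau)^\perp$ carries no distinguished vertex basis a priori, only the standard basis $\mathcal S$. Once this convention is in place -- and it will be used throughout the subsequent sections anyway -- the proposition is a formal consequence of Lemma \ref{lem:irreducible} and Proposition \ref{prop:linearirreducibles}(1), which is why it is appropriate to record it with \qed.
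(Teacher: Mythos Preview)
Your proposal is correct and matches the paper's approach exactly: the paper states explicitly that the proposition follows by taking together Proposition \ref{prop:linearirreducibles}(1) and Lemma \ref{lem:irreducible}, and records it with a bare \qed. Your added care about fixing the isometry $\Phi$ to make sense of ``interval'' in $(\tau)^\perp$ is a reasonable elaboration of what the paper leaves implicit.
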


The interval $T(v)$ is either breakable, in which case $v$ is tight, or it is unbreakable. For an unbreakable interval $T$ with $|[T]| \geq 3$, then $T$ contains a single vertex with norm $\geq 3$. Let $z(T)$ denote the unique vertex of norm $\geq 3$ contained in the unbreakable interval $T$. Breakable intervals play an important role in the analysis of standard bases, and we record how breakable intervals may pair against unbreakable intervals in a linear lattice in the following lemma.

\begin{lem}
Suppose that $T$ is a breakable interval, and that $V$ is an unbreakable interval. If $|[V]|\geq 3$, then $[T]\cdot [V]$ equals

\begin{enumerate}
    \item $|[V]| -1$, iff $V \prec T$;
    \item $|[V]| -2$, iff $z(V) \in T$ and $V \pitchfork T$, or $|[V]| =3$, and $V\dag T$;
    \item $1$, iff $|[V]| = 3$, $z(V) \in T$, $V \pitchfork T$, and $\epsilon_V\epsilon_T=\epsilon$; 
    \item $-1$, iff $V \dag T$; or
    \item 0, iff $z(V) \not \in T$ and either $V$ and $T$ are distant or $V \pitchfork T$.
\end{enumerate}
If $|[V]| = 2$, then $|[V]\cdot [T]| \leq 1$, with equality iff $V$ and $T$ abut. 
\end{lem}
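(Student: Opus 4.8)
Here is my plan for proving the lemma on pairings between a breakable interval $T$ and an unbreakable interval $V$ in a sum of linear lattices.

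\textbf{Setup.} Write $T = \{x_a, \ldots, x_b\}$ and $V = \{x_c, \ldots, x_d\}$ (both lying in the same linear summand $L_k$, otherwise the pairing is $0$ and every listed condition fails). The pairing $[T]\cdot[V] = \sum_{i,j}\langle x_i, x_j\rangle$ breaks up according to the relative position of the two intervals. The plan is to expand $[V] = x_c + \cdots + x_d$ and compute $[T]\cdot x_i$ for each $i$ using the tridiagonal structure of the vertex basis: $[T]\cdot x_i = -a_i + (\text{number of neighbors of }x_i\text{ inside }T)$ where the neighbors are $x_{i-1}, x_{i+1}$. Concretely, for $x_i$ in the interior of $T$ (i.e. $a < i < b$), $[T]\cdot x_i = -a_i + 2$; for an endpoint $x_a$ or $x_b$ of $T$ that is not isolated, $[T]\cdot x_i = -a_i + 1$; for $x_i$ immediately adjacent to $T$ on the outside, $[T]\cdot x_i = 1$; and for $x_i$ distant from $T$, $[T]\cdot x_i = 0$. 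The only subtlety is that for the norm-$2$ vertices of $L_k$ we have $-a_i + 2 = 0$, so interior norm-$2$ vertices of $V$ contribute nothing, and this is exactly what makes the answer depend only on $z(V)$, the unique norm-$\geq 3$ vertex of $V$ (when $|[V]| \geq 3$, Proposition \ref{prop:linearirreducibles}(4) guarantees there is exactly one such vertex, and it has norm exactly $3$ since otherwise $|[V]|$ would exceed what an unbreakable interval allows — here one uses $|[V]| = \sum(-a_i) + 2(|V|-1)$, so a single norm-$3$ vertex gives $|[V]| = 3$ and a norm-$\geq 4$ vertex is impossible for an irreducible-but-unbreakable interval with the relevant norm bound, and in general $z(V)$ has norm $3$ precisely when $|[V]| = 3$).

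\textbf{Case analysis.} I would organize by the mutual position of $V$ and $T$. (i) If $V \prec T$ (so $V \subsetneq T$), then every vertex of $V$ is interior to $T$ except possibly endpoints shared with $T$; the interior norm-$2$ vertices contribute $0$, $z(V)$ contributes $-3+2 = -1$ if interior, but one has to track whether $z(V)$ or the non-$z(V)$ endpoints of $V$ coincide with endpoints of $T$. Summing, one gets $|[V]|-1$; the bookkeeping is cleanest by writing $[V] = [V']+[T]\cap(\text{complement})$ type decompositions, or just directly: $[T]\cdot[V] = \sum_{x_i \in V}[T]\cdot x_i$, and since $V\prec T$ means $c > a$ or $d < b$ with the other endpoint possibly shared, the telescoping gives $-3 + 2(|V|-1) + (\text{endpoint corrections}) = |[V]| - 1$. (ii) If $z(V)\in T$ and $V\pitchfork T$: now part of $V$ sticks out of $T$; the protruding norm-$2$ tail contributes via one "outside-adjacent" $+1$ and the rest $0$, while $z(V)$ being interior or an endpoint of $T$ adjusts by $-1$ or $0$; total $|[V]|-2$. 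The subcase $|[V]|=3$ and $V\dag T$ (consecutive) is separate: then $[T]\cdot[V] = [T]\cdot x_{z(V)} + \cdots$; here $z(V)$ may or may not be the vertex of $V$ abutting $T$. (iii) For the claim $[T]\cdot[V] = 1$ when $|[V]|=3$, $z(V)\in T$, $V\pitchfork T$, and the signs match — here the single protruding norm-$2$ vertex of $V$ is adjacent to $T$, contributing $+1$, while $z(V)$ is an interior-or-shared-endpoint vertex of $T$; one must carefully see that the contributions cancel to leave exactly $+1$ (and note this is the sign-sensitive case, so $\epsilon_V\epsilon_T = \epsilon$ enters). (iv) $V\dag T$ consecutive with $|[V]|\geq 3$ generic: only the vertex of $V$ adjacent to $T$ contributes, giving $-a_i + 1$; if that vertex is $z(V)$ this is $-2$ (covered in (2)), otherwise it is $x_i$ of norm $2$ giving $+1$... so actually I need to recheck: $V\dag T$ means they are consecutive and disjoint, so the boundary vertex $x_c$ (say) of $V$ has its neighbor $x_{c-1} = x_b \in T$, hence $[T]\cdot x_c = \langle x_b, x_c\rangle = 1$, and $[T]\cdot x_i = 0$ for all other $x_i \in V$ — so $[T]\cdot[V] = 1$ if the boundary vertex is norm $2$. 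This forces me to be careful: the lemma says $-1$ iff $V\dag T$; so I think the intended reading is that $[T]$ should be taken with the convention that makes it $\epsilon_T[T(v)]$, i.e. signs. I would reconcile this by tracking $\epsilon$'s throughout: the honest statement is about $\epsilon_V\epsilon_T([T]\cdot[V])$ or the lemma has a fixed orientation convention that I should state at the outset.

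\textbf{Finite-norm-two-vertex case.} When $|[V]| = 2$, $V$ is a single norm-$2$ vertex (or a longer chain summing to norm $2$, but in a linear lattice an interval of norm $2$ must be a single vertex $x_i$ with $a_i = 2$, since longer intervals have norm $\sum(-a_j) + 2(|V|-1) \geq$ something larger). Then $[T]\cdot[V] = [T]\cdot x_i$, which is $1$ if $x_i$ is adjacent to $T$, $0$ if distant, and... if $x_i \in T$ it would be $-2 + (\#\text{neighbors in }T) \in \{0,-1,-2\}$ — but then $V\subset T$ is not "abutting", so the claim $|[T]\cdot[V]|\leq 1$ with equality iff $V,T$ abut needs $x_i\notin T$, which should follow since $V$ breakable... no, $V$ unbreakable. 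I'd need $V \not\subset T$; this holds because $V$ a single vertex inside breakable $T$ is allowed, giving $[T]\cdot x_i = -2 + (\text{1 or 2})$, which can be $0$ — consistent with $|[T]\cdot[V]|\leq 1$. So the $|[V]|=2$ statement is: $[T]\cdot[V] = 1$ iff $V,T$ share an endpoint or are consecutive (abut), $0$ otherwise (including $V\prec T$ with $z$ irrelevant), and $-1$ never occurs here — wait but abutting-via-consecutive on a single norm-$2$ vertex gives $+1$ and sharing an endpoint also gives... if $V = \{x_a\}\subset T$ then they share endpoint $x_a$ and $[T]\cdot x_a = -2+1 = -1$. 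So abut can give $\pm 1$. Fine: $|[T]\cdot[V]| \le 1$, equality iff abut. This is clean.

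\textbf{Main obstacle.} The real difficulty is the sign bookkeeping — the statement as written mixes an unsigned quantity $|[V]|-1$, $|[V]|-2$ with signed values $+1$, $-1$, so I expect the crux is to fix at the start the convention that $[T]$, $[V]$ denote the canonical positive representatives $[T(v)]$ with $\epsilon(v) = +1$ built into the vertex-basis orientation, and then verify that with that convention every case comes out with the claimed sign; the case where $\epsilon_V\epsilon_T = \epsilon$ is singled out precisely because there the sign is not forced by the interval positions alone. The combinatorial content — that interior norm-$2$ vertices of $V$ are invisible to $[T]$, so everything is controlled by $z(V)$ and the single protruding vertex — is routine once the telescoping sum $[T]\cdot[V] = \sum_{x_i\in V}[T]\cdot x_i$ is set up, using the values $[T]\cdot x_i \in \{-a_i + 2, -a_i + 1, -a_i, 1, 0\}$ according to position; the work is just enumerating the ways $z(V)$ and the endpoints of $V$ can sit relative to the endpoints of $T$.
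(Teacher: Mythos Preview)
Your approach is the paper's: its proof is literally the one-line sketch ``condition on how $V$ meets $T$ and whether $|[V]|\ge 3$,'' and your plan --- expand $[T]\cdot[V]=\sum_{x_i\in V}[T]\cdot x_i$, evaluate each term from the tridiagonal form according to the position of $x_i$ relative to $T$, and exploit that interior norm-$2$ vertices contribute $0$ so that only $z(V)$ and endpoint effects survive --- is exactly that case analysis. Your diagnosis that the sign bookkeeping is the real content (the lemma is implicitly about the signed pairing of standard basis vectors $\epsilon_T[T]$ and $\epsilon_V[V]$, which is why items (2)--(4) appear to overlap when $|[V]|=3$) is also correct.

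Two slips to fix. First, $z(V)$ does not have norm $3$ in general: from $|[V]|=\sum_i a_i-2(|V|-1)$ with every $a_i=2$ except $a_{z(V)}$ one gets $|z(V)|=|[V]|$, so your case (i) line ``$-3+2(|V|-1)+\cdots$'' should have $|[V]|$ in place of $3$, after which the telescoping yields $|[V]|-1$, $|[V]|-2$ uniformly for all $|[V]|\ge 3$. Second, an interval with $|[V]|=2$ need not be a single vertex --- any string $\{x_c,\ldots,x_d\}$ with every $a_i=2$ has $|[V]|=2$ --- so your final case must allow this; the same mechanism (interior norm-$2$ vertices vanish) still leaves only the endpoint-or-adjacent contribution, giving $\pm 1$ exactly when $V$ and $T$ abut and $0$ otherwise.
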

\begin{proof}[Proof sketch] The result follows by using the fact that $V$ is unbreakable and conditioning on how $V$ meets $T$ and whether or not $|[V]|\geq 3$.
\end{proof}

Greene uses this characterization of pairings together with the structure of changemaker basis elements to then prove the following lemmas.

\begin{lem}[Corollary 4.3 of \cite{Gre13}]
    A changemaker basis $\mathcal V$ contains at most one breakable vector, and it is tight. \qed
\end{lem}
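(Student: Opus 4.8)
The statement splits into two claims: any breakable element of $\mathcal V$ is tight, and there is at most one breakable element. The first is immediate. Every $v\in\mathcal V$ is a standard basis element, so by the preceding lemma (a standard basis element that is not tight is unbreakable), a breakable $v\in\mathcal V$ must be tight. It remains to rule out two tight breakable vectors.

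For the second claim the plan is to argue by contradiction. Suppose $v_a,v_b\in\mathcal V$ are tight with $a<b$; by the first claim we may assume both are breakable. From the construction $v_c=-d_c+2d_0+\sum_{1\le i\le c-1}d_i$ for $c\in\{a,b\}$, so $|v_a|=a+4$ and $\langle v_a,v_b\rangle=a+2=|v_a|-2$. Next I would dissect a breaking $v_b=x+y$ with $|x|,|y|\ge 3$ and $\langle x,y\rangle=-1$: writing $x_i+y_i=(v_b)_i$, the constraints $\langle x,\sigma\rangle=\langle y,\sigma\rangle=0$ and $\langle x,y\rangle=-1$, together with $\sigma_0=1$, force (after possibly swapping $x$ and $y$) the break into the canonical shape $y=d_0-d_m+\sum_{p\in S'}d_p$, $x=v_b-y$, for some $m\in\{1,\dots,b-1\}$ with $\sigma_m\ge 2$ and some $S'\subseteq\{1,\dots,b-1\}\setminus\{m\}$ with $\sum_{p\in S'}\sigma_p=\sigma_m-1$; in particular the break produces a short irreducible vector $y\in(\sigma)^\perp$. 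One also has to exclude breaks with a coordinate outside $\{0,\dots,b\}$: any such coordinate $x_l\ne 0$ contributes $-x_l^2\le -1$ to $\langle x,y\rangle$ on top of an already negative contribution from the rest of the coordinates, overshooting $-1$.

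With the short vector $y$ in hand and the identity $\langle v_a,v_b\rangle=|v_a|-2$, I would pass to the interval picture supplied by the preceding proposition: $v_a=\epsilon_a[T_a]$ with $T_a$ a breakable interval containing two vertices of norm $\ge 3$, $v_b=\epsilon_b[T_b]$, and $y=\epsilon_y[U]$ with $U$ containing at most one high-norm vertex. The near-maximal pairing $|\langle v_a,v_b\rangle|=|v_a|-2$ forces $T_a$ to lie almost entirely inside $T_b$, and feeding this together with $U$ into the pairing lemma relating breakable and unbreakable intervals produces a claw, a heavy triple, or an incomplete cycle in the pairing graph among $v_a$, $v_b$, $y$, and an adjacent standard basis element — incompatible with $\mathcal V$ being the changemaker basis of a sum of linear lattices, the standing setting here. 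The main obstacle is this two-stage rigidity argument: first pinning every break of a tight vector to the canonical form above (a finite but delicate bookkeeping over subset sums of $\sigma$, including the exclusion of far-flung support), and then extracting the precise forbidden configuration when two tight breakable vectors coexist, which is where the linear-lattice hypothesis, via the interval calculus, does the real work.
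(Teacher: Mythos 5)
Your first claim is handled correctly: breakability of an element of $\mathcal V$ forces tightness by the immediately preceding lemma, and that is exactly how the source handles it. (Bear in mind that the paper itself offers no proof of this statement at all --- it is imported wholesale as Corollary 4.3 of Greene's paper --- so what matters is whether your reconstruction of the second half actually closes. It does not.) The computation $\langle v_a,v_b\rangle=a+2=|v_a|-2$ for two tight vectors is right and is indeed the pivot of the argument, but everything after it is a plan rather than a proof. Your ``canonical shape'' for a break of $v_b$ is asserted, not established, and is not exhaustive: a break can put the $-d_b$ coordinate into $y$, can have $x_0y_0=0$ with $\{x_0,y_0\}=\{0,2\}$, and your dismissal of support outside $\{0,\dots,b\}$ fails because the ``already negative contribution from the rest of the coordinates'' need not exist (the $d_0$ coordinate can contribute $+1$, leaving room for a far-flung $-1$). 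More seriously, the endgame --- ``feeding this into the pairing lemma produces a claw, a heavy triple, or an incomplete cycle'' --- is never exhibited: $y$ is not a standard basis element, so the forbidden-configuration machinery for $G(\mathcal S)$ does not apply to it directly, and you never say which configuration arises or among which vectors. You acknowledge this yourself as ``the main obstacle,'' which is another way of saying the proof is not there.

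The detour through a break of $v_b$ is also not the efficient route. From $\langle v_a,v_b\rangle=|v_a|-2>0$ and the interval calculus (the pairing of two intervals equals $|[T\cap T']|$ minus the number of ends of $T\cap T'$ past which one of the two intervals extends), the only surviving configurations are $T(v_a)$ interior to $T(v_b)$ or $T(v_a)\pitchfork T(v_b)$, and positivity of the pairing forces $\epsilon(v_a)\epsilon(v_b)=+1$; in either configuration Proposition \ref{prop:linearirreducibles}(3) (or the fact that the two components of $T(v_b)\setminus T(v_a)$ are distant) makes $v_b-v_a=-d_b+2d_a+\sum_{a<i<b}d_i$ reducible. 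The actual content of the corollary is then the direct verification, via the changemaker condition exactly as in the irreducibility lemma for standard basis elements, that $v_b-v_a$ is irreducible. That coordinate computation is the step your proposal replaces with an unproven classification of breaks and an unspecified forbidden subgraph, and it is the step you would need to supply.
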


\begin{lem}[Lemma 4.4 of \cite{Gre13}]
    Given a pair of unbreakable vectors $v_i, v_j \in \mathcal V$ with $|v_i|$, $|v_j|\geq 3$, we have $|v_i\cdot v_j|\leq 1$, with equality if and only if $T(v_i) \dag T(v_j)$ and $\epsilon(v_i)\epsilon(v_j) = -v_i\cdot v_j$. \qed
\end{lem}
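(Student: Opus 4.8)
The plan is to reduce the statement to an explicit computation in the $d_i$–coordinates together with a short case analysis, and to isolate the real difficulty in a single structural claim about unbreakable standard basis vectors. Since by Corollary 4.3 a changemaker basis contains at most one breakable vector, and it is tight, I would first dispose of the cases in which $v_i$ or $v_j$ is tight (these are read off directly from the shape $2d_0+\sum_{k\ge 1}d_k$), and then assume $v_i=-d_i+\sum_{k\in A_i}d_k$ and $v_j=-d_j+\sum_{k\in A_j}d_k$ are non-tight with $i<j$. Using $A_i,A_j\subseteq\{0,\dots,j-1\}$ and orthonormality of $\{d_k\}$ one obtains
\[
v_i\cdot v_j=|A_i\cap A_j|-[\,i\in A_j\,],
\]
so $v_i\cdot v_j\ge -1$ with no work, while $|v_j|=1+|A_j|\ge 3$ forces $|A_j|\ge 2$. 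Everything then comes down to proving $v_i\cdot v_j\le 1$ and analyzing when $|v_i\cdot v_j|=1$.

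For the bound $v_i\cdot v_j\le 1$ the plan is to argue by contradiction: assuming $v_i\cdot v_j\ge 2$, one produces from $v_i$ a splitting $v_j=x+y$ violating unbreakability of $v_j$ — equivalently, in the interval language of Proposition~\ref{prop:linearirreducibles} and Lemma~\ref{lem:irreducible}, one shows that two unbreakable intervals of norm $\ge 3$ whose intersection contains the (unique, by Proposition~\ref{prop:linearirreducibles}(4)) heavy vertex of each cannot both belong to a standard basis. The input that makes this work is the translation of ``$v_j$ unbreakable'' into a combinatorial condition on $A_j$, together with the $\ll$–maximality used to select $A_j$ in the construction of $\mathcal V$; with these in hand one reads off the required decomposition, or else contradicts the pairing computations of the preceding lemma by exhibiting a breakable interval that pairs too large against $v_i$ or $v_j$. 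Extracting the right combinatorial consequence of unbreakability and then running this breaking/pairing argument is the step I expect to be the main obstacle; the rest is bookkeeping.

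Finally, for the equality clause, when $v_i\cdot v_j=\pm 1$ the displayed formula leaves only $(|A_i\cap A_j|,[\,i\in A_j\,])\in\{(1,0),(2,1)\}$ for the value $+1$ and $(0,1)$ for the value $-1$, and in each case the dictionary between the sets $A_k$ and the intervals $T(v_k)$ forces $T(v_i)$ and $T(v_j)$ to be consecutive rather than distant or transverse, i.e. $T(v_i)\dag T(v_j)$; the sign of the single off-diagonal entry of the linear lattice joining the two intervals then yields $\epsilon(v_i)\epsilon(v_j)=-v_i\cdot v_j$. Conversely, if $T(v_i)\dag T(v_j)$ a one-line computation gives $v_i\cdot v_j=-\epsilon(v_i)\epsilon(v_j)$, which is $\pm 1$. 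I would write up this $\dag$/sign verification last, since it is routine once the trichotomy ``distant, consecutive, or transverse'' has been pinned down for pairs of unbreakable standard basis vectors.
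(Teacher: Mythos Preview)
The paper does not prove this lemma at all; it simply cites Lemma~4.4 of \cite{Gre13} and places a \qed. So there is no argument here to compare against---you are writing a proof where the paper imports one.

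Your $d_k$-coordinate formula $v_i\cdot v_j=|A_i\cap A_j|-[\,i\in A_j\,]$ is correct for non-tight $v_i,v_j$, and the lower bound is immediate. The problem is your stated mechanism for the upper bound. You propose to ``produce from $v_i$ a splitting $v_j=x+y$ violating unbreakability of $v_j$,'' but this does not work in the cases that actually arise. When $v_i\cdot v_j\ge 2$, the interval analysis forces $z(v_i)=z(v_j)$, hence $|v_i|=|v_j|=m$; in the $\prec$ case the obvious candidate $v_j=v_i+(v_j-v_i)$ has $|v_j-v_i|=2$, and in the $\pitchfork$ case $\langle v_i,\,v_j-v_i\rangle=-2$. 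Neither is a breaking. Unbreakability of $v_j$ is not the obstruction.

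The tool that actually closes the argument is the $\ll$-maximality of $A_j$, which you list as auxiliary input but should make the centerpiece. Once $z(v_i)=z(v_j)$ forces $|A_i|=|A_j|$ and hence $\sigma_j-\sigma_i=\sigma_b-\sigma_a$ for the single elements $a\in A_i\setminus A_j$, $b\in A_j\setminus A_i$ (or $A_i=A_j$ in the $\prec$ case), the changemaker property lets you make change for $\sigma_j-\sigma_i$ using indices $<i$, producing $A_j'\ni i$ with $A_j'>A_j$ and the same $\sigma$-sum---contradicting the construction of $v_j$. Your equality clause has the same gap: $|v_i\cdot v_j|=1$ can also come from $T(v_i)\pitchfork T(v_j)$ with $|[T(v_i)\cap T(v_j)]|=3$, not only from $T(v_i)\dag T(v_j)$, and that $\pitchfork$ case must be eliminated by the same maximality argument before the sign computation goes through.
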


\begin{cor}[Corollary 4.5 of \cite{Gre13}]
    If $T(v_i)$ and $T(v_j)$ are distinct unbreakable intervals with $|v_i|$, $|v_j| \geq 3$, then $z(v_i) \neq z(v_j)$. \qed
\end{cor}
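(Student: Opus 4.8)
The plan is to argue by contradiction, in the spirit of Greene's deduction of Corollary 4.5 from Lemma 4.4 in \cite{Gre13}. Since intervals lying in distinct linear summands are disjoint — and hence have incomparable, in particular distinct, heavy vertices — I may assume $T(v_i)$ and $T(v_j)$ lie in a common linear summand, and suppose toward a contradiction that $z := z(v_i) = z(v_j)$. The first step is a normalization: an unbreakable interval $T$ contains at most one vertex of norm $\geq 3$ by Proposition \ref{prop:linearirreducibles}(4), while a (contiguous) chain of norm-$2$ vertices $T$ satisfies $|[T]| = 2$; so an unbreakable interval with $|[T]| \geq 3$ has exactly one heavy vertex $z(T)$, and a one-line computation with the vertex-basis Gram matrix gives $|[T]| = \langle x_{z(T)}, x_{z(T)}\rangle$. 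In particular $|v_i| = |v_j| = \langle x_z, x_z\rangle =: w \geq 3$.

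Next I would run the geometric case analysis. Since $x_z$ lies in both $T(v_i)$ and $T(v_j)$, these two intervals are neither distant nor consecutive, so $T(v_i)\cap T(v_j)$ is a nonempty interval containing $x_z$ — hence with $x_z$ as its only heavy vertex, so $|[T(v_i)\cap T(v_j)]| = w$ — and exactly one of $T(v_i)\prec T(v_j)$, $T(v_j)\prec T(v_i)$, or $T(v_i)\pitchfork T(v_j)$ holds. Expanding $[T(v_i)]\cdot[T(v_j)] = \sum_{q\in T(v_j)}\langle [T(v_i)], x_q\rangle$ and comparing termwise with the analogous expansion of $|[T(v_i)\cap T(v_j)]|$, the only discrepancies arise at the (at most two) endpoints of $T(v_i)\cap T(v_j)$ that fail to be endpoints of both $T(v_i)$ and $T(v_j)$, each costing exactly $1$; consequently $v_i\cdot v_j = \epsilon(v_i)\epsilon(v_j)\,[T(v_i)]\cdot[T(v_j)]$ satisfies $|v_i\cdot v_j| \geq w-2 \geq 1$. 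This endpoint bookkeeping, carried out separately for the two containment configurations and the transverse one, is the step that needs care, though it is entirely routine.

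Finally I would invoke Lemma 4.4 of \cite{Gre13}, which asserts $|v_i\cdot v_j|\leq 1$, with equality only when $T(v_i)\dag T(v_j)$. If $w\geq 4$ the inequality $|v_i\cdot v_j|\geq 2$ is already contradictory; if $w=3$ then $|v_i\cdot v_j|\in\{1,2\}$, the value $2$ is absurd, and the value $1$ forces $T(v_i)\dag T(v_j)$, i.e.\ that $T(v_i)$ and $T(v_j)$ are consecutive — impossible, as they share the vertex $x_z$. This contradiction gives $z(v_i)\neq z(v_j)$, as desired.
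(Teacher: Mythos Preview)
Your proof is correct and faithfully reconstructs the deduction of Corollary 4.5 from Lemma 4.4 in \cite{Gre13}; the paper itself supplies no argument here, simply importing the result from Greene (hence the \qed). One minor expository point: your trichotomy ``$T(v_i)\prec T(v_j)$, $T(v_j)\prec T(v_i)$, or $T(v_i)\pitchfork T(v_j)$'' omits the case of strict containment without a shared endpoint, but your endpoint bookkeeping covers that case as well (two discrepancies, yielding $w-2$), so the argument goes through unchanged.
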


We collect here one more observation about tight vectors in $\mathcal S$.

\begin{lem}\label{lem:vttightnowjtight}
    If $v_t \in \mathcal V$ is tight, then no $w_j \in \mathcal W$ is tight.
\end{lem}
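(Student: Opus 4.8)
The plan is to argue by contradiction. Suppose some $v_t \in \mathcal V$ and some $w_j \in \mathcal W$ are both tight, so that $v_t = -d_t + 2d_0 + \sum_{i=1}^{t-1} d_i$ and $w_j = -e_j + 2d_0 + \sum_{i=1}^{n} d_i$. The opening computations are $|v_t|^2 = t+4$, $\langle v_t, w_j\rangle = t+2 = |v_t|^2-2$, and
\[
w_j - v_t = -e_j + 2d_t + \sum_{i=t+1}^{n} d_i .
\]
Since $\langle v_t, w_j\rangle = t+2 \neq 0$, Proposition \ref{prop:linearirreducibles}(1) together with Lemma \ref{lem:irreducible} puts the intervals $T := T(v_t)$ and $W := T(w_j)$ in one linear summand $\Lambda_k$ of $(\tau)^\perp$, and $\epsilon(v_t)\epsilon(w_j)\langle [T],[W]\rangle = t+2 = |[T]|^2 - 2$.

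Next I would pin down how $T$ and $W$ sit inside $\Lambda_k$. For intervals in a linear lattice, $\langle [T],[W]\rangle$ is the sum of the norms of the common vertices minus a correction counting the adjacent straddling pairs; comparing this with $|[T]|^2 = \sum_{x\in T}\langle x,x\rangle - 2(|T|-1)$ and using that every vertex has norm $\geq 2$, one finds first that $W \not\subseteq T$ (because $|[W]|^2 = n+6 > t+4 = |[T]|^2$) and that $T\ne W$, so $T$ and $W$ either abut, are distant, satisfy $T \prec W$, or satisfy $T \pitchfork W$. The value $|[T]|^2-2\ge 3$ rules out the first two cases, and in each of the remaining two the only way to attain $|[T]|^2-2$ is for $W\setminus T$ (respectively, the overhang of $T$ past $W$) to be as short and light as possible; in both surviving configurations $w_j-v_t$ is forced to be a sum $u_1+u_2$ where $u_1,u_2$ are $\pm$ the classes of intervals of $\Lambda_k$ that are distant from one another. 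Consequently $\langle u_1,u_2\rangle = 0$, so $|w_j-v_t|^2 = |u_1|^2 + |u_2|^2$ with each $|u_\ell|^2\ge 2$, and, since each of those intervals abuts $T$, also $\langle u_\ell, v_t\rangle\in\{0,\pm 1\}$.

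Finally I would confront this with the explicit formula for $w_j-v_t$. Writing $u_\ell = (P_\ell,y_\ell)$ with $P_\ell\in E_8$ and $y_\ell$ a $\mathbb Z$-combination of $d_0,\dots,d_n$, we obtain $P_1+P_2 = -e_j$, $y_1+y_2 = 2d_t+\sum_{i>t}d_i$, $\langle P_1,P_2\rangle + \langle y_1,y_2\rangle = 0$, and $\langle P_\ell,s\rangle = -\langle y_\ell,\sigma\rangle$ (as $u_\ell\in(\tau)^\perp$). Because $\sigma_0 = 1$ (Theorem \ref{thm:hardeightembedding}) and $v_t,w_j$ are tight, one has $\langle y_1+y_2,\sigma\rangle = 2\sigma_t+\sum_{i>t}\sigma_i = |\sigma|_1+1 = s^*_j$, consistently with $\langle P_1+P_2,s\rangle = -s^*_j$; the point is to run through the finitely many ways the root $-e_j$ and the doubled coordinate $2d_t$ can be distributed between $u_1$ and $u_2$ and to check that none is compatible with all of $\langle P_1,P_2\rangle + \langle y_1,y_2\rangle = 0$, $|u_\ell|^2\ge 2$, $\langle u_\ell,v_t\rangle\in\{0,\pm1\}$, and $\langle P_\ell,s\rangle = -\langle y_\ell,\sigma\rangle$ --- noting that the piece carrying $-e_j$ is forced to have $\mathbb Z$-part whose pairing with $\sigma$ is large in absolute value, hence a coordinate of absolute value $\ge 2$, and tracing where that coordinate can sit yields the contradiction. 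Carrying out this last elimination is the heart of the argument; the interval-position analysis of the middle step and the opening identities are routine by comparison.
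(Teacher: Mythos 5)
Your proposal follows essentially the same route as the paper: compute $\langle v_t, w_j\rangle = |v_t|-2$, use the interval geometry of a sum of linear lattices to force $w_j - v_t$ to split as a sum of two orthogonal interval classes (i.e.\ to be reducible), and then contradict this by showing algebraically that $w_j - v_t = w_j|_{E_8} + 2d_t + \sum_{i>t} d_i$ is irreducible. Two caveats: you write $w_j|_{E_8} = -e_j$, which silently assumes $w_j$ is unloaded, whereas a tight $w_j$ may be loaded with $w_j|_{E_8} = -e_j + r$ of norm $4$ --- that is exactly the case the paper handles by appeal to the computer-verified Proposition \ref{prop:technicalpairings}; and your final elimination is only described, not executed --- it is the same computation the paper outsources to the proof of Lemma \ref{lem:irreducible}, so you should either carry it out or cite that argument explicitly.
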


\begin{proof}[Sketch of proof.]
If $v_t$ and $w_j$ are both tight, then $3 < |v_t|-2 = v_t \cdot w_j < |w_j|-2$, so $v_t \pitchfork w_j$ and $w_j -v_t = \pm([T(w_j)-T(v_t)] - [T(v_t)-T(w_j)])$ is reducible. However, $w_j -v_t = w_j|_{E_8} + 2d_t + \sum_{i = t+1}^n d_i$ is irreducible, the proof of which is essentially the same as the proof of Lemma \ref{lem:irreducible} (cf. Proposition \ref{prop:technicalpairings}). 
\end{proof}

\subsection{The intersection graph.}\label{sec:forbidden}
We introduce here the notion of the \emph{intersection graph} of a collection of intervals, upon which the basis of our analysis in Section \ref{sec:identifying} rests. The intersection graph is a variant of the notion of the pairing graph adapted to the study of a collection of intervals in a linear lattice.

Recall that every element of the standard basis $\mathcal S = \mathcal V \cup \mathcal W$ is represented by an interval in the pairing graph of the vertex basis of $L$. We write $\bar {\mathcal S}$, $\bar {\mathcal V}$, $\bar {\mathcal W}$ to denote the subset of unbreakable intervals in $\mathcal S$, $\mathcal V$, $\mathcal W$, respectively.

\begin{defin}
Given a collection of intervals $\mathcal S = \{T_1, \ldots, T_k\}$ whose classes are linearly independent, we define the \emph{intersection graph} as
\begin{equation*}
    G(\mathcal S) = (\mathcal S, \mathcal E), \ \ \mathcal E = \{(T_i, T_j)\ | \ T_i \text{ abuts } T_j\}.
\end{equation*}
\end{defin}

We now collect several key properties of the intersection graph.

\begin{defin}\label{def:claw}
The \emph{claw} $(i;j,k,l)$ is the graph $Y = (V,E)$ with 
\[
    V = \{i,j,k,l\} \ \ \text{ and } \ \ E = \{(i,j),(i,k),(i,l)\}.
\]
A graph $G$ is \emph{claw-free} if it does not contain an induced subgraph isomorphic to $Y$. 
\end{defin}

Equivalently, if three vertices in $G$ neighbor a fourth, then some two of them neighbor.

\begin{lem}\label{lem:clawfree}
$G(\mathcal S)$ is claw-free.
\end{lem}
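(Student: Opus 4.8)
The strategy mirrors Greene's proof that the intersection graph of a changemaker lattice is claw-free (Lemma 4.6 of \cite{Gre13}), adapted to accommodate the eight extra basis elements $w_1, \ldots, w_8$ coming from the $E_8$ factor. Suppose for contradiction that $\mathcal S$ contains a claw $(T_0; T_1, T_2, T_3)$, i.e.\ $T_0$ abuts each of $T_1, T_2, T_3$ but no two of the latter abut. First I would record the elementary consequence that the intervals $T_1, T_2, T_3$ are pairwise distant (they cannot overlap, since overlapping unbreakable intervals sharing an endpoint or being consecutive would make them abut, and a transverse overlap $T_i \pitchfork T_j$ would force, via Proposition \ref{prop:linearirreducibles}(3) and the pairing lemma, an abutment relation as well in this setting). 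Next I would split into cases according to how many of the four intervals in the claw come from $\mathcal W$ versus $\mathcal V$.

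\textbf{Case A: all four intervals lie in $\bar{\mathcal V}$.} This is exactly Greene's situation, and the argument of \cite[Lemma 4.6]{Gre13} applies verbatim: using the changemaker structure of $\sigma$ and the fact that $T_0$ abuts all of $T_1, T_2, T_3$ while these are pairwise distant, one derives a contradiction with the defining inequality $\sigma_j \le 1 + \sum_{i<j}\sigma_i$ (typically by locating the gappy index of the vector centered at the hub and playing the three distant neighbors against the changemaker condition). Since the changemaker basis $\mathcal V \subset (\sigma)^\perp$ sits inside our standard basis unchanged, nothing new is needed here.

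\textbf{Case B: at least one interval in the claw lies in $\bar{\mathcal W}$.} Here the new content appears. The key point is that each $w_j|_{E_8}$ has norm $2$ or $4$: unloaded $w_j$ has $w_j|_{E_8} = -e_j$ of norm $2$, and loaded $w_j$ has $w_j|_{E_8} = -e_j + r$ of norm $4$. Correspondingly $|w_j| = |w_j|_{E_8}| + |\pi_{\mathbb Z}(w_j)|^2_{\text{(counted with the $2d_0$ weight)}}$, and the projection onto $\mathbb Z^{n+1}$ is $2d_0 + \sum_{i\ge 1}d_i$ (tight), $\sum_{i\in A}d_i$ with $A$ consecutive (just right), or $\sum_{i\in A}d_i$ with $A$ having a gap (gappy). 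The abutment/pairing data for the $\mathbb Z^{n+1}$-parts is governed exactly as in the changemaker case, while the $E_8$-parts contribute a controlled correction: two standard basis elements $w_i, w_j$ pair as $\langle w_i|_{E_8}, w_j|_{E_8}\rangle + \langle \pi_{\mathbb Z}w_i, \pi_{\mathbb Z}w_j\rangle$, and $\langle w_i|_{E_8}, w_j|_{E_8}\rangle$ is a small integer (between $-4$ and $4$, and typically in $\{-1,0,1\}$ given that the $e_j$'s are simple roots and the $r$'s are positive roots chosen minimally by Lemma \ref{lem:hardeightcombinatorial}). I would argue that for the configuration to be a claw, the $\mathbb Z^{n+1}$-projections must already exhibit the offending distant/abutting pattern, so that the hub's projection is forced to have a gappy index witnessed by the three distant leaves; then the changemaker inequality together with the bounds of Lemma \ref{lem:hardeightcombinatorial} (which cap the $s^*_j$ in terms of $|\sigma|_1$) again yields a contradiction. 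The subcase where the hub $T_0$ itself comes from $\mathcal W$ and is loaded is the most delicate, since then $T_0$ is a length-$\ge 3$ breakable-or-unbreakable interval with a norm-$\ge 3$ vertex, and one must use the precise structure of $w_j$ produced by the Step 1--7 algorithm together with Proposition \ref{prop:technicalpairings} to control its pairings against the three leaves.

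\textbf{Main obstacle.} I expect the hardest part to be Case B with a loaded hub: bookkeeping the interaction between the $E_8$-component pairings (which can be as large as $\pm 4$ in principle) and the linear-lattice pairing lemma for breakable versus unbreakable intervals, while ruling out that the $E_8$-correction ``rescues'' an otherwise-forbidden claw. The resolution should come from the observation, already exploited in the proof of Lemma \ref{lem:irreducible} and encapsulated in Proposition \ref{prop:technicalpairings}, that a large pairing $\langle z, \tau\rangle > |\sigma|_1 + 1$ forces a coefficient $\ge 2$ somewhere, which in a claw configuration overloads one of the pairings beyond what the interval-pairing lemma permits; combined with the fact (Lemma \ref{lem:vttightnowjtight}) that tight vectors cannot coexist across $\mathcal V$ and $\mathcal W$, this pins down the possibilities tightly enough to close every case.
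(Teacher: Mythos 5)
The lemma is far more elementary than you make it. It is a statement about intervals in a linear lattice, made under the standing hypothesis (in force throughout the subsection on the intersection graph) that $(\tau)^\perp$ is a sum of linear lattices, so that every standard basis element is already known to be represented by an interval $T(v)$ in the vertex basis. The paper's entire proof is a pigeonhole on the two ends of an interval: if $T_0$ abuts three intervals $T_1$, $T_2$, $T_3$, then two of them abut $T_0$ at the same end (either sharing that endpoint with $T_0$ or being consecutive to $T_0$ there), and any two intervals abutting a third at a common end necessarily abut each other. Hence no induced claw. No changemaker inequality, no $E_8$ data, and no distinction between loaded, unloaded, tight, or gappy vectors enters at all.

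Your proposal goes wrong in two ways. First, it is a plan rather than a proof: Case B ends with ``I would argue\ldots'' and ``I expect the hardest part to be\ldots'', so the claimed contradiction is never established. Second, and more fundamentally, you have misidentified the mechanism even in Case A: Greene's claw-freeness lemma for changemaker lattices is proved by the same two-ends pigeonhole, not by ``locating the gappy index of the hub and playing the three distant neighbors against the changemaker condition.'' The changemaker and $E_8$-changemaker combinatorics are deployed elsewhere in the paper --- namely, to show that particular candidate standard bases cannot be realized by intervals without producing a claw, a heavy triple, or an incomplete cycle --- whereas claw-freeness of $G(\mathcal S)$ itself is an unconditional property of any collection of intervals on a path. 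Once you observe that ``abut'' means sharing an endpoint or being consecutive, and that an interval has only two ends, the entire case analysis you outline evaporates.
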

\begin{proof}
If some interval $T_i$ abuts three intervals $T_j, T_k, T_l$, then it abuts some two at the same end, and then those two abut.
\end{proof}

\begin{defin}
A \emph{heavy triple} $(T_i, T_j, T_k)$ consists of distinct intervals of norm $\geq 3$ contained in the same component of $G(\mathcal S)$, none of which separates the other two in $G(\mathcal S)$. In particular, if $(x_i, x_j, x_k)$ spans a triangle, then it spans a \emph{heavy triangle}.
\end{defin}

\begin{lem}[Lemma 4.10 of \cite{Gre13}]
$G(\bar{\mathcal S})$ does not contain a heavy triple. \qed
\end{lem}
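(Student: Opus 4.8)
The plan is to extend Greene's proof of Lemma 4.10 of \cite{Gre13} from a changemaker basis $\bar{\mathcal V}$ to the full standard basis $\bar{\mathcal S} = \bar{\mathcal V}\cup \bar{\mathcal W}$. Suppose, for contradiction, that $(T_1,T_2,T_3)$ is a heavy triple in $G(\bar{\mathcal S})$: three intervals of norm $\geq 3$ in a single component of $G(\bar{\mathcal S})$, none separating the other two. If all three lie in $\bar{\mathcal V}$, then since every $v\in \mathcal V$ has vanishing $E_8$-component, the pairings $v_i\cdot v_j$, and hence the induced subgraph on $\bar{\mathcal V}$, coincide with those of Greene's changemaker lattice $(\sigma)^\perp \subset \mathbb Z^{n+1}$, and Greene's Lemma 4.10 gives the contradiction. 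So at least one $T_m$ equals some $T(w_j)$ with $w_j \in \bar{\mathcal W}$, and this is the case that requires work.

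For the remaining cases I would re-run Greene's argument while tracking which members of the triple lie in $\bar{\mathcal W}$, invoking three structural facts that hold for the full standard basis: (a) for unbreakable intervals $V,V'$ of norm $\geq 3$ one has $|V\cdot V'|\leq 1$, with equality iff $T(V)\dag T(V')$ and the signs match (Lemma 4.4 of \cite{Gre13}, valid once irreducibility and unbreakability of the $w_j$ are known, i.e. Lemma \ref{lem:irreducible} and its successor); (b) distinct such intervals have distinct heavy vertices $z(\cdot)$ (Corollary 4.5 of \cite{Gre13}); and (c) the pairing rules of a breakable interval against an unbreakable one, recorded in the lemma preceding Corollary 4.3. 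If the heavy triple contains a tight vector, Lemma \ref{lem:vttightnowjtight} guarantees it is the unique tight, hence unique breakable, basis element among the three, so the ``at most one breakable vector'' input on which Greene's case analysis rests is preserved. With these in hand, the purely combinatorial part of Greene's argument---that three norm-$\geq 3$ intervals cannot sit in a $2$-connected block of an interval graph without forcing a reducible vector or violating the changemaker inequality at a shared vertex---carries over.

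The main obstacle is the sub-case where the heavy triple contains a \emph{loaded} $w_j$ (so $j\in\{2,3,4\}$ and $w_j|_{E_8}=-e_j+r$ has norm $4$). Here the crude pairing bounds used in Greene's analysis must be replaced by the finer control of Proposition \ref{prop:technicalpairings}, which, for a norm-$2$ or norm-$4$ vector $z\in E_8$, pins down the relationship among $\langle -e_j+r+z,-z\rangle$, $\langle z,\tau\rangle$, and $\langle -e_j+r+z,\tau\rangle$; this is precisely the input needed to exclude a heavy-triple configuration threading through a loaded vector, used in the same spirit as in the proof of Lemma \ref{lem:irreducible}. When every $\bar{\mathcal W}$-member of the triple is unloaded, its $E_8$-component is a single $-e_j$ of norm $2$, so such a $w_j$ behaves like a changemaker-basis vector with one extra orthonormal coordinate; these cases reduce to Greene's argument with only bookkeeping for the cross-terms $\langle -e_j,-e_k\rangle\in\{0,-1\}$ between distinct members of $\bar{\mathcal W}$.
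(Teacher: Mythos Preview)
The paper does not supply a proof here at all: the lemma is stated with a bare \qed{} and the attribution ``Lemma 4.10 of \cite{Gre13}''. The point is that Greene's proof of Lemma~4.10 is carried out entirely at the level of intervals in a sum of linear lattices---it uses only that the elements of $\bar{\mathcal S}$ are unbreakable intervals, that distinct unbreakable intervals of norm $\geq 3$ have distinct heavy vertices, and the cycle lemma (Lemma~3.8 of \cite{Gre13}). None of this touches the coordinate description of the basis vectors in $E_8\oplus\mathbb Z^{n+1}$. Once the paper has established, in Lemma~\ref{lem:irreducible} and the lemma following it, that every standard basis element (including each $w_j$, loaded or not) is an irreducible vector representing an unbreakable interval, Greene's argument applies verbatim to $\bar{\mathcal S}=\bar{\mathcal V}\cup\bar{\mathcal W}$ with no modification.

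Your proposal is not wrong, but it is working at the wrong level. The difficulties you anticipate for loaded $w_j$---and the appeal to Proposition~\ref{prop:technicalpairings}---belong to the proofs of irreducibility and unbreakability, where one genuinely must control pairings of the form $\langle -e_j+r+z,-z\rangle$. Those lemmas are already in hand by the time the heavy-triple lemma is invoked. At that stage a loaded $w_j$ is just another unbreakable interval $T(w_j)$ with a single heavy vertex $z(w_j)$; the $E_8$ part of its coordinate expression is invisible to the interval combinatorics. So your case split on ``all in $\bar{\mathcal V}$'' versus ``some in $\bar{\mathcal W}$'' versus ``some loaded'' is unnecessary: there is one case, and it is Greene's.
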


\begin{lem}
    If $\hat G(\mathcal S)$ is connected, then so is $G(\mathcal S)$. 
\end{lem}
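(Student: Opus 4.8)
The statement to prove is: if $\hat G(\mathcal S)$ is connected, then $G(\mathcal S)$ is connected. Here $\hat G(\mathcal S)$ is the pairing graph of the standard basis $\mathcal S = \mathcal V \cup \mathcal W$ — vertices are basis elements, edges record nonzero inner products — while $G(\mathcal S)$ is the intersection graph, whose edges record when the corresponding intervals $T(v), T(w)$ abut (share an endpoint or are consecutive). Since abutting intervals always pair nontrivially, $G(\mathcal S)$ is a subgraph of $\hat G(\mathcal S)$ on the same vertex set, so the content is that no pairing edge connects intervals that merely cross ($T \pitchfork T'$) or are distant without a compensating chain of abutments. The plan is to argue that whenever two standard basis elements $v, v'$ pair nontrivially, either their intervals abut, or there is a third standard basis element whose interval abuts both — which suffices to propagate connectivity from $\hat G$ to $G$.

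**Key steps.** First I would recall from Proposition~\ref{prop:linearirreducibles}(1) and Lemma~\ref{lem:irreducible} that every $v \in \mathcal S$ is $\pm[T(v)]$ for an interval $T(v)$ in some linear summand $L_k$; and that two intervals in distinct summands are orthogonal, so a pairing edge forces both endpoints into the same $L_k$. Next, for $T, T'$ intervals in the same linear lattice with $[T] \cdot [T'] \neq 0$, the possibilities are: they abut; or $T \pitchfork T'$; or they are distant but one contains a high-norm vertex adjacent to the other (handled by the breakable-interval pairing lemma and Lemma~4.4 of \cite{Gre13}). The crossing case $T \pitchfork T'$ is the crux: by Proposition~\ref{prop:linearirreducibles}(3), $[T \setminus T'] \pm [T' \setminus T]$ is reducible, and the symmetric difference $T \triangle T'$ is a union of two distant intervals; I would use this, together with the fact that $T \cap T'$ is itself an interval, to locate a standard basis element — or a short chain of them — whose interval(s) realize the abutments $T(v) \sim \cdot \sim T(v')$ in $G(\mathcal S)$. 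For the distant-but-pairing case, the pairing is $\pm 1$ and occurs exactly when a norm-$3$ interval's high-norm vertex sits adjacent to the other interval (the ``$-1$ iff $V \dag T$'' clause of the breakable-pairing lemma), which is itself an abutment or forces one — so that case collapses to abutment directly.

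**Main obstacle.** The delicate part is the crossing case: I cannot in general expect $T(v)$ and $T(v')$ to abut when they pair, so I must produce a replacement path in $G(\mathcal S)$. The right approach is probably to not argue edge-by-edge but rather to show that the connected components of $G(\mathcal S)$ are ``saturated'' — if $T, T'$ lie in components $\mathcal C, \mathcal C'$ of $G(\mathcal S)$ and $[T]\cdot[T'] \neq 0$, then $\mathcal C = \mathcal C'$ — by examining the interval geometry of the union $\bigcup_{T \in \mathcal C} T$ inside a single linear summand and showing it is itself a union of intervals no two of which can be straddled by a pairing from outside without the outside interval already abutting something inside. I would lean on the structure of changemaker/extension-set bases (the nested structure $v \ll v_j$, the shape of $w_j|_{E_8}$, and Lemma~\ref{lem:vttightnowjtight}) to control how crossings can occur. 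I expect this to be the only real work; the summand-decomposition and distant-pairing reductions are routine given the lemmas already in hand.
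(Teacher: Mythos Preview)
Your proposal has a genuine gap in the crossing case, and it misses the key mechanism the paper exploits. You correctly isolate $T(v) \pitchfork T(v')$ as the crux, but your plan to ``lean on the structure of changemaker/extension-set bases'' via the $\ll$-order, the shapes of $w_j|_{E_8}$, and Lemma~\ref{lem:vttightnowjtight} is not an argument --- none of those lemmas gives you a handle on why a third basis element must abut both $T(v)$ and $T(v')$. (Also, your ``distant-but-pairing'' case does not exist: distant intervals in a linear lattice are orthogonal, and the clause ``$-1$ iff $V \dag T$'' in the breakable-pairing lemma describes consecutive intervals, which is already an abutment.)

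The paper's proof uses a completely different and much shorter idea that you overlooked: $\mathcal S$ \emph{spans} $(\tau)^\perp$. First, $\hat G(\mathcal S)$ connected forces $(\tau)^\perp$ indecomposable, hence isomorphic to a single $\Lambda(p,q)$ --- so all intervals live in one linear lattice. Now if $v, v'$ pair nontrivially but are not joined in $G(\mathcal S)$, then $T(v) \pitchfork T(v')$ (or one contains the other), and the two intervals cut $T(v) \cup T(v')$ into two or three sub-intervals. Each such sub-interval has class lying in $(\tau)^\perp$, and since $v$ and $v'$ together span only a $2$-dimensional subspace, at least one sub-interval's class must be expressible using $\mathcal S \setminus \{v, v'\}$. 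The basis elements appearing in that expression then furnish a path in $G(\mathcal S)$ from $v$ to $v'$. The spanning property is doing all the work; no case analysis on tight/gappy/just-right vectors is needed.
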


\begin{proof}
If $\hat G(\mathcal S)$ is connected, then $(\tau)^\perp$ is indecomposable, so $(\tau)^\perp \cong \Lambda(p,q)$. If now $G(\mathcal S)$ is disconnected, then there is a pair $v, v' \in \mathcal S$ with $v \cdot v' \neq 0$ but no path from $v$ to $v'$ in $G(\mathcal S)$. It follows that $v \pitchfork v'$, $T(v) = \{x_i, \ldots, x_j\}$, $T(v') = \{x_{i'}, \ldots, x_{j'}\}$, and, without loss of generality, $i<i'$ and either $i'<j<j'$ or $j'<j$. If $i'<j<j'$, then either $[\{x_i, \ldots, x_{i'-1}\}]$, $[\{x_{i'}, \ldots, x_j\}]$, or $[\{x_{j+1}, \ldots, x_{j'}]$ is in the span of $\mathcal S \setminus \{v, v'\}$, or else $\mathcal S$ does not generate $(\tau)^\perp$, but then there is a path from $v$ to $v'$. If $j'<j$, then either $[\{x_i, \ldots, x_{i'-1}\}]$ or $[\{x_{j'+1}, \ldots, x_j\}]$ is in the span of $\mathcal S \setminus \{v, v'\}$, or else $\mathcal S$ does not generate $(\tau)^\perp$, but then there is a path from $v$ to $v'$.
\end{proof}

\begin{lem}\label{0pairingmediated}
    For $x$, $y$, $z$ irreducible in $\Lambda(p,q)$, if $|x|, |y|\geq 3$, $|z| = 2$, $x\cdot y = 0$, and $|x\cdot z| = |y\cdot z| = 1$, then $\epsilon(x) \epsilon(y) = (x\cdot z)(y\cdot z)$.
\end{lem}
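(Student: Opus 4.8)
The plan is to work with explicit interval representatives. By Proposition \ref{prop:linearirreducibles}(1), since $x$, $y$, $z$ are irreducible in the linear lattice $\Lambda(p,q)$, we may write $x = \epsilon(x)[X]$, $y = \epsilon(y)[Y]$, $z = \epsilon(z)[Z]$ for intervals $X$, $Y$, $Z$ in the vertex basis, and replacing $z$ by $-z$ if necessary we may assume $\epsilon(z) = +1$. The hypothesis $|z| = 2$ forces $Z$ to be a single vertex $\{x_m\}$ of norm $2$, so for any interval $T$ the pairing $[Z]\cdot[T]$ is simply the number of vertices in $T$ adjacent to $x_m$ minus $2\delta_{x_m\in T}$; concretely $[Z]\cdot[T] \in \{-1,0,+1\}$, it equals $+1$ exactly when $Z$ abuts $T$ from outside (shares a neighbor, $Z\dag T$ in our notation), and $-1$ exactly when $x_m$ is an endpoint of $T$ (so $Z\prec T$ in the sense $Z\subset T$, $Z$ at the end). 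The hypotheses $|x\cdot z| = |y\cdot z| = 1$ thus say $x_m$ interacts nontrivially with each of $X$ and $Y$ in one of these two ways.

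**Main case analysis.** The core of the argument is to combine the constraint $x\cdot y = 0$ (with $|x|,|y|\geq 3$) with the adjacency data coming from $x_m$. First I would rule out the possibility that $x_m$ lies strictly inside $X$ (i.e. $x_m$ is a non-endpoint vertex of $X$): then $[Z]\cdot[X]$ would be $2 - 2 = 0$ if $x_m$ has both neighbors in $X$, contradicting $|x\cdot z| = 1$; so $x_m$ internal to $X$ is impossible, and likewise for $Y$. Hence for each of $X$ and $Y$, the vertex $x_m$ is either an endpoint of the interval or sits just outside one end. Now I enumerate the (few) geometric configurations of the triple $(X,Y,Z=\{x_m\})$ consistent with $x\cdot y = 0$: if $x_m$ were an endpoint of both $X$ and $Y$, or outside one and an endpoint of the other at the same place, then $X$ and $Y$ would abut at $x_m$ and one checks $[X]\cdot[Y]\neq 0$ (using $|X|,|Y|\geq 2$), contradicting $x\cdot y = 0$; so the two "interactions" happen at geometrically separated spots. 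Once $X$ and $Y$ are forced to be distant or to cross ($X\pitchfork Y$) in a way making $[X]\cdot[Y] = 0$, the sign relation becomes a direct computation: writing out $x\cdot z = \epsilon(x)\,[X]\cdot[Z]$ and $y\cdot z = \epsilon(y)\,[Y]\cdot[Z]$ and multiplying gives $(x\cdot z)(y\cdot z) = \epsilon(x)\epsilon(y)\,([X]\cdot[Z])([Y]\cdot[Z])$, so it suffices to show $([X]\cdot[Z])([Y]\cdot[Z]) = 1$, i.e. that $x_m$ relates to $X$ and to $Y$ in the \emph{same} way (both endpoint, or both outside).

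**The key point.** The crux is: why must $x_m$ relate to $X$ and $Y$ identically? This is forced by the requirement that $\{[X],[Y],[Z]\}$ be linearly independent together with $x\cdot y = 0$, or more robustly by a direct look at the two bad mixed configurations. Suppose $x_m$ is an endpoint of $X$ (so $[X]\cdot[Z] = -1$) but sits just outside $Y$ adjacent to an endpoint $x_{m'}$ of $Y$ (so $[Y]\cdot[Z] = +1$). For $x\cdot y = 0$ we need $X$ and $Y$ not to abut; but $x_m\in X$ is adjacent to $x_{m'}\in Y$, which means $X$ and $Y$ are consecutive intervals ($X\dag Y$), whence $[X]\cdot[Y] = \pm 1 \neq 0$ — contradiction. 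Symmetrically the case ($x_m$ outside $X$, endpoint of $Y$) is excluded. Therefore the two interactions must be of the same type, giving $([X]\cdot[Z])([Y]\cdot[Z]) = (-1)(-1) = 1$ or $(+1)(+1) = 1$, and hence $\epsilon(x)\epsilon(y) = (x\cdot z)(y\cdot z)$ as claimed.

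**Anticipated obstacle.** I expect the main subtlety to be bookkeeping the small-norm boundary cases — in particular when $|X| = 2$ or $|Y| = 2$, where an interval is a single edge and "endpoint" versus "adjacent outside" can coincide, and one must check that $[X]\cdot[Y]$ really is nonzero whenever $X$ and $Y$ abut (this uses that each has at least $2$ vertices, guaranteed by $|x|,|y|\geq 3$ — a norm-$\geq 3$ interval has at least two vertices since a single vertex of a linear lattice has norm $a_i\geq 2$ but norm exactly $2$ only for a singleton edge-free vertex, so $|[T]|\geq 3$ forces either $\geq 2$ vertices or a single vertex of norm $\geq 3$; the latter singleton case must also be tracked). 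Dispatching these degenerate configurations carefully, while conceptually routine, is where the proof needs care; the sign identity itself then drops out immediately from the multiplicativity of $\epsilon$ under the interval correspondence.
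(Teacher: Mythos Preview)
Your approach is essentially the paper's: reduce to showing that $z$ relates to $x$ and to $y$ in the same way (both $z\prec$ or both $z\dag$), then read off the sign identity. The paper's proof simply asserts this dichotomy and multiplies the two resulting sign relations $\epsilon(x)\epsilon(z)=\pm x\cdot z$, $\epsilon(y)\epsilon(z)=\pm y\cdot z$.

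There is, however, a genuine gap in your exclusion of the mixed case. You argue: if $x_m$ is an endpoint of $X$ and $x_m$ sits just outside $Y$ adjacent to an endpoint $x_{m'}$ of $Y$, then ``$x_m\in X$ is adjacent to $x_{m'}\in Y$, which means $X$ and $Y$ are consecutive intervals.'' This inference is false: it can happen that $x_{m'}\in X$ as well, so that $X$ and $Y$ overlap rather than abut. Concretely, take $X=\{x_m,\ldots,x_j\}$ (so $x_m$ is the left endpoint of $X$) and $Y=\{x_{m+1},\ldots,x_l\}$; then $Z=\{x_m\}$ is an endpoint of $X$ and consecutive to $Y$, but $X\cap Y\supseteq\{x_{m+1}\}\neq\emptyset$, so $X\not\dag Y$.

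To close the gap you must compute $[X]\cdot[Y]$ directly in this overlap configuration and show it is nonzero. This is where the hypotheses $|x|,|y|\geq 3$ (and $|x_m|=2$) do real work: one finds $[X]\cdot[Y]=\sum_{l'=m+1}^{\min(j,l)}(a_{l'}-2)$ up to a boundary correction, and vanishing would force all $a_{l'}=2$ on the overlap, which in turn forces $|[X]|=2$ or $|[Y]|=2$, contradicting the norm hypotheses. Your ``anticipated obstacle'' paragraph gestures at degenerate cases but focuses on singleton intervals rather than this overlap scenario, which is the actual missing piece. (Note also that your sign conventions for $[Z]\cdot[T]$ are those of the negative-definite lattice, whereas by this point in the paper the convention has switched to positive-definite; this is harmless for the final identity but worth tracking.)
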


\begin{proof}
    In this scenario either $x\dag z$ and $y \dag z$, in which case $\epsilon(x)\epsilon(z) = -x\cdot z$ and $\epsilon(y)\epsilon(z) = -y\cdot z$, or $z \prec x$ and $z \prec y$, in which case $\epsilon(x)\epsilon(z) = x \cdot z$ and $\epsilon(y)\epsilon(z) = y \cdot z$, and in either case the lemma follows.
\end{proof}

\begin{lem}\label{pitchforkmediated}
    For $x$, $y$, $z$ irreducible in $\Lambda(p,q)$, if $|x|, |y|\geq 3$, $x\pitchfork y$, $|z| = 2$, and $|x\cdot z| = |y\cdot z| = 1$, then $\epsilon(x) \epsilon(y) = -(x \cdot z)(y \cdot z)$
\end{lem}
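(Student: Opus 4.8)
The plan is to follow the model of the proof of Lemma~\ref{0pairingmediated}: reduce to a case analysis on how the norm-$2$ interval $T(z)$ sits relative to $T(x)$ and $T(y)$, pin down the sign $\epsilon(x)\epsilon(z)$ (and $\epsilon(y)\epsilon(z)$) in each admissible position, and multiply. The first step is a local classification. Write $x=\epsilon(x)[T(x)]$, $y=\epsilon(y)[T(y)]$, $z=\epsilon(z)[T(z)]$ using Proposition~\ref{prop:linearirreducibles}(1); since $|z|=2$, the interval $T(z)$ consists entirely of norm-$2$ vertices. A direct computation of $[T(x)]\cdot[T(z)]$, using that $|x|\geq 3$ and that $T(z)$ is a string of $(-2)$'s, shows: if $T(x)$ and $T(z)$ are distant or $T(x)\pitchfork T(z)$ then $[T(x)]\cdot[T(z)]=0$; if $T(z)$ is interior to $T(x)$ then $[T(x)]\cdot[T(z)]=0$; if $T(z)\dag T(x)$ then $[T(x)]\cdot[T(z)]=-1$; if $T(z)\prec T(x)$ sharing exactly one endpoint then $[T(x)]\cdot[T(z)]=+1$; and $T(x)\subseteq T(z)$ is impossible on norm grounds. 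Hence $|x\cdot z|=1$ forces $T(z)$ to be either consecutive to $T(x)$ (giving $\epsilon(x)\epsilon(z)=-(x\cdot z)$) or properly nested in $T(x)$ sharing an endpoint (giving $\epsilon(x)\epsilon(z)=x\cdot z$), and the same dichotomy applies with $y$ in place of $x$.

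Next I would bring in $x\pitchfork y$. Normalize so that $T(x)=\{x_a,\dots,x_b\}$ and $T(y)=\{x_c,\dots,x_d\}$ with $a<c\leq b<d$, so $T(x)\cap T(y)=\{x_c,\dots,x_b\}$ is a proper subinterval of each. There are then four combinations of the two dichotomies. The combination ``$T(z)$ consecutive to both $T(x)$ and $T(y)$'' is geometrically impossible: an interval consecutive to $T(x)$ lies wholly left of $x_a$ or wholly right of $x_b$, and likewise for $T(y)$, and checking the finitely many pairs of such conditions against $a<c\leq b<d$ yields a contradiction in every case (this is the argument underlying Lemma~\ref{lem:clawfree}, specialized). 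The two ``mixed'' combinations---$T(z)$ consecutive to one of $T(x),T(y)$ and nested-with-shared-endpoint in the other---give $\epsilon(x)\epsilon(z)=-(x\cdot z)$ together with $\epsilon(y)\epsilon(z)=y\cdot z$ (or the two with $x$ and $y$ interchanged); multiplying the two relations gives $\epsilon(x)\epsilon(y)=-(x\cdot z)(y\cdot z)$, as claimed.

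The remaining combination---$T(z)$ nested with a shared endpoint in each of $T(x)$ and $T(y)$---is where I expect the work to be. Since $T(z)\subseteq T(x)\cap T(y)=\{x_c,\dots,x_b\}$, and $a<c$ forces $x_a\notin T(z)$ while $b<d$ forces $x_d\notin T(z)$, the shared-endpoint requirement forces $x_b\in T(z)$ (to share an endpoint with $T(x)$) and $x_c\in T(z)$ (to share an endpoint with $T(y)$), i.e.\ $T(z)=T(x)\cap T(y)$, and this would yield $\epsilon(x)\epsilon(y)=(x\cdot z)(y\cdot z)$, the opposite sign. The main obstacle is to rule this configuration out under the standing hypotheses. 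The key observation is that if $T(z)=T(x)\cap T(y)$ then the entire overlap $\{x_c,\dots,x_b\}$ is a norm-$2$ string, whence $x\cdot y=\epsilon(x)\epsilon(y)\sum_{k=c}^{b}(\langle x_k,x_k\rangle+2)=0$; so this borderline configuration is precisely the case already covered by Lemma~\ref{0pairingmediated}, and the present lemma is to be applied only when $x\cdot y\neq 0$ (equivalently, when $T(x)\cap T(y)$ contains a vertex of norm $\geq 3$), which excludes this case. I would either add the hypothesis $x\cdot y\neq 0$ explicitly or verify in each place Lemma~\ref{pitchforkmediated} is invoked (in Section~\ref{sec:identifying}) that $x\cdot y\neq 0$ holds; with that in hand, only the ``mixed'' cases survive and the proof is complete.
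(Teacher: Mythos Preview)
Your approach is the same as the paper's---a case analysis on the position of $T(z)$ relative to each of $T(x)$ and $T(y)$---but you carry it out more carefully. The paper's one-line proof simply asserts ``without loss of generality, $z\prec x$ and $z\dag y$'' and reads off the signs; it does not explain why the other two combinations are excluded. You correctly rule out the ``both consecutive'' configuration from the geometry of $x\pitchfork y$, and you correctly observe that the ``both nested'' configuration forces $T(z)=T(x)\cap T(y)$, whence every vertex in the overlap has norm $2$ and $x\cdot y=0$. So the lemma as literally stated does admit this borderline case, and your proposed fix (add the hypothesis $x\cdot y\neq 0$, or note that this case is already governed by Lemma~\ref{0pairingmediated}) is the right one. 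In the paper the two lemmas are only ever used jointly, via the notion of a ``sign error,'' so the overlap is harmless in applications; but your more detailed argument is what actually justifies the paper's ``without loss of generality.''
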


\begin{proof}
    In this scenario, without loss of generality, $z\prec x$ and $z \dag y$, so $\epsilon(x)\epsilon(z) = x\cdot z$ and $\epsilon(y) \epsilon(z) = -y\cdot z$. 
\end{proof}

\begin{defin}
    We say that \emph{there is a sign error between $x$ and $y$ mediated by} $z$, if there is a triple $x$, $y$, $z \in \mathcal S$ standing in contradiction to lemmas (\ref{0pairingmediated}) and (\ref{pitchforkmediated}). We occasionally drop reference to the vectors $x$, $y$, or $z$ if it is clear from context.
\end{defin}

\begin{prop}
    If $(\tau)^\perp \cong \Lambda(p,q)$, then there are no sign errors. \qed
\end{prop}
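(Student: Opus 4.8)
The plan is to observe that, once we fix an isometry realizing $(\tau)^\perp \cong \Lambda(p,q)$, the standard basis $\mathcal S$ is carried to a set of vectors in $\Lambda(p,q)$, and that Lemmas \ref{0pairingmediated} and \ref{pitchforkmediated} already forbid any sign error among irreducible vectors of a linear lattice. First I would record that every $v \in \mathcal S$ is irreducible in $(\tau)^\perp$ by Lemma \ref{lem:irreducible}, and that irreducibility is preserved by an isometry, so each $v$, viewed in $\Lambda(p,q)$, is an irreducible vector. By Proposition \ref{prop:linearirreducibles}(1) it then has the form $\epsilon(v)[T(v)]$ for a uniquely determined interval $T(v)$ and a sign $\epsilon(v) \in \{\pm 1\}$ --- precisely the data appearing both in the statements of Lemmas \ref{0pairingmediated} and \ref{pitchforkmediated} and in the definition of a sign error.

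With this in hand the argument is immediate: given any triple $x, y, z \in \mathcal S$, all three are irreducible vectors in $\Lambda(p,q)$, so whenever $|x|, |y| \geq 3$, $|z| = 2$, and $|x \cdot z| = |y \cdot z| = 1$, the hypothesis $x \cdot y = 0$ forces $\epsilon(x)\epsilon(y) = (x \cdot z)(y \cdot z)$ by Lemma \ref{0pairingmediated}, while the hypothesis $x \pitchfork y$ forces $\epsilon(x)\epsilon(y) = -(x \cdot z)(y \cdot z)$ by Lemma \ref{pitchforkmediated}. Since a sign error is by definition exactly a triple in $\mathcal S$ standing in contradiction to one of these two conclusions, no such triple can exist, which is the claim.

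The step that requires the most care --- though it is not a genuine obstacle --- is confirming that the sign $\epsilon(v)$ and interval $T(v)$ attached to a standard basis element in the preceding subsection coincide with the $\epsilon$ and $T$ to which the mediating lemmas refer; this is just a matter of unwinding the $\pm[T]$ decomposition of Proposition \ref{prop:linearirreducibles}(1) together with the uniqueness of $T(v)$. The substance of the proposition therefore lies entirely in Lemmas \ref{0pairingmediated} and \ref{pitchforkmediated} and the structural results of this section (Lemma \ref{lem:irreducible} and the identification of each standard basis element with an interval); the proposition itself is recorded because it supplies the logical justification for treating a sign error as an obstruction to $(\tau)^\perp$ being a linear lattice in Section \ref{sec:identifying}.
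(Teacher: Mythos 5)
Your argument is correct and is exactly the reasoning the paper intends: the proposition is stated with an immediate \qed because, once the standard basis elements are known to be irreducible (Lemma \ref{lem:irreducible}) and hence of the form $\epsilon(v)[T(v)]$ (Proposition \ref{prop:linearirreducibles}), a sign error is by definition a violation of Lemmas \ref{0pairingmediated} and \ref{pitchforkmediated}, which hold for all irreducible vectors of a linear lattice. No further commentary is needed.
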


\begin{lem}
    If $|v_i|, |w_j|\geq 3$, $v_i$ and $w_j$ are unbreakable, and $v_i\cdot w_j \neq 0$, then $v_i \not \pitchfork w_j$.
\end{lem}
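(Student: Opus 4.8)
The plan is to argue by contradiction, so suppose $v_i \pitchfork w_j$ while $v_i\cdot w_j\neq 0$, with $v_i,w_j$ unbreakable and $|v_i|,|w_j|\geq 3$. Write $T=T(v_i)$, $T'=T(w_j)$, and set $C=T\cap T'$, $D=T\cup T'$, $A=T\setminus T'$, $B=T'\setminus T$. Since $T\pitchfork T'$, the intervals $A,B,C$ are non-empty, $A$ and $B$ are distant, $C\subsetneq D$, and $[D]=[A]+[C]+[B]$, so $[D]\cdot[C]=|[C]|-2=|v_i\cdot w_j|\geq 0$. From $[T]=[A]+[C]$ and $[T']=[C]+[B]$ one checks that, up to an overall sign, one of $v_i+w_j$, $v_i-w_j$ equals $[D]+[C]$ and the other equals $[A]-[B]$. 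As $[D],[C]$ are non-zero with $\langle[D],[C]\rangle\geq 0$, and $[A],[B]$ are non-zero with $\langle[A],-[B]\rangle=0$, \emph{both} $v_i+w_j$ and $v_i-w_j$ are reducible in $(\tau)^\perp$.

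Next I would contradict this by examining $v_i\pm w_j$ inside $E_8\oplus\mathbb Z^{n+1}$. Since $v_i\in\mathcal V$ has $v_i|_{E_8}=0$, both $v_i+w_j$ and $v_i-w_j$ have $E_8$-component $w_j|_{E_8}$, which is $-e_j$ when $w_j$ is unloaded and $-e_j+r$ for a positive root $r$ when $w_j$ is loaded; in either case $|w_j|_{E_8}|\leq 4$. Because $v_i\cdot w_j\neq 0$, exactly one of the splittings $v_i+w_j=v_i+w_j$ and $v_i-w_j=v_i+(-w_j)$ has non-negative pairing between its two pieces; let $u$ be the combination for which it does \emph{not}, i.e.\ $u=v_i+w_j$ if $v_i\cdot w_j<0$ and $u=v_i-w_j$ if $v_i\cdot w_j>0$. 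By the previous paragraph $u$ is still reducible in $(\tau)^\perp$, and since the $\mathbb Z^{n+1}$-coordinates of $v_i$ include a $-1$ while those of $w_j$ are non-negative, the $\mathbb Z^{n+1}$-part of $u$ is non-zero. I claim that $u$ is irreducible in $E_8\oplus\mathbb Z^{n+1}$, which is the desired contradiction.

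To prove the claim I would rerun the analysis of the proof of Lemma \ref{lem:irreducible}. Writing a hypothetical reduction $u=x+y$ with $x=\zeta+x_{\mathbb Z}$, $y=\zeta'+y_{\mathbb Z}$ and $\zeta+\zeta'=w_j|_{E_8}$, the identity $\langle\zeta,\zeta'\rangle=\tfrac12\bigl(|w_j|_{E_8}|-|\zeta|-|\zeta'|\bigr)$ together with the bound on $\langle x_{\mathbb Z},y_{\mathbb Z}\rangle$ forced by $\langle x,y\rangle\geq 0$ gives first $|\zeta|,|\zeta'|\leq 4$ and then $|\zeta|,|\zeta'|\leq 2$; here the $\mathbb Z^{n+1}$-coordinates of $v_i$ being those of a changemaker basis vector (supported on a maximal subset-sum) and those of $w_j$ being $\{0,1\}$-valued, or of the form $2d_0+\sum_{k\geq 1}d_k$ in the tight case, keep $\langle x_{\mathbb Z},y_{\mathbb Z}\rangle$ under control exactly as in Lemma \ref{lem:irreducible} and Lemma \ref{lem:vttightnowjtight}. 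When $w_j$ is loaded, the resulting configuration with $|\zeta|,|\zeta'|\leq 2$ is precisely the one handled by Proposition \ref{prop:technicalpairings}, whose three items eliminate every remaining possibility; when $w_j$ is unloaded one argues directly as in Cases 1 and 3 of Lemma \ref{lem:irreducible}. The only splitting that survives all of this is $\{x,y\}=\{\pm v_i,\mp w_j\}$, whose pieces pair to $\mp(v_i\cdot w_j)$, which has the wrong sign by the very definition of $u$; hence $u$ is irreducible.

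I expect the last step to be the main obstacle: the norm-$\leq 4$ bookkeeping in $E_8\oplus\mathbb Z^{n+1}$ has to be carried out carefully, in particular tracking how the maximal-subset-sum support of $v_i$ meshes with a loaded $w_j$ through Proposition \ref{prop:technicalpairings}, and handling the tight sub-case of $w_j$ (where a $\mathbb Z^{n+1}$-coordinate of $u$ can equal $2$ or $3$). It is precisely here that the hypothesis $v_i\cdot w_j\neq 0$ is indispensable: were $v_i\cdot w_j=0$, both $v_i+w_j$ and $v_i-w_j$ would split as $v_i+(\pm w_j)$ with orthogonal pieces already inside $E_8\oplus\mathbb Z^{n+1}$, leaving no contradiction — consistent with the fact that transverse pairs of zero pairing do occur among standard basis elements.
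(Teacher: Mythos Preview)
Your high-level plan matches the paper's, but you miss the observation that collapses step~3 to a two-line check. Since $v_i$ and $w_j$ are unbreakable of norm $\geq 3$, each of $T(v_i),T(w_j)$ contains a \emph{unique} high-norm vertex; your own formula $|v_i\cdot w_j|=|[C]|-2\neq 0$ gives $|[C]|\geq 3$, so $C$ already contains such a vertex, forcing $z(T(v_i))=z(T(w_j))\in C$. Every other vertex of both intervals then has norm $2$, so $|v_i|=|w_j|=|[C]|$ and $|v_i\cdot w_j|=|v_i|-2$, whence your chosen $u$ satisfies $|u|=4$. Now any reduction $u=x+y$ in $(\tau)^\perp$ with $\langle x,y\rangle\geq 0$ must have $|x|=|y|=2$ and $\langle x,y\rangle=0$ (a sum of linear lattices contains no norm-$1$ vectors). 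If $w_j$ is unloaded, the $E_8$-part of $u$ is $\pm e_j$ of norm $2$, so one of $x,y$ must equal $\pm e_j\notin(\tau)^\perp$ since $s_j^*\neq 0$; if $w_j$ is loaded, the $E_8$-part of $u$ already has norm $4$, forcing the $\mathbb Z^{n+1}$-part of $u$ to vanish, which is impossible since $v_i$ contributes a $-d_i$ that $\pm w_j$ (with nonnegative $\mathbb Z^{n+1}$-entries) cannot cancel. This is the paper's argument.

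Without $|u|=4$, your step~3 is a genuine gap. The inequality $\sum_k x_ky_k\leq 1$ driving Lemma~\ref{lem:irreducible} relies on the coordinate pattern of a \emph{single} standard basis element together with the maximality of its support; the vector $u=v_i\pm w_j$ has $\mathbb Z^{n+1}$-coordinates ranging in $\{-2,-1,0,1,2\}$, and neither that bound nor the conclusions of Proposition~\ref{prop:technicalpairings} (which are calibrated to rule out reductions of $w_j$, not of $u$) transfer as stated. At best your sketch yields $|\zeta|+|\zeta'|\leq 6$, which does not pin things down, and the closing claim that only the splitting $\{\pm v_i,\mp w_j\}$ survives is unsupported.
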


\begin{proof}
    If $v_i \pitchfork w_j$, then $|v_i| = |w_j|$ and $v_i \cdot w_j = |v_i|-2$. Then $v_i-w_j$ is reducible, but $|v_i - w_j| = 4$, and $v_i -w_j = e_j -d_i + d_m$ for some $m$, and $e_j \not \in (\tau)^\perp$, a contradiction.
\end{proof}

\begin{lem}
    If $|w_i|, |w_j|\geq 3$, $w_i$ and $w_j$ are unbreakable, and $w_i \cdot w_j \neq 0$, then $w_i\not\pitchfork w_j$.
\end{lem}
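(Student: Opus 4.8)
The plan is to imitate the proof of the preceding lemma (the $v_i$--$w_j$ case), with the extra bookkeeping needed because both basis vectors now lie in $\mathcal W$. Suppose toward a contradiction that $w_i\pitchfork w_j$. As in the preceding lemma, the transversal crossing of two unbreakable intervals, together with $w_i\cdot w_j\neq 0$, forces $|w_i|=|w_j|$ and $w_i\cdot w_j=\delta(|w_i|-2)$ for some $\delta\in\{\pm1\}$, so that $u:=w_i-\delta w_j$ is reducible (Proposition \ref{prop:linearirreducibles}(3)) with $|u|=2|w_i|-2(|w_i|-2)=4$. Since $(\tau)^\perp\cong\Lambda(p,q)$ has no vector of norm $1$, any reduction $u=x+y$ (with $x,y\neq 0$ and $\langle x,y\rangle\geq 0$) has $|x|=|y|=2$ and $\langle x,y\rangle=0$; projecting onto the $E_8$ summand, $\langle x|_{E_8},x|_{E_8}\rangle,\langle y|_{E_8},y|_{E_8}\rangle\in\{0,2\}$ and $x|_{E_8}+y|_{E_8}=u|_{E_8}$, so in particular $\langle u|_{E_8},u|_{E_8}\rangle\leq 4$. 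If the coefficient of some simple root $e_m$ in $u|_{E_8}$ is nonzero, then one of $x|_{E_8},y|_{E_8}$---say $x|_{E_8}$---is a root with nonzero $e_m$-coefficient; as $\langle x,x\rangle=2=\langle x|_{E_8},x|_{E_8}\rangle$ this gives $x=x|_{E_8}\in E_8$, hence $\langle x,s\rangle=0$. Writing $x=\sum_k a_ke_k$, the identity $\langle x,s\rangle=\sum_k a_ks_k^*$, combined with the sign-coherence of the coordinates of a root of $E_8$ and with $s_k^*\geq 0$ (as $s\in\mathcal C$), forces $a_k=0$ whenever $s_k^*>0$; so if $s_m^*\geq 1$ this contradicts $a_m\neq 0$. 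Thus it suffices to exhibit, in each case, a simple root $e_m$ with $s_m^*\geq 1$ whose coefficient in $u|_{E_8}$ does not vanish.

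If neither $w_i$ nor $w_j$ is loaded, then $u|_{E_8}=-e_i+\delta e_j$: should $e_i,e_j$ be adjacent with $\delta=1$ we get $\langle u|_{E_8},u|_{E_8}\rangle=6>4$ outright, and otherwise the coefficient of $e_j$ in $u|_{E_8}$ is $\delta\neq 0$ (note $i\neq j$), while $s_j^*\geq 1$ because $|w_j|\geq 3$ forces $w_j$ to have nonzero projection to $\mathbb Z^{n+1}$; so $m=j$ works. If a loaded vector is involved, the loaded index lies in $\{2,3,4\}$ and a loaded $w_k$ has $w_k|_{E_8}=-e_k+r^{(k)}$ for the positive root $r^{(k)}$ built by the standard-basis algorithm, and the crucial observation is that the $e_k$-coefficient of $r^{(k)}$ always vanishes: $r^{(2)}\in\{e_4,e_3,e_3+e_4\}$, $r^{(3)}=e_2$, and $r^{(4)}$ is a sum of a subset of $\{e_1,e_2,e_5,e_6,e_7,e_8\}$ (which moreover omits $e_2$ when $w_2$ is loaded). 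A short case check then again produces the required $e_m$: for instance $m=j$ when $w_j$ is loaded and $w_i$ is not (since then the $e_j$-coefficients of $w_i|_{E_8}$ and $w_j|_{E_8}$ are $0$ and $-1$, and $s_j^*>|\sigma|_1+1\geq 1$), and $m\in\{2,3\}$ when both $w_i$ and $w_j$ are loaded (so $\{i,j\}$ is $\{2,4\}$ or $\{3,4\}$), using that the $e_m$-coefficient of $r^{(4)}$ vanishes for $m\in\{2,3\}$.

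The loaded case---disentangling the roots $r^{(k)}$ while both $w_i$ and $w_j$ may be loaded at once---is the main obstacle, and it is where one genuinely leans on the explicit description of the standard-basis algorithm (and, as in Cases 2 and 4 of the proof of Lemma \ref{lem:irreducible}, on Proposition \ref{prop:technicalpairings}). One must also dispose of the degenerate transversal configuration in which the norm-$\geq 3$ vertices of $w_i$ and $w_j$ lie on opposite sides of the overlap, where $w_i\cdot w_j=4$ and the reducible difference $[T(w_i)\setminus T(w_j)]-[T(w_j)\setminus T(w_i)]$ need not have norm $4$; there, exactly as in the preceding lemma, its $E_8$-projection still displays a root of $E_8$ lying in $(\tau)^\perp$ with nonzero coefficient on some simple root $e_m$ with $s_m^*\geq 1$, contradicting the argument of the first paragraph.
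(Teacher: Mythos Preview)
Your first two paragraphs constitute a correct and complete proof, and the core idea---that $u = w_i - \delta w_j$ is a reducible norm-$4$ vector whose $E_8$-projection must decompose into roots orthogonal to $s$, which the sign-coherence of root coordinates together with $s_m^* \geq 1$ forbids---is exactly the mechanism behind the paper's one-line proof. The paper is terser: it writes out only the unloaded case with $\epsilon_i\epsilon_j = +1$, where $w_i - w_j$ is literally $e_j - e_i$ and the phrase ``$e_j\cdot\tau = e_i\cdot\tau \neq 0$'' encodes your sign-coherence step. You, by contrast, carry out the case analysis in full, including both signs of $\delta$ and the loaded cases; your explicit use of the facts that $r^{(j)}$ never involves $e_j$ and that $r^{(4)}$ omits $e_2$ when $w_2$ is loaded is exactly what is needed there.

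Your third paragraph should be deleted. The ``degenerate transversal configuration'' you describe---with $z(T(w_i))$ and $z(T(w_j))$ on opposite sides of the overlap---forces $T(w_i)\cap T(w_j)$ to consist entirely of norm-$2$ vertices, so $|[T(w_i)\cap T(w_j)]| = 2$ and $[T(w_i)]\cdot[T(w_j)] = 0$, contradicting the hypothesis $w_i\cdot w_j \neq 0$; this case is vacuous (and the value $w_i\cdot w_j = 4$ you quote is not what arises). Likewise, Proposition~\ref{prop:technicalpairings} is not needed: your sign-coherence argument from the first paragraph already dispatches every loaded configuration.
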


\begin{proof}
    If $w_i \pitchfork w_j$, then $|w_i|= |w_j|$ and $w_i \cdot w_j = |w_i|-2$, and so $|w_i-w_j| = 4$ and $w_i -w_j$ is reducible, but $e_j -e_i$ is irreducible in $(\tau)^\perp$ since $e_j\cdot \tau = e_i\cdot \tau \neq 0$.
\end{proof}

\begin{lem}[Lemma 3.8 of \cite{Gre13}]
Given a cycle $C \subset G(\mathcal S)$, the intervals in $V(C)$ abut pairwise at a common end. That is, there exists an index $j$ such that each $T_i \in V(C)$ has left endpoint $x_{j+1}$ or right endpoint $x_j$. In particular, $V(C)$ induces a complete subgraph of $G(\mathcal S)$. \qed
\end{lem}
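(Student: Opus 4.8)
The plan is to encode each interval by its pair of boundary markers, translate the relation ``abuts'' into ``marker sets overlap,'' and then extract every contradiction from the linear independence of the classes in $\mathcal{S}$. To an interval $T = \{x_i, \dots, x_j\}$ lying in a linear summand $L_k$, associate the two--element set $M(T) = \{\, i,\ j+1 \,\}$. Comparing with the definitions of ``shares a common endpoint'' and ``consecutive,'' one checks directly that $T$ abuts $T'$ if and only if $M(T) \cap M(T') \neq \emptyset$; moreover $\min M(T)$ is the left endpoint index of $T$ and $\max M(T) - 1$ its right endpoint index, so $M(T)$ recovers $T$. Hence distinct intervals have distinct marker sets, and since the classes of $\mathcal{S}$ are linearly independent, no two members of $\mathcal{S}$ coincide as intervals (nor is one the negative of another). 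Since two abutting intervals necessarily lie in the same summand, a cycle $C \subset G(\mathcal{S})$ lives entirely inside one $L_k$, and I work there.

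The engine of the argument is the observation that if $q_1 < \dots < q_r$ are integers and $T_1, \dots, T_r$ are distinct intervals each of whose marker set is a two--element subset of $\{q_1, \dots, q_r\}$, then $[T_1], \dots, [T_r]$ are linearly dependent: each such interval is, up to sign, a sum of the $r-1$ elementary intervals $[\{x_{q_s}, \dots, x_{q_{s+1}-1}\}]$, so the $r$ classes lie in an $(r-1)$--dimensional subspace. With this in hand I would induct on $|V(C)|$. For $|V(C)| = 3$, write the triangle as $T_1, T_2, T_3$ and pick markers $p_{12} \in M(T_1) \cap M(T_2)$, $p_{23} \in M(T_2) \cap M(T_3)$, $p_{13} \in M(T_1) \cap M(T_3)$; if these three values are distinct the dependence above contradicts linear independence, so two coincide, and that common value then lies in $M(T_1) \cap M(T_2) \cap M(T_3)$ --- the desired common marker. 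For $|V(C)| = m \geq 4$: if $C$ were chordless, the markers $p_i \in M(T_i) \cap M(T_{i+1})$ (indices mod $m$) would be pairwise distinct, since a coincidence $p_i = p_j$ would place a common marker in two intervals non--adjacent in $C$, i.e.\ would produce a chord; then the dependence with $r = m$ contradicts linear independence. So $C$ has a chord $\{T_a, T_b\}$, splitting $C$ into two shorter cycles $C_1, C_2$ sharing that edge; by induction $V(C_\ell)$ has a common marker $p_\ell$, and since $T_a$ lies in both, $\{p_1, p_2\} \subseteq M(T_a)$. If $p_1 \neq p_2$ then $M(T_a) = \{p_1, p_2\} = M(T_b)$, forcing $T_a = T_b$, which is impossible, so $p_1 = p_2$ is a common marker for all of $V(C) = V(C_1) \cup V(C_2)$. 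Translating ``$p$ is a marker of $T$'' back --- $p$ is either the left endpoint index of $T$ or one more than its right endpoint index --- yields the index $j := p - 1$ of the statement, and since every interval in $V(C)$ carries the marker $p$ they pairwise abut, so $V(C)$ spans a complete subgraph.

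The one step requiring genuine care is the impossibility of a chordless cycle of length $m \geq 4$: one must verify carefully, with the cyclic indexing, that the $m$ shared markers are distinct, and in particular see why the analogous coincidence is permitted when $m = 3$ (there the would--be ``chord'' is simply an edge of the triangle). Everything else is routine bookkeeping with the marker encoding, and I do not anticipate needing any input beyond the linear independence of $\mathcal{S}$ and the combinatorics of intervals.
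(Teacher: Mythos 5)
Your argument is correct and complete; note that the paper itself supplies no proof of this lemma (it is quoted verbatim from \cite{Gre13} with a \qed), so the relevant comparison is with Greene's original argument, and yours is essentially the same: reduce to a chordless cycle (any chord splits the cycle into two shorter ones sharing an edge, and the induction then forces the two common ends to coincide), and rule out a chordless cycle of length $m\geq 3$ by exhibiting a linear dependence among the interval classes, each being a sum of the $m-1$ elementary intervals cut out by the $m$ pairwise-distinct shared endpoints. The marker-set encoding $M(T)=\{i,j+1\}$ is a clean packaging of the ``abut iff share an end'' dichotomy that correctly unifies the ``common endpoint'' and ``consecutive'' cases, and you have handled the two delicate points — distinctness of the shared markers on a chordless cycle of length $\geq 4$, and distinctness of the intervals themselves (which follows from linear independence of $\mathcal S$) — so I see no gap.
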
 

In the sequel, we will abuse notation and use $v_i$ and $w_j$ to refer both to the elements of $\mathcal S$ and to the intervals $T(v_i)$ and $T(w_j)$ that they represent in $L$.

\section{Identifying $E_8$-changemaker embeddings.}\label{sec:identifying}
In this section, we identify when the orthogonal complement of an $E_8$-changemaker $\tau = (s, \sigma) \in E_8 \oplus \mathbb Z^{n+1}$ ($n \geq 2$) is a linear lattice, or a lattice that decomposes as the orthogonal sum of linear lattices, and record each such $\tau$ in terms of $s^*$ and $\sigma$. One might assume, given the structural diversity of linear lattices admitting changemaker embeddings, that the full breadth of linear lattices admitting $E_8$-changemaker embeddings is difficult to capture. While there are forty-four distinct countably infinite families of linear lattices and orthogonal sums of pairs of linear lattices that admit $E_8$-changemaker embeddings, \emph{a posteriori} there are only $4$ distinct families of changemaker tails $\sigma$ represented in this census: $(1, \ldots, 1)$, $(1, \ldots, 1, n+1)$, $(1, 2, \ldots, 2)$, and $(1, 1, 2, \ldots, 2)$ $\in \mathbb Z^{n+1}$.

One notices that the claw $(e_1;e_2, e_3, e_5)$ in the $E_8$ Dynkin diagram \ref{fig:E_8Dynkin} persists in $G(\mathcal S)$ if $s^*_1=s^*_2 = s^*_3 = s^*_5 = 0$. By Lemma \ref{lem:clawfree}, we therefore cannot have $s^*_1=s^*_2 = s^*_3 = s^*_5 = 0$. Of most interest is $w_1$, which corresponds to the trivalent vertex in the $E_8$ Dynkin diagram. Clearly, there are only so many ways that the claw in the Dynkin diagram may be resolved given a fixed form for $w_1$. Of next importance to us is the vector $w_5$---unlike $w_2$ and $w_3$, $w_5$ is always unloaded, and thus $w_1 \cdot w_5$ is easier to control since $w_5\cdot -e_1 = -1$ no matter what $w_5$ is. Our general strategy for identifying when $L$ is a linear lattice is: first we fix a changemaker tail $\sigma$, then we fix $w_1$, then, generically but not exclusively, we fix $w_5$, and then check whether $\{v_1, \ldots, v_n, w_1, w_5\}$ can be completed to a full standard basis $\mathcal S$ whose intersection graph contains none of the forbidden features detailed in Section \ref{sec:forbidden}.

This section, devoted entirely to the analysis necessary necessary to characterize which linear lattices admit $E_8$-changemaker embeddings, consists of four general subsections, corresponding to the three general types of indecomposable changemaker bases (cf. Sections 6, 7, and 8 of \cite{Gre13}) and to the case when the sublattice of $L$ generated by $\mathcal V$ is decomposable (cf. Section 4 of \cite{Gre13}): in the first subsection we treat the case when every element of $\mathcal V$ is just right; in the second subsection we treat the case when $\mathcal V$ contains a gappy vector but no tight vector; in the third subsection we treat the case when there is a tight $v_t \in \mathcal V$. Throughout, we rely extensively on the classification of changemaker bases whose intersection graphs contain no claws, heavy triples, or incomplete cycles carried out in \cite{Gre13} and make frequent reference to structural lemmas there without supplying proof here.

In what follows, we write $A_j$ to mean $\{0\leq i \leq n\colon w_j \cdot d_i \neq 0\}$. We write $v \sim w$ if $v$ and $w$ are adjacent in $G(\mathcal S)$, i.e. $v\dag w$, $v \prec w$, or $w \prec v$.

\subsection{When all vectors are just right.}
First, we treat the case when $\sigma = (1, \ldots, 1) \in \mathbb Z^{n+1}$.
\begin{prop}\label{prop:alloness10}
Suppose $\sigma = (1, \ldots, 1) \in \mathbb Z^{n+1}$ and $n \geq 2$. If $|w_1| = 2$ then one of the following holds:
\begin{enumerate}
    \item $s^* = (0, 1, n+1, 0, 0, 0, 0, 0)$,
    \item $s^* = (0, 1, n+1, 1, 0, 0, 0, 0)$,
    \item $s^* = (0, 1, 0, 0, n+1, 0, 0, 0)$,
    \item $s^* = (0, 1, 0, 0, n+1, 1, 0, 0)$,
    \item $s^* = (0, n+2, n+1, 0, 0, 0, 0, 0)$,
    \item $s^* = (0, n+2, n+1, 1, 0, 0, 0, 0)$,
    \item $s^* = (0, n+2, 0, 0, n+1, 0, 0, 0)$, or
    \item $s^* = (0, n+2, 0, 0, n+1, 1, 0, 0)$.
\end{enumerate}
\end{prop}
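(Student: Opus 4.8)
The plan is to analyze the constraints imposed on $s^*$ by the hypothesis $\sigma = (1,\ldots,1)$ and $|w_1| = 2$, using the combinatorial structure of $E_8$-changemakers together with the forbidden features (claws, heavy triples, incomplete cycles, sign errors) catalogued in Section \ref{sec:forbidden}. First I would record that $|\sigma|_1 = n+1$, so the bound $s^*_i \leq |\sigma|_1 + 1 = n+2$ holds for $i \in \{1,5,6,7,8\}$ by Lemma \ref{lem:hardeightcombinatorial}(1), and the various loaded-vector bounds constrain $s^*_2, s^*_3, s^*_4$. The assumption $|w_1| = 2$ means $w_1 = -e_1 + d_k$ for a single index $k$ (since $w_1$ cannot be tight when $|w_1| = 2$), forcing $s^*_1 = \sigma_k = 1$; so $s^*_1 = 1$, which explains why the first coordinate is $0$ in none of the cases — wait, actually one must be careful: I would double-check whether $|w_1| = 2$ forces $s^*_1 = 1$ or allows $s^*_1 = 0$ with $w_1 = -e_1$ (norm $2$, projection to $\mathbb Z^{n+1}$ empty). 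Both are possible \emph{a priori}; the statement's $s^*_1 = 0$ in all eight cases suggests that the $s^*_1 = 1$ possibility gets excluded by a claw or sign-error argument downstream, which I would need to pin down early.

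Next I would turn to the claw obstruction. The $E_8$ Dynkin diagram contains the claw $(e_1; e_2, e_3, e_5)$, and when $|w_1| = 2$ the interval $T(w_1)$ is short, so $w_1$ still pairs nontrivially with each of $w_2, w_3, w_5$ unless the corresponding $s^*_j$ is chosen to ``move'' that neighbor away in $G(\mathcal S)$. Since $w_1 \cdot w_j \neq 0$ whenever $e_1 \cdot e_j \neq 0$ and the $\mathbb Z^{n+1}$-parts don't cancel it, by Lemma \ref{lem:clawfree} we cannot have all three of $w_2, w_3, w_5$ adjacent to $w_1$ without two of them being adjacent to each other. Analyzing the cases: $w_2$ and $w_5$ are never adjacent in $E_8$ (they're at distance $\geq 2$ and their $\mathbb Z^{n+1}$-parts can't create an abutment generically), $w_3$ and $w_5$ are never adjacent, but $w_2$ and $w_3$ are both adjacent to $e_1$ only. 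So to kill the claw we must arrange that at most one of $w_3, w_5$ genuinely neighbors $w_1$ in $G(\mathcal S)$ — which is achieved by making $s^*_3$ large (loaded, so $w_3$ becomes a long interval abutting $w_2$ instead) or $s^*_5$ large, and the other of the pair equal to zero, plus $s^*_2$ either small ($=1$, giving the ``$1$'' cases) or loaded ($= n+2$, giving the ``$n+2$'' cases). This case split into {$s^*_3$ big vs. $s^*_5$ big} $\times$ {$s^*_2 = 1$ vs. $s^*_2 = n+2$} $\times$ {$s^*_4 \in \{0,1\}$} yields exactly the eight listed $s^*$, once one also verifies $s^*_6 = s^*_7 = s^*_8 = 0$ (any nonzero tail coordinate would extend the $E_8$-chain and create either a heavy triple with the loaded vector or an incomplete cycle).

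The key steps, in order: (1) translate $|\sigma|_1 = n+1$ and $|w_1| = 2$ into the statement $s^*_1 \in \{0,1\}$ and determine which survives; (2) invoke Lemma \ref{lem:clawfree} on the persistent Dynkin claw to force exactly one of $\{s^*_3, s^*_5\}$ to be ``loaded-large'' (i.e. equal to its maximal admissible value $n+1$, by Lemma \ref{lem:hardeightcombinatorial}(3),(4) combined with the $\sigma=(1,\ldots,1)$ bounds) and the other to vanish; (3) determine the admissible values of $s^*_2$ — here Lemma \ref{lem:hardeightcombinatorial}(3)(a), which says $s^*_2 \leq s^*_3 + |\sigma|_1 + 1$ when $s^*_2$ is loaded, together with $s^*_3 = 0$ in half the cases, pins $s^*_2 \in \{1, n+2\}$; wait — I should check: if $s^*_3 = n+1$ (loaded) then by Lemma \ref{lem:hardeightcombinatorial}(4)(a) $s^*_3 \leq s^*_2 + |\sigma|_1 + 1 = s^*_2 + n + 2$, forcing $s^*_2 \geq -1$, i.e. no real constraint, so $s^*_2$ can still be $1$; and whether $s^*_2$ can exceed $1$ while $s^*_3$ is loaded must be ruled out by a heavy-triple or cycle argument among $w_1, w_2, w_3$; (4) show $s^*_4 \leq 1$ and $s^*_6 = s^*_7 = s^*_8 = 0$ by the same incomplete-cycle/heavy-triple analysis on the extended $E_8$-arm. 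The main obstacle I expect is step (2)–(3): precisely controlling the adjacency structure of the loaded vectors $w_2, w_3$ (whose interval shape depends on the algorithm of Section \ref{sec:workingin} and on the relative sizes of $s^*_2, s^*_3, s^*_4$) and verifying that exactly the eight configurations avoid all four forbidden features — this requires a careful hands-on case analysis of how the loaded $w_j|_{E_8} = -e_j + r$ sits relative to the other $w_i$, much like the bulk of the argument in Section \ref{sec:identifying}, rather than any single clean lemma.
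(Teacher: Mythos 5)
Your high-level strategy---case analysis on the shapes of $w_2,w_3,w_5$ driven by the claw $(e_1;e_2,e_3,e_5)$ in the Dynkin diagram---is the same one the paper uses (the paper organizes it by conditioning on $w_5$ first), but several concrete steps in your outline are wrong or would fail. First, there is no ambiguity about $s^*_1$: by the construction of the standard basis, $w_1 = -e_1 + \sum_{i\in A_1}d_i$ (or is tight), so $|w_1| = 2 + |A_1|$, and $|w_1|=2$ forces $A_1=\emptyset$, i.e.\ $s^*_1=0$ immediately; no downstream claw or sign-error argument is involved. Second, you misuse ``loaded'' throughout: a vector $w_j$ is loaded only when $s^*_j > |\sigma|_1+1 = n+2$, and \emph{none} of the eight final configurations contains a loaded vector --- $s^*_3=n+1=|\sigma|_1$ gives the unloaded just-right vector $-e_3+d_0+\cdots+d_n$, and $s^*_2=n+2=|\sigma|_1+1$ gives a tight unloaded vector. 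Consequently the inequalities of Lemma \ref{lem:hardeightcombinatorial}(3),(4) that you invoke to ``pin'' $s^*_2$ and $s^*_3$ are not the operative constraints; what actually pins these values is the interval combinatorics (e.g.\ that an unloaded $w_j$ with $\sigma=(1,\ldots,1)$ must be one of $-e_j$, $-e_j+d_n$, $-e_j+d_1+\cdots+d_n$, $-e_j+d_0+\cdots+d_n$, or tight, and then ruling out all but a few options by claws and incomplete cycles). Loaded vectors do appear in the proof, but only as possibilities for $w_2,w_3,w_4$ that must be eliminated.

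The most serious error is your step (4): you propose to show $s^*_4\le 1$ and $s^*_6=s^*_7=s^*_8=0$, but cases (4) and (8) of the statement have $s^*_6=1$ (and $s^*_4=0$). The correct pattern is that the ``extra $1$'' sits on the Dynkin neighbor of whichever arm carries the full vector: $s^*_4\in\{0,1\}$ when $s^*_3=n+1$, and $s^*_6\in\{0,1\}$ when $s^*_5=n+1$. Your proposed verification would therefore either miss two of the eight configurations or wrongly exclude them. Relatedly, your claim that $w_2$ and $w_5$ (or $w_3$ and $w_5$) ``are never adjacent'' is false: even though $\langle e_2,e_5\rangle=0$, the intervals can abut through shared support in $\mathbb Z^{n+1}$ (e.g.\ $w_2=-e_2+d_n$ and $w_5=-e_5+d_n$ pair to $1$), and a substantial portion of the paper's argument consists precisely of exploiting such adjacencies to produce heavy triples like $(w_2,w_4,w_5)$ and incomplete cycles threading through $v_1,\ldots,v_n$. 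Without tracking these $\mathbb Z^{n+1}$-mediated adjacencies, and without separately disposing of the tight possibilities for $w_3$ and $w_5$ (the paper devotes Case V and part of Case I to exactly this), the case analysis does not close.
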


\begin{proof}
Observe that if $w_j \in \mathcal W$ is unloaded, then either $|w_j| = 2$, $w_j = -e_5 + d_n$, $w_j = -e_5 + d_1+ \ldots + d_n$, $w_j = -e_5 + d_0 + \ldots d_n$, or $w_j$ is tight. We begin to break our analysis into cases by conditioning on $w_5$.

Case I: $|w_5| = 2$.
First suppose that $|w_5| = 2$. Then both $w_2$ and $w_3$ are unloaded, for if $w_2$ or $w_3$ is loaded, then at least one of $w_2$ and $w_3$ is not tight, and so there is an incomplete cycle since $|w_j|\geq 3$ for some $j \in \{6,7,8\}$ if $w_2$ or $w_3$ is loaded and $|w_5| = 2$. It follows then that $w_2 \sim w_3$ and either $w_2 = -e_2 + d_0 + \ldots + d_n$ or $w_3 = -e_3 + d_0 + \ldots + d_n$ or else there is an incomplete cycle, and furthermore that $|w_6| = |w_7| = |w_8| = 2$. 

Suppose, by way of contradiction, that $w_3$ is tight. Then $w_2 = -e_2 + d_0 + \ldots + d_n$, and $|w_4|\geq 3$ or else $(w_3;v_1, w_1, w_4)$ is a claw. If $|w_4|\geq 3$ and $w_4$ is unloaded, then $w_4 = -e_4 + d_n$ or else $2 \leq w_4 \cdot w_2 \leq |w_4|-2$, but then $(v_1, \ldots, v_n, w_4, w_2, w_3, v_1)$ is an incomplete cycle. If $w_4$ is loaded, then $w_4|_{E_8} = -e_4 + e_2$ and $|A_4|\geq 2$. It follows that either $w_4 = -e_4 + e_2 + d_{n-1} + d_n$, in which case $(v_1, \ldots,w_4, w_1, w_3, v_1)$ is an incomplete cycle, or $ w_4 = -e_4 + e_2 + d_{n-2} + d_{n-1} + d_n$, in which case $(w_3, w_2, w_4, w_1, w_3)$ is an incomplete cycle, or else $2 \leq w_4 \cdot w_2 \leq |w_2|-3$. Conclude that $w_3$ is not tight.

Case I.1: $w_3 = -e_3 + d_0 + \ldots + d_n$.

Suppose that $w_3 = -e_3 + d_0 + \ldots + d_n$. Then either $w_2 = -e_2 + d_n$, in which case $\epsilon_2 = -\epsilon_3$, or $w_2$ is tight. If $w_2 = -e_2 + d_n$, then $w_4$ is unloaded, or else $w_4 = -e_4 + e_2 + 2d_0 + d_1 + \ldots + d_n$, and so $(v_1, \ldots v_n, w_2, w_1, w_4, v_1)$ is an incomplete cycle, and so either $|w_4| = 2$ or $w_4 = -e_4 + d_n$, or else $(w_1, w_2, w_4, w_3, w_1)$ is an incomplete cycle or $2 \leq w_4 \cdot w_3 \leq |w_3| -3$ or $w_4$ is tight, in which case either $(v_1, \ldots, v_n, w_2, w_4, v_1)$ is an incomplete cycle or $\epsilon_2 = \epsilon_4 = \epsilon_3$, a contradiction since $w_2 \cdot w_3 = 1$. So, if $w_2 = -e_2 + d_n$, then either $s^* =  (0, 1, n+1, 0, 0, 0, 0, 0)$ or $s^* = (0, 1, n+1, 1, 0, 0, 0, 0)$. Suppose now that $w_2$ is tight. If $w_4$ is loaded, then $w_4 \sim w_1$, and so $w_4 = -e_4 + e_2 + d_0 + \ldots + d_n$ or else there is an incomplete cycle, but then $w_4 \cdot w_3 = n = |w_3|-3$, which is absurd. Conclude that $w_4$ is unloaded, and so either $|w_4| = 2$ or $w_4 = -e_4 + d_n$, or else there is an incomplete cycle. So, if $w_2$ is tight, then $s^* = (0, n+2, n+1, 0, 0, 0, 0, 0)$ or $s^* = (0, n+2, n+1, 1, 0, 0, 0, 0)$. 

Case I.2: $w_2 = -e_2 + d_0 + \ldots + d_n$.

Suppose that $w_2 = -e_2 + d_0 + \ldots + d_n$. Then either $w_3 = -e_3 + d_n$ or $w_3$ is tight, and in either case there is a claw or an incomplete cycle if $w_4$ is unloaded. If $w_4$ is loaded, then $w_4|_{E_8} = -e_4 + e_2$, and so $w_4 \sim w_1$. But then $w_4 \sim w_3$ or else $(w_1;w_3, w_4, w_5)$ is a claw, and similarly $w_4 \sim w_2$ or else $(w_1;w_2, w_4, w_5)$ is a claw. But then $A_4 = \emptyset$ or else $(w_1, w_3, w_4)$ is a negative triangle, in which case $s^*_4 = |\sigma|_1$, which is absurd since $w_4$ is loaded. 

Case II: $w_5 = -e_5 + d_n$.

Now suppose that $w_5 = -e_5 + d_n$. Then $|w_6|\geq 3$ or else $(w_5;v_n, w_1, w_6)$ is a claw, and so either $w_6 = -e_6 + d_0 + \ldots + d_n$ or $w_6$ is tight, or else $(v_n;v_{n-1}, w_5, w_6)$ is a claw. Suppose that $w_6 = -e_6 + d_0 + \ldots + d_n$. Let $j \in \{2,3\}$. If $A_j = \{n\}$, then $|w_2|$, $|w_3|\geq 3$ and $w_j$ is loaded, or else $(v_n, w_5, w_1, w_j, v_n)$ is an incomplete cycle, but then $(w_2, w_3, w_5)$ is a heavy triple or either $w_2$ or $w_3$ is unloaded and tight, so either $(w_2;v_1, w_1, w_6)$ or $(w_3;v_1, w_1, w_6)$ is a claw. If $A_j = \{i, \ldots, n\}$ for $0 \leq i \leq n-1$, then $2 \leq w_j \cdot w_6 \leq |w_6|-2$. If $w_j$ is tight, then $w_j$ is loaded and $|w_k|\geq 3$ for $k \in \{2,3\}\setminus \{j\}$, or else $(v_1, \ldots, v_n, w_5, w_1, w_j, v_1)$ is an incomplete cycle, but then $w_k$ is unloaded, and $A_k = \{i, \ldots, n\}$ for some $0 \leq i \leq n$, so $(w_k, w_5, w_6)$ is a heavy triple. Suppose instead that $w_6$ is tight. Again, let $j \in \{2, 3\}$. If $A_j = \{n\}$, then $|w_2|$, $|w_3|\geq 3$ and $w_j$ is loaded, or else $(v_n, w_5, w_1, w_j, v_n)$ is an incomplete cycle, but then $(w_5, w_j, w_6, v_1, \ldots, v_n, w_j)$ is an incomplete cycle. If $A_j = \{i, \ldots, n\}$ for some $1\leq i \leq n-1$, then $w_j$ is unloaded, and thus $(w_1, w_j, v_i, \ldots, v_n, w_5, w_1)$ is an incomplete cycle. If $A_j = \{0, \ldots, n\}$, then $w_j = -e_j + d_0 + \ldots + d_n$, and so $(w_1, w_j, w_6, v_1, \ldots, v_n, w_5, w_1)$ is an incomplete cycle. 

Case III: $w_5 = -e_5 + d_1 + \ldots + d_n$.

Now suppose that $w_5 = -e_5 + d_1 + \ldots + d_n$. Then $w_6 = -e_6 + d_n$, or $n =2$ and $w_6 = -e_6 + d_1 +d_2$, or else there is a claw. Let $j \in \{2,3\}$ if $A_j =\{n\}$, then $(w_j, w_5, w_6)$ is a heavy triple. If $A_j = \{i, \ldots, n\}$ for $0 \leq i \leq n$, then $2\leq w_j \cdot w_5 \leq |w_5|-2$, which is absurd since $w_j$ and $w_5$ are unbreakable. If $w_j$ is tight, then $w_j$ is loaded or else $(w_1, w_j, v_1, w_5, w_1)$ is an incomplete cycle, but then $w_k$ is unloaded and $w_k = -e_k + d_i + \ldots + d_n$ for $k \in \{2,3\}\setminus \{j\}$. If $n = 2$ and $w_6 = -e_6 + d_1 + \ldots + d_n$, then $|w_j|\geq 3$ for some $j \in \{2,3\}$, and either $(w_j, w_5, w_6)$ is a heavy triple, or $(v_1;v_2, w_j, w_5)$ is a claw.

Case IV: $w_5 = -e_5 + d_0 + \ldots + d_n$.

Now suppose that $w_5 = -e_5 + d_0 + \ldots + d_n$. Then $|w_j| \geq 3$ for some unloaded $w_j$ with $j \in \{2, 3\}$, and moreover either $A_j= \{n\}$ or $w_j$ is tight. It follows that $|w_k| = 2$ for $k \in \{2, 3\} \setminus \{j\}$, or else, without loss of generality, either $(w_j, v_n, w_k, w_5, w_j)$ is an incomplete cycle or $(w_j,v_1, \ldots, v_n, w_k, w_5, w_j)$ is an incomplete cycle if $w_j$ is tight. Suppose that $w_j = -e_j + d_n$. Furthermore, $|w_7| = |w_8| = 2$ or else there is an incomplete cycle, and either $|w_6| = 2$, or $w_6 = -e_6 + d_n$, or else $w_6 = -e_6 + d_n + d_{n-1}$ and either $A_j = \{n\}$ and $(w_j, w_5, w_6)$ is a heavy triple, or $w_j$ is tight and $(w_5;w_1,w_j,w_6)$ is a claw. In any event, if $j = 3$, then $w_4$ is loaded or else either $|w_4| =2$, and either $w_3 = -e_3 + d_n$ and $(w_3;v_n, w_4, w_5)$ is a claw or $w_3$ is tight and $(w_3;v_1, w_4, w_5)$ is a claw, $w_4 = -e_4 + d_n$ and either $w_3 = -e_3 + d_n$ and $(w_3, w_4, w_5)$ is a heavy triple or $w_3$ is tight and $(w_3, v_1, \ldots, v_n, w_4, w_5, w_3)$ is an incomplete cycle, or $2 \leq w_4 \cdot w_5 \leq |w_5|-2$, which is absurd. If $j = 3$ and $w_4$ is loaded then, since $|w_1| = |w_2| = 2$, either $|w_6| = 2$ and $w_4|_{E_8} = -e_4 + e_5$, in which case $(w_1, w_4, w_6, w_5, w_1)$ is an incomplete cycle, or $w_6= -e_6 + d_n$ and $w_4|{E_8} = -e_4 + e_5 + e_6$, in which case either $w_3 = -e_3 + d_n$ and $(w_1, w_3, w_6, w_7, w_4, w_1)$ is an incomplete cycle or $w_3$ is tight and $(w_3, v_1, \ldots, v_n, w_6, w_7, w_4, w_1, w_3)$ is an incomplete cycle. Conclude that $j \neq 3$. Suppose instead that $j = 2$. Since then $|w_3| = 2$, we must have that $w_2$ is unloaded, for if not, then $(w_2, w_3, w_1, w_5, w_2)$ is an incomplete cycle. It follows that if $|w_4|\geq 3$, then $w_4$ is loaded, for if not, then either $w_4 = -e_4 + d_n$ and either $w_2 = -e_2 + d_n$ and $(w_2, w_4, w_5)$ is a heavy triple or $w_2$ is tight and $(w_2, v_1, \ldots, v_n, w_4, w_5, w_2)$ is an incomplete cycle, or $w_4$ is tight, in which case $w_2 = -e_2 + d_n$, so $(w_4, v_1, \ldots, v_n, w_2, w_5, w_4)$ is an incomplete cycle. Suppose now that $w_4$ is loaded. If $w_4|_{E_8} = -e_4 + e_2$, then $w_2$ is tight and $w_4 = -e_4 + e_2 + d_n$, or else $2 \leq w_4 \cdot w_5 \leq |w_5| -2 \neq |w_4| -2$. But then $(w_2, v_1, \ldots, v_n, w_4, w_5, w_2)$ is an incomplete cycle. If $w_4|_{E_8} = -e_4 + e_2 + e_1 + e_5$, then either $w_6 = -e_6 + d_n$ and $w_4= -e_4 + e_2 + e_1 + e_5$, in which case $w_2$ is tight and so $(w_2, v_1, \ldots, v_n, w_6, w_4, w_2)$ is an incomplete cycle, or $|w_6| = 2$ and $w_4 \sim w_6$, and so $w_4 = -e_4 + e_2 + e_1 + e_5$ or else $(w_6;w_4, w_5, w_7)$ is a claw or $(w_4, w_5, w_6)$ is a negative triangle, but then $w_4 \cdot w_2 = -1$, so $(w_2, w_4, w_6, w_5, w_2)$ is an incomplete cycle. If $w_4|_{E_8} = -e_4 + e_2 + e_1 + e_5 + e_6$, then $w_4 = -e_4 + e_2 + e_1 + e_5 + e_6$ or else $(w_7;w_4,w_6, w_8)$ is a claw, but then either $w_2 = -e_2 + d_n$ and $(w_2, w_6, w_7, w_4, w_2)$ is an incomplete cycle, or $w_2$ is tight and $(w_2, v_1, \ldots, v_n, w_6, w_7, w_4, w_2)$ is an incomplete cycle. Conclude that $w_4$ is not loaded, hence $|w_4| = 2$ or else $(w_4, w_3, w_1, w_5, w_4)$ is an incomplete cycle, hence $s^* = (0, 1, 0, 0, n+1, 0, 0, 0)$, $s^* = (0, 1, 0, 0, n+1, 1, 0, 0)$, $s^* = (0, n+2, 0, 0, n+1, 0, 0, 0)$, or $s^* = (0, n+2, 0, 0, n+1, 1, 0, 0)$.

Case V: $w_5$ is tight.

Suppose lastly, by way of contradiction, that $w_5$ is tight. Then, if $j \in \{2,3\}$, $|w_j|\geq 3$, and $w_j \cdot w_1 = -1$, then $w_j = -e_j + d_0 + \ldots + d_n$ or else there is an incomplete cycle or $w_j$ is loaded and $2 \leq w_5 \cdot w_j \leq |w_j| -3$. It follows that $|w_6|\geq 3$ or else $(w_5;v_1, w_1, w_6)$ is a claw, but then $w_6 = -e_6 + d_n$ or else $2\leq w_6 \cdot w_j = |w_6| -2$ or else $w_6$ is tight and $(v_1;v_2, w_5, w_6)$ is a claw, but then $(v_1, \ldots, v_n, w_6, w_j, w_1, w_5, v_1)$ is an incomplete cycle.
\end{proof}

\begin{prop}\label{prop:alloness11}
If $\sigma = (1, \ldots, 1) \in \mathbb Z^{n+1}$, $n \geq 2$, and $w_1 = -e_1 + d_n$, then one of the following holds:
\begin{enumerate}
    \item $s^* = (1, n+2, n+1, 0, 0, 0, 0, 0)$,
    \item $s^* = (1, n+2, n+1, 1, 0, 0, 0, 0)$,
    \item $s^* = (1, n+2, 0, 0, n+1, 0, 0, 0)$, or
    \item $s^* = (1, n+2, 0, 0, n+1, 1, 0, 0)$.
\end{enumerate}
\end{prop}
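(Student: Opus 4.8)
\textbf{Proof proposal for Proposition \ref{prop:alloness11}.}

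The plan is to follow the same template as the proof of Proposition \ref{prop:alloness10}, but now with the extra leverage that $w_1 = -e_1 + d_n$ (so $|w_1| = 3$ and $A_1 = \{n\}$) forces substantially more rigidity on the neighbours of $w_1$ in $G(\mathcal S)$. First I would record the general form of an unloaded extension vector when $\sigma = (1,\ldots,1)$: for $j \in \{5,6,7,8\}$ each $w_j$ is either of norm $2$, equal to $-e_j + d_n$, equal to $-e_j + d_k + \cdots + d_n$ for some $k$, equal to $-e_j + d_0 + \cdots + d_n$, or tight; and similarly for $w_2, w_3$ when they are unloaded. I would then note that since $w_1 = -e_1 + d_n$ pairs nontrivially with any $w_j$ whose support contains $n$, and since $w_1$ neighbours $w_2, w_3, w_5$ in the Dynkin-diagram sense, the claw-freeness (Lemma \ref{lem:clawfree}) and heavy-triple (Lemma 4.10 of \cite{Gre13}) constraints become sharp. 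In particular $v_{n-1}$, $w_1$, and any of $w_2, w_3, w_5$ neighbour $v_n$, so some two of these must neighbour, which pins down how the intervals $T(w_2), T(w_3), T(w_5)$ must sit relative to $T(w_1)$ and the tail.

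The core of the argument is a case analysis on $w_5$, exactly paralleling Cases I–V in the previous proposition. Because $w_1 = -e_1 + d_n$ rather than $|w_1| = 2$, the subcase $|w_5| = 2$ is now the one that survives: if $w_5$ has norm $\geq 3$ then $w_5$ and $w_1$ both contain $x_n$ (or $w_5$ is tight), and combined with the neighbours $w_6, w_7, w_8$ this produces either a claw at $w_1$, a heavy triple among $\{w_1, w_5, w_6\}$, or an incomplete cycle through the tail $v_1, \ldots, v_n$ — so I would rule out each of Cases II–V as before. With $|w_5| = 2$ (hence $|w_6| = |w_7| = |w_8| = 2$ to avoid incomplete cycles), I then condition on whether $w_2$, $w_3$ are loaded. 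As in Proposition \ref{prop:alloness10}, if $w_5$ has norm $2$ then at most one of $w_2, w_3$ can be loaded, and one checks that a loaded $w_2$ or $w_3$ forces $|w_j| \geq 3$ for some $j \in \{6,7,8\}$, contradicting $|w_5| = 2$ via an incomplete cycle; so both $w_2$ and $w_3$ are unloaded. Then $w_2 \sim w_3$, and to avoid an incomplete cycle through the tail one of them must be $-e_j + d_0 + \cdots + d_n$ while the other is tight or equal to $-e_k + d_n$; running the sign-error lemmas (\ref{0pairingmediated}), (\ref{pitchforkmediated}) on the triple through $v_n$, together with the requirement that $w_1 = -e_1 + d_n$ is actually generated, eliminates all possibilities except $\{s^*_2, s^*_3\} = \{n+1, n+2\}$ with the larger value on $e_2$ (since $e_2$ is the trivalent-adjacent leaf), yielding $s^*_2 = n+2$ and $s^*_3 \in \{0, n+1\}$. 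Finally $w_4$: it must be unloaded (a loaded $w_4$ has $w_4|_{E_8} \ni e_2$ or $e_1$, hence neighbours $w_1$, producing a claw or negative triangle with $w_2, w_3$), so $|w_4| = 2$ or $w_4 = -e_4 + d_n$, and the latter is compatible only when it does not create a heavy triangle with $w_3$ — giving $s^*_4 \in \{0, 1\}$. Assembling, $s^* \in \{(1, n+2, n+1, 0, 0,0,0,0),\ (1,n+2,n+1,1,0,0,0,0),\ (1,n+2,0,0,n+1,0,0,0),\ (1,n+2,0,0,n+1,1,0,0)\}$, with the last two arising from the symmetric role played by $e_5$ in place of $e_3$ (the $e_3$–$e_4$ arm versus the $e_5$–$e_6$–$e_7$–$e_8$ arm of the Dynkin diagram).

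I expect the main obstacle to be the bookkeeping in the subcase where one of $w_2, w_3$ is tight: there the tight interval is breakable, so it overlaps many other intervals transversally, and one must carefully apply the pairing lemma for breakable-versus-unbreakable intervals (together with the no-sign-error proposition) to see that the only consistent configuration is the one claimed, rather than an incomplete cycle or a forbidden sign pattern. A secondary subtlety is verifying that each of the four surviving $s^*$ vectors genuinely arises from an $E_8$-changemaker $\tau = (s,\sigma)$ — i.e. that $s^* = (s_1^*, \ldots, s_8^*)$ lies in the fundamental Weyl chamber and that the resulting $\tau$ satisfies the parity-interval conditions of Definition \ref{def:e8changemaker} — but this is a direct check against the constraints in Lemma \ref{lem:hardeightcombinatorial} and does not require new ideas. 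Throughout I would lean on the structural lemmas of Section \ref{sec:forbidden} (claw-freeness, no heavy triples, cycles are complete, no sign errors) exactly as Greene does, so the novelty is confined to tracking how the fixed shape of $w_1$ interacts with the changemaker tail $(1,\ldots,1)$.
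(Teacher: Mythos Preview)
Your plan has a genuine gap: you claim that only the case $|w_5| = 2$ survives and that Cases II--V can all be ruled out, but Case IV (namely $w_5 = -e_5 + d_0 + \cdots + d_n$, i.e.\ $s^*_5 = n+1$) does \emph{not} lead to a contradiction. In fact this is precisely where outcomes (3) and (4) come from. Your heuristic ``if $|w_5|\ge 3$ then $w_5$ and $w_1$ both contain $x_n$'' overlooks that when $A_5 = \{0,\ldots,n\}$ the $E_8$-part of the pairing cancels the $d_n$-contribution: $w_1\cdot w_5 = \langle -e_1,-e_5\rangle + 1 = -1+1 = 0$, and moreover $w_5\cdot v_i = 0$ for every $i$, so $w_5$ sits orthogonal to $w_1$ and to the entire tail. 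There is no claw, heavy triple, or incomplete cycle produced by $w_5$ alone in this case, and the paper's proof spends most of Case IV carefully analysing how $w_2,w_3,w_4,w_6$ must then behave, eventually forcing $w_2$ tight, $|w_3|=2$, $|w_4|=2$, and $|w_6|\in\{2,3\}$.

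Consequently your Case I analysis cannot by itself account for all four outcomes: with $|w_5|=2$ the paper shows both $|w_2|,|w_3|\ge 3$ (else there is a claw at $w_1$), so $s^*_3 = 0$ is impossible there, and Case I yields only (1) and (2). Your appeal at the end to a ``symmetric role played by $e_5$ in place of $e_3$'' is not a substitute for the missing Case IV analysis, since $e_3$ and $e_5$ are not symmetric in the $E_8$ Dynkin diagram (the $e_3$-arm has length two, the $e_5$-arm has length four), and the actual arguments ruling out loaded $w_4$ and constraining $w_6$ differ between the two surviving cases. You need to run Case IV separately and in full.
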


\begin{proof}
As in the previous lemma, if $w_j \in \mathcal W$ is unloaded, then either $|w_j| = 2$, $w_j = -e_j + d_n$, $w_j = -e_j + d_1+ \ldots + d_n$, $w_j = -e_j + d_0 + \ldots + d_n$, or $w_j$ is tight. We break our analysis into cases by conditioning on $w_5$.

Case I: $|w_5| = 2$.

Suppose first that $|w_5| = 2$. Let $j \in \{2,3\}$. Then both $w_2$ and $w_3$ are unloaded and $|w_2|$, $|w_3|\geq3$, or else there is a claw at $w_1$. It follows that $w_j = -e_j + d_0 + \ldots + d_n$ and $w_k$ is tight for $\{j, k\} = \{2,3\}$, and so $|w_6| = |w_7| = |w_8| = 2$ or else there is an incomplete cycle. If $w_4$ is loaded then $w_4|_{E_8} = -e_4 + e_2 + e_1$, so $(w_5;w_1,w_4, w_6)$ is a claw. It follows that $|w_4| = 2$, in which case $w_3 = -e_3 + d_0 + \ldots + d_n$ or $(w_3;v_1, w_2, w_4)$ is a claw, or $w_4 = -e_4 + d_n$, in which case $w_3 = -e_3 + d_0 + \ldots + d_n$, or else $w_3$ is tight and $(w_3, v_1, \ldots, v_n, w_4, w_2, w_3)$ is an incomplete cycle. Conclude that $s^* = (1, n+2, n+1, 0, 0, 0, 0, 0)$ or $s^* = (1, n+2, n+1, 1, 0, 0, 0, 0)$. 

Case II: $w_5 = -e_5 + d_n$.

Suppose now that $w_5 = -e_5 + d_n$. Then $(v_n; v_{n-1}, w_1, w_5)$ is a claw.

Case III: $w_5 = -e_5 + d_1 + \ldots + d_n$.

Suppose now that $w_5 = -e_5 + d_1 + \ldots + d_n$. Then, for $j \in \{2, 3\}$, $w_j \cdot v_i = 0$ for all $1 \leq i \leq n$ or else either there is a claw or $w_j$ is unbreakable and $2\leq w_j \cdot w_5 \leq |w_5|-2$, and $A_j \neq \{0, 1, \ldots, n\}$ or else $w_j \cdot w_5 = |w_5|-2 < |w_j|-2$. It follows that $|w_2| = |w_3| = 2$, but then $(w_1;v_n, w_2, w_3)$ is a claw.

Case IV: $w_5 = -e_5 + d_0 + \ldots + d_n$.

Suppose now that $w_5 = -e_5 + d_0 + \ldots + d_n$. Then either $|w_6| = 2$ or $w_6 = -e_6 + d_n$, and so either $|w_7|= 2$ or $w_7$ is tight, in which case either $(w_7;v_1, w_5, w_8)$ is a claw or $(w_1, w_6, w_8)$ is a heavy triple, and therefore $|w_8| = 2$ or else either $w_8$ is tight, in which case $(w_5, w_6, w_7, w_8, w_5)$ is an incomplete cycle if $|w_6| = 2$ and $w_8$ mediates a sign error between $w_1$ and $w_6$ if $w_6 = -e_6 + d_n$, or $w_8 = -e_8 +d_n$ and $(w_8;v_n, w_5, w_7)$ is a claw. Furthermore, $w_j$ is tight and $|w_k| = 2$ for $\{j, k\} = \{2,3\}$, since if $|w_2| = |w_3| = 2$, then $(w_1;v_n, w_2, w_3)$ is a claw, if $A_j = \{n\}$ and $w_j \cdot e_1 = -1$ then $(v_n;v_{n-1}, w_1, w_j)$ is a claw, and if $A_j = \{n\}$ and $w_j \cdot e_1 = 0$ then $w_k \sim w_5$ and either $w_k$ is unbreakable, in which case $(w_2,w_3, w_5)$ is a heavy triple, or $w_k$ is tight, in which case $(w_k, v_1, \ldots, v_n, w_j, w_5, w_k)$ is an incomplete cycle. If $|w_2| = 2$, then $(w_3;v_1, w_4, w_5)$ is a claw if $|w_4| =2$, and if $|w_4|\geq 3$ and $w_4$ is unloaded then $(v_1, \ldots, v_n, w_1, w_4, w_5, w_3, v_1)$ is an incomplete cycle. If $w_4$ is loaded then either $w_4|_{E_8} = -e_4 + e_1 + e_5$, in which case either $|w_6| = 2$ and $w_4 = -e_4 + e_1 + e_5 + d_n$, and therefore $(w_2, w_1, v_n, w_4, w_2)$ is an incomplete cycle, or $w_6=-e_6 + d_n$ and $w_4 = -e_4 + e_1 + e_5$, so $s^*_4 = |\sigma|_1 + 1$, or $w_6 = -e_6 + d_n$ and $w_4|_{E_8} = -e_4 + e_1 + e_5 + e_6$, in which case $(w_2, w_1, w_6, w_7, w_4, w_2)$ is an incomplete cycle. Conclude that $w_2$ is tight and $|w_3| = 2$. If $w_4|_{E_8} = -e_4 + e_2 + e_1$, then $2\leq w_4 \cdot w_5 < |w_5| -2$, if $w_4|_{E_8} = -e_4 + e_2 + e_1 + e_5$ then either $|w_6| = 2$, in which case $w_4 = -e_4 + e_2 + e_1 + e_5$ or else either $(w_6;w_4, w_5, w_7)$ is a claw or $(w_4, w_5, w_6)$ is a negative triangle, but then there is a sign error between $w_4$ and $w_5$ mediated by $w_2$, or $w_6 = -e_6 + d_n$, in which case $w_4 = -e_4 + e_2 + e_1 + e_5$, in which case $(w_2, v_1, \ldots, v_n, w_6, w_4, w_5, w_2)$ is an incomplete cycle, and if $w_4|_{E_8} = -e_4 + e_2 + e_1 + e_5 + e_6$, then $w_4 = -e_4 + e_2 + e_1 + e_5 + e_6$ or else $(w_7;w_4,w_6,w_8)$ is a claw, but then either $(w_2, v_1, \ldots, v_n, w_6, w_2)$ is an incomplete cycle, or there is a sign error between $w_4$ and $w_6$ mediated by $w_2$. If $w_4$ is unloaded, then $|w_4| = 2$ or else $(w_4, v_n, w_1, w_3, w_4)$ is an incomplete cycle. Conclude that $s^* = (1, n+2, 0, 0, n+1, 0, 0, 0)$ or $s^* = (1, n+2, 0, 0, n+1, 1, 0, 0)$.

Case V: $w_5$ is tight.

 Suppose lastly that $w_5$ is tight. Then for $j\in \{2,3\}$ with $w_j$ unloaded, $w_j = -e_j + d_0 + \ldots + d_n$ or else $(v_n;v_{n-1}, w_1, w_j)$ is a claw, and so $|w_6|\geq 3$ or else $(w_5;v_1, w_j, w_6)$ is a claw. It follows that $w_6 = -e_6 + d_n$ since $w_6$ is not tight and $w_6 \cdot w_2 \leq 1$, but then $(v_1, \ldots, v_n, w_6, w_j, w_5, v_1)$ is an incomplete cycle.
\end{proof}

\begin{lem}
If $\sigma = (1, \ldots, 1) \in \mathbb Z^{n+1}$, $n \geq 2$, and $w_1 = -e_1 + d_1 + \ldots + d_n$, then $(\tau)^\perp$ is not a linear lattice. 
\end{lem}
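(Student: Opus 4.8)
The plan is a proof by contradiction, in the same spirit as Propositions \ref{prop:alloness10} and \ref{prop:alloness11}. Suppose $(\tau)^\perp \cong \Lambda(p,q)$. Since $\Lambda(p,q)$ is indecomposable, $\hat G(\mathcal S)$ is connected, hence so is $G(\mathcal S)$, and $G(\mathcal S)$ contains no claw, no heavy triple, no incomplete cycle, and no sign error. The first step is to record the rigidity that the hypotheses already impose on $\mathcal S$. Because $\sigma = (1, \ldots, 1)$, the changemaker basis consists of the norm-$2$ vectors $v_i = d_{i-1} - d_i$, whose intervals form a path in $G(\mathcal S)$; and a direct computation gives $w_1 \cdot v_1 = -1$ with $w_1 \cdot v_j = 0$ for $2 \leq j \leq n$, so $w_1$ is adjacent in $G(\mathcal S)$ to $v_1$ and to no other element of $\mathcal V$. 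Since $w_1$ is just right, it is unbreakable, so by Proposition \ref{prop:linearirreducibles}(4) the interval $T(w_1)$ contains at most one vertex of norm $\geq 3$; as $|w_1|^2 = n+2 \geq 4$ it contains exactly one, and a norm count forces that vertex to have norm exactly $n+2$. Thus $T(w_1)$ is a long interval, carrying a uniquely heavy vertex, and it is attached to the $v$-path only at $v_1$.

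Next I would dispose of the easy reduction: if $|w_2| = |w_3| = 2$, then $w_2 = -e_2$ and $w_3 = -e_3$ are norm-$2$ vertices with $w_1 \cdot w_2 = w_1 \cdot w_3 = -1$, $w_2 \cdot w_3 = 0$, and $v_1 \cdot w_2 = v_1 \cdot w_3 = 0$, so $(w_1; v_1, w_2, w_3)$ is a claw, contradicting Lemma \ref{lem:clawfree}; hence $|w_2| \geq 3$ or $|w_3| \geq 3$. The bulk of the proof then conditions on the form of $w_5$, which is always unloaded and so is one of $-e_5$, $-e_5 + d_n$, $-e_5 + d_1 + \cdots + d_n$, $-e_5 + d_0 + \cdots + d_n$, or tight --- the same split used in Propositions \ref{prop:alloness10} and \ref{prop:alloness11}. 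When $w_5$ is one of the three ``large'' types one computes $w_1 \cdot w_5 = n - 1 \neq 0$. If $w_5$ is unbreakable then $w_1$ and $w_5$ are unbreakable of norm $\geq 3$ with positive pairing, hence non-crossing; they cannot nest, since nesting would give $|w_1 \cdot w_5| = \min(|w_1|^2, |w_5|^2) - 1 \geq n$, so they abut consecutively and $|w_1 \cdot w_5| = 1$. If $w_5$ is tight, the classification of pairings between a breakable interval and the unbreakable $T(w_1)$ of norm $n+2$ forces $|w_1 \cdot w_5| \in \{0, 1, n, n+1\}$. In all cases the value $n-1$ is excluded once $n \geq 3$, leaving only the residual situations $n = 2$, $w_5 = -e_5$, and $w_5 = -e_5 + d_n$.

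In each residual situation I would run the same bookkeeping as in the proofs of Propositions \ref{prop:alloness10} and \ref{prop:alloness11}: enumerate which of $w_2, w_3, w_4, w_6, w_7, w_8$ are loaded --- the admissible $E_8$-parts of loaded vectors being $-e_2 + e_4$, $-e_2 + e_3$, $-e_2 + e_3 + e_4$ for $w_2$, $-e_3 + e_2$ for $w_3$, and an $E_8$-part assembled from $e_1, e_2, e_5, e_6, e_7, e_8$ for $w_4$, all constrained by Lemma \ref{lem:hardeightcombinatorial} --- and, for each configuration, determine the adjacencies around $w_1$ and $v_1$ and exhibit a forbidden feature: a claw at $w_1$, a claw at $v_1$, a heavy triple among $\{w_2, w_3, w_5\}$ or among a triple containing $w_5$ and $w_6$, an incomplete cycle running from $w_1$ through a tail of $v_1, \ldots, v_n$, or a sign error mediated by a norm-$2$ vector; Lemma \ref{lem:vttightnowjtight} disposes of the competing possibility that a tight $v_t$ coexists with a tight $w_j$. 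The main obstacle is precisely this residual enumeration: no single inequality closes the cases $n = 2$, $w_5 = -e_5$, and $w_5 = -e_5 + d_n$, and because $w_2$ and $w_3$ (unlike $w_5$) may be loaded and $w_4$ may be loaded in several ways, the number of branches is comparable to those handled in the two preceding propositions. The consistent leverage is that $T(w_1)$ is forced to be a long interval carrying a uniquely heavy vertex and meeting the $v$-path only at $v_1$, so that any second long vector attaching near $v_1$ or near $w_1$ either over-pairs with $w_1$ or closes an incomplete cycle together with a tail of $v_1, \ldots, v_n$.
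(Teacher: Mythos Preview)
Your key computation --- that $w_1 \cdot w_5 = n-1$ for each of the three ``large'' forms of $w_5$, which via the pairing lemmas forces $n=2$ --- is correct and is essentially the engine behind the paper's argument as well. But your proposal is explicitly a plan rather than a proof: you concede that the residual enumeration (the cases $|w_5|=2$, $w_5=-e_5+d_n$, and $n=2$) is ``the main obstacle'' and do not carry it out. That is the genuine gap: nothing you have written actually produces a contradiction in those branches, and the work there is not routine bookkeeping --- it is where the loaded possibilities for $w_2$, $w_3$, $w_4$ must be confronted.

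The paper organizes the argument more efficiently, and it is worth seeing why. Rather than conditioning on $w_5$, the paper applies your pairing observation uniformly to \emph{every} unloaded $w_j$ with $j\in\{2,3,5\}$: each such $w_j$ with $|w_j|\geq 3$ must be $-e_j+d_n$ (or, when $n=2$, $-e_j+d_1+d_2$), since the other forms either give the forbidden pairing $w_j\cdot w_1 = n-1\geq 2$ or, when tight, produce the claw $(v_1;v_2,w_1,w_j)$. From this it follows that at most one unloaded $w_j$ ($j\in\{2,3,5\}$) can have $|w_j|\geq 3$, for two of them would yield a heavy triple or a claw at $v_1$ or $v_n$. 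Hence if $w_2$ and $w_3$ are both unloaded there is a claw at $w_1$. This reduces the entire lemma to two short cases --- $w_3$ loaded, or $w_2$ loaded --- each of which the paper dispatches in a few lines by exhibiting an incomplete cycle or a claw. Your route, conditioning on $w_5$ first, defers this reduction and leaves you with a three-way residual split in which the loaded cases for $w_2,w_3,w_4$ still have to be analyzed from scratch; the paper's ordering collapses the whole thing to about ten lines.
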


\begin{proof}
If $w_1 = -e_1  +d_1 + \ldots + d_n$, then for $j \in \{2,3,5\}$ with $w_j$ unloaded, either $w_j = -e_j + d_n$ or $n = 2$ and $w_j = -e_j + d_1 + d_2$, or else $w_j$ is tight and $(v_1;v_2, w_1, w_j)$ is a claw. It follows that $|w_j|\geq3$ for at most one $j \in \{2,3,5\}$ with $w_j$ unloaded or else there is a heavy triple or a claw at either $v_1$ or $v_n$, so there is a claw at $w_1$ if $w_2$ and $w_3$ are unloaded. If $w_3$ is loaded, then $w_3$ is tight or else $w_3 = -e_3 + e_2 + d_n$ and $(w_1, v_1, \ldots, v_n, w_3, w_1)$ is an incomplete cycle or $2 \leq w_3 \cdot w_1 = |w_3|-4$, but then $(v_1;v_2, w_1, w_3)$ is an incomplete cycle. If $w_2$ is loaded and $|w_3| = 2$, then $|w_5|\geq 3$ or else $(w_1;v_1, w_3, w_5)$ is an incomplete cycle, so unless $w_2$ is tight, then $w_2 \sim w_5$ and either $w_2 \sim w_1$ or $w_2 \sim v_n$, and in either case $(w_1, w_2, w_5)$ is a heavy triple. If $w_2$ is loaded and $|w_3|\geq 2$, then either $w_2$ is tight, in which case $(v_1;v_2,w_1,w_2)$ is a claw, or $w_2 \cdot w_1 = 0$, in which case $w_2 = -e_2 + e_3 + e_4$, or else $w_2 \cdot w_1 = 1$ and $(v_1, \ldots, v_n, w_2, w_1, v_1)$ is an incomplete cycle. If $w_2 = -e_2 + e_3 + e_4$, then $|w_3|\geq 3$ and $|w_4|\geq 3$ and $w_4$ is not tight, or else $(v_1;v_2, w_1, w_4)$ is a claw, so $w_4 = -e_4 + d_n$ or else $2\leq w_4 \cdot w_1 \leq |w_1|-2$, and therefore $(v_1, \ldots, v_n, w_4, w_1, v_1)$ is an incomplete cycle. 
\end{proof}

\begin{prop}\label{prop:alloness1full}
If $\sigma = (1, \ldots, 1) \in \mathbb Z^{n+1}$, $n \geq 2$, and $w_1 = -e_1 + d_0 + \ldots + d_n$, then one of the following holds:
\begin{enumerate}
    \item $s^* = (n+1, 1, 0, 0, 0, 0, 0, 0)$,
    \item $s^* = (n+1, 0, 1, 0, 0, 0, 0, 0)$,
    \item $s^* = (n+1, 0, 0, 0, 1, 0, 0, 0)$,
    \item $s^* = (n+1, 1, 1, 0, 0, 0, 0, 0)$,
    \item $s^* = (n+1, 1, 0, 0, 1, 0, 0, 0)$,
    \item $s^* = (n+1, 0, 1, 0, 1, 0, 0, 0)$,
    \item $s^* = (n+1, 1, 0, 1, 0, 0, 0, 0)$,
    \item $s^* = (n+1, 0, 0, 1, 1, 0, 0, 0)$,
    \item $s^* = (n+1, 0, 0, n+2, 1, 0, 0, 0)$,
    \item $s^* = (n+1, 1, 1, n+2, 0, 0, 0, 0)$,
    \item $s^* = (n+1, 1, 0, n+2, 0, 0, 0, 0)$, or
    \item $s^* = (n+1, 0, 1, n+2, 1, 0, 0, 0)$.
\end{enumerate}
\end{prop}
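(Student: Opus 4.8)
The proof follows the template of Propositions \ref{prop:alloness10} and \ref{prop:alloness11}, which handled the remaining shapes of $w_1$ for the tail $\sigma = (1,\ldots,1)$. Since $\sigma = (1,\ldots,1)$ we have $|\sigma|_1 = n+1$, the changemaker basis $\mathcal V = \{v_1,\ldots,v_n\}$ consists of the norm-$2$ vectors $v_i = d_{i-1}-d_i$, which form a path in $G(\mathcal S)$, and each unloaded extension vector $w_j$ is one of $-e_j$, $-e_j+d_n$, $-e_j+(d_1+\ldots+d_n)$, $-e_j+(d_0+\ldots+d_n)$, or tight (the case $s^*_j = n+2$), while a loaded $w_j$ — possible by construction only for $j\in\{2,3,4\}$ — has $w_j|_{E_8} = -e_j + r$ for one of the short roots $r$ allowed by Lemma \ref{lem:hardeightcombinatorial}, together with a tail of one of these four shapes. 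Two structural facts drive the analysis: first, $w_1 = -e_1 + (d_0+\ldots+d_n)$ has full support in $\mathbb Z^{n+1}$, which via the pairing lemmas of Section \ref{sec:forbidden} severely restricts how the $\mathbb Z^{n+1}$-parts of the remaining standard basis elements may overlap $\mathcal V$; second, $w_1$ resolves the trivalent vertex of the $E_8$ Dynkin diagram, so that claw-freeness (Lemma \ref{lem:clawfree}) at $w_1$ limits how many of $w_2,w_3,w_5$ can have norm $\geq 3$.

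I would run the proof by cases on $w_5$, exactly as in Proposition \ref{prop:alloness11}: (I) $|w_5| = 2$; (II) $w_5 = -e_5+d_n$; (III) $w_5 = -e_5+(d_1+\ldots+d_n)$; (IV) $w_5 = -e_5+(d_0+\ldots+d_n)$; and (V) $w_5$ tight. In each case I would branch on $w_2$ and $w_3$, the other Dynkin-neighbours of $e_1$, and then on $w_4$, the neighbour of $e_3$. The eliminations use precisely the forbidden features catalogued in Section \ref{sec:forbidden}: claws at $w_1$, $v_1$, or $v_n$; heavy triples among $\{w_1,w_2,w_3,w_5\}$; incomplete cycles — typically of the form $(v_1,\ldots,v_n,w_a,w_1,w_b,v_1)$, or a short cycle through the $e_5,e_6,e_7$ segment such as $(w_a,w_b,w_5,w_6,w_a)$, which is forbidden since every cycle in $G(\mathcal S)$ induces a complete subgraph; and sign errors mediated by a norm-$2$ vector, contradicting Lemmas \ref{0pairingmediated} and \ref{pitchforkmediated}. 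In the branches where $w_4$ is loaded, I would carry out the same computation as in the proof of Lemma \ref{lem:irreducible}, invoking Proposition \ref{prop:technicalpairings}: for each admissible shape of $w_4|_{E_8}$ one checks that its pairings against $w_1$, $w_5$, $w_6$, and the $e_7$-chain force a claw, a heavy triangle, or an incomplete cycle, so no loaded $w_4$ survives and $s^*_4 \leq n+2$. The configurations that pass every test are exactly the twelve listed values of $s^*$.

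The main obstacle, as in the companion propositions, is the volume of sub-cases — above all the loaded-$w_4$ branches — together with the care needed to check cycle-completeness: a configuration that is admissible at the level of norms and pairings can still be ruled out only because some $4$- or $5$-cycle through the $e_5, \ldots, e_8$ tail of the Dynkin diagram fails to be complete. The one genuinely new ingredient relative to Propositions \ref{prop:alloness10} and \ref{prop:alloness11} is the full support of $w_1$: it forces every $w_j$ with $j\in\{2,3,5\}$ that carries no $e_1$-summand to abut $w_1$ at the $x_0$-end, which simultaneously pins down the admissible resolutions of the Dynkin claw $(e_1;e_2,e_3,e_5)$ and produces the incomplete cycles used to bound $s^*_2$, $s^*_3$, $s^*_5$ from above. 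Once that is noted, the remaining casework, though long, is mechanical.
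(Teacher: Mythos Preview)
Your outline is workable, but the paper organizes the case analysis differently and more efficiently. Rather than conditioning on $w_5$ as in Propositions \ref{prop:alloness10} and \ref{prop:alloness11}, the paper first exploits the key consequence of full support: since $w_1 = -e_1 + d_0 + \cdots + d_n$, one has $w_1 \cdot v_i = 0$ for every $i$, so $w_1$ has no neighbours in $\mathcal V$ at all and the Dynkin claw $(w_1;w_2,w_3,w_5)$ must be resolved entirely within $\mathcal W$. This forces $|w_j| \geq 3$ for at least one $j \in \{2,3,5\}$, and the paper then splits on the cardinality $|\{j \in \{2,3,5\} : |w_j| \geq 3\}|$: Case I (exactly one, with subcases I.1--I.3 according to which $j$), Case II (exactly two, with subcases II.1--II.3 according to which is omitted), and Case III (all three, shown impossible via heavy triples and sign errors). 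This decomposition exploits the near-symmetry among $w_2,w_3,w_5$ in this setting and is tighter than the $w_5$-first template, since several of your five $w_5$-shapes are eliminated almost immediately by the pairing $w_5 \cdot w_1 \geq 2$ between two unbreakable vectors of norm $\geq 3$.

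One correction to your reading of the new ingredient: claw-freeness at $w_1$ does not \emph{limit} how many of $w_2,w_3,w_5$ have norm $\geq 3$ --- it forces \emph{at least one} to, since among the unloaded options only $w_j = -e_j + d_n$ gives $w_j \cdot w_1 = 0$. The upper bound (at most two survive in the end) comes later, from heavy-triple and sign-error arguments in Cases II and III. Your phrase ``abut $w_1$ at the $x_0$-end'' is also not quite the right picture: the operative fact is simply that $w_1$ is orthogonal to all of $\mathcal V$, isolating it from the $v_i$-path, and this is exactly what makes the count of norm-$\geq 3$ vectors among $\{w_2,w_3,w_5\}$ the natural organizing principle rather than the shape of $w_5$.
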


\begin{proof}
Since $w_1\cdot v_i = 0$ for all $1\leq i \leq n$, $|w_j| = 2$ for at most two $j \in \{2,3,5\}$. We will break the analysis at hand into three cases based on the size of $\{j \in \{2,3,5\} \colon |w_j|\geq 3\}$.

Case I: $|\{j \in \{2,3,5\} \colon |w_j|\geq 3\}| = 1$.

Suppose now that $|w_j|\geq 3$ for a single $j \in \{2,3,5\}$. It follows that $|w_7|=|w_8| = 2$ or else there is a claw at $w_1$. Now, if $|w_6|\geq 3$, then $w_6 = -e_6 + d_n$ and $(w_6;v_n, w_1, w_7)$ is a claw or $w_6$ is tight and $(w_6;v_1, w_1, w_7)$ is a claw. It follows that neither $w_2$ nor $w_3$ is loaded in this case, since if $|w_2|$ or $|w_3|\geq 3$ then $|w_5| = |w_6| = |w_7| = |w_8| = 2$. Furthermore, we must have $w_j = -e_j + d_n$ or $w_j$ is tight, or else $(w_1;w_2,w_3,w_5)$ is a claw. 

Case I.1: $|w_2|\geq 3$.

Suppose that $|w_2|\geq 3$, $|w_3| = 2$, and $|w_5| = 2$. If $w_4|_{E_8} = -e_4 + e_2$, then $w_2$ is tight and $w_4 = -e_4 + e_2 + d_i + \ldots + d_n$ for some $1\leq i\leq n$, but then $2 \leq w_4 \cdot w_1 \leq |w_1|-2$, which is absurd since $w_4$ and $w_1$ are unbreakable. If $w_4|_{E_8} = -e_4 + e_2 + e_1$, then $w_4 = -e_4 + e_2 + e_1$, or else either $(w_5;w_1, w_4, w_6)$ is a claw or $(w_1, w_4, w_5)$ is a negative triangle, but then $w_4 \dag w_2$, $w_4 \dag w_1$, and $w_1 \pitchfork w_2$, which is absurd. Conclude that $w_4$ is not loaded. If $w_2$ is tight, then either $|w_4| = 2$ and $\hat G(\mathcal S)$ is connected but $G(\mathcal S)$ is not, or $w_4 = -e_4 + e_n$, in which case there is either a sign error between $w_4$ and $w_1$ mediated by $w_2$ or $(v_1, \ldots, v_n, w_4, w_2, v_1)$ is an incomplete cycle. Conclude that $w_2 = -e_2 + d_n$ and either $|w_4| = 2$, $w_4 = -e_4 + d_n$, or $w_4$ is unloaded and tight, and so either $s^* = (n+1, 1, 0, 0, 0, 0, 0, 0)$, $s^* = (n+1, 1, 0, 1, 0, 0, 0, 0)$, or $s^*(n+1, 1, 0, n+2, 0, 0, 0, 0)$.

Case I.2: $|w_3|\geq 3$.

Suppose now that $|w_3|\geq 3$, $|w_2|= 2$, and $|w_5| = 2$. As before, we must have $|w_6| = |w_7| = |w_8| = 2$. It follows that $w_4$ is unloaded, or else $w_4|_{E_8} = -e_4 + e_1$, in which case $(w_2, w_1, w_5, w_4, w_2)$ is an incomplete cycle. If $w_3$ is tight, then $\hat G(\mathcal S)$ is connected but $G(\mathcal S)$ is not if $|w_4| = 2$, and $(w_1;w_2,w_4,w_5)$ is a claw if $w_4 = -e_4 + d_n$. It follows that $w_3 = -e_3 + d_n$, and thus that $|w_4| = 2$ since $(v_n; v_{n-1}, w_3, w_4)$ is a claw if $w_4 = -e_4 + d_n$ and $(w_1;w_2,w_4,w_5)$ is a claw if $w_4$ is tight, so $s^* = (n+1, 0, 1, 0, 0, 0, 0, 0)$.

Case I.3: $|w_5|\geq 3$.

Suppose now that $|w_5| \geq 3$, $|w_2| = 2$, and $|w_3| = 2$. It follows that $|w_6| = |w_7| = |w_8| = 2$ or else there is a claw at $w_1$. Suppose that $w_4|_{E_8} = -e_4 + e_1$. Then $w_5$ is tight and $w_4 = -e_4 + e_1 + d_i + \ldots + d_n$ for some $0\leq i \leq n-1$, but then $2 \leq w_4 \cdot w_5 = |w_4|-3$, which is absurd since $w_4$ is unbreakable. If $w_4|_{E_8} = -e_4 + e_1 + e_5$, then $w_4 = -e_4 + e_1 + e_5$ or else either $(w_6;w_4,w_5,w_7)$ is a claw or $(w_4,w_5,w_6)$ is a negative triangle, so $w_5$ is tight and $(w_5;v_1, w_4,w_6)$ is a claw. Conclude that $w_4$ is unloaded. If $w_5$ is tight, then $\hat G(\mathcal S)$ is connected but $G(\mathcal S)$ is not if $|w_4| = 2$, and either there is a sign error between $w_1$ and $w_4$ mediated by $w_5$ or $(v_1,\ldots, v_n, w_4, w_5, v_1)$ is an incomplete cycle if $w_4 = -e_4 + d_n$. Conclude that $w_5 = -e_5 + d_n$ and either $|w_4| = 2$, $w_4 = -e_4 + d_n$, or $w_4$ is unloaded and tight; hence $s^* = (n+1, 0, 0, 0, 1, 0, 0, 0)$, $s^* = (n+1, 0, 0, 1, 1, 0, 0, 0)$, or $s^* = (n+1, 0, 0, n+2, 1, 0, 0, 0)$.

Case II: $|\{j \in \{2,3,5\}\colon |w_j|\geq 3\}| = 2$

Case II.1: $|w_5| = 2$.

Suppose now that $|w_5| = 2$ and $|w_2|$, $|w_3| \geq 3$. If $w_j$ is loaded for $j \in \{2,3\}$ then $A_j = \{n\}$ and $w_j \dag w_1$ or $w_j$ is tight and $w_1 \prec w_j$, and $|w_k|\geq 3$ for some $k \in \{6, 7, 8\}$ minimal, and so the subgraph of $G(\mathcal S)$ induced by $\{w_j, w_1, w_5, \ldots, w_6, v_1, \ldots, v_n\}$ contains an incomplete cycle. Conclude that at least one $w_j = -e_j + d_n$ and at most one $w_j$ is tight and unloaded for $j \in \{2,3\}$. Note that $|w_7|=|w_8| = 2$, otherwise $w_k \sim w_1$ and either $w_k \sim v_1$ or $w_k \sim v_n$ for $k \in \{7,8\}$ minimal, and either $|w_6| = 2$, in which case $(w_1, w_5, w_6, \ldots, w_k, w_1)$ is an incomplete cycle or $|w_6|\geq 3$, in which case $w_6\sim w_1$ and either $w_6\sim v_1$ or $w_6 \sim v_n$, in which case the subgraph of $G(\mathcal S)$ induced by $(w_1, w_5, w_6, w_7, w_8, v_1, \ldots, v_n)$ contains an incomplete cycle. If $|w_6|\geq 3$, then either $w_6 = -e_6 + d_1$ or $w_6$ is tight. Suppose $w_6 = -e_6 + d_1$. Then $w_j$ is tight and $w_k = -e_k + d_n$ for $\{j, k\} = \{2,3\}$, or else $(w_j, w_k, w_6)$ is a heavy triple. But then $w_k \dag w_6$ and $\epsilon_k = -\epsilon_6$, but $w_j \pitchfork w_k$ and $w_j \pitchfork w_6$ or else there is an incomplete cycle, so $\epsilon_k = \epsilon_j = \epsilon_6$, a contradiction. Suppose that $w_6$ is tight. Then $w_j = -e_j + d_n$ and $w_k = -e_k + d_n$ for $j, k \in \{2,3\}$, but then $\epsilon_j = -\epsilon_k$ since $w_j \dag w_k$ and $\epsilon_j = \epsilon_6 = \epsilon_k$ since $w_j \pitchfork w_6$ and $w_k \pitchfork w_6$. Conclude that $|w_6| = 2$. If $w_4|_{E_8} = -e_4 + e_2$, then $w_2$ is tight, and $2 \leq w_4 \cdot w_1 \leq |w_1|-2$, which is absurd since $w_4$ and $w_1$ are unbreakable. If $w_4|_{E_8} = -e_4 + e_2 + e_1$, then $w_4 = -e_4 + e_2 + e_1$ or else either $(w_5;w_1,w_4,w_6)$ is a claw or $(w_4, w_5, w_6)$ is a negative triangle, but then $w_2$ is tight and $w_4 \dag w_2$, which is absurd since $w_4 \dag w_2$ and $w_4 \dag w_1$ but $w_1 \pitchfork w_2$. Conclude that $w_4$ is unloaded. If $w_2$ or $w_3$ is tight, then $w_4 = -e_4 + d_n$ or else $\hat G(\mathcal S)$ is connected but $G(\mathcal S)$ is not, but then there is a sign error between $w_4$ and $w_1$ mediated by $w_2$ or $(v_1, \ldots, v_n, w_4, w_2, v_1)$ is an incomplete cycle if $w_2$ is tight, and $w_4$ separates $w_3$ from $w_1$ in $G(\mathcal S)$ but $w_1 \pitchfork w_3$ and $w_4 \cdot w_3 = 0$ if $w_3$ is tight. Conclude that $w_2 = -e_2 + d_n$, $w_3 = -e_3 + d_n$, and either $|w_4| = 2$ or $w_4$ is unloaded and tight; hence $s^* = (n+1, 1, 1, 0, 0, 0, 0, 0)$ or $s^* = (n+1, 1, 1, n+2, 0, 0, 0, 0)$.

Case II.2: $|w_2| = 2$. 

Suppose now that $|w_2| = 2$ and $|w_3|$, $|w_5|\geq 3$. It follows that $w_3$ is unloaded, hence at most one $w_j$ is tight and at least one $w_j = -e_j + d_n$ for $j \in \{3, 5\}$. We must furthermore have that $|w_7| = |w_8| = 2$ or else for some $j \in \{7,8\}$ either $(w_3, w_5, w_j)$ is a heavy triple, or one of $w_3, w_5, w_j$ is tight and so either there is a sign error between the other two or the subgraph induced by $\{v_1, \ldots, v_n, w_3,w_5,w_j)\}$ contains an incomplete cycle. It follows that $|w_6| = 2$, or else $w_6 = -e_6 + d_n$ and $(w_6;v_n, w_1, w_7)$ is a claw, or $w_6$ is tight and $(w_6;v_1, w_1, w_7)$ is a claw. If $w_4|_{E_8} = -e_4 + e_1$, then $w_5$ is tight and $|A(w_4)|\geq 1$, but then $2\leq w_4 \cdot w_5 = |w_4|-3$, which is absurd since $w_4$ is unbreakable. If $w_4|_{E_8} = -e_4 + e_1 + e_5$, then $w_4 = -e_4 + e_1 + e_5$ or else either $(w_6;w_4,w_5, w_7)$ is a claw or $(w_4,w_5,w_6)$ is a negative triangle, but then $w_5$ is tight and $w_1 \pitchfork w_5$, which is absurd since $w_4 \dag w_1$ and $w_4 \dag w_5$. Conclude that $w_4$ is unloaded. If $w_3$ or $w_5$ is tight, then $w_4 = -e_4 + d_n$ or else $\hat G(\mathcal S)$ is connected but $G(\mathcal S)$ is not, but then $(v_n;v_{n-1}, w_3, w_4)$ is a claw if $w_5$ is tight and $w_4$ separates $w_1$ from $w_3$, $w_1 \pitchfork w_3$, and $w_4 \cdot w_3 = 0$ if $w_3$ is tight. Conclude that $w_3 = -e_3 + d_n$, $w_5 = -e_5 + d_n$, and either $|w_4| = 2$ or $w_4$ is unloaded and tight, hence $s^* = (n+1, 0, 1, 0, 1, 0, 0, 0)$ or $s^* = (n+1, 0, 1, n+2, 1, 0, 0, 0)$.

Case II.3: $|w_3| = 2$.

Suppose now that $|w_3| = 2$ and $|w_2|$, $|w_5|\geq 3$. If $w_2$ is loaded, then $w_2|_{E_8} = -e_2 + e_4$, so $|w_4|\geq 3$ and $w_4$ is unloaded, and therefore $w_4 \sim w_1$, and furthermore $w_2 \sim w_3$, $w_4 \sim w_3$, and $w_3 \sim w_1$. It follows that either $w_2$ or $w_4$ is tight, or else $(w_1,w_2,w_4)$ is a heavy triple. If $w_2$ is tight, then $w_4 = -e_4 + d_n$, in which case $(v_1, \ldots, v_n, w_4, w_1, w_3, w_2, v_1)$ is an incomplete cycle. If $w_4$ is tight, then $w_2 \cdot v_i = 0$ for all $1\leq i \leq n$ or else the subgraph induced by $(v_1, \ldots, v_n, w_1, w_2, w_3, w_4)$ contains an incomplete cycle, but then $2\leq w_2 \cdot w_1 = |w_1| -3$, which is absurd since both $w_1$ and $w_2$ are unbreakable. Conclude that $w_2$ is not loaded, and therefore at most one $w_j$ is tight and at least one $w_j = -e_j + d_n$ for $j \in \{2,5\}$.

We must furthermore have $|w_7| = |w_8| = 2$, or else for some $j \in \{7,8\}$ either $(w_2,w_5,w_j)$ is a heavy triple, or one of $w_2$, $w_5$, $w_j$ is tight and there is either a sign error between the other two or the subgraph induced by $\{v_1, \ldots, v_n, w_2, w_5, w_j\}$ contains an incomplete cycle. It follows that $|w_6| = 2$, or else $w_6 = -e_6 + d_n$ and $(w_6;v_n, w_1, w_7)$ is a claw, or $w_6$ is tight and $(w_6;v_1, w_1, w_7)$ is a claw. If $w_4|_{E_8} = -e_4 + e_2$, then $w_2$ is tight, so $w_4$ is unbreakable and $w_4 \dag w_5$, in which case $w_4 = -e_4 + e_2 + d_n$ or else $(w_5;v_n, w_4, w_6)$ is a claw, but then $2 = w_4 \cdot w_1 = |w_1| - n-1$, which is absurd. If $w_4|_{E_8} = -e_4 + e_2 + e_1$, then either $w_2$ is tight and $w_5 = -e_5 + d_n$ and $w_4 = -e_4 + e_2 + e_1$, in which case $(v_1, \ldots, v_n, w_5, w_4, w_2, v_1)$ is an incomplete cycle, or $w_2 = -e_2 + d_n$ and $w_5$ is tight and either $2 \leq w_4 \cdot w_5 \leq |w_4|-3$ or $w_4 = -e_4 + e_2 + e_1 + d_0 + \ldots + d_n$ and $(w_5;v_1, w_4, w_6)$ is a claw. If $w_4|_{E_8} = -e_4 + e_2 + e_1 + e_5$, then $w_4 = -e_4 + e_2 + e_1 + e_5$ or else either $(w_6;w_4, w_5, w_7)$ is a claw or $(w_4, w_5, w_6)$ is a negative triangle, but then the subgraph induced by $\{v_1, \ldots, v_n, w_2, w_4, w_5\}$ contains an incomplete cycle. Conclude that $w_4$ is unloaded. If $w_j$ is tight for $\{j, k\} = \{2, 5\}$, then either $(v_1, \ldots, v_n, w_k, w_j, v_1)$ is an incomplete cycle, or $w_4 = -e_4 + d_n$ and either $(v_1, \ldots, v_n, w_4, w_j, v_1)$ is an incomplete cycle or there is a sign error between $w_4$ and $w_k$ mediated by $w_j$, or $|w_4| = 2$ and $\hat G(\mathcal S)$ is connected but $G(\mathcal S)$ is not. Conclude that $w_2 = -e_2 + d_n$ and $w_5 = -e_5 + d_n$, and therefore that $|w_4| = 2$; hence $s^* = (n+1, 1, 0, 0, 1, 0, 0, 0)$.

Case III: $|\{j \in \{2,3,5\} \colon |w_j|\geq 3\}| = 3$.

Suppose lastly, by way of contradiction, that $|w_2|$, $|w_3|$, and $|w_5|$ $\geq 3$. 

Case III.1: $w_2$ or $w_3$ is loaded.

If $w_j$ is loaded for some $j \in \{2, 3\}$, then $w_j \sim w_1$ and either $A_j = \{n\}$ and $w_j \sim v_n$ or $w_j$ is tight and $w_j \sim v_1$. It follows that $w_5 = -e_5 + d_n$, since if $w_5$ is tight, then $A_j = \{n\}$ and $w_j \cdot w_1 = 1$, so either $w_j \dag w_5$ and $(v_1, \ldots, v_n, w_j, w_5, v_1)$ is an incomplete cycle, or $w_j \pitchfork w_5$ and there is a sign error between $w_1$ and $w_j$ mediated by $w_5$. It follows that for $\{j, k\} = \{2,3\}$, either $w_j$ is tight or $w_k$ is tight, otherwise $(w_2,w_3,w_5)$ is a heavy triple. 

Case III.1.a: $w_j$ is tight.

If $w_j$ is tight, then $|w_6| = |w_7| = |w_8| = 2$ or else $(v_1, \ldots, v_n, w_l, w_1, w_j, v_1)$ is an incomplete cycle for some $l \in \{6,7,8\}$, so $w_k = -e_k + d_n$ since $s^*_j \leq |\sigma|_1 + 1 + s^*_5 = |\sigma|_1 + 2$, and therefore $w_j = -e_j + e_k + 2d_0 + \ldots + d_n$. It follows that $w_4 \cdot v_i = 0$ for all $1 \leq i \leq n$ or else $(w_k, w_4, w_5)$ is a heavy triple. Furthermore, since $w_j$ is tight and hence $w_4$ is unbreakable, we must have that $w_4 \cdot w_1 = 0$, and therefore $w_4|_{E_8} = w_4$ or else $A_4 = \{0, \ldots, n\}$ and $2 \leq w_4 \cdot w_1$. If $w_4 = -e_4 + e_2 + e_1 + e_5$, then the subgraph induced by $\{v_1, \ldots, v_n, w_2, w_4, w_5\}$ contains an incomplete cycle. If $w_4 = -e_4 + e_2$, $-e_4 + e_2 + e_1$, $-e_4 + e_1$, or $-e_4 + e_1 + e_5$, then $|w_4 \cdot w_1| = 1$. Conclude that $w_4$ is unloaded, hence $|w_4| = 2$, in which case either $j = 2$ and $(v_1, \ldots, v_n, w_3, w_4, w_2, v_1)$ is an incomplete cycle, or $j = 3$ and $(w_3;v_1, w_1, w_4)$ is a claw.

Case III.1.b: $w_k$ is tight.

If $w_k$ is tight, then $|w_6| = |w_7| = |w_8| = 2$ or else for some $l \in \{6,7,8\}$ $w_l = -e_l + d_n$ and either $w_k \dag w_l$, in which case $(v_1, \ldots, v_n, w_l, w_k, v_1)$ is an incomplete cycle, or $w_k \pitchfork w_l$ and there is a sign error between $w_1$ and $w_l$ mediated by $w_k$. It follows that $w_j = -e_j + e_k + d_n$ since $s^*_j \leq |\sigma|_1 + 1 + s^*5= |\sigma|_1 + 2$, hence $w_j \cdot w_k = -1 \neq \pm (|w_j| - 2)$, so $w_j \dag w_k$ and $(v_1, \ldots, v_n, w_j, w_k, v_1)$ is an incomplete cycle. 

Case III.2: $w_2$, $w_3$, and $w_5$ are unloaded.

If $w_2$, $w_3$, and $w_5$ are unloaded, then, without loss of generality, for $\{j, k, l\} = \{2,3,5\}$, $w_j = -e_j + d_n$, $w_k = -e_k + d_n$, and $w_l$ is tight, or else the subgraph induced by $\{v_1, \ldots, v_n, w_2, w_3, w_5\}$ contains a claw or a heavy triple. Then, either $w_l \dag w_j$ and $(v_1, \ldots, v_n, w_j, w_l, v_1)$ is an incomplete cycle or $w_l \dag w_k$ and $(v_1, \ldots, v_n, w_k, w_l, v_1)$ is an incomplete cycle, or $w_j \pitchfork w_l$ and $w_k \pitchfork w_l$ and there is a sign error between $w_j$ and $w_k$ mediated by $w_l$.
\end{proof}

\begin{lem}
Suppose $w_1$ is tight. Then $(\tau)^\perp$ is not a linear lattice. 
\end{lem}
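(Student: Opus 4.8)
The plan is to mimic the case analysis of Propositions \ref{prop:alloness10}--\ref{prop:alloness1full}: with $\sigma = (1,\dots,1) \in \mathbb Z^{n+1}$ and $w_1$ tight, condition successively on the shapes of the extension vectors $w_5, w_2, w_3, w_4, w_6, w_7, w_8$ and show that no completion of $\mathcal V \cup \{w_1\}$ to a standard basis avoids all the forbidden configurations of Section \ref{sec:forbidden}.

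First I would record what tightness of $w_1$ buys us. It means $w_1 = -e_1 + 2d_0 + d_1 + \cdots + d_n$, so $|w_1| = n+6$. By Lemma \ref{lem:vttightnowjtight} no $v_t \in \mathcal V$ is tight, hence by Corollary 4.3 of \cite{Gre13} every $v_i$ is unbreakable; together with $\sigma = (1,\dots,1)$ this forces $v_i = -d_i + d_{i-1}$, so that $\mathcal V$ is the chain $v_1 \dag v_2 \dag \cdots \dag v_n$ of norm-$2$ vectors. A direct computation gives $w_1 \cdot v_1 = 1$ and $w_1 \cdot v_i = 0$ for $i \ge 2$, so in $G(\mathcal S)$ the vector $w_1$ attaches to the chain $\mathcal V$ only at $v_1$; in particular $v_1$ already has the two neighbors $v_2$ and $w_1$. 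Finally, the Dynkin claw $(e_1; e_2, e_3, e_5)$ of Figure \ref{fig:E_8Dynkin} must be resolved in $G(\mathcal S)$, so by Lemma \ref{lem:clawfree} at least one of $w_2, w_3, w_5$ has norm $\ge 3$, and if $w_j$ (for $j \in \{2,3,5\}$) has norm $2$ then $w_j = -e_j$ and $w_j \cdot w_1 = -1$.

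Next I would run the case analysis, conditioning first on $w_5$ (automatically unloaded, hence tight or of the form $-e_5$, $-e_5 + d_k + \cdots + d_n$, or $-e_5 + d_0 + \cdots + d_n$), then on $w_2, w_3$ (the same shapes plus the various loaded shapes, at most one of $w_2, w_3$ loaded), then on $w_4$, and finally on $w_6, w_7, w_8$. In every branch one invokes: claw-freeness at $w_1$ and at $v_1$; the absence of heavy triples, incomplete cycles, and sign errors; the structural facts that a pair $v_i, w_j$ or $w_i, w_j$ of vectors of norm $\ge 3$ cannot be transverse, so if adjacent it abuts at a common end; and Proposition \ref{prop:technicalpairings} to split off the extra $E_8$-root whenever $w_2, w_3,$ or $w_4$ is loaded---exactly as in the proofs of Propositions \ref{prop:alloness10} and \ref{prop:alloness1full}. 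The leverage from $w_1$ being tight is that $w_1$ is a single long vector meeting $\mathcal V$ in one place: any second $w_j$ of norm $\ge 3$ adjacent to $w_1$ pairs with it as $|w_j| - 1$ or $|w_j| - 2$ and then, together with the chain $\mathcal V$, spans a heavy triple or closes an incomplete cycle through $v_1, \dots, v_n$; while if no $w_j$ ($j \in \{2,3,5\}$) resolves the Dynkin claw at $w_1$, that claw survives.

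I expect the main obstacle to be, as in the surrounding propositions, the combinatorial bloat of the loaded branches---organizing all the subcases coming from the various loaded shapes of $w_4|_{E_8}$ (such as $-e_4 + e_1$, $-e_4 + e_1 + e_5$, $-e_4 + e_1 + e_5 + e_6$, and their counterparts carrying an $e_2$) together with the loaded shapes of $w_2$ or $w_3$, so that each terminates in one explicit forbidden feature. A secondary point needing care is that no extension vector other than $w_1$ is tight: if $w_j$ ($j \neq 1$) were tight then $w_j - w_1 = -e_j + e_1 \in (\tau)^\perp$ would lie in $E_8 \oplus \{0\}^{n+1}$ with norm $\le 6$, and one argues, as in the proof of Lemma \ref{lem:vttightnowjtight} (cf. Proposition \ref{prop:technicalpairings}), that this forces an overlap pattern on the underlying intervals making $w_j - w_1$ reducible, contradicting the irreducibility guaranteed by Lemma \ref{lem:irreducible}. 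The conclusion is that, in contrast to the non-tight cases of Propositions \ref{prop:alloness10}--\ref{prop:alloness1full}, no value of $s^*$ survives, so $(\tau)^\perp$ is not a linear lattice.
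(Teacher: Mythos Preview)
Your plan is correct and matches the paper's approach: both proceed by conditioning on the form of $w_5$ (then $w_2,w_3,w_4,w_6,w_7,w_8$) and in every branch exhibit a claw, heavy triple, or incomplete cycle from Section~\ref{sec:forbidden}. The paper's execution is slightly more streamlined than your outline in that it first uses the breakable--unbreakable pairing lemma to cut the possibilities for $w_5$ down to just three ($|w_5|=2$, $w_5=-e_5+d_n$, $w_5=-e_5+d_0+\cdots+d_n$), whereas you anticipate handling the intermediate shapes $-e_5+d_k+\cdots+d_n$ and the tight $w_5$ directly; the extra cases you list are in fact excluded by that pairing constraint, so your version would simply dispose of them quickly.
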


\begin{proof}
Observe that if $w_1$ is tight, then, since $w_5$ is unloaded, either $|w_5| = 2$, $w_5 = -e_5 + d_n$, or $w_5 = -e_5 + d_0 + \ldots + d_n$. We break our analysis into three respective cases.

Case I: $|w_5| = 2$.

Suppose that $|w_5|= 2$. Then either $|w_6| = 2$, in which case $|w_7| = |w_8| = 2$ or else there is an incomplete cycle, or $w_6 = -e_6 + d_0 + \ldots + d_n$ and $w_6 \prec w_1$, or else $(v_1, \ldots v_i, w_6, w_5, w_1, v_1)$ is an incomplete cycle for some $1\leq i \leq n$. If $|w_6| = 2$, then $w_2$ and $w_3$ are both unloaded and $w_j = -e_j + d_n$ or $w_j = -e_j + d_0 + \ldots + d_n$ for both $j \in \{2,3\}$, or else there is a claw at $w_1$. If $A_2 = A_3 = \{n\}$, then $(w_1,w_2,w_3)$ induces a heavy triple since $w_1 \cdot w_2 = w_1 \cdot w_3 = 0$, and if $A_j = \{n\}$ and $A_k = \{0, \ldots, n\}$ for $\{j,k\} = \{2,3\}$, then $w_j$ separates $w_k$ from $w_1$ in $G(\mathcal S)$, but $w_1 \cdot w_j = 0$ while $w_1 \cdot w_k \neq 0$. If $w_6 = -e_6 +d_0 + \ldots + d_n$, then $|w_2| = |w_3| = 2$ or else $A_j = \{n\}$ for some $j \in \{2,3\}$ and $(v_1, \ldots, v_n, w_j, w_6, w_1, v_1)$ is an incomplete cycle, and so $(w_1;v_1,w_2,w_3)$ is a claw.

Case II: $w_5 = -e_5 + d_n$.

Suppose now that $w_5 = -e_5 + d_n$.
If $A_k = \{0, \ldots, n\}$ for some $k \in \{2,3\}$, then $w_k$ is unloaded, or else $2\leq w_1 \cdot w_k = |w_k|-3$, and $w_5$ then separates $w_1$ from $w_k$ in $G(\mathcal S)$, but $w_5 \cdot w_1 = 0$ and $w_k\cdot w_1 \neq 0$, which is absurd. If $A_j = \{n\}$ for some $j \in \{2,3\}$, then $w_j$ is loaded and $|w_k|\geq 3$ for $\{j,k\} = \{2,3\}$, or else $(w_1,w_j,w_5)$ induces a heavy triple since $w_1 \cdot w_j = w_1 \cdot w_5 = 0$, and furthermore then $w_k = -e_k + d_0 + \ldots + d_n$, which by the argument above is absurd, or else $(w_j, w_k, w_5)$ is a heavy triple. It follows that $|A_2| = |A_3| = \emptyset$, but then $(w_1;v_1, w_2, w_3)$ is a claw.

Case III: $w_5 = -e_5 + d_0 + \ldots + d_n$. 

Suppose lastly that $w_5 = -e_5 + d_0 + \ldots + d_n$. Then for both $j \in \{2,3\}$, either $A_j = \emptyset$ or $A_j = \{n\}$, or else $2 \leq w_j \cdot w_5 \leq |w_5| -2$. If $A_j = \{n\}$, then $w_j$ is loaded and $|w_k|\geq 3$ for $\{j,k\} = \{2,3\}$, or else $w_j$ separates $w_5$ from $w_1$ in $G(\mathcal S)$ but $w_j \cdot w_1 = 0$ and $w_5 \cdot w_1 \neq 0$, but then $w_k = -e_k + d_n$. If $A_2 = A_3 = \emptyset$, then $w_2 \sim w_1$ and $w_3 \sim w_1$, and $(w_1;v_1, w_2,w_3)$ is a claw. 
\end{proof}

Having identified every linear lattice orthogonal to an $E_8$-changemaker with $\sigma = (1, \ldots, 1)$, we now consider the case when every vector in $\mathcal V$ is just right, $|v_i|\geq 3$ for some $1\leq i \leq n$, and $G(\mathcal V)$ is connected. 

We start with a basic observation.

\begin{prop}
    If every element of $\mathcal V$ is just right and $\sigma \neq (1, \ldots, 1)$, then for $r = \min\{i \in \{2, \ldots, n\} \colon |v_i|\geq 3\}$, either $v_r = -d_r + d_{r-1} + \ldots + d_0$ or $v_r = -d_r + d_{r-1} + \ldots + d_1$. \qed
\end{prop}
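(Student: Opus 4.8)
The plan is to pin down $v_1,\dots,v_{r-1}$ exactly, read off the shape of $v_r$ except for its left endpoint, and then use claw-freeness of the intersection graph to force the left endpoint to be $d_0$ or $d_1$. Throughout I use that we are in the running situation where $(\tau)^\perp$ is a sum of linear lattices (so $\sigma_0=1$, and $\mathcal S$ consists of intervals) and where some $v_i$ has norm $\ge 3$ (so that $r$ is defined).

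First I would record that, since every element of $\mathcal V$ is just right, no $v_i$ is tight, so $\sigma_i\le\sum_{j<i}\sigma_j$ for all $i$; in particular every $\sigma_i\ge 1$ and $|v_i|$ equals $1$ plus the size of the support of its $\mathbb Z^{n+1}$-projection. For $1\le i<r$, minimality of $r$ gives $|v_i|=2$ (and $|v_1|\le 2$ in any case, since its support lies in $\{0\}$), so $v_i=-d_i+d_{a_i}$ with $\sigma_{a_i}=\sigma_i$ and $a_i<i$; as $\sigma$ is non-decreasing, $\sigma_{a_i}\le\sigma_{i-1}\le\sigma_i=\sigma_{a_i}$, hence $\sigma_{i-1}=\sigma_i$, and the maximality built into the changemaker basis then forces $a_i=i-1$. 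Thus $v_i=-d_i+d_{i-1}$ for $1\le i\le r-1$, and $\sigma_0=\sigma_1=\cdots=\sigma_{r-1}$.

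Next I would determine $v_r$ up to its left endpoint. Since $v_r$ is just right with $|v_r|\ge 3$, its support is an interval $\{a,a+1,\dots,b\}$ with $0\le a<b\le r-1$, which is the maximal (in the total order used to build the basis) subset of $\{0,\dots,r-1\}$ summing to $\sigma_r$. Because $v_r$ is not tight, $\sigma_r\le\sum_{j<r}\sigma_j$, so $\sigma_r-\sigma_{r-1}\le\sum_{j<r-1}\sigma_j$ is a subset sum of $\{0,\dots,r-2\}$ (the truncation $(\sigma_0,\dots,\sigma_{r-2})$ is still a changemaker); adjoining $r-1$ exhibits a subset summing to $\sigma_r$ that contains $r-1$, and a one-line argument with the defining order shows the maximal such subset must then contain $r-1$. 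Hence $b=r-1$, so $v_r=-d_r+d_a+d_{a+1}+\cdots+d_{r-1}$, with $a\le r-2$ because $|v_r|\ge 3$.

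Finally I would rule out $a\ge 2$. If $a\ge 2$, then $v_{a-1}=-d_{a-1}+d_{a-2}$, $v_a=-d_a+d_{a-1}$, $v_{a+1}=-d_{a+1}+d_a$ all lie in $\mathcal V$ and, together with $v_r$, are pairwise distinct. A direct computation gives $v_a\cdot v_{a-1}=v_a\cdot v_{a+1}=v_a\cdot v_r=-1$ while $v_{a-1}\cdot v_{a+1}=v_{a-1}\cdot v_r=v_{a+1}\cdot v_r=0$. Since all four are irreducible in a sum of linear lattices (Proposition \ref{prop:linearirreducibles}) and $v_a$ has norm $2$, so $v_a$ cannot meet another interval transversally with nonzero pairing, $v_a$ abuts each of $v_{a-1},v_{a+1},v_r$ in $G(\mathcal S)$ whereas no two of the latter abut; this is an induced claw $(v_a;v_{a-1},v_{a+1},v_r)$, contradicting Lemma \ref{lem:clawfree}. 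Hence $a\in\{0,1\}$, which is the assertion. The step I expect to be the main obstacle is exactly this passage from the arithmetic of the pairings to an honest induced claw in $G(\mathcal S)$: one must confirm, via the interval-pairing bookkeeping of Section \ref{sec:lattices}, that for these particular intervals the $\pm 1$ pairings with $v_a$ are abutments and the $0$ pairings among $v_{a-1},v_{a+1},v_r$ are genuine non-abutments; everything before that is a routine unwinding of the changemaker condition and the maximality in the basis construction.
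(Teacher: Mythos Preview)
Your argument is correct. The paper states this proposition as a ``basic observation'' and marks it with \qed, giving no proof; the content is essentially borrowed from Greene's analysis of just-right changemaker bases. Your three-step outline (pin down $v_1,\dots,v_{r-1}$, determine the support of $v_r$, then use the claw $(v_a;v_{a-1},v_{a+1},v_r)$ to force $a\le 1$) is exactly the expected argument. One simplification: once you have established $\sigma_0=\sigma_1=\cdots=\sigma_{r-1}$ in step~1, and since $\sigma_0=1$ in the ambient setting, the maximal subset of $\{0,\dots,r-1\}$ summing to $\sigma_r$ is immediately $\{r-\sigma_r,\dots,r-1\}$, so step~2 reduces to a single sentence rather than the subset-sum manoeuvre you wrote. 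Your final concern about translating pairings into abutments is easily handled: since $v_{a-1}$, $v_a$, $v_{a+1}$ all have norm~$2$, each corresponds to a single vertex in the linear lattice, and a direct check shows that a norm-$2$ interval abuts another interval if and only if their pairing is $\pm 1$; hence the zero pairings among $v_{a-1},v_{a+1},v_r$ genuinely give non-edges in $G(\mathcal S)$.
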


\begin{lem}
If every element of $\mathcal V$ is just right, $G(\mathcal V)$ is connected, and either $|v_r| = r+1$ for some $2\leq r \leq n$ or $|v_r| = r$ for some $3 \leq r \leq n$, then $(\tau)^\perp$ is not a linear lattice.
\end{lem}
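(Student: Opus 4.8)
The approach is by contradiction. Suppose $(\tau)^\perp \cong \Lambda(p,q)$. Then $\mathcal S = \mathcal V \cup \mathcal W$ is a basis of irreducible vectors of a linear lattice, so $G(\mathcal S)$ is connected and contains no claw (Lemma \ref{lem:clawfree}), no heavy triple, no incomplete cycle, and no sign error. In particular $G(\mathcal V)$ is a connected induced subgraph free of all these features, so by the classification of changemaker bases in \cite{Gre13} the sublattice $\langle\mathcal V\rangle = (\sigma)^\perp$ is a sum of linear lattices, and being indecomposable (its interval basis has connected intersection graph) it is a single linear lattice. Hence the preceding proposition applies and $v_r$ is either $-d_r + d_0 + \cdots + d_{r-1}$ (the case $|v_r| = r+1$, i.e.\ $\sigma_r = r$) or $-d_r + d_1 + \cdots + d_{r-1}$ (the case $|v_r| = r$, i.e.\ $\sigma_r = r-1$). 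Since $r$ is minimal with $|v_r|\ge 3$, the vectors $v_1,\dots,v_{r-1}$ are the norm-$2$ vectors $v_i = -d_i + d_{i-1}$; these are single norm-$2$ vertices of the vertex basis and form a path $v_1 - v_2 - \cdots - v_{r-1}$ in $G(\mathcal S)$, and $v_r$, being just right and not tight, is unbreakable, so $T(v_r)$ contains a unique vertex $z(v_r)$ of norm $\ge 3$.

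First I would record how $v_r$ meets this path and what connectedness of $G(\mathcal V)$ costs. A direct computation gives $v_r\cdot v_i = 0$ for all $i<r$ in the case $\sigma_r = r$, whereas $v_r\cdot v_1 = -1$ and $v_r\cdot v_i = 0$ for $2\le i<r$ in the case $\sigma_r = r-1$. Thus in the first case $v_r$ is disconnected from the path inside $G(\{v_1,\dots,v_r\})$, so some $v_k$ with $k>r$ must bridge them; in the second case $v_r$ hangs off the endpoint $v_1$ of the path. Using the structural lemmas for changemaker bases in \cite{Gre13} together with the no-claw and no-incomplete-cycle conditions, I would extract the short list of admissible shapes for $v_{r+1}, v_{r+2}, \dots$ (equivalently, admissible values of $\sigma_{r+1}, \sigma_{r+2}, \dots$), so that the global structure of $\mathcal V$ and of the interval $T(v_r)$ is under control.

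Next I would bring in the $E_8$ part, following the strategy at the start of Section \ref{sec:identifying}: analyze $w_1$ (dual to the trivalent node $e_1$), the always-unloaded $w_5$, and then $w_2, w_3, w_4$, the only vectors that may be loaded. The leverage is that $\sigma$ is identically $1$ on the indices $0,\dots,r-1$ and jumps only at $r$, so the $d$-support $A_j$ of an unloaded $w_j$ is a short consecutive block of large indices (or $w_j$ is tight), while a loaded $w_j$ has the form $-e_j + r'$ for a short positive root $r'$ supplied by Lemma \ref{lem:hardeightcombinatorial}, adjoined to a $\sigma$-prescribed block of $d$'s. One then checks that the Dynkin-diagram claw $(e_1; e_2, e_3, e_5)$ cannot be resolved in $G(\mathcal S)$: in every choice of the $d$-parts of $w_1, w_2, w_3, w_5$ compatible with claw-freeness at $w_1$, completing $\{v_1,\dots,v_n,w_1,w_5\}$ to the full standard basis forces either a claw elsewhere, a heavy triple among some $\{w_1,w_j,w_k\}$, a sign error mediated by a norm-$2$ vertex, or---most often---an incomplete cycle threading the long interval $T(v_r)$ or the path $v_1 - \cdots - v_{r-1}$ (here the long $v_r$ plays exactly the role the degenerate case $\sigma = (1,\dots,1)$ of Propositions \ref{prop:alloness10}--\ref{prop:alloness1full} lacked). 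Running this through for both shapes of $v_r$ produces the contradiction.

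The main obstacle is keeping the case analysis finite and readable: two shapes of $v_r$, each crossed with the admissible continuations of $\sigma$ past $r$, the possible forms of $w_1$ and $w_5$, and the loaded/unloaded alternatives for $w_2, w_3, w_4$. I expect the cleanest organization is to fix $w_1$ first---the no-claw and no-incomplete-cycle conditions around the path and around $T(v_r)$ already force $s^*_1$ into a very small range---and only then resolve $w_5$ and the remaining $w_j$, so that each branch terminates quickly in one of the four forbidden configurations.
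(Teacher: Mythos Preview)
Your strategy---fix $w_1$, then $w_5$, then resolve the loaded/unloaded alternatives for $w_2,w_3,w_4$---mirrors the approach used in the paper for the degenerate case $\sigma = (1,\dots,1)$ (Propositions \ref{prop:alloness10}--\ref{prop:alloness1full}), and in principle it could be pushed through here. But the paper takes a substantially shorter route that avoids this case split entirely.

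The paper's argument has two ingredients you are missing. First, it proves directly that no $w_j$ is tight (this is a short observation: a tight $w_j$ abuts $v_1$, and then either $v_r \prec w_j$ forces an incomplete cycle via connectedness of $G(\mathcal V)$, or $(v_1;v_2,v_r,w_j)$ is a claw). Second---and this is the key structural step---it shows that $G(\mathcal S)$ contains no triangle of the form $(v_i,w_j,v_k)$ with $w_j$ unbreakable. This single lemma forces every $w_j \in \mathcal W$ to have \emph{at most one} neighbor in $G(\mathcal V)$. Once that is in hand, the conclusion is almost immediate: by Greene's classification, $G(\mathcal V)$ is either a path or a triangle with three paths hanging off it, and in either case there are at most two degree-one vertices of $G(\mathcal V)$ to which a heavy $w_j$ can be attached without creating a claw, a heavy triple, or an incomplete cycle. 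Hence at most two of $\{w_1,w_2,w_3,w_5\}$ can have norm $\ge 3$, and then the Dynkin claw at $e_1$ cannot be resolved.

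Your approach trades this conceptual counting argument for a case analysis whose size you correctly flag as the main obstacle; the paper's ``at most one neighbor'' lemma is precisely what collapses that analysis. If you want to carry out your version, I would recommend proving that lemma first---it is the step that makes the organization finite.
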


\begin{proof}
Observe that no $w_j \in \mathcal W$ is tight: if $w_j$ is tight then $w_j \sim v_1$ and either $|v_r| = r+1$ for some $2 \leq r \leq n$, in which case $v_r \prec w_j$ and there is an incomplete cycle since $G(\mathcal V)$ is connected, or $|v_r| = r$ for some $3 \leq r \leq n$, in which case $(v_1;v_2, v_r, w_j)$ is a claw. 

Suppose, by way of contradiction, that $G(\mathcal S)$ contains a triangle of the form $(v_i, w_j, v_k)$ with $i < k$ and $w_j$ unbreakable. Then $|v_l| = 2$ for all $1 \leq l \leq i$ and $|v_r| = r+1$, or else there is a heavy triangle. It furthermore follows that $i = r-1$, or else $2 \leq i < r-1$ and $(v_i;v_{i-1}, v_{i+1}, w_j)$ is a claw, or $i = 1$ and $3 \leq r$, in which case $w_j \cdot v_r \geq 1$, so $w_j \cdot v_r = 1$ and $w_j \sim v_r$, and therefore $(v_1, w_j, v_r, \ldots, v_1)$ is an incomplete cycle. It follows that $r \in A_j$ or else there is an incomplete cycle, since $G(\mathcal V)$ is connected, and so $k = r+1$, and $v_k = -d_k + d_{k-1} + d_{k-2}$, or else $w_j \cdot v_k \geq 2$. If $|v_l| = 2$ for all $l \geq r+2$, then $G(\mathcal V)$ is disconnected, in contradiction to one of our assumptions. Let $l\geq r+2$ be minimal such that $|v_l|\geq 3$. Then $w_j \cdot v_l \geq 1$, so in fact $l = r+2$, $v_{r+2} = -d_{r+2} + d_{r+1} + d_r$, and $w_j \dag v_{r+2}$. Since $G(\mathcal V)$ is connected by assumption, we then have a path in $G(\mathcal V)$ from $v_{r-1}$ to $v_{r+2}$, so there is an incomplete cycle $(v_{r-1}, w_j, v_{r+2}, \ldots, v_{r-1})$. Conclude that $G(\mathcal S)$ does not contain a triangle of the form $(v_i,w_j,v_k)$, and therefore $w_j$ has at most one neighbor in $G(\mathcal V)$ for all $w_j$ in $\mathcal W$.

According to the classification of changemaker lattices whose intersection graphs do not contain claws, heavy triples, or incomplete cycles, and whose standard basis elements are all just right, $G(\mathcal V)$ is either a path or a union of a triangle $(v_i, v_{i+2}, v_k)$ and three vertex disjoint paths $P_1$, $P_2$, and $P_3$ emanating from $v_i$, $v_{i+2}$, and $v_k$, respectively \cite[Section 6]{Gre13}. Furthermore, $|v_l| = 2$ for all $v_l \in P_1$, or else there is a heavy triple. It follows that $|w_j|\geq 3$ for at most two values $j \in \{1,2,3,5\}$ and that the neighbor of $w_j$ in $G(\mathcal V)$ is one of two vertices with degree 1 in $G(\mathcal V)$ or else there is a claw, an incomplete cycle, or a heavy triple, but then either there is claw at $w_1$ or there is an incomplete cycle including $w_1$.
\end{proof}

\subsection{When there is a gappy vector but no tight vector.}

\begin{lem}
If $\mathcal V$ contains a gappy element but no tight element, then $(\tau)^\perp$ is not a linear lattice.
\end{lem}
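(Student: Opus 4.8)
The plan is to combine Greene's classification of gappy-but-not-tight changemaker bases \cite[Section~7]{Gre13} with the kind of $E_8$-extension analysis already carried out for $\sigma=(1,\dots,1)$. Throughout this subsection the sublattice $\langle\mathcal V\rangle$ is indecomposable, so $G(\mathcal V)$ is connected, and if $(\tau)^\perp\cong\Lambda(p,q)$ then $\hat G(\mathcal S)$, hence $G(\mathcal S)$, is connected as well. First I would invoke \cite[Section~7]{Gre13}: if $\mathcal V$ contains a gappy vector $v_g$ but no tight vector and $G(\bar{\mathcal V})$ has no claw, heavy triple, or incomplete cycle, then $\sigma$ and $G(\mathcal V)$ take one of the explicit forms classified there --- $G(\mathcal V)$ is a path or a triangle with three pendant paths. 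Since no tight vector is present, $v_g$ is unbreakable, so $v_g=\pm[T(v_g)]$ for an unbreakable interval with a gap: there is a gappy index $k\in A_g$ with $k+1\notin A_g\cup\{g\}$, and by Proposition \ref{prop:linearirreducibles}(4) the interval $T(v_g)$ contains at most one vertex of norm $\ge3$.

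Next I would rule out every completion of $\{v_1,\dots,v_n\}$ to a standard basis $\mathcal S=\mathcal V\cup\mathcal W$ whose intersection graph avoids claws, heavy triples, incomplete cycles and sign errors. As in Propositions \ref{prop:alloness10}, \ref{prop:alloness11} and \ref{prop:alloness1full}, the analysis is organized by the shape of $w_1$ --- the vector attached to the trivalent simple root --- which by the standard-basis construction is one of $|w_1|=2$, $w_1=-e_1+d_n$, $w_1=-e_1+d_1+\dots+d_n$, $w_1=-e_1+d_0+\dots+d_n$, or $w_1$ tight, and secondarily by the shape of $w_5$, which is always unloaded so that $w_5\cdot(-e_1)=-1$ automatically. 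Claw-freeness of $G(\mathcal S)$ forbids $s^*_1=s^*_2=s^*_3=s^*_5=0$, so at least one of $w_1,w_2,w_3,w_5$ is nontrivial and must attach to the path $G(\mathcal V)$; Lemma \ref{lem:hardeightcombinatorial} restricts how a loaded $w_2$, $w_3$ or $w_4$ can attach, and Proposition \ref{prop:technicalpairings} dispatches the leftover loaded configurations. The crux in each case is that the gappy vertex $v_g$ obstructs the completion: because $T(v_g)$ has a gap, any $w_j$ meeting $v_g$ nontrivially must also meet a path-neighbour of $v_g$, or there is a claw; but then one of the subintervals along $G(\mathcal V)$ between $w_j$ and $v_g$ must lie in the span of $\mathcal S\setminus\{w_j,v_g\}$ (else $\mathcal S$ does not generate $(\tau)^\perp$), which produces an incomplete cycle through $v_g$. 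The only alternative, that every nontrivial $w_j$ avoids the stretch of the chain around $v_g$, leaves the claw at $w_1$, or a heavy triple among $\{w_2,w_3,w_5\}$, unresolved. Lemmas \ref{0pairingmediated} and \ref{pitchforkmediated} then eliminate the handful of remaining configurations, exactly as they handle sign errors in Proposition \ref{prop:alloness1full}.

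I expect the principal obstacle to be bookkeeping rather than a single conceptual hurdle: one must carry the path structure of $G(\mathcal V)$, the admissible forms of each $w_j$ (recalling that a loaded $w_j$ with $j\in\{2,3,4\}$ has $E_8$-projection of norm $4$ and so can meet the $v_i$ in more complicated ways than the unloaded vectors), and the sign data simultaneously. The hardest subcase should be $w_1=-e_1+d_0+\dots+d_n$: then $w_1$ is orthogonal to all of $\mathcal V$ and sits above the whole path, forcing at least two of $w_2,w_3,w_5$ to have norm $\ge3$ and run along the chain, where one of them unavoidably collides with the gappy vertex to produce a heavy triple or an incomplete cycle --- the analogue of Case~III of Proposition \ref{prop:alloness1full}. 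The other subcases should be easier, since there $w_1$ already neighbours part of the chain and claw-freeness around $w_1$ leaves little room for $w_2,w_3,w_5$.
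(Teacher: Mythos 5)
Your plan assembles the right toolkit (Greene's Section 7 classification, claw/heavy-triple/incomplete-cycle obstructions, the special role of $w_1$ and $w_5$), but the decisive steps are asserted rather than proved, and the mechanism you name as the crux is not the one that actually closes the argument. You claim that any $w_j$ meeting the gappy vertex $v_g$ ``must also meet a path-neighbour of $v_g$, or there is a claw,'' and that this forces an incomplete cycle through $v_g$; as stated this is neither justified nor, in the form given, the real obstruction. The actual proof pivots on two structural facts you do not isolate. First, writing $v_g = -d_g + d_{g-1} + \cdots + d_j + d_k$ with $k \leq j-2$, Greene's analysis gives that $v_g$ is the \emph{unique} gappy vector and that $v_k$ and $v_{k+1}$ lie in \emph{different components} of $G(\mathcal V_{g-1})$, joined only through $v_g$; this component structure (not the ``path or triangle with three pendant paths'' shape, which is the Section 6 just-right classification you quote) is what rules out any tight $w_j$ and controls the subsequent case analysis. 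Second, one shows that $G(\mathcal S)$ contains no triangle of the form $(v_i, w_j, v_k)$ with $w_j$ unbreakable, so that \emph{each $w_j$ has at most one neighbor in $G(\mathcal V)$}. Combined with the fact from the classification that only two vertices of $G(\mathcal V)$ can receive such an attachment without creating a heavy triple, this caps the number of indices $j \in \{1,2,3,5\}$ with $|w_j| \geq 3$ at two, and then the trivalence of $e_1$ in the Dynkin diagram forces a claw at $w_1$ or an incomplete cycle through $w_1$. Neither the no-triangle lemma nor the two-attachment-point count appears in your proposal, and without them the conclusion does not follow.

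Separately, the organization you propose---five cases on the shape of $w_1$, refined by $w_5$, as in the all-ones propositions---is not wrong in principle, but it is far heavier than necessary here and you explicitly defer its execution to ``bookkeeping.'' In the gappy-no-tight setting that bookkeeping is precisely where the content lives: for instance, verifying that a triangle $(v_i, w_j, v_k)$ forces $i = r-1$, $r \in A_j$, and then tracking how $v_{r+1}, v_{r+2}, \dots$ and the gappy index interact with $A_j$ requires the explicit form of $v_g$ and the minimality of $r = \min\{i \geq 2 : |v_i| \geq 3\}$. A submission in this form would be a strategy outline, not a proof; to repair it you should either carry out the case analysis in full or, better, prove the two structural statements above, after which the lemma follows in a few lines.
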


\begin{proof}
If $\mathcal V$ contains a gappy element $v_g = -d_g + d_{g-1} + \ldots + d_j + d_k$ with $k\leq j -2$ and no tight element, then $v_g$ is the unique gappy element of $\mathcal V$, and $v_k$ and $v_{k+1}$ belong to different components of $G(\mathcal V_{g-1})$ \cite[Section 7]{Gre13}. It follows that $|v_r| = r+1$ for $r = \min\{i \in \{2, \ldots, n\}\colon |v_i|\geq 3\}$, and that either there are paths from $v_1$ to $v_k$ and from $v_r$ to $v_{k+1}$ or from $v_1$ to $v_{k+1}$ and from $v_r$ to $v_k$ in $G(\mathcal V_{g-1})$. If $w_j$ is tight, then the union of the pair of paths mentioned above with $v_g$ and $w_j$ produces an incomplete cycle. Conclude that $w_j$ is not tight for any $w_j \in \mathcal W$.

Suppose, by way of contradiction, that $G(\mathcal S)$ contains a triangle of the form $(v_i, w_j, v_k)$, where $i < k$, without loss of generality, and $w_j$ is unbreakable. Then $|v_l| = 2$ for all $1 \leq l \leq i$, so $i = r-1$. We must then have that $r \in A_j$, or else $w_j \dag v_r$ and thus there is an incomplete cycle since $G(\mathcal V)$ is connected. 

Let $l\geq r+1$ be minimal such that $|v_l|\geq 3$. 

If $l = g$, then $g \geq r+2$ and $v_{g} = -d_{g} + d_{g-1} + d_{r-1}$ or else $2 \leq w_j \cdot v_g$, so then $g = r+2$ or else $(v_r, v_{r+1}, \ldots, v_g, v_r)$ is an incomplete cycle. We must then have that $\{r-1,r, r+1\} \subset A_j$, and furthermore that $g = r+2 \in A_j$ or else $w_j \cdot v_g = 2$. Then $n = g$, for if $|v_{g+1}| = 2$, then $(v_g;v_r,w_j,v_{g+1})$ is a claw, and if $|v_{g+1}|\geq 3$, then either $v_{g+1} = -d_{g+1} + d_{g} + d_{g-1}$, in which case $(v_r, v_g, v_{g+1})$ is a heavy triple, or $|v_{g+1}|\geq 4$, in which case $2\leq w_j \cdot v_{g+1}$. It follows then that if $|w_{j'}|\geq 3$ for $j'\neq j$, then $|w_{j'}\cdot v_r| = 1$ and $w_{j'} \cdot v_l = 0$ for all $l \neq r$ or else there is a heavy triple, a claw, or an incomplete cycle. But then $|A_{j'}\cap A_j|\geq 3$, so $w_{j'}$ or $w_{j}$ is loaded, $w_j \cdot w_{j'} = 1$, and $w_j \dag w_j'$, but then $(w_j, w_{j'}, v_r, v_g, w_j)$ is an incomplete cycle.

Conclude that $l \neq g$, so $v_l$ is just right and $w_j \cdot v_l \geq 1$, so $w_j \dag v_l$. We must then have that $v_l \dag v_{r-1}$, otherwise there is a length $\geq 2$ path $v_{r-1}, \ldots, v_l$ in $G(\mathcal V)$ since $G(\mathcal V)$ is connected by assumption, and therefore $(w_j, v_{r-1}, \ldots, v_l, w_j)$ is an incomplete cycle. We must then have that $l = r+1$, and $v_{r+1} = -d_{r+1} + d_r + d_{r-1}$. If $|v_{r+2}| = 2$, then $|v_m| = 2$ for all $r+2 \leq m \leq g$ or else $2 \leq w_j \cdot v_m$ for $m$ minimal such that $m\geq r+3$, $|v_m|\geq 3$, and $v_l$ is just right. Then, $g = r+3$ and $v_g = -d_g + d_{g-1} + d_r$ or else there is a claw or $(v_g, v_{r+1}, w_j)$ is a heavy triple. Since $|v_{r+2}| = 2$, we have $\{r-1,r, r+1, r+2\}\subset A_j$, so either $w_j \cdot v_g = 2$ or $g \in A_j$, in which case $w_j \cdot v_g = 1$ and $(w_j, v_{r+1}, v_{r+2}, v_g, w_j)$ is an incomplete cycle. Conclude that $|v_{r+2}|\geq 3$, hence $v_{r+2}$ is just right, or else $(v_{r+2}, v_{r+1}, w_j)$ is a heavy triple. We must then have that $v_{r+2} = -d_{r+2} + d_{r+1} + d_r$ and $r+2 \in A_j$, or else $w_j \cdot v_{r+2} \geq 2$. Then, since $G(\mathcal V)$ is connected there is a length $\geq 2$ path from $v_{r+1}$ to $v_{r+2}$ in $G(\mathcal V)$, hence there is an incomplete cycle $(w_j, v_{r+1}, \ldots, v_{r+2}, w_j)$. Conclude that there is no triangle of the form $(v_i,w_j,v_k)$ in $G(\mathcal S)$, so each $w_j \in \mathcal W$ has at most one neighbor in $G(\mathcal V)$.

As in the case where $G(\mathcal V)$ is connected and every element of $\mathcal V$ is just right

By the classification of changemaker latices with no claws, heavy triples, or incomplete cycles that contain a gappy vector but no tight vector, there are exactly two $v_i$'s with a single neighbor in $\mathcal V$ such that no heavy triple is formed if $w_j \sim v_i$ for some $w_j$ \cite[Section 7]{Gre13}. But then $|w_j|\geq 3$ for at most two distinct values $j \in \{1,2,3,5\}$ or else there is a heavy triple, but then either there is a claw at $w_1$ or else there is an incomplete cycle containing $w_1$. 
\end{proof}

\subsection{When $t \geq 2$ and $v_t$ is tight.}

\begin{lem}
Let $t \geq 2$ and suppose that $v_t$ is tight. Then $\mathcal V = \mathcal V_t$, $|v_i| = 2$ for $1\leq i\leq t-1$, and $w_1 = -e_1 + d_0 + \ldots + d_{t-1}$.
\end{lem}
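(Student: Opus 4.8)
The plan is to first pin down $\sigma$ together with the changemaker part of the standard basis, and then to locate $w_1$ among the finitely many candidates allowed by the $E_8$-changemaker condition. Throughout we use the standing assumption of this subsection that $(\tau)^\perp$ is a linear lattice, so that $G(\mathcal S)$ is connected and contains no claw, heavy triple, or incomplete cycle.

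\emph{Structure of $\mathcal V$.} Since $v_t$ is tight it is breakable, and by \cite[Corollary 4.3]{Gre13} it is the unique breakable element of $\mathcal V$; in particular $v_1$ is unbreakable. As $\sigma_1\le 1+\sigma_0=2$ and $\sigma_1=2$ would force $v_1$ to be tight (hence breakable) with $1\ne t$, we get $\sigma_1=1$, so $v_1=-d_1+d_0$ and $\langle v_t,v_1\rangle=1$. For each $j\ge 2$ the index $j-1$ lies in $A_j$ (if $|v_j|=2$ then $v_j=-d_j+d_{j-1}$, and otherwise maximality of $A_j$ together with $\sigma_j\ge\sigma_{j-1}$ forces $j-1\in A_j$), so $v_j$ abuts $v_{j-1}$; hence $G(\mathcal V)$ is connected. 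Being an induced subgraph of $G(\mathcal S)$, $G(\mathcal V)$ inherits the absence of claws, heavy triples, and incomplete cycles (for heavy triples, note that non-separation in $G(\mathcal V)$ implies non-separation in the larger graph $G(\mathcal S)$), so Greene's classification of connected changemaker bases containing a tight vector \cite{Gre13} applies and yields $\mathcal V=\mathcal V_t$ together with $|v_i|=2$ for $1\le i\le t-1$. Concretely $\sigma=(1,\dots,1,t+1)\in\mathbb Z^{t+1}$, $v_i=-d_i+d_{i-1}$ for $i<t$, $v_t=-d_t+2d_0+\sum_{i=1}^{t-1}d_i$, and $G(\mathcal V)$ is the path $v_t-v_1-v_2-\cdots-v_{t-1}$ (a single edge when $t=2$).

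\emph{Locating $w_1$.} By Lemma \ref{lem:vttightnowjtight} no $w_j$ is tight, and by Lemma \ref{lem:hardeightcombinatorial}(1) $s^*_1\le|\sigma|_1+1=2t+2$; discarding the tight value leaves $s^*_1\le 2t+1$, so $w_1=-e_1+\sum_{i\in A_1}d_i$ with $A_1$ the maximal subset of $\{0,\dots,t\}$ whose $\sigma$-sum equals $s^*_1$. The possibilities are $A_1=\{t-k,\dots,t-1\}$ when $s^*_1=k\le t$, $A_1=\{t\}$ when $s^*_1=t+1$, and $A_1=\{t\}\cup\{t-m,\dots,t-1\}$ when $s^*_1=t+1+m$ with $1\le m\le t$; the asserted conclusion is exactly $s^*_1=t$, giving $A_1=\{0,\dots,t-1\}$, so it remains to exclude the others. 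For $1\le k<t$ one computes that $w_1$ abuts only $v_{t-k}$ and $v_t$ inside $\mathcal V$; since $G(\mathcal V)$ is the path $v_t-v_1-\cdots-v_{t-1}$, the edges $w_1v_{t-k}$ and $w_1v_t$ together with the sub-path of $G(\mathcal V)$ joining $v_{t-k}$ to $v_t$ form a cycle in $G(\mathcal S)$ of length $t-k+2$; when $t-k\ge 2$ two of its vertices (e.g.\ $v_t$ and $v_2$, or $v_1$ and $v_3$) fail to abut, so it is an incomplete cycle, a contradiction. The remaining short cycles $k=t-1$ (and, when $t=2$, also $k=1$) are ruled out by further scrutiny of the interval structure: $w_1$ is unbreakable of norm $\ge 3$, and it cannot cross the breakable interval $v_t$ against which it pairs with value $|w_1|-2$ while also abutting the norm-$2$ vector $v_1$ (respectively $v_{t-1}$) without violating the breakable–unbreakable pairing lemma or producing a sign error. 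When $s^*_1\in\{0,t+1\}$ one has $|w_1|=2$ and $w_1$ disjoint from $v_1,\dots,v_{t-1}$; here $w_1$ abuts each plain member of $\{w_2,w_3,w_5\}$ (one has $\langle w_1|_{E_8},w_j|_{E_8}\rangle\in\{-1,0\}$, so $\langle w_1,w_j\rangle\ge -1$), and tracking how $w_5$—always unloaded—and the loaded members of $\{w_2,w_3\}$ must sit relative to the path $G(\mathcal V)$ forces a claw at $w_1$ or an incomplete cycle through $v_t,v_1,\dots,v_{t-1}$. The values $s^*_1=t+1+m$ are handled the same way, with $w_1$ meeting $v_t$ through its $d_t$-coordinate and an interior vertex $v_{t-m}$ of the path. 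In every case a forbidden configuration appears, so $s^*_1=t$ and $w_1=-e_1+d_0+\cdots+d_{t-1}$.

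\emph{Main obstacle.} The labor concentrates in the exclusion step: there is no single invariant forcing $s^*_1=t$, so one must run through the values of $s^*_1$, and in the cases with $|w_1|=2$ one must carry along the loaded vectors $w_2,w_3,w_4$ and the vector $w_5$, exactly as in Propositions \ref{prop:alloness10}–\ref{prop:alloness1full}. The organizing principle keeping this tractable is that $G(\mathcal V)$ is a path once $\sigma=(1,\dots,1,t+1)$ is known, so any attachment of $w_1$ to a non-endpoint vertex—or joining the two endpoints—creates an incomplete cycle, and the handful of residual triangles are removed using that $v_t$ is the unique breakable basis vector, $w_1$ is unbreakable, and $w_1$ cannot cross $v_t$.
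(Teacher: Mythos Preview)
Your proof has a genuine gap in the first step. You claim that Greene's classification of connected changemaker bases with a tight vector forces $\mathcal V=\mathcal V_t$ and $|v_i|=2$ for $1\le i\le t-1$, i.e.\ $\sigma=(1,\dots,1,t+1)$. This is not what \cite[Section 8]{Gre13} says. Lemma 8.1 there only gives that $v_1,\dots,v_{t-1}$ are \emph{just right}, which allows $|v_i|\ge 3$; and Lemma 8.2 allows $n>t$. Greene's full classification in the tight case produces several Berge types with more complicated tails, so the inheritance argument---while correct in showing that $G(\mathcal V)$ has no claws, heavy triples, or incomplete cycles---does not by itself pin down $\sigma$. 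The paper's proof confirms this: its Case IV ($0<m<t-1$) begins by deducing $|v_{m+1}|\ge 3$ from a claw at $v_m$, a deduction that uses $w_1$ and would be vacuous if $|v_i|=2$ were already known. In other words, the structure of $\mathcal V$ is established \emph{through} the analysis of $w_1$, not prior to it.

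Even granting your first step, the second step is too sketchy to stand. For $1\le k<t$ you assert an incomplete cycle through $w_1$, $v_{t-k}$, and $v_t$, but the edge $w_1\text{--}v_t$ in $G(\mathcal S)$ requires abutment: since $w_1\cdot v_t=k=|w_1|-2$ and $v_t$ is breakable, the pairing lemma gives $w_1\pitchfork v_t$ when $k\ge 2$, so there is no such edge. You acknowledge this for $k=t-1$ but dispatch it with ``further scrutiny of the interval structure'' and a vague appeal to sign errors; a real argument is needed. Likewise, the cases $s^*_1\in\{0,t+1\}$ (where $|w_1|=2$) genuinely require tracking $w_2,w_3,w_5$ through a case analysis; pointing to Propositions \ref{prop:alloness10}--\ref{prop:alloness1full}, which concern different $\sigma$'s, is not a proof. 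The paper handles all of this by conditioning on $m=\min(A_1)$ and running six cases in which the constraints on $w_1$ and on $\mathcal V$ are derived together.
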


\begin{proof}
Let $m = \min(A_1)$. Since by Lemma \ref{lem:vttightnowjtight} $w_1$ is not tight, we have six cases to consider: \emph{Case 0}: $m = n$, \emph{Case I}: $t< m < n$, \emph{Case II}: $t = m$, \emph{Case III}: $m = t-1$, \emph{Case IV}: $0 < m < t-1$, and \emph{Case V}: $m = 0$. 

Case 0: $t< m = n$.

If $m = n$, then $w_1 = -e_1 + d_n$ and $w_1 \sim v_n$. We break this case into three subcases: Case 0.1: $|w_5| = 2$, Case 0.2: $|w_5|\geq 3$ and $n \not \in A_5$, and Case 0.3: $n \in A_5$.

Case 0.1: $|w_5| = 2$.

If $|w_5| = 2$, then $|w_6| = |w_7| = |w_8| = 2$, or else the subgraph induced by $\{v_1, \ldots, v_n, w_1, w_5,$ $\ldots, w_l\}$ for $l = \min\{i \in \{6,7,8\} \colon |w_i| \geq 3\}$ contains an incomplete cycle. It follows that both $w_2$ and $w_3$ are unloaded, and $|w_2|,|w_3|\geq 3$ or else there is a claw at $w_1$. For $j \in \{2,3\}$, if $n \not \in A_j$, then either $(w_1;v_n,w_j,w_5)$ is a claw or $w_j \sim v_n$, but then $|v_n|\geq 3$, and so $(v_n,w_1,w_j)$ is a heavy triple. It follows that $n \in A_2\cap A_3$, hence $n = A_2 \cap A_3$, or else $2 \leq w_2 \cdot w_3 \leq |w_2|-2$. Since $w_2 \dag w_3$ and both $w_2$ and $w_3$ have neighbors in $\mathcal V$, there is a unique $r \in \{1,\ldots, n\}$ such that $w_2 \sim v_r$ and $w_3 \sim v_r$ or else the subgraph induced by $\{v_1, \ldots, v_n, w_2, w_3\}$ contains an incomplete cycle. Furthermore, $|v_r| = 2$, or else $(v_r, w_2, w_3)$ is a heavy triple; thus $r \in A_2 \cap A_3$ and $r = n$, but then $(w_1, w_2, w_3)$ is a heavy triple.

Case 0.2: $|w_5|\geq 3$ and $n \not \in A_5$.

If $|w_5| \geq 3$ and $n \not \in A_5$, then $w_5 \sim w_1$, so $w_5\sim v_n$ and $w_5 \cdot v_i = 0$ for all $i \in \{1, \ldots, t-1, t+1, \ldots, n\}$ or else the subgraph induced by $\{v_1 \ldots, v_n, w_1, w_5\}$ contains an incomplete cycle. Then $w_5 \cdot v_n = 1$ and $|v_n|\geq 3$, but then $(v_n, w_1, w_5)$ is an incomplete cycle.

Case 0.3: $n \in A_5$. 

If $n \in A_5$, then $w_5\cdot w_1 = 0$. If $w_5 \sim v_n$, then $w_5 \sim v_r$ for some $1<r <n$ with $v_n \sim v_r$ or else $(v_n;v_r, w_1, w_5)$ is a claw, and, moreover, $w_5 \cdot v_i = 0$ for all $i \in \{1, \ldots, n-1\}\setminus \{r\}$, or else the subgraph induced by $\{v_1, \ldots, v_n, w_5\}$ contains an incomplete cycle. It follows that $|v_n|\geq 3$ and thus $|v_r| = 2$, or else $(v_r, v_n, w_5)$ is a heavy triple, so $w_5 \cdot v_n = 1$, $w_5 \cdot v_r = -1$, and $v_n \cdot v_r = -1$. It then follows that $v_n$ is just right, so $v_n = -d_n + d_{n-1} + d_{n-2}$ and $w_5 = -e_5 + d_n + d_{n-1} + d_{n-2}$, and so $(v_l, v_n, w_5)$ induces a heavy triple for $l = \max \{i < n-2\colon |v_i|\geq 3\}$.

Case I: $t< m < n$.

If $t < m$ then $v_m$ and $v_{m+1}$ are unbreakable, $v_m \sim v_r$ for some $r<m$, and $v_{m+1} \sim v_s$ for some $s\leq m$. It follows that $v_{m+1} \cdot v_m = 0$, or else $v_{m+1} \dag v_m$, in which case either $w_1 \cdot v_{m+1} = 0$ and $(v_{m};v_r, v_{m+1}, w_1)$ is a claw, or $w_1 \cdot v_{m+1} = 1$, in which case $v_{m+1} \cdot v_m = -1$ or else $(v_m, v_{m+1}, w_1)$ is a negative triangle, but then $(v_m, w_1, v_{m+1}, v_s, \ldots, v_r, v_m)$ is an incomplete cycle. Since $G(\mathcal V)$ is connected, it follows that $m+1 \in A_1$ or else $G(\mathcal V \cup \{w_1\})$ contains an incomplete cycle. Hence, $A_1 = \{m, \ldots, n\}$ or else there is some $m+2\leq k \leq n$ such that $v_k$ is gappy and $w_1 \cdot v_k = 1$, but then the subgraph induced by $\{v_1, \ldots, v_k, w_1)$ contains an incomplete cycle. Furthermore, we must have $|v_i| = 2$ for all $m+2\leq i \leq n$. We now break the analysis down into three subcases based on the assumption that $|w_5| = 2$, $|w_5|\geq 3$ and $A_5 \cap A_1 = \emptyset$, or $A_5 \cap A_1 \neq \emptyset$. 

Case I.1: $|w_5| = 2$.

If $|w_5| = 2$, then, as in Case 0, $|w_6| = |w_7| = |w_8| = 2$ or else there is an incomplete cycle, and therefore $w_2$ and $w_3$ are loaded and $|w_2|,|w_3|\geq 3$. For $j \in \{2,3\}$, if $A_j \cap A_1 = \emptyset$, then $w_j \sim w_1$, so $w_j \sim v_m$ or else there is an incomplete cycle, but then $|v_m|\geq 3$, so $(v_m, w_1, w_j)$ is a heavy triple. If $m \in A_j$, then $m+1 \in A_j$ or else $A_j \cap \text{supp}(v_{m+1}) = \{m\}$, in which case the subgraph induced by $\{v_1, \ldots, v_{m+1}, w_j\}$ contains an incomplete cycle. But then $\{m, m+1\} = A_j \cap \{m, \ldots, n\}$ or else $2 \leq w_j \cdot w_1 < |w_1|-2$, so we must have $m+1 = n$ since $|v_i|= 2$ for all $m+2\leq i \leq n$. If $m \not \in A_j$, then $n\in A_j$, again since $|v_i|= 2$ for all $m+2\leq i \leq n$. It follows that $n \in A_2 \cap A_3$, so $w_2 \cdot w_3 = 1$, and therefore there is a unique $i \in \{1, \ldots, n\}$ with $w_2\sim v_i$ and $w_3 \sim v_i$, so $w_2 = -e_2 + d_n$ and $w_3 = -e_3 + d_n$, but then $(v_{m+1}, w_2, w_3)$ is a heavy triple. 

Case I.2: $|w_5|\geq 3$ and $A_5 \cap A_1 = \emptyset$.

If $|w_5| \geq 3$ and $A_5 \cap A_1 = \emptyset$, then $w_5 \sim w_1$, so $w_5\sim v_m$ and $w_5 \cdot v_i = 0$ for all $i \in \{1, \ldots, t-1, t+1, \ldots, n\}$ or else the subgraph induced by $\{v_1 \ldots, v_n, w_1, w_5\}$ contains an incomplete cycle. Then $w_5 \cdot v_m = 1$ and $|v_m|\geq 3$, but then $(v_m, w_1, w_5)$ is an incomplete cycle.

Case I.3: $A_5 \cap A_1 \neq \emptyset$.

If $A_5 \cap A_1 \neq \emptyset$, then the argument from Case I.1 shows that $n\in A_5$. Moreover, for $j \in \{2,3\}$ if $w_j$ is unloaded and $|w_j|\geq 3$, or if $j =2$, $w_2$ is loaded and $|w_3|=2$, then $n \in A_j$, so $w_j \cdot w_5 = 1$, and therefore there is a unique $i \in \{1, \ldots, n\}$ with $w_j\sim v_i$ and $w_5 \sim v_i$, so $A_j = \{n\}$ and $w_5 = -e_5 + d_n$, but then $(v_{m+1}, w_j, w_5)$ is a heavy triple. 

Case II: $m = t$.

If $m = t$, then $w_1 \cdot v_t = -1$, so either $w_1 \dag 
v_t$ or $w_1 \pitchfork v_t$. We will first show that $w_1
\dag v_t$. Suppose, by way of contradiction, that $w_1 = -
e_1 + d_t$ and $w_1 \pitchfork v_t$. We must have 
$\mathcal V = \mathcal V_t$; if $|v_{t+1}|= 2$, then $t+1 
\in A_1$, so $|w_1\cdot v_t|\leq |w_1|-3$ and therefore 
$w_1 \dag v_1$, and if $v_{t+1} = -d_{t+1} + d_t + d_{t-
1}$, then $v_{t+1}$ separates $v_t$ from $w_1$ in 
$G(\mathcal S)$, but $v_{t+1} \cdot v_t = 0$. Furthermore, 
$w_1 + v_t$ is reducible. Then we may write $w_1 + v_t = -
e_1 + 2d_0 + d_1 + \ldots + d_{t-1}$ as $x + y$ with $x, y 
\in (\tau)^\perp$ and $x \cdot y = 0$, which we will show 
is absurd. If $x \cdot d_i$, $y\cdot d_i \geq 0$ for all 
$1\leq i \leq n$, then either $y|_{E_8} \neq 0$, or $x = -
e_1 + 2d_0 + d_1 + \ldots + d_{t-1}$ and $y = 0$, or $x 
\cdot \tau \neq 0$. If $y|_{E_8} \neq 0$, then $x|_{E_8} 
\cdot y|_{E_8} = -1$, $x\cdot d_0 = y\cdot d_0 = 1$, and 
$\{x \cdot d_i , y \cdot d_i \} = \{0, 1\}$ for all $1 \leq 
0 \leq t-1$. However, if $x|_{E_8} \cdot y|_{E_8} = -1$, 
then $y|_{E_8}$ is a negative root, so $y$ is irreducible 
in $(\tau)^\perp$. Furthermore, $y \cdot w_1 = -1$ and $y 
+ w_1$ is irreducible in $(\tau)^\perp$ since $|
(y+w_1)|_{E_8}|=2$ and $(y + w_1)\cdot d_i\in \{0,1\}$ for 
all $0\leq i \leq n$, so $y \dag w_1$ and $\epsilon(y) = 
\varepsilon(v_t)$; hence there is a sign error between $y$ 
and $w_1$ mediated by $v_t$. If now $x \cdot d_n = -1$, 
then $y \cdot d_n = 1$, $x\cdot d_0 = 1$, $y\cdot d_0 = 
1$, and $y|_{E_8} = 0$, thus $y \cdot \tau \neq 0$. 
Conclude that $w_1 \dag v_t$.

Since $w_1\dag v_t$, either $\mathcal V= \mathcal V_t$, or $|v_{t+1}| = 2$, in which case $(v_t;v_1, v_{t+1}, w_1)$ is a claw, or $v_{t+1} = -d_{t+1} + d_t + d_{t-1}$, in which case $t+1 \in A_1$ or else $(v_1, \ldots, v_{t-1}, v_{t+1}, w_1, v_t, v_1)$ is an incomplete cycle. It follows then that $|v_i| = 2$ for all $t+2 \leq i \leq n$ and $A_1 = \{t, \ldots, n\}$. We will break the remaining analysis into three subcases, as usual, depending on whether $|w_5| = 2$, $|w_5|\geq 3$ and $A_5 \cap A_1 = \emptyset$, or $A_5 \cap A_1 \neq \emptyset$.

Case II.1: $|w_5| = 2$.

If $|w_5| = 2$, then, as before, $|w_6| = |w_7| = |w_8| = 2$ or else there is an incomplete cycle, since the argument that shows that $w_1 \dag v_t$ shows that $w_j \dag v_t$ if $w_j \cdot v_t = -1$ for all $j \in \{1, \ldots, 8\}$. It follows that $w_2$ and $w_3$ are unloaded, and that $|w_2|, |w_3|\geq 3$, or else there is a claw at $w_1$. Furthermore, $A_2 \cap A_1 \neq \emptyset$ and $A_3 \cap A_1 \neq \emptyset$, or else there is an incomplete cycle. Then, for $j \in \{2,3\}$, it follows that either $n \in A_j$, or else $n>t$ and $A_j \cap A_1 = \{t\}$, in which case $w_j \sim v_{t+1}$ and either $w_j \cdot v_t \neq 0$, in which case $w_j \dag v_t$ and thus $(v_1, \ldots, v_{t-1}, v_{t+1}, w_j, v_t, v_1)$ is an incomplete cycle, or $w_j \cdot v_t = 0$, in which case $w_j = -e_j + d_t + d_r$ for some $r < t$, and $w_j \sim v_r$, and therefore either $r < t-1$ and the subgraph induced by $\{v_1, \ldots, v_{t+1}, w_j\}$ contains and incomplete cycle, or $r = t-1$, and $(v_t, v_{t+1}, w_j)$ induces a heavy triple since $v_{t+1} \cdot v_t = w_j \cdot v_t = 0$. Conclude that $\{n\} = A_2 \cap A_3$, so $w_2 \dag w_3$ and either there is an incomplete cycle or $A_2 = A_3 = \{n\}$, in which case $(v_l, w_2, w_3)$ induces a heavy triple for $l = \max\{i \leq n \colon |v_i|\geq 3\}$. 

Case II.2: $|w_5|\geq 3$ and $A_5 \cap A_1 = \emptyset$.

If $w_5 \geq 3$ and $A_5 \cap A_1 = \emptyset$, then $w_5 \dag w_1$ and $A_5 \subset \{0, \ldots, t-1\}$, and so either $|v_i| = 2$ for all $1\leq i \leq t-1$ and $w_5 = -e_5 + d_0 + \ldots + d_{t-1}$, or there is an incomplete cycle. It follows that $t = n$, or else $(v_1, \ldots, v_{t-1}, v_{t+1}, w_5, v_t, v_1)$ is an incomplete cycle. It follows that for $j \in \{2,3\}$ if $|w_j|\geq 3$, then $A_j = \{t\}$, or else $w_j \dag w_5$ and $w_j \sim v_{t-1}$, and therefore $(v_1, \ldots, v_{t-1}, w_j, w_5, v_t, v_1)$ is an incomplete cycle. It follows then that if $|w_j|\geq 3$ for $j \in \{2,3\}$, then $w_j\dag v_t$, using the same argument as for $w_1$ if $w_j$ is unloaded, and therefore $(v_t;v_1, w_5, w_j)$ is a claw.

Case II.3: $A_5 \cap A_1 \neq \emptyset$. 

If $A_5 \cap A_1 \neq \emptyset$, then either $n \in A_5$, or $A_5 \cap A_1 = \{t\}$ and $w_5 \cdot v_{t+1} = 1$, in which case either $t = n$, or $t< n$ and either $w_5 \cdot v_t \neq 0$, in which case $w_5 \dag v_t$ and thus $(v_1, \ldots, v_{t-1}, v_{t+1}, w_5, v_t, v_1)$ is an incomplete cycle, or $w_5 \cdot v_t = 0$, in which case $w_5 = -e_5 + d_t + d_r$ for some $r < t$, and $w_5 \sim v_r$, and therefore either $r < t-1$ and the subgraph induced by $\{v_1, \ldots, v_{t+1}, w_5\}$ contains and incomplete cycle, or $r = t-1$, and $(v_t, v_{t+1}, w_5)$ induces a heavy triple since $v_{t+1} \cdot v_t = w_5 \cdot v_t = 0$. Conclude that $n \in A_5$. If $|w_2| = |w_3| = 2$, then $(w_1;v_t, w_2, w_3)$ is a claw, so let $j \in \{2,3\}$ with $|w_j|\geq 3$ and $w_j \cdot -e_1 = -1$. As with $w_5$ in Case II.2, either $n \in A_j$, in which case $w_j \dag w_5$ and there is a unique $v_i$ such that $w_j \sim v_i$ and $w_5 \sim v_i$, so $i = n$ and $(v_l, w_j, w_5)$ induces a heavy triple for $ l = \max\{i \in \{1, \ldots, n\}\colon |v_i|\geq 3$, or $|v_i| = 2$ for all $1 \leq i \leq t-1$ and $w_j = -e_j + d_0 + \ldots + d_n$, in which case $(v_t;v_1,w_j, w_5)$ is a claw.

Case III: $m = t-1$. 

If $m = t-1$, then $w_1 \dag v_{t-1}$, so either $w_1 = -e_1 + d_t$ and $w_1 \pitchfork v_t$, or $t \in A_1$, in which case $|v_i| = 2$ for all $t+1 \leq i \leq n$ or else $v_{t+1} = -d_{t+1} + d_t + d_{t-1}$ and $(v_t, v_{t+1}, w_1)$ induces a heavy triple since $v_{t+1} \cdot v_t = w_1 \cdot v_t = 0$, and so furthermore $w_1 = -e_1 + d_{t-1} + \ldots + d_n$.

Case III.1 $|w_5| = 2$.

If $|w_5| = 2$, then $|w_6| = |w_7| = |w_8| = 2$ or else there is an incomplete cycle, so then $|w_2|, |w_3|\geq 3$ and $w_2$ and $w_3$ are both unloaded. Let $j \in \{2,3\}$. If $A_j \cap A_1 = \emptyset$, then $w_1 = -e_1 + d_{t-1}$, and $n \in A_j$ since $|v_i| = 2$ for all $t+1 \leq i \leq n$, but then the subgraph induced by $\{v_1, \ldots, v_n, w_1, w_j\}$ contains an incomplete cycle. Suppose now that $A_j \cap A_1 \neq \emptyset$. If $w_1 = -e_1 + d_{t-1}$, then either $2 < t$ and $(v_{t-1};v_{t-2}, w_1, w_j)$ is a claw or $(v_1, \ldots, v_{t-1}, w_j, v_t, v_1)$ is an incomplete cycle, or $t = 2$ and $(v_1;v_2, w_1, w_j)$ is a claw or $(v_1, v_2, w_j)$ is a negative triangle. If $w_1 = -e_1 + d_{t-1} + \ldots + d_n$, then either $n \in A_j$ or $w_j = -e_j + d_0 + \ldots + d_{t-1}$ and $w_j \prec v_t$, or else $w_j = -e_j + d_{t-1}$ and there is either a claw, an incomplete cycle, or a negative triangle as before. If $n \in A_2 \cap A_3$, then $w_2 \dag w_3$, so the subgraph induced by $\{v_1, \ldots, v_n\}$ contains an incomplete cycle, unless $w_2 = -e_2 + d_n$ and $w_3 = -e_3 + d_n$, in which case $(v_t, w_2, w_3)$ induces a heavy triple. If $w_j = -e_j + d_0 + \ldots + d_{t-1}$, then $t= n$ or else $(v_t;v_1, v_{t+1}, w_j)$ is a claw, and if then $w_k = -e_k + d_t$ for $\{j, k\} = \{2,3\}$, then $(v_t;v_1, w_j,w_k)$ is a claw, and if $w_k = -e_k + d_{t-1} + d_t$, then $(v_1, \ldots, v_{t-1}, w_1, w_k, w_j, v_n, v_1)$ is an incomplete cycle. 

Case III.2 $|w_5|\geq 3$ and $A_5 \cap A_1 = \emptyset$.

If $|w_5|\geq 3$ and $A_5 \cap A_1 = \emptyset$, then $w_1 = -e_1 + d_{t-1}$ and $A_5 \subset \{t, \ldots, n\}$, in which case the subgraph induced by $\{v_1, \ldots, v_n, w_1, w_5\}$ contains an incomplete cycle.

Case III.3 $A_5 \cap A_1 \neq \emptyset$. 

If $A_5 \cap A_1 \neq \emptyset$, then it follows from the argument in Case III.1 that $w_1 = -e_1 + d_{t-1}+ \ldots + d_n$ and either $w_5 = -e_5 + d_n$ or $w_5 = -e_5 + d_0 + \ldots + d_{t-1}$. Let $j \in \{2,3\}$ and suppose that $w_j \cdot -e_1 = -1$. Then, if $|w_j|\geq 3$, precisely one element $w_k$ of $\{w_j, w_5\}$ has $n \in A_k$ while the other element $w_l$ has $A_l = \{0, \ldots, t-1\}$, in which case either $(v_t;v_1, w_k, w_l)$ is a claw or $(v_1, \ldots, v_{t-1}, w_1, w_k, w_j, v_n, v_1)$ is an incomplete cycle, as in Case III.1.

Case IV: $0<m < t-1$.

If $0<m<t-1$, then $|v_{m+1}|\geq 3$ or else there is a claw at $v_m$, and $m+1 \in A_1$ or else the subgraph induced by $\{v_1, \ldots, v_{m+1}, v_t, w_1\}$ contains an incomplete cycle. It follows, since every element of $\{v_1, \ldots, v_{t-1}\}$ is just right \cite[Lemma 8.1]{Gre13} and $w_1 \not \sim v_t$, that $w_1 = -e_1 + d_m + \ldots + d_{t-1}$, and furthermore that $|v_i| = 2$ for all $m + 2\leq i \leq t-1$. Since $|v_{m+1}| \geq 3$, we have $n = t$ by \cite[Lemma 8.2]{Gre13}.

Case IV.1: $|w_5| = 2$. 

If $|w_5| = 2$, then $|w_6| = |w_7| = |w_8| = 2$ or else there is an incomplete cycle, as before. It follows that $w_2$ and $w_3$ are unloaded, and $|w_2|$, $|w_3|\geq 3$. Let $j \in \{2,3\}$. If $A_j \cap A_1 = \emptyset$, then $A_j \subset \{1, \ldots, m-1, t\}$. If $A_j = \{t\}$, then $(v_t; v_1, v_r, w_j)$ is a claw for the unique $2\leq r \leq m+1$ with $|v_r| = r+1$. If $A_j \cap \{1, \ldots, m-1\} \neq \emptyset$, then there is some $k \leq m$ with $|v_k|\geq 3$ such that $w_j \cdot v_k = 1$, and some $l < k$ such that $w_j \cdot v_l = -1$, thus the subgraph induced by $\{v_1, \ldots, v_k, v_t, w_j\}$ contains an incomplete cycle. It follows that $A_j \cap A_1 \neq \emptyset$, so either $t-1 \in A_j$ or $A_j = \{m\}$, in which case the subgraph induced by $\{v_1, \ldots, v_{m+1}, v_t, w_j\}$ contains an incomplete cycle. If $t-1 \in A_j$, then either $w_j = -e_j + d_{t-1}$ or $w_j = -e_j + d_{t-1} + d_t$; $|A_j \cap A_1| = 1$ and $A_j \cap \{1, \ldots, m-1\}\neq 0$, in which case there is a pair $1 \leq l < k$ with $v_l$ and $v_k$ in separate components of $G(\bar{\mathcal V})$, $w_j \cdot v_k = 1$, and $w_j \cdot v_l = -1$, and so there is an incomplete cycle in the subgraph induced by $\{v_1, \ldots, v_t, w_j\}$; or $w_j = -e_j + d_{t-2} + d_{t-1}$ and $m = t-2$, but then $w_1 \dag w_j$ and there is a sign error between $w_1$ and $w_j$ mediated by $v_t$. It follows that for $\{j, k\} = \{2,3\}$, $w_j = -e_j + d_{t-1}$ and $w_k = -e_k + d_{t-1} + d_t$, or or else either $w_j \cdot w_k = 2$ or $A_j = A_k = \{n\}$ and so there is a sign error between $w_j$ and $w_k$ mediated by $v_t$, but then $(v_{m+1}, w_j, w_k)$ is a heavy triangle. 

Case IV.2: $|w_5|\geq 3$ and $A_5 \cap A_1 = \emptyset$. 

If $|w_5|\geq 3$ and $A_5 \cap A_1 = \emptyset$, then, as in the argument for $w_j$ in Case Iv.1, either $w_j = \{t\}$ and $(v_t;v_1, v_r, w_5)$ is a claw, or $w_5 \cdot v_k = 1$ for some $r \leq k <t$ and $w_5 \cdot v_l$ for some $1 \leq l < k$, so the subgraph induced by $\{v_1, \ldots, v_k, v_t, w_5\}$ contains an incomplete cycle.

Case IV.3: $A_5 \cap A_1\neq \emptyset$. 

If $A_5 \cap A_1 \neq \emptyset$, then, as in the argument for $w_j$ in Case Iv.1, either $w_5 = -e_5 + d_{t-1}$ or $w_5 = -e_5 + d_{t-1} + d_t$. If $|w_2| = |w_3| = 2$, then $(w_1;v_m, w_2, w_3)$ is a claw, and, for $j \in \{2,3\}$, if $|w_j|\geq 3$ and $w_j \cdot -e_1 = -1$, then, letting $\{k,l\} = \{j, 5\}$, $A_k = \{t-1\}$ and $A_l = \{t-1, t\}$, but then $(v_{m+1}, w_j, w_5)$ is a heavy triple. 

Case V: $m = 0$. 

If $m = 0$, then $\{0, \ldots, t-1\}\subset A_1$, so $|v_i| = 2$ for all $1 \leq i \leq t-1$. If $t \in A_1$, then $2 \leq w_1 \cdot v_t \leq |w_1|-3$, which is absurd. Suppose that $w_1 = -e_1 + d_0 + \ldots + d_{t-1} + d_r$ for some $t < r$. Then $2\leq w_1 \cdot v_t = |w_1|-2$, so $w_1 -v_t$ is reducible. Suppose $w=-e_1 + d_r + d_t -d_0 = x + y$ for some $x, y \in (\tau)^\perp$ and $x\cdot y \geq 0$. Then, without loss of generality, either $x = w$ and $y = 0$, or $|x| = 3$, $|y| = 2$, and $x\cdot y = 0$. We must have $y|_{E_8} = 0$ if $y \in (\tau)^\perp$, so we must have $x = -e_1 + d_k$ for some $k \in \{0, t, r\}$, which is absurd since $e_1 \cdot \tau = t + d_r \cdot \sigma > \max\{d_0 \cdot \sigma, d_t \cdot \sigma, d_r \cdot \sigma\}$. Conclude that $w_1 = -e_1 + d_0 + \ldots + d_{t-1}$ and $w_1 \prec v_t$. It follows that $t = n$; if not, then either $|v_{t+1}| = 2$, in which case $(v_t;v_1, v_{t+1}, w_1)$ is a claw, or $v_{t+1} = -d_{t+1} + d_t + d_{t-1}$, in which case $(v_1, \ldots, v_{t+1}, w_1, v_t, v_1)$ is an incomplete cycle.
\end{proof}

\begin{prop}\label{prop:vttight}
If $t\geq 2$ and $v_t$ is tight, then $\sigma = (1, \ldots, 1, n+1) \in \mathbb Z^{n+1}$, and one of the following holds:
\begin{enumerate}
    \item $s^* = (n, n+1, 1, 0, 0, 0, 0, 0)$,
    \item $s^* = (n, 1, n+1, 0, 0, 0, 0, 0)$,
    \item $s^* = (n, n+2, 1, 0, 0, 0, 0, 0)$,
    \item $s^* = (n, 1, n+2, 0, 0, 0, 0, 0)$,
    \item $s^* = (n, n+1, 0, 0, 1, 0, 0, 0)$,
    \item $s^* = (n, 1, 0, 0, n+1, 0, 0, 0)$, 
    \item $s^* = (n, n+2, 0, 0, 1, 0, 0, 0)$,
    \item $s^* = (n, 1, 0, 0, n+2, 0, 0, 0)$,
    \item $s^* = (n, 0, n+1, 0, 1, 0, 0, 0)$,
    \item $s^* = (n, 0, 1, 0, n+1, 0, 0, 0)$,
    \item $s^* = (n, 0, n+2, 0, 1, 0, 0, 0)$, or
    \item $s^* = (n, 0, 1, 0, n+2, 0, 0, 0)$.
\end{enumerate}
\end{prop}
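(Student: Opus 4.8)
The plan is to feed the output of the preceding lemma into a forbidden-subgraph analysis of $G(\mathcal S)$ parallel to the one carried out above for $\sigma = (1, \ldots, 1)$.

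First I would record the data handed to us by the preceding lemma: since $v_t$ is tight and $\mathcal V = \mathcal V_t$ we have $t = n$, $v_i = -d_i + d_{i-1}$ for $1 \le i \le n-1$, $v_n = -d_n + 2d_0 + d_1 + \ldots + d_{n-1}$, and $w_1 = -e_1 + d_0 + \ldots + d_{n-1}$. The relations $v_i = -d_i + d_{i-1}$ force $\sigma_{i-1} = \sigma_i$ for $i < n$, and $\sigma_0 = 1$ as noted after Theorem \ref{thm:hardeightembedding} (since $(\tau)^\perp$ has no $\mathbb Z$ summand); tightness of $v_n$ then gives $\sigma_n = 1 + \sum_{i<n}\sigma_i = n+1$, so $\sigma = (1, \ldots, 1, n+1)$, and reading off $w_1$ gives $s^*_1 = \sum_{i=0}^{n-1}\sigma_i = n$. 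A direct computation gives $w_1 \cdot v_j = 0$ for $1 \le j \le n-1$, $w_1 \cdot v_n = |w_1| - 1$, $v_n \cdot v_1 = 1$, and $v_n \cdot v_j = 0$ for $2 \le j \le n-1$, so $G(\mathcal V \cup \{w_1\})$ is the path $w_1 - v_n - v_1 - v_2 - \cdots - v_{n-1}$ with $w_1 \prec v_n$; note that $v_n$ is the unique breakable element of the standard basis, which will be important below.

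It remains to determine $w_2, w_3, w_4, w_5, w_6, w_7, w_8$, equivalently $s^*_2, \ldots, s^*_8$. None of these is tight (Lemma \ref{lem:vttightnowjtight}), and the forbidden-subgraph constraints will preclude loading, so each $w_j = -e_j + \sum_{i \in A_j} d_i$ with $A_j$ maximal for the value $s^*_j$; given the shape of $\sigma$, $A_j$ is a suffix of $\{0, \ldots, n-1\}$, possibly with $\{n\}$ adjoined. I would then show, by the usual tally of claws, heavy triples, incomplete cycles, and sign errors, that (i) $s^*_4 = s^*_6 = s^*_7 = s^*_8 = 0$; (ii) $s^*_j \in \{0, 1, n+1, n+2\}$ for $j \in \{2, 3, 5\}$, since any larger value forces $A_j$ to overlap the path $v_1 - \cdots - v_{n-1}$ and so produces an incomplete cycle or a heavy triple among unbreakable vectors; and (iii) exactly one of $w_2, w_3, w_5$ carries the value $0$, namely the unique one abutting $w_1$, whose presence is what resolves the claw in $G(\mathcal S)$ at $w_1$ inherited from the $E_8$ Dynkin diagram, while of the remaining two one carries $1$ and the other carries $n+1$ or $n+2$. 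The twelve items in the statement are precisely the $3 \times (2 \times 2)$ assignments permitted by (iii): which of $w_2, w_3, w_5$ is the $0$; which of the other two is the $1$; and whether the large value is $n+1$ or $n+2$.

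The organizing device for (i)--(iii) is to condition first on $w_5$ --- guaranteed unloaded, hence with the fewest options --- and then within each resulting subcase to pin down $w_2, w_3, w_4$ and $w_6, w_7, w_8$ in turn, exactly as in the proofs of Propositions \ref{prop:alloness10}--\ref{prop:alloness1full}. I expect the main obstacle to be not any single step but the length of this census, and especially keeping straight that $v_n$ is breakable: a triangle through $v_n$ is permitted by the heavy-triple lemma, so rather than appealing to it one must track whether each $\pm 1$ pairing into $v_n$ corresponds to abutting intervals or to a $\pitchfork$ overlap, a distinction that is itself forced by the no-sign-error condition relative to $w_1 \prec v_n$.
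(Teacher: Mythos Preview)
Your plan matches the paper's: both invoke the preceding lemma to fix $\sigma$, $s^*_1$, and $w_1$, then split on the four possibilities for $w_5$ (corresponding to $s^*_5 \in \{0,1,n+1,n+2\}$) and run the forbidden-subgraph census in each subcase to pin down $w_2,w_3,w_4,w_6,w_7,w_8$. One small correction to your item (iii): the Dynkin claw at $w_1$ is dissolved not by the $0$-valued $w_j$ but by the $1$-valued one, since $A_j=\{t-1\}$ gives $w_j\cdot w_1=(-e_j)\cdot(-e_1)+d_{t-1}\cdot d_{t-1}=-1+1=0$; in the $(n+1)$-cases both the $0$-valued and the $(n+1)$-valued vectors still abut $w_1$. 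This slip does not affect the structure of your argument or the $3\times 2\times 2$ count, which is correct.
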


\begin{proof}
First note that for $w_j \in \mathcal W$ $j \neq 1$, and $w_j$ unloaded, if $A_j \neq \emptyset$, then $A_j \in \{\{t\}, \{t, t-1\}, \{t-1\}\}$. Recall that if $A_j 
= t$, then $w_j \dag v_t$ for all $j \in \{1, \ldots, 8\}$. Note that $|w_6| = |w_7| = |w_8| = 2$ or else either $(v_t;v_1, w_1, w_j)$ is a claw or $(v_1, \ldots, v_{t-1}, w_j, w_1, v_t, v_1)$ is an incomplete cycle for some $j \in \{6,7,8\}$

Case I: $|w_5| = 2$.

Suppose that $|w_5| = 2$. It follows that $w_2$ and $w_3$ are both unloaded, and furthermore that $w_j = -e_j + d_{t-1}$ and $w_k = -e_k + d_{t-1} + d_t$ or $w_k = -e_k + d_t$ for $\{j,k\} = \{2,3\}$. If $w_4$ is loaded, then $w_4|_{E_8} = -e_4 + e_2 + e_1$ or else $s^*_4 \leq |\sigma|_1$, and therefore $w_4 = -e_4 + e_2 + e_1 + d_t$ or else $w_4 \cdot w_1 \geq 0$ and either $(w_5;w_1, w_4, w_6)$ is a claw or $(w_1, w_4, w_5)$ is a negative triangle, but then $(v_t, w_1, w_5, w_4, v_t)$ is an incomplete cycle. Conclude that $w_4$ is unloaded. If $t\in A_4$, then $(v_1, \ldots, v_{t-1}, w_4, w_1, v_t, v_1)$ is an incomplete cycle. If $w_4 = -e_4 + d_t$, then $(v_t;v_1, w_1, w_4)$ is a claw. Conclude that $|w_4| = 2$, thus $s^* = (n, n+1, 1, 0,0, 0, 0, 0)$, $s^* = (n, 1, n+1, 0, 0, 0,0, 0)$, $s^* = (n, n+2, 1,0, 0, 0, 0, 0)$, or $s^* = (n, 1, n+2, 0, 0, 0, 0, 0)$. 

Case II: $w_5 = -e_5 + d_{t-1}$.

Suppose now that $w_5 = -e_5 + d_{t-1}$. If $w_j$ is loaded and $w_j \cdot e_1 = 0$ for $j \in \{2,3\}$, then $A_j = \{t\}$ or else $(v_1, \ldots, v_{t-1}, w_j, w_1, v_t, v_1)$ is an incomplete cycle. But then $(v_t;v_1, w_1, w_j)$ is a claw. If $w_2$ is loaded but $|w_3| = 2$, then $w_2 \sim w_3$ and $w_3 \sim w_1$, so either $w_2 = -e_2 + e_4 + d_{t-1} + d_t$ and $(v_1, \ldots, v_{t-1}, w_5, w_2, w_3, w_1, v_t, v_1)$ is an incomplete cycle, or $w_2 = -e_2 + e_4 + d_t$, and $(v_t, w_1, w_3, w_2, v_t)$ is an incomplete cycle. It follows that $w_j = -e_j + d_t$ or $w_j = -e_j + d_{t-1} + d_t$ and $|w_k| = 2$ for $\{j, k\} = \{2,3\}$. Since $|\sigma|_1 + 1 = 2n + 2$, $s^*_1 = n$, $s^*_2 \leq n + 2$, and $s^*_5 = 1$, we must have $w_4|_{E_8} = -e_4 + e_2 + e_1 + e_5$, in which case $w_4 = -e_4 + e_2 + e_1 + e_5 + d_t$, or else $(w_6;w_4, w_5, w_7)$ is a claw. But then $(v_1, \ldots, v_{t-1}, w_5, w_4, v_t, v_1)$ is an incomplete cycle. Conclude that $w_4$ is unloaded. If $t-1 \in A_4$, then $(v_1, \ldots, v_{t-1}, w_5, w_4, w_1, v_t, v_1)$ is an incomplete cycle, and if $w_4 = -e_4 + d_t$, then $(v_t;v_1, w_1, w_4)$ is a claw. Conclude that $|w_4| =2$ so $s^* = (n,0, n+1, 0, 1, 0, 0, 0)$, $s^* = (n, n+1, 0, 0, 1, 0, 0, 0)$, $s^* = (n, 0, n+2, 0, 1, 0, 0, 0)$, or $s^* = (n, n+2, 0, 0, 1, 0, 0, 0)$. 

Case III: $w_5 = -e_5 + d_t$.

Suppose now that $w_5 = -e_5 + d_t$. Citing the same argument as from the beginning of Case II, we see that $w_2$ and $w_3$ are both unloaded. It follows that $w_j = -e_j + d_{t-1}$ and $|w_k| = 2$ for $\{j,k\} = \{2,3\}$. If $w_4$ is loaded, then either $w_2 = -e_2 + d_{t-1}$ and $w_4|_{E_8} = -e_4 + e_2 + e_1 + e_5$ or $|w_2| = 2$ and $w_4|_{E_8} = -e_4 + e_1 + e_5$, and in either case we see that $w_4 \sim w_6$, so $w_4 \dag w_5$ and $w_4 \cdot w_5 = -1$ or else $(w_6;w_4,w_5, w_7)$ is a claw. But then $A_4 = \{t-1\}$, so $(v_1, \ldots, v_{t-1}, w_4, v_t, v_1)$ is an incomplete cycle or $(v_1, v_t, w_4)$ is a negative triangle. Conclude that $w_4$ is unloaded. If $t-1\in A_4$, then $(v_1, \ldots, v_{t-1}, w_4, w_1, v_t, v_1)$ is an incomplete cycle, and if $w_4= -e_4 + d_t$, then $(v_t,w_4, w_5)$ induces a heavy triple. Conclude that $|w_4| = 2$, so $s^* = (n, 0, 1, 0, n+1, 0, 0, 0)$ or $s^* = (n, 1, 0, 0, n+1, 0, 0, 0)$.

Case IV: $w_5 = -e_5 + d_{t-1} + d_t$.

Suppose lastly that $w_5 = -e_5 + d_{t-1} + d_t$. Citing the same argument as from the beginning of Case III, it follows that $w_j = -e_j + d_{t-1}$ and $|w_k| = 2$ for $\{j,k\} = \{2,3\}$. If $w_4$ is loaded then either $w_4|_{E_8} = -e_4 + e_2 + e_1 + e_5$ or $w_4|_{E_8} = -e_4 + e_1 + e_5$, and in both cases we see that $w_4 \sim w_6$, so $w_4 \dag w_5$ and $w_4 \cdot w_5 = -1$, but then $A_4 = \emptyset$, which is absurd since then $s^*_4 \leq n+2 = |\sigma|_1 + 1$. Conclude that $w_4$ is unloaded. If $w_4$ is unloaded and $t-1\in A_4$, then $(v_1, \ldots, v_{t-1}, w_4, w_1, v_t, v_1)$ is an incomplete cycle. If $w_4= -e_4 + d_t$, then $(v_t;v_1, w_1, w_4)$ is a claw. Conclude that $|w_4| = 2$, so $s^* = (n, 0, 1, 0, n+2, 0, 0, 0)$ or $s^* = (n, 1, 0, 0, n+2, 0, 0, 0)$.
\end{proof}

\subsection{When $v_1$ is tight.}

\begin{lem}
    If $v_1$ is tight, then $v_2 \sim v_1$.
\end{lem}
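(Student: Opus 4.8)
The plan is to pin down $v_1$ and $v_2$ explicitly from the changemaker data and then invoke the pairing lemmas for breakable intervals to force the intervals $T(v_1)$ and $T(v_2)$ to abut. Assume $n\ge 2$, so that $v_2$ exists. Since $v_1$ is tight, $\sigma_1$ is realized by its maximal representation, so $\sigma_1=1+\sigma_0=2$ (in the present setting $\sigma_0=1$, as otherwise $(\tau)^\perp$ would contain a vector of norm $1$) and $v_1=-d_1+2d_0$; in particular $|v_1|=5$ and $T(v_1)$ is breakable. The changemaker inequality together with $\sigma_1\le\sigma_2$ forces $\sigma_2\in\{2,3,4\}$. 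The value $\sigma_2=4$ is impossible: it makes $v_2=-d_2+2d_0+d_1$ tight, hence breakable, and then $\mathcal V$ would contain two breakable vectors, contradicting Corollary 4.3 of \cite{Gre13}. So either $\sigma_2=2$, with $v_2=-d_2+d_1$ and $v_1\cdot v_2=-1$, or $\sigma_2=3$, with $v_2=-d_2+d_0+d_1$ and $v_1\cdot v_2=1$. In both cases $v_1\cdot v_2\neq 0$, so by Proposition \ref{prop:linearirreducibles} the intervals $T(v_1)$ and $T(v_2)$ lie in a common linear summand and are not distant; it remains only to rule out a transverse configuration $T(v_1)\pitchfork T(v_2)$.

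When $\sigma_2=2$ this is immediate: $|[T(v_2)]|=2$, so $T(v_2)$ is unbreakable, and the lemma on the pairing of a breakable interval against a norm-$2$ interval gives $|[T(v_1)]\cdot[T(v_2)]|\le 1$, with equality precisely when the two abut; since $|v_1\cdot v_2|=1$ we conclude $v_2\sim v_1$. When $\sigma_2=3$, the vector $v_2$ is just right, so $T(v_2)$ is unbreakable with $|[T(v_2)]|=3$. Suppose, for contradiction, that $T(v_1)\pitchfork T(v_2)$. The lemma on the pairing of a breakable interval against an unbreakable one of norm $3$ shows that a transverse pair pairs to $0$ or $1$; since $v_1\cdot v_2=1\neq 0$ we get $[T(v_1)]\cdot[T(v_2)]=1$, hence $\epsilon(v_1)\epsilon(v_2)=1$ and $[T(v_1)]-[T(v_2)]=\epsilon(v_1)(v_1-v_2)$. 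On the other hand Proposition \ref{prop:linearirreducibles}(3) gives that $[T(v_1)\setminus T(v_2)]-[T(v_2)\setminus T(v_1)]=[T(v_1)]-[T(v_2)]$ is reducible, so $v_1-v_2=d_0-2d_1+d_2$ must be reducible in $(\tau)^\perp$.

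I would then contradict this by showing that $d_0-2d_1+d_2$ is irreducible: in any splitting $v_1-v_2=x+y$ with $x,y\in(\tau)^\perp\setminus\{0\}$ and $\langle x,y\rangle\ge 0$, the $E_8$-components of $x$ and $y$ are opposite vectors in $E_8$, contributing $\le -2$ to $\langle x,y\rangle$ unless they vanish, while $\sum_i\langle x,d_i\rangle\langle y,d_i\rangle\le 1$ (only the coordinate $-2$ can yield a positive product); hence both $E_8$-components vanish and $\langle x,y\rangle\in\{0,1\}$, and a short finite check of the resulting coordinate patterns against $x\cdot\tau=0$, using $\sigma_0=1$, $\sigma_1=2$, $\sigma_2=3$ and $\sigma_i\ge 3$ for $i\ge 2$, leaves only $x=0$ or $y=0$.

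The only genuine work is this last irreducibility verification, which is exactly the kind of bounded case analysis carried out in the proof of Lemma \ref{lem:irreducible} (and in \cite[Lemma 3.13]{Gre13}); everything else is bookkeeping with the explicit forms of $v_1$ and $v_2$ and pairing lemmas already in hand.
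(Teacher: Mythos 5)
Your proof is correct, and its skeleton matches the paper's: the case $\sigma_2=\sigma_1=2$ (so $|v_2|=2$) is immediate from $|v_1\cdot v_2|=1$, and the real content is ruling out $T(v_1)\pitchfork T(v_2)$ when $v_2=-d_2+d_1+d_0$. Where you differ is in how the transverse configuration is killed. The paper splits the symmetric difference of $T(v_1)$ and $T(v_2)$ into its two component intervals, notes their norms must be $(4,2)$ or $(3,3)$, and checks directly that no vector of norm $2$ (resp.\ $3$) in $(\tau)^\perp$ realizes the required pairings with $v_1$ and $v_2$. You instead invoke Proposition \ref{prop:linearirreducibles}(3) to conclude that $v_1-v_2=d_0-2d_1+d_2$ would have to be reducible, and then verify it is irreducible. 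These are the same finite coordinate computation in different packaging --- the paper's witnessing vectors are exactly the summands of your putative decomposition --- but your version avoids the $(4,2)$/$(3,3)$ case split and is a little cleaner; I confirmed the irreducibility check goes through (the only subset of $\{d_0,d_2,-2d_1\}$ pairing to zero with $\tau$ is the whole vector, and any $E_8$-component or extra coordinate $i\geq 3$ forces $\langle x,y\rangle<0$ since $\sigma_i\geq\sigma_2=3$).

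Two small points to tighten. First, your dismissal of $\sigma_2=4$ via ``tight, hence breakable'' is not literally licensed by the quoted results: tightness of the vector does not formally imply breakability of its interval. The clean way to kill that case is to note $v_1\cdot v_2=3$, which is incompatible with every value permitted by Lemma 4.4 of \cite{Gre13} and the breakable-versus-unbreakable pairing lemma once Corollary 4.3 forces at least one of the two to be unbreakable. (The paper silently omits this case altogether, so you are if anything more careful.) Second, you assert $T(v_1)$ is breakable; it need not be, but in the unbreakable subcase Lemma 4.4 gives $|v_1\cdot v_2|\leq 1$ with equality only for $T(v_1)\dag T(v_2)$, so the conclusion is immediate there and your transversality argument is only needed when $T(v_1)$ is breakable. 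Neither point affects the validity of the result.
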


\begin{proof}
    Either $|v_2| = 2$, so $v_2 \dag v_1$ or $v_2 \prec v_1$ since $v_2 \cdot v_1 = -1$, or $v_2 = -d_2 + d_1 + d_0$, in which case $|v_2|= 3$, $v_2 \cdot v_1 = 1$, and so either $v_2 \dag v_1$ or $v_2 \pitchfork v_1$. Suppose, by way of contradiction, that $v_2 \pitchfork v_1$. Then $T(v_1) \cup T(v_2) \setminus T(v_1) \cap T(v_2)$ consists of two intervals $T_0$ and $T_1$, and without loss of generality, either $|T_0| = 4$ and $|T_1| = 2$ or $|T_0| =|T_1| =3$. Suppose first that $|T_0|=4$ and $|T_1| =2$. Since $\varepsilon_1 = \varepsilon_2$, it follows that there is some $x \in (\tau)^\perp \cap \mathbb Z^{n+1}$ with $|x| = 2$ such that $x \cdot v_1 = 1$ and $x \cdot v_2 = -1$. Write $x = \sum_{i = 0}^n x_i d_i$, and we must have $2x_0 - x_1 = 1$ and $x_0 + x_1 -x_2 = -1$, and therefore $x_0 = x_2 = 0$ and $x_1 = -1$, or else $|x|\geq 2$. But then $x_i = 0$ for all but one $2\leq i \leq n$, so $x\cdot \sigma\neq 0$, which is absurd. Suppose instead that $|T_0|=3$ and $|T_1| = 3$. Then there is some $x \in (\tau)^\perp$ with $|x| = 3$ such that $x \cdot v_1 = 2$ and $x \cdot v_2 = -1$. Then either $x = r + d_i$ for some $r \in E_8 \oplus (0)$, in which case $x = r + d_0$ or else $x \cdot v_1 \neq 2$, and so $x \cdot v_2 = 1$, or $x = \sum_{i = 0}^n x_id_i$, in which case $2x_0 -x_1 = 2$, so $x_0 = 1$ and $x_1 = 0$, and $1 -x_2 = -1$, so $|x|\geq 3$. Conclude that $v_2 \dag v_1$ if $v_2 = -d_2 + d_1 + d_0$.
\end{proof}

\begin{lem}
    If $v_1$ is tight and $v_3 = -d_3 + d_2 + d_1$, then $v_3 \dag v_1$.
\end{lem}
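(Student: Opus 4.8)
The plan is to pin down the relative position of the intervals $T(v_1)$ and $T(v_3)$, ruling out everything except the consecutive configuration, in the spirit of the proof of the preceding lemma and the sketch of Lemma~\ref{lem:vttightnowjtight}. First I would record the numerology. Since $v_1$ is tight, $v_1 = -d_1 + 2d_0$, so $\sigma_1 = 1 + \sigma_0 = 2$ (using $\sigma_0 = 1$) and hence $\sigma_i \geq 2$ for every $i \geq 1$; the hypothesis $v_3 = -d_3 + d_1 + d_2$ says exactly that $\sigma_3 = \sigma_1 + \sigma_2 = 2 + \sigma_2$, with $\sigma_2 \geq 2$. A direct computation gives $v_1 \cdot v_3 = -1$, $|v_1| = 5$, and $|v_3| = 3$; and since $v_3$ is not tight it is unbreakable, so $T(v_3)$ is an unbreakable interval of norm $3$ whose unique vertex of norm $\geq 3$ has norm exactly $3$.

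Next I would enumerate the possible positions of $T(v_1)$ relative to $T(v_3)$. Because $v_1 \cdot v_3 \neq 0$ they are not distant and lie in a common linear summand. Every subinterval of the norm-$3$ interval $T(v_3)$ has norm at most $3$, so the norm-$5$ interval $T(v_1)$ is not contained in it; and if $T(v_3) \subsetneq T(v_1)$ with a common endpoint, then $v_1 \cdot v_3 = \pm(|[T(v_3)]| - 1) = \pm 2$, a contradiction. This leaves three cases: $T(v_3) \dag T(v_1)$, which is the desired conclusion; $T(v_3)$ strictly interior to $T(v_1)$; or $T(v_3) \pitchfork T(v_1)$. In each of the last two, $[T(v_1)] - [T(v_3)]$ breaks as a sum (resp.\ difference) of the nonempty distant intervals making up the set difference $T(v_1)\setminus T(v_3)$ (resp.\ the symmetric difference), so it is reducible; moreover a short pairing computation gives $[T(v_1)] \cdot [T(v_3)] = 1$ (in the $\pitchfork$ case after discarding the value $0$, which is incompatible with $v_1 \cdot v_3 = -1$), and this pins down the signs $\epsilon(v_1), \epsilon(v_3)$ so that $[T(v_1)] - [T(v_3)] = \pm(v_1 + v_3)$. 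Hence $v_1 + v_3 = 2d_0 + d_2 - d_3$ would be reducible in $(\tau)^\perp$.

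The main work, and the step most liable to slip, is then to prove that $2d_0 + d_2 - d_3$ is irreducible in $(\tau)^\perp$. Here I would first observe that in any decomposition $2d_0 + d_2 - d_3 = x + y$ with $\langle x, y \rangle \geq 0$, the $E_8$-components of $x$ and $y$ must vanish: writing $x = z + \sum_i x_i d_i$ with $z \in E_8$, the coordinate constraints $x_0 + y_0 = 2$, $x_2 + y_2 = 1$, $x_3 + y_3 = -1$ give $x_0 y_0 \leq 1$ and $x_2 y_2, x_3 y_3 \leq 0$, so $\langle x, y \rangle \leq -|z| + 1$, which is negative if $z \neq 0$ since then $|z| \geq 2$. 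With $z = 0$ the same bookkeeping leaves only finitely many shapes for $x$ — essentially $d_0 + \alpha d_2 + \beta d_3$, $2d_0 + \alpha d_2 + \beta d_3$, and $d_0 \pm d_k + \alpha d_2 + \beta d_3$ with $\alpha \in \{0,1\}$, $\beta \in \{0,-1\}$, $k \notin \{0,2,3\}$ — and a direct check, using $\sigma_0 = 1$, $\sigma_3 = 2 + \sigma_2 \geq 4$, $\sigma_2 \geq 2$, and monotonicity of $\sigma$ (so $\sigma_k \geq 2$, and $\sigma_k \geq \sigma_3 > 1 + \sigma_2$ for $k \geq 4$), shows $\langle x, \tau \rangle \neq 0$ in every case. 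That contradiction eliminates the interior-containment and $\pitchfork$ configurations and leaves only $T(v_3) \dag T(v_1)$, i.e.\ $v_3 \dag v_1$. The only genuinely delicate points are the sign bookkeeping relating $[T(v_1)] - [T(v_3)]$ to $\pm(v_1 + v_3)$ and making the final finite check exhaustive; the rest parallels the arguments in the preceding lemmas and in \cite[Section~3]{Gre13}.
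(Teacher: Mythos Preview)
Your proof is correct and follows essentially the same strategy as the paper's: assume $v_3 \not\dag v_1$, deduce that the symmetric difference of the two intervals splits into two distant pieces, and reach a contradiction by showing that the required vectors cannot lie in $(\tau)^\perp$. The one execution difference is that you package the contradiction as the global irreducibility of $v_1 + v_3 = 2d_0 + d_2 - d_3$, whereas the paper works directly with the individual pieces $[T_0], [T_1]$: it reads off from the interval structure that one piece must be an $x \in (\tau)^\perp$ with either $|x| = 2$, $x \cdot v_1 = 1$, $x \cdot v_3 = 1$, or $|x| = 3$, $x \cdot v_1 = 2$, $x \cdot v_3 = 1$, and then those pairing constraints pin $x$ down to one or two candidates in $\mathbb Z^{n+1}$ which are immediately seen not to be orthogonal to $\tau$. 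The paper's route is a touch more economical since the prescribed pairings do most of the narrowing for you, while your route requires the longer (but still finite and easy) case check of all decompositions of $2d_0 + d_2 - d_3$; both buy the same conclusion.
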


\begin{proof}
    Suppose to the contrary. Then $T(v_1) \cup T(v_3) \setminus T(v_1) \cap T(v_3)$ consists of two intervals $T_0$ and $T_1$, and without loss of generality, either $|T_0| = 4$ and $|T_1| = 2$ or $|T_0| =|T_1| =3$. Suppose there is some $x \in (\tau)^\perp$ with $|x| = 2$, $x\cdot v_1 = 1$, and $x \cdot v_3 = 1$. Then, writing $x = \sum_{i=0}^nx_id_i$, $2x_0 -x_1 = 1$ and $x_1 + x_2 -x_3 = 1$, so $x_0 = x_1 = 1$, but then $x \cdot \tau \neq 0$. Suppose there is some $x \in (\tau)^\perp$ with $|x| = 3$, $x\cdot v_1 = 2$, and $x \cdot v_3 = 1$. Then, $2x_0 -x_1 = 2$, so $x_0 = 1$ and $x_1 = 0$, and $x_2 -x_3 = 1$, so either $x_2 = 0$ and $x_3 = -1$, in which case $x \cdot \tau = \sigma_0 - \sigma_3 \pm \sigma_j\neq 0$ since $|\sigma_j| > |\sigma_3-\sigma_1|$ for $j \geq 4$, or $x_2 = 1$ and $x_3 = 0$, in which case $x\cdot \tau = \sigma_0 +\sigma_2 + \sigma_j \neq 0$ since $|\sigma_j|\geq \sigma_0 + \sigma_2$ for $j \geq 4$. Conclude that $v_3 \dag v_1$
\end{proof}

\begin{cor}
    If $v_1$ is tight, then for all $2 \leq j \leq n$ there is a path from $v_j$ to $v_1$ in $G(\mathcal V)$.
\end{cor}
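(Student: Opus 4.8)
The plan is to prove this by induction on $j$, using the two lemmas just established (that $v_2 \dag v_1$ when $v_1$ is tight, and that $v_3 \dag v_1$ when $v_3 = -d_3 + d_2 + d_1$) as the base cases, together with the structure of the changemaker basis $\mathcal{V}$. The key point is that for $v_1$ tight, every $v_j$ with $j \geq 2$ has $\operatorname{supp}(v_j) \cap \{0, \ldots, j-1\} \neq \emptyset$ (indeed $\operatorname{supp}(v_j) \setminus \{j\}$ is nonempty since $\sigma_j \geq 1$), so $v_j$ pairs nontrivially with some $v_k$, $k < j$. First I would observe that $v_j \cdot d_0 \in \{0,1,2\}$: it is $2$ iff $v_j$ is tight, $1$ iff $0 \in \operatorname{supp}(v_j)$ and $v_j$ is not tight, and $0$ otherwise.

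The main step is to argue that $v_j \sim v_k$ for the \emph{smallest} $k \in \operatorname{supp}(v_j)$ with $k \geq 1$, or alternatively that $v_j$ has a neighbor with strictly smaller index in $G(\mathcal{V})$. If $v_j$ is just right, then $T(v_j)$ is an interval $\{x_{j}\} \cup (\text{interval in } \{x_{\min A}, \ldots, x_{j-1}\})$ where $A = \operatorname{supp}(v_j) \setminus \{j\}$ consists of consecutive integers; letting $k = \min A$, the vertex basis element $x_k$ lies in $T(v_j)$, and since $v_k$ is an interval containing $x_k$ (it is irreducible, hence of the form $\pm[T(v_k)]$ by Proposition \ref{prop:linearirreducibles}), one checks $T(v_j)$ and $T(v_k)$ abut — this is exactly the kind of computation carried out in \cite[Section 6, 8]{Gre13}, and I would cite the relevant structural lemmas there rather than redo it. If $v_j$ is gappy or tight, the base-case lemmas and Lemma \ref{lem:vttightnowjtight} (no $w_j$ is tight when some $v_t$ is tight, and more relevantly the argument showing $v_1$-tight forces the changemaker structure) handle the exceptional shapes; in particular when $j = 2, 3$ the two preceding lemmas give $v_j \dag v_1$ directly.

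With the claim that every $v_j$ ($j \geq 2$) is adjacent in $G(\mathcal{V})$ to some $v_k$ with $k < j$, the corollary follows by a trivial descending induction: from $v_j$ pass to $v_{k}$ with $k < j$, and repeat; the index strictly decreases each time, so after finitely many steps we reach $v_1$, yielding a path from $v_j$ to $v_1$. I would write this up as: ``By the two preceding lemmas and the classification of just-right and gappy changemaker bases in \cite[Sections 6--8]{Gre13}, each $v_j$ with $2 \leq j \leq n$ is adjacent in $G(\mathcal V)$ to some $v_k$ with $k<j$; iterating produces the desired path.''

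The hard part will be pinning down, uniformly across the just-right, gappy, and tight-$v_t$ (for $t \geq 2$) cases, that the ``leftmost support vertex'' always produces a genuine abutment rather than a transverse intersection $\pitchfork$ (which would give $v_j \cdot v_k \neq 0$ but no edge in $G(\mathcal S)$). This is precisely where the sign constraints and the irreducibility of $v_k$ and $v_j$ get used, and where I would lean most heavily on Greene's analysis; the reducibility obstruction $[T(v_j)\setminus T(v_k)] \pm [T(v_k) \setminus T(v_j)]$ being reducible (Proposition \ref{prop:linearirreducibles}(3)) combined with $v_j, v_k$ being standard basis elements of $(\tau)^\perp$ should rule out $v_j \pitchfork v_k$, but verifying this cleanly for every basis shape is the crux. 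The alternative, slicker route — which I would pursue first — is simply to note that $\hat G(\mathcal V)$ restricted to $\{v_1, \ldots, v_j\}$ is connected for each $j$ (since $(\sigma)^\perp$ is an irreducible lattice when $v_1$ is tight, as follows from the classification), and then apply Lemma \ref{lem:vttightnowjtight}'s companion (the lemma ``if $\hat G(\mathcal S)$ is connected then so is $G(\mathcal S)$'') to the sublattice spanned by $\{v_1, \ldots, v_j\}$, concluding connectedness of $G(\mathcal V)$ and hence the existence of the path.
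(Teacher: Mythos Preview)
The paper gives no proof of this corollary at all---it simply appends \qed, treating the statement as immediate from the two preceding lemmas. So your task is really to reconstruct what the author considers obvious.

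Your main inductive strategy (each $v_j$ with $j\geq 2$ is adjacent in $G(\mathcal V)$ to some $v_k$ with $k<j$) is correct and is the intended argument, but you are missing the clean reason it works. Since $v_1$ is tight, every $v_j$ with $j\geq 2$ is unbreakable (Corollary~4.3 of \cite{Gre13}: at most one breakable vector). For two \emph{unbreakable} basis elements, any nonzero pairing forces abutment: if both have norm $\geq 3$ this is exactly Lemma~4.4 of \cite{Gre13}, and if one has norm $2$ it is the last line of the breakable/unbreakable pairing lemma. Thus the only way $v_j$ can pair nontrivially with a smaller-index vector \emph{without} being adjacent to it in $G(\mathcal V)$ is if that vector is $v_1$ itself and $v_j\pitchfork v_1$. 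The two preceding lemmas are there precisely to rule this out in the cases where it could occur (namely $j=2$, and $j=3$ with $v_3=-d_3+d_2+d_1$, which is the only shape for $j=3$ where $v_3$ pairs nontrivially with $v_1$ but with no $v_k$, $k\geq 2$). For $j\geq 4$, one checks that $v_j$ always pairs nontrivially with some unbreakable $v_k$, $2\leq k<j$, so the special lemmas are not needed there. Once you see this, the descending induction is genuinely trivial.

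Your proposed ``slicker route''---applying the lemma ``if $\hat G(\mathcal S)$ is connected then so is $G(\mathcal S)$'' to the sublattice spanned by $\mathcal V$---does not work as stated. That lemma's proof relies on $\mathcal S$ generating the full linear lattice $(\tau)^\perp$, so that any sub-interval of a pitchforking pair lies in the span of $\mathcal S\setminus\{v,v'\}$. The changemaker basis $\mathcal V$ only spans the sublattice $(\sigma)^\perp\subset\mathbb Z^{n+1}$, which need not contain the relevant sub-intervals of $(\tau)^\perp$, so the argument breaks. You should drop this alternative entirely.
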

\qed

\begin{lem}\label{lem:v1tightw1}
If $v_1$ is tight and $n \geq 2$, then $w_1 = -e_1 + d_1 + \ldots + d_n$ and $|v_i| = 2$ for all $2\leq i \leq n$. 
\end{lem}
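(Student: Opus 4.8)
## Proof Proposal for Lemma~\ref{lem:v1tightw1}

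The plan is to analyze $w_1 = -e_1 + \sum_{i \in A_1} d_i$ via the quantity $m := \min(A_1)$, exactly as in the proof of the preceding lemma (the $v_t$-tight case) but now with $t = 1$. The combinatorial engine is the same: $G(\mathcal{S})$ contains no claw, no heavy triple, and no incomplete cycle, and by the corollary just established, $G(\mathcal{V})$ is connected with every $v_j$ joined to $v_1$ by a path. First I would record that $w_1$ is not tight (Lemma~\ref{lem:vttightnowjtight}), so $A_1 \neq \{0,1,\ldots,n\}$ with a coefficient-$2$ entry at $d_0$; this means $w_1 = -e_1 + \sum_{i \in A_1} d_i$ with all coefficients in $\{-1, 0, 1\}$ on the $d_i$. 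Then the possible values of $m = \min(A_1)$ partition into the cases $m = n$, $1 < m < n$, $m = 1$, and $m = 0$ (there is no room for $0 < m < t-1 = 0$, nor for $m = t-1 = 0$ separately from $m=0$, which streamlines things relative to the general-$t$ argument).

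The core of the argument handles each case by producing a forbidden configuration unless we land in the claimed conclusion. \textbf{Case $m = n$:} here $w_1 = -e_1 + d_n$, so $w_1 \dag v_n$ and $w_1$ meets $\mathcal{V}$ only at $v_n$. Since $w_1 \cdot v_i = 0$ for $i \neq n$ but $G(\mathcal{V})$ is connected with $v_1$ tight (hence $|v_1| \geq 3$), one finds a heavy triple or claw among $\{v_1, v_n, w_1\}$ together with a path from $v_1$ to $v_n$ in $G(\mathcal{V})$; this rules out $m=n$ unless $n \leq$ small, which I would dispatch directly. \textbf{Case $1 < m < n$:} as in Case~I of the prior lemma, $v_m$ and $v_{m+1}$ are unbreakable, $m+1 \in A_1$ forces $A_1 = \{m, \ldots, n\}$ (else a gappy $v_k$ with $k > m+1$, $w_1 \cdot v_k = 1$, yields an incomplete cycle), and then $|v_i| = 2$ for $m+2 \leq i \leq n$; but now $v_1$ tight and $w_1$ not meeting $v_1$ forces, via connectivity of $G(\mathcal{V})$, either a claw at $v_m$ or an incomplete cycle through $v_1, \ldots, v_m, w_1, v_n$ — contradiction. \textbf{Case $m = 1$:} then $w_1 \cdot v_1 = -1$, and I would argue, exactly as in Case~II of the preceding lemma (the $m = t$ subcase, with the reducibility argument for $w_1 + v_1 = -e_1 + 2d_0 + \text{(possible tail)}$), that $w_1 \dag v_1$ is impossible to extend: either $w_1 \pitchfork v_1$ forces $\mathcal{V} = \mathcal{V}_1$, which collapses the lattice to rank too small, or the reducibility of $w_1 + v_1$ leads to a sign error mediated by $v_1$ or a vector pairing nontrivially with $\tau$. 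This leaves \textbf{Case $m = 0$:} here $\{0, 1, \ldots\} \cap \{0, \ldots, ?\} \subset A_1$ gives $|v_i| = 2$ for... more precisely, $0 \in A_1$ and $w_1 \cdot v_1 = 2x_0 - x_1$; if $w_1 = -e_1 + d_0 + d_1 + \cdots + d_{t-1}$ with $t = 1$ the tail is empty, so $w_1 = -e_1 + d_0$, which is tight — contradiction. So $A_1 \supsetneq \{0\}$, and the analysis of the prior lemma's Case~V (with $t$ replaced throughout) forces $w_1 = -e_1 + d_0 + \cdots + d_n$ only if... Actually since $v_1$ is tight, the relevant normalization is that $w_1$ should \emph{not} contain $d_0$ at all: reconsider — with $v_1$ tight, $v_1 = -d_1 + 2d_0 + (\text{nothing else on } \mathbb{Z}^{n+1})$ is impossible; rather $v_1$ tight means its projection is $2d_0 + \sum_{i=1}^n d_i$... so I would re-examine using the actual definition of tight to see that $m = 0$ forces $2 \leq w_1 \cdot v_1$, contradicting unbreakability of $w_1$ and $v_1$. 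Thus the only surviving possibility is $m = 1$ with $w_1 \dag v_1$ \emph{succeeding}, which upon running the connectivity constraint forward gives $A_1 = \{1, \ldots, n\}$ and $|v_i| = 2$ for $2 \leq i \leq n$, i.e.\ the claimed conclusion $w_1 = -e_1 + d_1 + \cdots + d_n$.

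The step I expect to be the main obstacle is pinning down \textbf{Case $m = 1$} cleanly: one must show both that $w_1 \pitchfork v_1$ cannot happen (the reducibility-of-$w_1 + v_1$ computation, which requires carefully checking that no decomposition $w_1 + v_1 = x + y$ with $x \cdot y \geq 0$, $x, y \in (\tau)^\perp$ exists — this is where one invokes that $e_1 \notin (\tau)^\perp$ and that the $E_8$-coordinates force $|y|_{E_8}|$ to be a root, exactly as in the $m=t$ analysis and the proof of Lemma~\ref{lem:irreducible}), and that $w_1 \dag v_1$ together with $|v_i| \geq 3$ for some $i \geq 2$ plus connectivity of $G(\mathcal{V})$ forces $A_1$ to swallow the entire support $\{1, \ldots, n\}$ with no gaps (else a gappy $v_k$ produces an incomplete cycle) and forces all $v_i$, $i \geq 2$, to have norm $2$ (else a heavy triple $(v_1, v_i, w_1)$ or a claw). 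I would lean heavily on \cite[Lemmas 8.1, 8.2]{Gre13} for the structure of $\{v_2, \ldots, v_n\}$ and on the forbidden-subgraph lemmas of Section~\ref{sec:forbidden}, mirroring the bookkeeping of the preceding lemma's proof rather than reproving it from scratch.
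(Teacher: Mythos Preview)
Your outline --- case split on $m = \min(A_1)$ --- matches the paper's, but the execution has genuine gaps in two places.

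\textbf{The $m=0$ case is mishandled.} You claim $w_1 = -e_1 + d_0$ is tight; it is not (tight means the $\mathbb Z^{n+1}$-projection is $2d_0 + \sum_{i\ge 1} d_i$, whereas $-e_1+d_0$ is just right). You then try to dispose of $m=0$ by saying $2 \le w_1\cdot v_1$ contradicts unbreakability, but $v_1$ is \emph{tight} and hence can be breakable, and in fact $w_1 = -e_1+d_0$ gives $w_1\cdot v_1 = 2 = |w_1|-1$, which is exactly $w_1 \prec v_1$ --- no contradiction. In the paper this is the longest case: it splits into $w_1=-e_1+d_0$ (with $w_1\prec v_1$), $w_1=-e_1+d_0+d_g$ (with $w_1\pitchfork v_1$), and $\{0,1\}\subset A_1$ (with $w_1\dag v_1$), and each subcase is nontrivial.

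\textbf{The contradictions do not come from $w_1$ and $\mathcal V$ alone.} Your sketches for $m=0$, $m=n$, and $1<m<n$ all try to find a claw, heavy triple, or incomplete cycle inside $\{v_1,\ldots,v_n,w_1\}$. That is not how the paper's argument works, and it is not enough: in almost every subcase (Cases I.1.a--c, I.3.a--b, III.1--III.3 and their sub-subcases) the paper first pins down the shape of $w_1$ and some of the $v_i$, and then derives the contradiction only after bringing in $w_5$ (and then $w_2,w_3$, sometimes $w_6,w_7,w_8$) and running through the possible supports $A_5, A_2, A_3$. For instance, in Case I.1 ($w_1=-e_1+d_0$, $w_1\prec v_1$) there is no forbidden subgraph in $G(\{v_1,\ldots,v_n,w_1\})$ at all; the contradiction emerges only after showing that every admissible choice of $w_5$ forces a sign error or incomplete cycle once $w_2,w_3$ are also accounted for. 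Your proposal never mentions $w_5$, so it cannot close these cases. The same applies to your $1<m<n$ case, which in the paper further subdivides by whether $m=n$, $m+1=n$, or $m+2\le n$, with the contradictions again coming from the $w_j$ analysis.
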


\begin{proof}

Let $m = \min(A_1)$. We break our analysis into cases based on $m$.

Case I: $m = 0$.

If $m = 0$, then either $w_1 = -e_1 + d_0$ and $w_1 \prec v_1$, $w_1 = -e_1 + d_0 + d_g$ for some $2\leq g$ and $w_1 \pitchfork v_1$, or $1 \in A_1$ and $w_1 \dag v_1$.

Case I.1: $w_1 = -e_1 + d_0$.

Suppose $w_1 = -e_1 + d_0$. It follows that $v_i$ is just right for all $2 \leq i \leq n$, or else $v_g = -d_g + d_{g-1} + d_0$ for some $2<g$, in which case there is a sign error between $v_g$ and $w_1$ mediated by $v_1$. If $j \in \{2,3,5\}$, $|w_j|\geq 3$, and $w_j\cdot -e_1 = -1$, then either $0 \in A_j$ or $1 \in A_j$, or else $(v_1, \ldots, v_{m'}, w_j, w_1, v_1)$ is an incomplete cycle for $m' = \min(A_j)$. 

Suppose now that $0\in A_j$. It follows that $w_j \pitchfork v_1$ or else either $|v_2| = 2$ and $(v_1; v_2, w_1, w_j)$ is a claw, or $v_2 = -d_2 + d_1 + d_0$ and $(v_1, w_1, v_2, w_j, v_1)$ is a claw. Conclude that $w_j = -e_j + d_0 + d_g$ for some $2 \leq g$. 

Suppose instead that $0 \in A_j$ but $1 \in A_j$. It follows that $w_j\dag v_1$ or else there is a sign error between $w_1$ and $w_j$ mediated by $v_1$. If $v_2 = -d_2 + d_1 + d_0$, then $2 \in A_j$ or else $(v_1,v_2, w_j)$ is a negative triangle, but then $(v_1, v_2, w_1, w_j, v_1)$ is an incomplete cycle. Conclude that $|v_i| = 2$ for all $2 \leq i \leq n$ and $A_j = \{1,2, \ldots, n\}$. 

We further break this case down depending on whether $|w_5| = 2$, $w_5 = -e_5 + d_0 + d_g$ for some $2 \leq g$, or $|v_i|=2$ for all $2 \leq i \leq n$ and $w_5 = -e_5 + d_1 + \ldots + d_n$.

Case I.1.a: $|w_5| = 2$.

If $|w_5| = 2$, then $|w_6| = |w_7| = |w_8| = 2$, or else the subgraph induced by $\{v_1, \ldots, v_n, w_1, w_5,$ $w_6, w_7, w_8\}$ contains an incomplete cycle. It follows that $w_2$ and $w_3$ are both unloaded and $|w_2|$, $|w_3|\geq 3$, or else there is a claw at $w_1$. Let $\{j,k\} = \{2,3\}$. Then, without loss of generality, $w_j = -e_j + d_0 + d_g$ for some $2\leq g$ and $w_k = -e_k + d_1+ \ldots + d_n$ and $|v_i| = 2$ for all $2\leq i \leq n$, or else either $2\leq w_j \cdot w_k \leq |w_j|-2$ or $w_j \cdot w_k = 1$ and $w_j, w_k \pitchfork v_1$, in which case there is a sign error between $w_j$ and $w_k$ mediated by $v_1$. However, then $w_j = -e_j + d_0 + d_n$ and there is a sign error between $w_j$ and $v_2 + \ldots + v_n$ mediated by $v_1$. Conclude that $|w_5|\geq 3$.

Case I.1.b: $w_5 = -e_5 + d_0 + d_g$.

Suppose that $w_5 = -e_5 + d_0 + d_g$. We must then have $|w_j|\geq 3$ for some $j \in \{2,3\}$ with $w_j \cdot -e_1 = -1$, and so by the same argument as in Case I.1.a, we must have $|v_i| = 2$ for all $2\leq i \leq n$ and $A_j = \{1, \ldots, n\}$, but then $w_5 = -e_5 + d_0 + d_n$ and there is a sign error between $w_5$ and $v_2 + \ldots + v_n$ mediated by $v_1$. 

Case I.1.c: $|v_i| = 2$ for all $2 \leq i \leq n$ and $w_5 = -e_5 + d_1 + \ldots + d_n$.

If $|v_i| = 2$ for all $2 \leq i \leq n$ and $w_5 = -e_5 + d_1 + \ldots + d_n$, then there is some $j \in \{2,3\}$ with $|w_j|\geq 3$ and $w_j \cdot -e_1 = -1$, so $w_j = -e_j + d_0 + d_n$ by the arguments outlined in Cases I.1.a and I.1.b, but then $(v_1, \ldots, v_n, w_j, w_5, v_1)$ is an incomplete cycle.

Case I.2: $w_1 = -e_1 + d_0 + d_g$ for some $2\leq g$.

Suppose that $w_1 = -e_1 + d_0 + d_g$ for some $2 \leq g$. Then either $g = n$, or $|v_{g+1}|\geq 3$, which is absurd since then $w_1 \cdot v_{g+1} = 1$, and so $v_{g+1}$ separates $w_1$ from $v_1$ in $G(\mathcal V)$ but $v_{g+1} \not \prec v_1$. Conclude that $g= n$, and furthermore that $0 \in \text{supp}(v_n)$, and moreover that $|v_n|\geq 3$, or else $w_1 \dag v_n$ and so $v_n$ separates $w_1$ from $v_1$ in $G(\mathcal V)$ but $v_n \not \prec v_1$. Then, since $|w_1| = 4$, $|v_1| = 5$, and $w_1 \cdot v_1 = 2$, there exists some $x \in (\tau)^\perp$ with $|x| = 2$ and $x\cdot w_1 = x\cdot v_1 = -1$. Since $-d_0 + d_1 \not \in (\tau)^\perp$, we must have $x = d_1 -d_n$, in which case $|v_i|=2$ for all $2\leq i \leq n$, a contradiction.

Case I.3: $1 \in A_1$ and $w_1 \dag v_1$.

If $1 \in A_1$, then $w_1 = -e_1 + d_0 + \ldots + d_n$, and therefore $|v_i| = 2$ for all $2\leq i \leq n$ or else either $v_2 = -d_2 + d_1 + d_0$, which is absurd since then $v_2 \dag v_1$ and $v_2 \dag w_1$, or $v_3 = -d_3 + d_2 + d_1$, which is absurd since then $(v_1, v_3, w_1)$ is a negative triangle. It follows that if $|w_j|\geq 3$ and $w_j \cdot -e_1 = -1$, then $A_j = \{n\}$, or else either $w_j = -e_j + d_0$, in which case $(v_1;v_2, w_1, w_j)$ is a claw, or $w_j = -e_j + d_0 + d_n$, in which case there is a sign error between $w_j$ and $v_1 + \ldots + v_n$ mediated by $v_1$, or $A_j = \{n-1, n\}$, in which case either $n = 2$ and $(v_1, w_1, w_j)$ is a negative triangle, or $n >2$ and $(v_{n-1};v_{n-2}, v_n, w_j)$ is a claw.

Case I.3.a: $|w_5| = 2$.

If $|w_5| = 2$, then $|w_6| = |w_7| = |w_8| = 2$ or else there is an incomplete cycle, so $w_2$ and $w_3$ are unloaded and $|w_2|$, $|w_3|\geq 3$ or else there is a claw at $w_1$. We must then have that $w_2 = -e_2 + d_n$ and $w_3 = -e_3 + d_n$, but then $(v_1,w_2,w_3)$ induces a heavy triple.

Case I.3.b: $w_5 = -e_5 + d_n$.

If $w_5 = -e_5 + d_n$, then we must have $|w_j|\geq 3$ and $w_j \cdot -e_1 = -1$ for some $j \in \{2,3\}$ or else $|w_2|=|w_3| = 2$ and $(w_1;v_1, w_2, w_3)$ is a claw. But then $A_j = \{n\}$ and $(v_1, w_j, w_5)$ induces a heavy triple.

Case II: $m = 1$.

If $m =1$, then by work above we see that $w_1\dag v_1$. If $|v_2| = 2$, then $2 \in A_1$, in which case $w_1 = -e_1 + d_1 + \ldots + d_n$, in which case either $|v_i| = 2$ for all $2\leq i \leq n$, or $v_3 = -d_3 + d_2 + d_1$ or $v_3 = -d_3 + d_2 + d_1 + d_0$, but in either case $v_3 \dag v_1$ and $v_3 \dag w_1$, which is absurd. Suppose now that $w_2 = -d_2 + d_1 + d_0$. We must have $2 \in A_1$ or else $(v_1, v_2, w_1)$ is a negative triangle, and so either $|v_i| = 2$ for all $3\leq i \leq n$ and $A_1 = \{1, \ldots, n\}$, or $v_3 = -d_3 + d_2 + d_0$, in which case either $3 \in A_1$ and $(v_1;v_2, v_3, w_1)$ is a claw or $3 \not \in A_1$ and $(v_1, v_3, w_1)$ is a negative triangle. Conclude that $A_1 = \{1, \ldots, n\}$, $|v_i| = 2$ for all $3 \leq i \leq n$, and $|v_2|\in \{2,3\}$. 

Suppose that $v_2 = -d_2 + d_1 + d_0$ and let $|w_j|\geq 3$ with $w_j \cdot -e_1 = -1$. If $A_j \cap A_1 = \emptyset$, then $w_j = -e_j + d_0$ and $w_j \prec v_1$, but then $(v_1, v_2, w_j, w_1, v_1)$ is an incomplete cycle. If $A_j \cap A_1 = \{1\}$, then $(v_1, v_2, w_j)$ is a negative triangle. If $n \geq 3$ and $A_j \cap A_1 = \{1, n\}$, then $(v_1, w_j, v_n, \ldots, v_2, v_1)$ is an incomplete cycle. If $A_j \cap A_1 = \{n, n-1\}$, then either $n = 2$ and $(v_1, w_1, w_j)$ induces a heavy triple, or $n > 2$ and there is a claw at $v_{n-1}$. Conclude that $A_j = \{n\}$. We must have $|w_j|$, $|w_k|\geq 3$ with $w_j \cdot -e_1 = w_k \cdot -e_1 = -1$ for $j \neq k \in \{2,3,5\}$ or else there is a claw at $w_1$ or $|w_5|=2$, $w_2$ or $w_3$ is loaded, and the subgraph induced by $\{v_1, \ldots, v_n, w_1, w_5, w_6, w_7, w_8\}$ contains an incomplete cycle, but then $(v_2, w_j, w_k)$ is a heavy triple. Conclude that $v_2 = -d_2 + d_1$.

Case III: $2\leq m$.

If $2 \leq m$, then either $m  = n$ or $|v_{m+1}|\geq 3$ and $m+1 \in A_1$, or else there is an incomplete cycle or $|v_{m+1}| = 2$ and there is a claw at $v_m$. Suppose that $|v_{m+2}|\geq 3$. Then, either $v_{m+2} = -d_{m+2} + d_{m+1} + d_m$ and $(v_l,v_{m+2}, w_1)$ induces a heavy triple for $l = \max\{i\leq m \colon |v_i|\geq 3\}$, or $v_{m+2} = -d_{m+2}+ d_{m+1} + d_0$, in which case $v_{m+1}= -d_{m+1} + d_m + \ldots + d_1$ or else either $v_{m+1} = -d_{m+1} + d_m + \ldots + d_0$ and $(v_1;v_2,v_{m+1}, v_{m+2})$ is a claw if $|v_2| =2$ and $(v_1, v_{m+1}, v_2, v_{m+2}, v_1)$ is an incomplete cycle if $v_2 = -d_2 + d_1 + d_0$, or $|v_{m+1}|\leq m$ and $(v_1, \ldots, v_{m+1}, v_{m+2}, v_1)$ is an incomplete cycle, and therefore $|v_i| = 2$ for all $2 \leq i \leq m$. It follows that $m+2 \in A_1$ or else $(v_1, \ldots, v_m, w_1, v_{m+2}, v_1)$ is an incomplete cycle; thus, $w_1 = -e_1 + d_m + \ldots + d_n$ and $|v_i| = 2$ for all $m+3\leq i \leq n$. Note that $\epsilon(w_1) = \varepsilon(v_1)$ since $w_1 \cdot v_1 = 0$ and $w_1 \cdot (v_2 + \ldots + v_m) = v_1 \cdot (v_2 + \ldots + v_m) = -1$. Suppose instead that $|v_{m+2}| = 2$. Then $|v_i| = 2$ for all $m+2\leq i \leq n$ and $w_1 = -e_1 + d_m + \ldots + d_n$.

We now break this case into subcases based on whether $m = n$, $v_{m+2} = -d_{m+2} + d_{m+1} + d_0$, or $|v_i| = 2$ for all $m+2\leq i \leq n$. 

Case III.1: $m = n$. 

Suppose that $m = n$, and suppose that $|w_j|\geq 3$ for some $w_j \cdot -e_1 = -1$. Then, $n \in A_j$ or else the subgraph induced by $\{v_1, \ldots, v_n, w_1, w_j\}$ contains an incomplete cycle. We must then either have $w_j \cdot v_n = 0$, or else $w_j \sim v_k$ for some $k \leq n-1$ with $v_n \sim v_k$, or else $(v_n;v_k, w_1, w_j)$ is a claw.

Case III.1.a: $|w_5| = 2$.

If $|w_5| = 2$, then either $|w_6| = |w_7| = |w_8| = 2$ or $v_n = -d_n + d_{n-1} + \ldots + d_0$ and $w_6 = -e_6 + d_n + d_0$, or else the subgraph induced by $\{v_1, \ldots, v_n, w_1, w_5, w_6, w_7, w_8\}$ contains an incomplete cycle. Furthermore, we must have $|w_2|$, $|w_3|\geq 3$ or else there is a claw at $w_1$. It follows that there is some $j \in \{2,3\}$ with $|w_j|\geq 3$ and $w_j \cdot -e_1 = -1$, so $n \in A_j$, and therefore $|w_6| = |w_7| = |w_8| = 2$ or else either $0 \in A_j$ and $w_j \cdot w_6 = 2$, which is absurd, or $w_j \dag w_6$, in which case the subgraph induced by $\{v_1, \ldots, v_n, w_1, w_j, w_5, w_6\}$ contains an incomplete cycle. It follows that $w_2$ and $w_3$ are both unloaded and $w_2 \dag w_3$. We must therefore have $|v_n|\geq 3$ or else $v_n = -d_n + d_{n-1}$ and then either $w_2 \cdot w_3 = 2$, which is absurd, or $(v_n;v_{n-1}, w_1, w_j)$ is a claw for some $j \in \{2,3\}$. Let $\{j,k\} = \{2,3\}$. If $v_n = -d_n + d_{n-1} + \ldots + d_l$ for some $l \leq n-2$, then either $w_2 \cdot w_3 = 2$, or, without loss of generality, $w_j = -e_j + d_l + d_n$ and $w_k = -e_k + d_{n-1} + d_n$, but then either $n \geq 3$ and $(v_1, \ldots, v_l, w_j, w_k, v_{n-1}, \ldots v_1)$ is an incomplete cycle or $n = 2$, $w_j \cdot v_1 = 2$ and $w_k \cdot v_1 = -1$, so there is a sign error between $w_j$ and $w_k$ mediated by $v_1$.

Case III.1.b: $n \in A_5$. 

If $n \in A_5$, then $|w_j|\geq 3$ for some $j \in \{2,3\}$ with $w_j \cdot -e_1 = -1$ or else $|w_2| = |w_3| =2$ and $(w_1;v_n,w_2,w_3)$ is a claw. Then $w_j \dag w_5$, and the argument from Case III.1.a produces an incomplete cycle or a sign error between $w_j$ and $w_5$.

Case III.2: $m+1 = n$.

Suppose now that $m+1 = n$, and that $|w_j|\geq 3$ and $w_j \cdot -e_1 = -1$. If $A_j \cap A_1 =\emptyset$, then $w_j \dag v_1$ and $w_j \sim v_i$ for some $i \leq m$, so the subgraph induced by $\{v_1, \ldots, v_n, w_1, w_j\}$ contains an incomplete cycleIf $m \in A_j$, then $A_j \cap A_1 = \{m\}$ or else $A_j \cap A_1 = \{m, m+1\}$, and then $w_j \dag w_1$ and $w_j \dag v_l$ for some $l \leq m$, in which case there is either a heavy triple or an incomplete cycle. It follows that if $m \in A_j$, then either $w_j \cdot v_{m+1} = 1$, hence $w_j \dag v_{m+1}$, so $w_j \cdot v_m = 0$ or else there is a claw at $v_m$ or the subgraph induced by $\{v_1, \ldots, v_n, w_j\}$ contains an incomplete cycle or a heavy triple. But then, either $w_j \cdot v_1 = v_{m+1} \cdot v_1 = 0$, in which case $(v_1, v_{m+1}, w_j)$ induces a heavy triple; or $|\{x \in \{v_{m+1}, w_j\} \colon x\cdot v_1 \neq 0\}| = 1$, in which case either $w_j \pitchfork v_1$ and $v_{m+1}\cdot v_1 = 0$, so $w_j = -e_j + d_m + d_0$, $v_m = -d_m + d_{m-1} + \ldots + d_0$, $|v_i| = 2$ for all $2\leq i \leq m-1$, and $v_{m+1} = -d_{m+1} + d_m + d_{m-1}$, so there is a sign error between $w_j$ and $v_2 + \cdots + v_{m-1} + v_{m+1}$ mediated by $v_1$; $w_j \dag v_1$ and $v_{m+1} \cdot v_1 = 0$, in which case $(v_1, \ldots, v_{m+1}, w_j, v_1)$ is an incomplete cycle; or $w_j \cdot v_1 = 0$ and $v_{m+1} \sim v_1$, in which case $w_j \sim v_i$ for some $2\leq i \leq m-1$ and $(v_1, \ldots, v_i, w_j, v_{m+1}, v_1)$ is an incomplete cycle; or $w_j \pitchfork v_1$ and $v_{m+1} \dag v_1$, so there is a sign error between $w_j$ and $v_{m+1}$; or $w_j \dag v_1$ and $v_{m+1} \prec v_1$, in which case $m = 2$, $v_2 = -d_2 + d_1$, $v_3 = -d_3 + d_2 + d_0$, and $w_j = -e_j + d_2 + d_1$, but then $(v_1, v_2, v_3, w_j, v_1)$ is an incomplete cycle. Conclude that $A_j \cap A_1 = \{n\}$. 

Case III.2.a: $|w_5| = 2$.

If $|w_5| = 2$, then $|w_6| = |w_7| = |w_8| = 2$, or else for $l = \min \{j \in \{6,7,8\} \colon |w_j|\geq 3\}$ either $w_j \dag v_i$ for some $i \neq m$, in which case there is an incomplete cycle, or $\{m, m+1\} \subset A_l$, so $w_l \cdot v_1 = 2$, which is absurd. It follows that $w_2$ and $w_3$ are unloaded and $|w_2|$, $|w_3|\geq 3$ or else there is a claw at $w_1$. Then, $n \in A_2 \cap A_3$, so $w_2 \cdot w_3 = 1$ and $w_2 \dag w_3$, and therefore $v_{m+1}$ is just right or else $v_{m+1} = -d_{m+1} + d_m + d_0$ and either $0 \in A_2 \cap A_3$, in which case $2\leq w_2 \cdot w_3 \leq |w_2|-2$, or $w_j \dag v_{m+1}$ but either $w_j \cdot v_1 = 0$ or $w_j \dag v_1$ and $w_k \pitchfork v_1$ for $\{j,k\} = \{2,3\}$, which is absurd, or $w_2\dag v_{m+1}$ and $w_3 \dag v_{m+1}$ and $(w_2, w_3, v_{m+1})$ is a heavy triple. However, if $v_{m+1} = -d_{m+1} + d_m + \ldots + d_l$ for some $l \leq m-1$, then either $(v_{m+1}, w_2, w_3)$ is a heavy triple, or either $w_j \dag v_{m+1}$ and $w_k \cdot v_{m+1} = 0$, so $w_k \cdot v_o = 1$, or $w_j \cdot v_l = w_k \cdot v_o = 1$ for $\{j,k\} = \{2,3\}$ and some $l+1\leq l\leq  o \leq m$ with $|v_l|$,$|v_o|\geq 3$, so the subgraph induced by $\{v_1, \ldots, v_n, w_2, w_3\}$ contains either a heavy triple or an incomplete cycle.

Case III.2.b: $n \in A_5$.

If $n \in A_5$, then $|w_j|\geq 3$ and $w_j \cdot -e_1 = -1$ for some $j \in \{2,3\}$ or else $(w_1;v_m, w_2,w_3)$ is a claw, but then the argument in Case III.2.a produces a contradiction if $n \in A_j \cap A_5$.

Case III.3: $m+2 \leq n$.

If $m + 2 \leq n$, then either $v_{m+2} = -d_{m+2} + d_{m+1}+ d_0$, $v_{m+2} = -d_{m+2} + d_{m+1}$, or $v_{m+2} = -d_{m+2} + d_{m+1} + d_m$, in which case $(v_l, v_{m+2}, w_1)$ induces a heavy triple for $l = \max\{i\leq m\colon |v_i|\geq 3\}$.

Case III.3.a: $v_{m+2} = -d_{m+2} + d_{m+1} + d_0$.

Suppose, by way of contradiction, that $v_{m+2} = -d_{m+2} + d_{m+1} + d_0$. Then, either $v_{m+1} = -d_{m+1} + d_m + \ldots + d_1$ and $|v_i|=2$ for all $2\leq i \leq m$, or $m+2 = 4$, $v_3 = -d_3 + d_2 + d_0$, and either $v_2 = -d_2 + d_1$ or $v_2 = -d_2 + d_1 + d_0$. 

Case III.3.a.i: $v_{m+1} = -d_{m+1} + d_m + \ldots + d_1$.

Suppose first that $v_{m+1} = -d_{m+1} + d_m + \ldots + d_1$ and $|v_i| =2$ for all $2\leq i \leq m$. Let $|w_j|\geq 3$ with $w_j \cdot -e_1 = -1$. If $A_j \cap A_1 = \emptyset$, then $w_j = -e_j + d_0$, but then $(v_1, w_j, v_{m+2}, v_{m+1}, v_1)$ is an incomplete cycle. If $A_j \cap A_1 = \{m\}$, then $A_j \cap \{1, \ldots, m\} = \{m\}$ or else $2\leq w_j \cdot v_{m+1}$, but then either $A_j = \{m\}$ and $(v_1, \ldots, v_m, w_j, v_{m+1}, v_1)$ is an incomplete cycle or $w_j = -e_j + d_0 + d_m$, but then $w_j \pitchfork v_1$, $w_j \dag v_{m+1}$, and $v_{m+1} \dag v_1$, which is absurd. If $m+1 \in A_j \cap A_1$, then $m+1 = \max(A_j)$ or else $1 \leq w_j \cdot w_1$ with equality if and only $A_j \cap A_1 = \{m+1, n\}$, but then $(v_1, \ldots, v_m, w_1, w_j, v_{m+1}, v_1)$ is an incomplete cycle since $m \not \in A_j \cap A_1$. If $A_j \cap A_1 = \{m+1\}$ then $A_j = \{m+1\}$ and $(v_1, v_{m+1}, w_j, v_{m+2}, v_1)$ is an incomplete cycle, and if $A_j \cap A_1 = \{m, m+1\}$, then $(v_1, \ldots, v_m, w_1, w_j, v_{m+2}, v_1)$ is an incomplete cycle. If $\max(A_j) \geq m+2$, then, in fact, either $A_j = \{n\}$, which is absurd since $v_{m+2} \prec v_1$, $v_{m+2} \sim (v_{m+3} + \cdots + v_n)$, $(v_{m+3} + \ldots + v_n)\cdot v_1 = 0$, and $w_j \sim (v_{m+3} + \cdots +  v_n$, but $w_j \cdot v_1 = 0$, or $w_j = -e_j + d_0 + d_n$, in which case either $n\geq m+3$ and there is a sign error between $w_j$ and $v_{m+2}$ mediated by $v_1$, or $n = m+2$ and $w_j \cdot v_{m+2} = 0$. Since we must have $|w_j|\geq 3$ and $w_j \cdot -e_1 = -1$ for at least one $j \in \{2,3,5\}$, we may assume that $n = m+2$. 

Case III.3.a.i.$\alpha$: $|w_5| = 2$.

If $|w_5| = 2$, then either $|w_6| = |w_7| = |w_8| = 2$, or else there is an incomplete cycle or $w_6 = -e_6 + d_0 + d_{m+2}$, in which case there is a sign error between $w_1$ and $w_6$ mediated by $v_1 + \ldots + v_m$. It follows that $w_2$ and $w_3$ are unloaded and $|w_2|$, $|w_3|\geq 3$, or else there is a claw at $w_1$, but then $w_2 \cdot w_3 = 2$, which is absurd.

Case III.3.a.i.$\beta$: $w_5 = -e_5 + d_0 + d_{m+2}$.

If $w_5 = -e_5 + d_0 + d_{m+2}$, then $|w_2| = |w_3| =2$ or else $A_j = \{0, m+2\}$ for some $j \in \{2,3\}$ and $w_j \cdot w_5 = 2$, but then $(w_1;v_m, w_2,w_3)$ is a claw.

Case III.3.a.ii: $m+2 = 4$, $v_3 = -d_3 + d_2 + d_0$, and $v_2 = -d_2 + d_1$.

Suppose that $m+2 = 4$, $v_3 = -d_3 + d_2 + d_0$, and $v_2 = -d_2 + d_1$, and let $|w_j|\geq 3$ with $w_j \cdot -e_1 = -1$. If $0 \in A_j$, then either $3\in A_j$ or $4 \in A_j$, or else $(v_1, v_3, w_j, w_4, v_1)$ is an incomplete cycle. If $\{0,3\} \subset A_j$, then $4 \in A_j$ or else $w_j \cdot v_4 = 2$, but then $w_j \dag w_1$ and $w_j \dag v_4$, so $(v_1, v_2, w_1, w_j, v_4, v_1)$ is an incomplete cycle. If $\{0,4\} \subset A_j$, then $3\not \in A_j$ or else $(v_1, v_2, w_1, w_j, v_4)$ is an incomplete cycle, but then there is a sign error between $w_j$ and $v_3$ mediated by $v_1$. If $1 \in A_j$, then $2 \in A_j$, and so $3 \in A_j$ since $\sigma_1 + \sigma_2 > \sigma_3$, and so $4 \in A_j$ since $\sigma_1 + \sigma_2 + \sigma_3 > \sigma_4$, but then $(v_1;v_3,v_4, w_j)$ is a claw. If $A_j \cap A_1 = \{m\}$, then $A_j = \{m\}$ and $(v_1, v_2, w_j, v_3, v_1)$ is an incomplete cycle. If $m+1 \in A_j$, then, as in Case III.3.a.i, $m+1 = \max(A_j)$, but then $A_j = \{m+1\}$, or else $m+2 \in A_j$, but then $(v_1, v_3, w_j, v_4, v_1)$ is an incomplete cycle. It follows then that either $A_j = \{n\}$, in which case either $n = 4$ and $w_j \dag v_4$ but $w_j \cdot v_1 = 0$, which is absurd, or $n \geq 5$ and $(v_5 + \ldots + v_n + w_j) \dag v_4$ but $(v_5 + \ldots + v_n + w_j) \cdot v_1 = 0$. Conclude that $|w_2| = |w_3| = |w_5| =2$, so $(w_1;w_2,w_3,w_5)$ is a claw.

Case III.3.a.iii: $m + 2 = 4$, $v_3 = -d_3 + d_2 + d_0$, and $v_2 = -d_2 + d_1 + d_0$. 

Suppose that $m+2 = 4$, $v_3 = -d_3 + d_2 + d_0$, and $v_2 = -d_2 + d_1 + d_0$, and let $|w_j|\geq 3$ with $w_j \cdot -e_1 = -1$. If $0 \in A_j$, then either $(v_2, v_4, w_j)$ is a heavy triple; $2 \in A_j$, in which case $3 \in A_j$ or $w_j \cdot v_3 = 2$, in which case $1 \in A_j$ or else $2 = w_j \cdot v_1 < |w_j|-2$, but then there is a sign error between $w_j$ and $v_3$ mediated by $v_1$; or $4 \in A_j$, in which case $1 \in A_j$ or else there is a sign error between $w_j$ and $v_4$ mediated by $v_1$, but then $2 \in A_j$ or else $w_j \cdot v_2 = 2$, and so $3 \in A_j$ or else $w_j \cdot v_3 = 2$, but then there is a sign error between $w_j$ and $v_3$ mediated by $v_1$. If $1 \in A_j$, then $2 \in A_j$ or else $w_j \dag v_1$ or $w_j \pitchfork v_1$, which is absurd since $w_j \dag v_2$ and $v_2 \dag v_1$, then $3\in A_j$ since $\sigma_1 + \sigma_2 > \sigma_3$, then $4 \in A_j$ since $\sigma_1 + \sigma_2 + \sigma_3 > \sigma_4$, but then $2 \leq w_j \cdot w_1 < |w_j|-2$. If $2 \in A_j$, then $w_j \dag v_2$, so $3 \in A_j$ or else $(v_1, v_2, w_j, v_3, v_1)$ is an incomplete cycle, so $4 \in A_j$ since $\sigma_2 + \sigma_3 > \sigma_4$, but then $2 = w_j \cdot w_1$. If $3\in A_j$, then $w_j \dag v_3$ and $w_j \cdot v_1 = 0$, which is absurd. If $4\in A_j$, then $w_j \dag v_4$ and $w_j \cdot v_1 = 0$, which is absurd. If $n > 4$ and $n \in A_j$, then $A_j = \{n\}$, and $(v_5 + \ldots + v_n + w_j) \dag v_4$ but $(v_5 + \ldots + v_n + w_j) \cdot v_1 = 0$, which is absurd. Conclude that $|w_2| = |w_3| = |w_5| = 2$, so $(w_1;w_2, w_3, w_5)$ is a claw.

Case III.3.b: $|v_i| = 2$ for all $m+2\leq i \leq n$. 

Suppose that $|v_i| = 2$ for all $m+2 \leq i \leq n$ and let $|w_j| \geq 3$ with $w_j \cdot -e_1 = -1$. If $A_j \cap A_1 = \emptyset$ then either $w_j = -e_j + d_0$, in which case $(v_1, \ldots, v_m, w_1, w_j, v_1)$ is an incomplete cycle, or there exists some $2\leq i \leq m$ such that $w_j \cdot v_i = 1$, in which case either some subpath of the path $(v_1, \ldots, v_m, w_1, w_j, v_i, \ldots, v_1)$ is an incomplete cycle or $|v_m|\geq 3$ and $w_j \cdot v_m = 1$, in which case $(v_m, w_1, w_j)$ is a heavy triple. Conclude that $A_j \cap A_1 \neq \emptyset$. If $m \in A_j$, then $A_j \cap A_1 = \{m\}$ or else either $A_j \cap A_1 = \{m, n\}$, and since $n > m+1$, $(v_1, \ldots, v_m, w_j, v_n, v_{n-1}, \ldots, v_{m+1}, \ldots, v_1)$ is an incomplete cycle, or $2 \leq w_j \cdot w_1$, which is absurd. It follows then that $w_j \cdot v_{m+1} = 1$. If $v_{m+1} \sim v_1$, then $v_{m+1} \prec v_1$ or else either $w_j \pitchfork v_1$ and $(v_{m+1};v_1, v_{m+2}, w_j)$ is a claw, or $w_j \dag v_1$, $w_j \dag v_{m+1}$, and $v_{m+1} \dag v_1$, which is absurd. If $v_{m+1} \prec v_1$, then $w_j \dag v_1$, so $\{0, 1\} \subset A_j$ or else either $(v_{m+1};v_1, v_{m+2}, w_j)$ is a claw or $(v_1, v_{m+1}, w_j)$ is a negative triangle, but then $m+1 \in A_j$ since $\sigma_0 + \sigma_1 + \sigma_m > \sigma_{m+1}$, which is absurd. Conclude that $v_{m+1} \cdot v_1 =0$, hence $v_{m+1} = -d_{m+1} + d_m + \ldots + d_{l}$ for some $l < m$ and $v_{m+1} \dag v_l$. Then either $l \in A_j$, which is absurd since then $w_j \cdot v_{m+1} = 2$, or $(w_j;v_l,v_{m+1}, w_j)$ is a claw. Conclude that $n \in A_j$, and, in fact, that $A_j \cap A_1 = \{n\}$ or else either $w_j \cdot w_1 = 1$ and $(v_1, \ldots, v_m, w_1, w_j, v_{n-1}, \ldots, v_{m+1}, \ldots, v_1)$ is an incomplete cycle, or $2\leq w_j \cdot w_1$.

Case III.3.b.i: $|w_5| = 2$. 

If $|w_5| = 2$, then $|w_6| = |w_7|= |w_8|= 2$ or else the subgraph induced by $\{v_1, \ldots, v_n, w_1, w_5,$ $w_6, w_7, w_8\}$ contains an incomplete cycle. It follows that $w_2$ and $w_3$ are unloaded, and that $|w_2|$, $|w_3|\geq 3$ or else there is a claw at $w_1$, so $\{n\} = A_1 \cap A_2 \cap A_3$, hence $(v_{m+1}, w_2, w_3)$ is a heavy triple.

Case III.b.ii: $|w_5|\geq 3$.

If $|w_5|\geq 3$, then $\{n\} = A_1 \cap A_5$, and $|w_j|\geq 3$ for some $j \in \{2,3\}$ with $w_j \cdot -e_1 = -1$, so $\{n\} = A_1 \cap A_j \cap A_5$, and $(v_{m+1}, w_j, w_5)$ is a heavy triple.
\end{proof}

\begin{prop}\label{prop:v1tight}
If $v_1$ is tight, then $\sigma = (1, 2, \ldots, 2) \in \mathbb Z^{n+1}$ and one of the following holds:
\begin{enumerate}
    \item $s^* = (2n, 0, 1, 0, 2, 0, 0, 0)$,
    \item $s^* = (2n, 0, 2, 0, 1, 0, 0, 0)$,
    \item $s^* = (2n, 1, 0, 0, 2, 0, 0, 0)$,
    \item $s^* = (2n, 1, 2, 0, 0, 0, 0, 0)$,
    \item $s^* = (2n, 2, 0, 0, 1, 0, 0, 0)$, or
    \item $s^* = (2n, 2, 1, 0, 0, 0, 0, 0)$.
\end{enumerate}
\end{prop}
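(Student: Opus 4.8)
The plan is to let Lemma \ref{lem:v1tightw1} do the heavy lifting on the $\mathcal V$-side and then run a case analysis on the extension set $\mathcal W$ exactly parallel to the proof of Proposition \ref{prop:vttight}. Under the running hypothesis $n \geq 2$ (the cases $n \leq 1$ being subsumed by the census underlying Proposition \ref{prop:normboundnlessthan2}), Lemma \ref{lem:v1tightw1} already gives $w_1 = -e_1 + d_1 + \cdots + d_n$ and $|v_i| = 2$ for every $2 \leq i \leq n$. First I would read off the shape of $\sigma$: since $(\tau)^\perp$ contains no $\mathbb Z$-summand we have $\sigma_0 = 1$, and $v_1$ tight forces $\sigma_1 = 1 + \sigma_0 = 2$; then $|v_i| = 2$ for $i \geq 2$ together with $0 \leq \sigma_0 \leq \cdots \leq \sigma_n$ forces $\sigma_i = \sigma_{i-1} = 2$ inductively, so $\sigma = (1,2,\ldots,2)$ and $|\sigma|_1 = 2n+1$. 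Pairing $w_1$ with $\tau$ then gives $s^*_1 = 2n$, which is the leading entry in every one of the six claimed configurations. It also follows, by the reasoning used at the start of the proof of Proposition \ref{prop:vttight}, that $w_6, w_7, w_8$ are unloaded of norm $2$ (otherwise a minimal $w_j$ of norm $\geq 3$ among these produces a claw at $w_1$ or an incomplete cycle through $v_1,\ldots,v_n,w_1$), so only $w_2, w_3, w_4, w_5$ remain to be pinned down.

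The core step is to condition on $w_5$, which is always unloaded. Since $\sigma = (1,2,\ldots,2)$, the a priori possibilities are $|w_5| = 2$, $w_5 = -e_5 + d_0$, $w_5 = -e_5 + d_n$, $w_5 = -e_5 + d_0 + d_n$, longer consecutive or gappy supports, or $w_5$ tight; the long and tight cases are eliminated because they force an incomplete cycle running through $v_1,\ldots,v_n,w_1,w_5$ or a claw at $w_1$ or $v_1$, leaving $s^*_5 \in \{0,1,2\}$. Within each surviving case I would determine $w_2$ and $w_3$: first rule out that either is loaded (i.e.\ $s^*_2 > |\sigma|_1+1$ or $s^*_3 > |\sigma|_1+1$) by feeding the combinatorial bounds of Lemma \ref{lem:hardeightcombinatorial} — now with $|\sigma|_1 = 2n+1$, $s^*_1 = 2n$, $s^*_5 \leq 2$ — together with the pairing dichotomies of Proposition \ref{prop:technicalpairings} into the forbidden-subgraph machinery of Section \ref{sec:forbidden}, producing a claw, heavy triple, incomplete cycle, or sign error (Lemmas \ref{0pairingmediated} and \ref{pitchforkmediated}) in each loaded subcase; the surviving unloaded options then have $s^*_2,s^*_3 \in \{0,1,2\}$ with supports among $\{d_0\}$ and $\{d_n\}$ and exactly one of $w_2, w_3$ of norm $2$. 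The same template handles $w_4$: Lemma \ref{lem:hardeightcombinatorial} (using the already-established smallness of $s^*_2, s^*_3, s^*_5$) shows a loaded $w_4$ has $w_4|_{E_8} = -e_4 + r$ for a short root $r$ whose pairing structure, via Proposition \ref{prop:technicalpairings}, again yields a forbidden configuration, while a norm-$\geq 3$ unloaded $w_4$ creates a claw at $v_1$ or an incomplete cycle through $v_1,\ldots,v_n,w_1$; hence $|w_4| = 2$, i.e.\ $s^*_4 = 0$, in every surviving branch. Tracking the branches, one finds precisely that exactly one of $e_2, e_3, e_5$ carries $s^* = 0$ while the other two carry the values $1$ and $2$ in one of the two orders — the six listed configurations — and the exact vectors (not merely their supports) are forced by orthogonality to $\tau$ together with the norm constraints.

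The main obstacle is the bookkeeping in the middle step: systematically eliminating every loaded configuration of $w_2, w_3, w_4$ and every large-support unloaded configuration of $w_5$ and $w_4$ without missing a case, and in particular certifying the absence of sign errors in the branches where $w_5 = -e_5 + d_0$ or $w_5 = -e_5 + d_n$ meets $w_2$ or $w_3$ along $v_1$ — these are the branches closest to surviving, so the obstructions there are the most delicate. I expect the loaded-$w_4$ analysis to be the single most technical piece, since it invokes the iterative construction of $w_4|_{E_8}$ and the full strength of Proposition \ref{prop:technicalpairings}; everything else is a finite, if lengthy, enumeration of the kind already carried out in Propositions \ref{prop:alloness10}--\ref{prop:vttight}.
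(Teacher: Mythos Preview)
Your proposal is correct and follows essentially the same route as the paper: invoke Lemma~\ref{lem:v1tightw1} to fix $\sigma=(1,2,\ldots,2)$ and $w_1$, then case-split on $w_5$ and eliminate the unwanted configurations of $w_2,w_3,w_4$ via the forbidden-subgraph machinery. The only organizational differences are that the paper disposes of $w_6,w_7,w_8$ inside each $w_5$-case rather than upfront, and handles the loaded $w_2,w_3,w_4$ subcases by direct claw/cycle/sign-error arguments rather than by appealing to Proposition~\ref{prop:technicalpairings}; these are cosmetic variations, not substantive ones.
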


\begin{proof}
By Lemma \ref{lem:v1tightw1}, if $v_1$ is tight, then $|v_i| = 2$ for all $2\leq i \leq n$ and $w_1 = -e_1 + d_1 + \ldots + d_n$. If $|w_j|\geq 3$, then either $w_j = -e_j + d_0$, $A_j = \{n\}$, or $w_j = -e_j + d_0 + d_n$, in which case there is a sign error between $w_j$ and $v_2 + \ldots + v_n$ mediated by $v_1$. 

Case I: $|w_5| = 2$.

If $|w_5| = 2$, then $|w_6| = |w_7| = |w_8| = 2$ or else the subgraph induced by $\{v_1, \ldots, v_n, w_1, w_5,$ $w_6, w_7, w_8\}$ contains an incomplete cycle. It follows that $w_2$ and $w_3$ are unloaded and $|w_2|$, $|w_3| \geq 3$ or else there is a claw at $w_1$, so for $\{j,k\} = \{2,3\}$, $w_j = -e_j + d_0$ and $w_k = -e_k + d_n$. If $w_4$ is loaded, then $w_4|_{E_8} = -e_4 + e_2 + e_1$ and either $w_2 = -e_2 + d_0$, in which case $n \in A_4$ or else $s^*_4 = |\sigma|_1 + 1$, but then $(w_5;w_1,w_4,w_6)$ is a claw or $(w_1, w_4, w_5)$ is a negative triangle, or $w_2 = -e_2 + d_n$, in which case either $n \in A_4$ and either $(w_5;w_1, w_4, w_6)$ is a claw or $(w_1, w_4, w_5)$ is a negative triangle, or $(v_1, \ldots, v_n, w_2, w_4, w_5, w_1, v_1)$ is an incomplete cycle. Conclude that $w_4$ is unloaded, hence either $|w_4| = 2$, $w_4 = -e_4 + d_n$, in which case $(v_1, \ldots, v_n, w_4, w_1, v_1)$ is an incomplete cycle, or $w_4 = -e_4 + d_0$, in which case either $j = 2$ and there is a sign error between $w_4$ and $w_2$ induced by $v_1$ or $j = 3$ and $(v_1;w_1,w_3, w_4)$ is a claw. Conclude that $|w_4| = 2$, hence $s^* = (2n, 1, 2, 0, 0, 0, 0, 0)$ or $s^* = (2n, 2, 1, 0, 0, 0, 0, 0)$. 

Case II: $w_5 = -e_5 + d_0$.

If $w_5 = -e_5 + d_0$, then $|w_j|\geq 3$ for some $j \in \{2,3\}$ with $w_j \cdot -e_1 = -1$ or else $|w_2| = |w_3| = 2$ and $(w_1; v_1, w_2, w_3)$ is a claw. If $w_j = -e_j + d_0$, then there is a sign error between $w_j$ and $w_5$ mediated by $v_1$, so $A_j = \{n\}$. If $j = 2$ and $w_2$ is loaded, then $|w_3| = 2$ and $w_2 = -e_2 + e_4 + d_n$, so $(v_1, \ldots, v_n, w_2, w_3, w_1, v_1)$ is an incomplete cycle. Conclude that $w_j = -e_j + d_n$. It follows that $|w_6| = |w_7| = |w_8| = 2$ or else, for $l = \min \{k \in \{6,7,8\} \colon |w_k|\geq 3\}$, either $w_l = -e_l + d_n$ and $(v_1, w_j, w_l)$ induces a heavy triple, or $w_l = -e_l + d_0$ and there is a sign error between $w_5$ and $w_l$ mediated by $v_1$ if $l \geq 7$ or $l = 6$ and $(v_1;v_2, w_5, w_6)$ is a claw. Then, letting $\{j,k\} = \{2,3\}$, it follows that $w_k$ is unloaded, hence $|w_k| = 2$, since if $w_k = -e_k + d_0$ then there is a sign error between $w_k$ and $w_5$ mediated by $v_1$, and if $w_k = -e_k +d_n$, then $(v_1, w_j,w_k)$ induces a heavy triple. Suppose that $w_4$ is loaded and $j = 2$; thus, $w_4|_{E_8} = -e_4 + e_2 + e_1 + e_5$, or else $s^*_4 \leq |\sigma|_1 + 1$, and $0 \not \in A_4$ or else $(w_6;w_4,w_5, w_7)$ is a claw, hence either $w_4 = -e_4 + e_2 + e_1 + e_5$ and $(v_1, \ldots, v_n, w_2, w_4, w_5, v_1)$ is an incomplete cycle or $w_4 = -e_4 + e_2 + e_1 + e_5 + d_n$ and $(v_1, \ldots, v_n, w_4, w_1, v_1)$ is an incomplete cycle. Suppose instead that $w_4$ is loaded and $j = 3$; then $w_4|_{E_8} = -e_4 + e_1 + e_5$, so either $0 \in A_4$ and $(w_6;w_4, w_5, w_7)$ is a claw or $w_4 = -e_4 + e_1 + e_5 + d_n$ and $(v_1, \ldots, v_n, w_4, w_2, w_1, v_1)$ is an incomplete cycle. Conclude that $w_4$ is not loaded. If $w_4 = -e_4 + d_0$ then there is a sign error between $w_4$ and $w_5$ mediated by $v_1$, and if $w_4 = -e_4 + d_n$, then $(v_1, \ldots, v_n, w_4, w_1, v_1)$ is an incomplete cycle. Conclude that $|w_4| = 2$, hence $s^* = (2n, 2, 0, 0, 1, 0, 0, 0)$ or $s^* = (2n, 0, 2, 0, 1, 0, 0, 0)$.

Case III: $w_5 = -e_5 + d_n$. 

If $w_5 = -e_5 = d_n$, then $|w_j|\geq 3$ for some $j \in \{2,3\}$ with $w_j \cdot -e_1 = -1$ or else $|w_2| = |w_3| = 2$ and $(w_1; v_1, w_2, w_3)$ is a claw. If $A_j = \{n\}$, then $(v_1, w_j, w_5)$ induces a heavy triple, so $w_j = -e_j + d_0$. It follows that $|w_6| = |w_7| = |w_8| = 2$ or else, for $l = \min \{k \in \{6,7,8\} \colon |w_k|\geq 3\}$, either $w_l = -e_l + d_0$ and there is a sign error between $w_j$ and $w_l$, or $w_l = -e_l = d_n$ and either $l = 6$ and $(v_n;v_{n-1}, w_5, w+l)$ is a claw or $l \geq 7$ and $(v_1, v_5, v_l)$ induces a heavy triple. Letting $\{j,k\} = \{2,3\}$, it follows that $w_k$ is unloaded, so $|w_k| = 2$ since if $w_k = -e_k + d_0$, then there is a sign error between $w_2$ and $w_3$ mediated by $v_1$ and if $w_k = -e_k + d_n$ then $(v_1, w_k, w_5)$ induces a heavy triple. Suppose that $w_4$ is loaded and that $j = 2$. Then $w_4|_{E_8} = -e_4 + e_2 + e_1 + e_5$ or else $s^*_4 \leq |\sigma|_1 + 1$, and so $w_4 = -e_4 + e_2 + e_1 + e_5$ or else $n \in A_4$ and $(w_6;w_4,w_5, w_8)$ is a claw, but then $(v_1, \ldots, v_n, w_5, w_4, w_2, w_1, v_1)$ is an incomplete cycle. Suppose instead that $w_4$ is loaded and $j = 3$; then $w_4|_{E_8} = -e_4 + e_1 + e_5$ and $n \in A_4$ or else $s^*_4 \leq |\sigma|_1 + 1$, but then $(w_6;w_4, w_5, w_7)$ is a claw. Conclude that $w_4$ is unloaded, hence $|w_4| = 2$ or else $w_4 = -e_4 + d_n$ and $(v_1, \ldots, v_n, w_4, v_1)$ is an incomplete cycle, or $w_4 = -e_4 + d_0$ and either $j = 2$ and $v_1$ mediates a sign error between $w_4$ and $w_2$ or $j = 3$ and $(v_1;v_2, w_3, w_4)$ is a claw. Hence, $s^* = (2n, 1, 0, 0, 2, 0, 0, 0)$ or $s^* = (2n, 0, 1, 0, 2, 0, 0, 0)$.
\end{proof}

\subsection{When $G(\mathcal V)$ is disconnected.} 
First we recall some basic observations about changemaker bases whose intersection graphs are disconnected. 

\begin{lem}[Lemma 5.1 of \cite{Gre13}]
A changemaker lattice has at most two indecomposable summands. If it has two indecomposable summands, then there exists an index $r>1$ for which $v_r = -d_r + \sum_{i=0}^{r-1} d_i$, $|v_i| = 2$ for all $1\leq i <r$, and $v_r$ and $v_1$ belong to separate summands. \qed
\end{lem}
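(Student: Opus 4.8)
My plan is to translate the statement into one about the pairing graph $\hat{G}(\mathcal V)$ of the standard changemaker basis $\mathcal V=\{v_1,\dots,v_n\}$ of $L=(\sigma)^\perp\subset\mathbb Z^{n+1}$, and then run an induction on the index $j$. The first step is the reduction: \emph{the number of indecomposable orthogonal summands of $L$ equals the number of connected components of $\hat{G}(\mathcal V)$.} Indeed, each $v_i$ is irreducible (the $\mathbb Z^{n+1}$-analogue of Lemma \ref{lem:irreducible}, essentially \cite[Lemma 3.13]{Gre13}), so no $v_i$ can have nonzero projections to two distinct orthogonal summands; hence each summand is spanned by a union of basis vectors, the vertex set of each summand is connected (or the summand would itself decompose), and vectors in distinct summands are orthogonal, so distinct summands lie in distinct components. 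This gives a bijection between indecomposable summands and components. So it suffices to prove $\hat{G}(\mathcal V)$ has at most two components, and to pin down the two-component case.

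Next I would induct on $j$, tracking $\hat{G}(\mathcal V_j)$ (the subgraph on $v_1,\dots,v_j$). Write $v_j=-d_j+\sum_{a\in A_j}c_{j,a}d_a$ with $A_j\subseteq\{0,\dots,j-1\}$ and $c_{j,a}\ge 1$, the coefficient $c_{j,0}=2$ only when $v_j$ is tight. When $v_j$ is adjoined, the component count goes up by exactly one precisely when $v_j$ is orthogonal to \emph{every} earlier basis vector; otherwise it stays the same or drops. Call such a $j$ a \emph{break index}. The heart of the argument is a pairing computation: if $j$ is a break index then $\langle v_j,v_i\rangle=0$ for all $i<j$, and by minimality of the first break index $r$ the graph $\hat{G}(\mathcal V_{r-1})$ is connected, so $v_1,\dots,v_{r-1}$ all lie in one component. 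Using that the first break forces $v_r=-d_r+\sum_{a\in A_r}c_{r,a}d_a$ to be orthogonal to the just-right (or tight) predecessors, one reads off $\langle v_r,v_i\rangle=-c_{r,i}[i\in A_r]+\sum_{a\in A_i}c_{i,a}c_{r,a}$ must vanish for every $i<r$; a short analysis — ruling out a tight predecessor, for which $\langle v_r,v_i\rangle=1$, and ruling out a predecessor of norm $\ge 3$, whose extra support index produces $\langle v_r,v_i\rangle\neq 0$ — yields $v_i=-d_i+d_{i-1}$ (so $|v_i|=2$) for all $1\le i<r$, and correspondingly $A_r=\{0,1,\dots,r-1\}$ with all $c_{r,a}=1$, i.e.\ $v_r=-d_r+\sum_{i=0}^{r-1}d_i$. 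In particular $r\ge 2$.

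Finally I would rule out a second break. If some $j>r$ were also a break index, the same characterization would force $v_i=-d_i+d_{i-1}$ for all $i<j$, in particular for $i=r$; but $|v_r|=r+1\ge 3$, a contradiction. Hence there is at most one break index, so $\hat{G}(\mathcal V)$ has at most two components, proving the first assertion. If $L$ is disconnected, there is exactly one break index $r$, and the two components are the one containing $\{v_1,\dots,v_{r-1}\}$ together with whatever later vectors attach there, and the one containing $v_r$ together with its later attachments; since no later vector can bridge the two components in the disconnected case, $v_1$ and $v_r$ lie in different summands, with $v_r=-d_r+\sum_{i=0}^{r-1}d_i$ and $|v_i|=2$ for $1\le i<r$, as claimed.

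\textbf{Main obstacle.} The delicate step is the pairing computation characterizing when $v_j$ is orthogonal to all of its predecessors — in particular showing the support $A_j$ cannot be "gappy" or proper without creating a connection. This requires combining the maximality built into the choice of $A_j$ with the interval structure of the just-right predecessors, and treating a possible tight predecessor (with its coefficient $2$ at $d_0$) as a separate case; everything else (the reduction to components and the no-second-break step) is then formal.
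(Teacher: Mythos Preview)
The paper does not prove this lemma; it is quoted from \cite{Gre13} and closed with a \qed. So there is no proof here to compare against, and what follows evaluates your sketch on its own.

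Your architecture is sound: the summand--component bijection via irreducibility of the $v_i$, the break-index framework, and the no-second-break step are all correct. The only soft spot is the step you yourself flag as the main obstacle, and your stated reasons there are not right. A tight predecessor $v_i$ does not automatically give $\langle v_r,v_i\rangle=1$, and a norm-$\ge 3$ predecessor does not automatically give $\langle v_r,v_i\rangle\neq 0$: both pairings can vanish if the relevant coefficients $c_{r,a}$ happen to be zero. The actual mechanism is a two-stage induction. While the predecessors $v_1,\dots,v_{i-1}$ all have norm $2$ (so $\sigma_0=\cdots=\sigma_{i-1}=1$), orthogonality forces $c_{r,0}=\cdots=c_{r,i-1}=:c$. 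If $v_i$ is the first predecessor with $|v_i|\ge 3$, then $\langle v_r,v_i\rangle=0$ reads $c_{r,i}=m\cdot c$ with $m=\sigma_i\ge 2$ (just right) or $m=i+1\ge 2$ (tight); since $c_{r,i}\le 1$, this forces $c=0$. But once $c_{r,0}=\cdots=c_{r,i}=0$, a trivial forward induction on $\langle v_r,v_j\rangle=-c_{r,j}+\sum_{a\in A_j}c_{j,a}c_{r,a}$ (with $A_j\subset\{0,\dots,j-1\}$) kills every $c_{r,j}$, giving $v_r=-d_r$ and $\sigma_r=0$, contrary to $\sigma_r\ge\sigma_0=1$. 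Hence every predecessor has norm $2$, and then $c\neq 0$ forces $c=1$ and the stated form of $v_r$. A slicker variant bypasses the cases: the projection of $v_r$ onto $\mathrm{span}(d_0,\dots,d_{r-1})$ is orthogonal to $(\sigma')^\perp$, hence lies in $\mathbb Z\sigma'$ (using $\sigma_0=1$), so $c_{r,a}=\lambda\sigma_a$; the bound $c_{r,a}\le 1$ for $a\ge 1$ then forces $\lambda=1$ and $\sigma_a=1$. Note that maximality of $A_r$ plays no role here.
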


\begin{lem}[Lemma 5.2 of \cite{Gre13}]
All intervals in $\mathcal V$ are just right. In particular, they are unbreakable. \qed
\end{lem}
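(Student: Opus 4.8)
The plan is to argue by contradiction: assume that some vector of $\mathcal V$ is tight or gappy, take the one of least index, and show that it must pair nontrivially with basis vectors lying in \emph{both} indecomposable summands of $(\tau)^\perp$, which is impossible. First I would record the structure supplied by the previous lemma (Lemma 5.1 of \cite{Gre13}): an index $r > 1$ with $v_i = -d_i + d_{i-1}$ of norm $2$ for $1 \le i < r$, with $v_r = -d_r + \sum_{i=0}^{r-1} d_i$, and with $v_1$ and $v_r$ in distinct summands $L_1 \ni v_1$, $L_2 \ni v_r$. Since the $v_j$ are irreducible and $L_1 \perp L_2$, each $v_j$ lies wholly in one summand; writing $S_1 \sqcup S_2$ for the induced partition of $\{1, \dots, n\}$, the connected chain $v_1 - \cdots - v_{r-1}$ forces $\{1, \dots, r-1\} \subseteq S_1$, while $r \in S_2$. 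As $v_1, \dots, v_{r-1}$ have norm $2$ and $A_r = \{0, \dots, r-1\}$ is an interval, the vectors $v_1, \dots, v_r$ are already just right; hence if the lemma fails the offending vector has index $j > r$, and $v_1, \dots, v_{j-1}$ are all just right. It will then suffice to produce indices $i_1 \in S_1$ and $i_2 \in S_2$ with $v_j \cdot v_{i_1} \ne 0 \ne v_j \cdot v_{i_2}$.

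For a tight $v_j = -d_j + 2d_0 + \sum_{i=1}^{j-1} d_i$ this is a one-line computation: since $r < j$, one has $v_j \cdot v_1 = -1 + 2 = 1$ and $v_j \cdot v_r = -1 + 2 + (r-1) = r \ge 2$, so $i_1 = 1$, $i_2 = r$ work, and there is no tight vector.

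The gappy case is the crux, and I expect it to be the main obstacle. Write $v_j = -d_j + \sum_{i \in A} d_i$ with $A := A_j$ not consecutive. I would first use maximality of $A$ — together with $\sigma_0 = \cdots = \sigma_{r-1} = 1$, $\sigma_r = r$ (forced by $v_1, \dots, v_r$), and the fact that a prefix of a changemaker is a changemaker — to show that $A \cap \{0, \dots, r-1\}$ is either empty or a run $\{r-m, \dots, r-1\}$ flush with $r-1$, and that $\{0, \dots, r-1\} \subseteq A$ forces $r \in A$. The remainder is a case analysis on the location, relative to $r$, of a gappy index $k$ of $v_j$ (so $k \in A$ and $k+1 \notin A \cup \{j\}$). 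When $A$ meets $\{0, \dots, r-1\}$ one computes $v_j \cdot v_r = m - [r \in A]$ and $v_j \cdot v_{r-m} = -1$; the only configuration in which $v_j \cdot v_r = 0$ is $m = 1$ with $r \in A$, and there I would instead use $v_j \cdot v_{r-1} = -1$ (a pairing with $S_1$) together with a pairing against an $S_2$-vector extracted from the part of $A$ lying in $\{r, \dots, j-1\}$. When $A$ is disjoint from $\{0, \dots, r-1\}$ the gap sits in $\{r, \dots, j-1\}$, and I would play the vector $v_{k+1}$ — just right by minimality of $j$, with $k \in A_{k+1}$ and hence $v_j \cdot v_{k+1} \ge 1$ — off against $v_r$ and the lowest run of $A$, following the connected chain of whichever summand contains $v_{k+1}$ down until it meets the other summand. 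In every branch the outcome is a basis vector nonorthogonal to both $L_1$ and $L_2$, contradicting $L_1 \perp L_2$. Keeping this branching under control — in particular disposing cleanly of the two degenerate configurations just mentioned, for which I would invoke the structural lemmas on pairings of unbreakable vectors of norm $\ge 3$ (Lemma 4.4 and Corollary 4.5 of \cite{Gre13}) — is where the real effort goes. Granting the contradiction, the ``in particular'' clause is automatic: a just right vector is not tight, so by the lemma that non-tight standard basis vectors are unbreakable, every interval in $\mathcal V$ is unbreakable.
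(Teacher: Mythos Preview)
The paper does not give its own proof of this lemma: it is quoted directly from \cite{Gre13} and closed with a \qed, so there is no argument here to compare your proposal against. Your overall strategy---show that a tight or gappy $v_j$ must pair nontrivially with basis vectors in both indecomposable summands, contradicting $L_1\perp L_2$---is the natural one, and your treatment of the tight case is correct.

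For the gappy sketch, one observation removes a branch you would otherwise have to chase. Your swap showing that $\{0,\dots,r-1\}\subseteq A$ forces $r\in A$ iterates: if $\{0,\dots,k\}\subseteq A$ with $r\le k<j-1$, then since $v_{k+1}$ is not tight (minimality of $j$) we have $\sigma_{k+1}\le\sum_{i\le k}\sigma_i$, and since any prefix of a changemaker is a changemaker there is $B\subseteq\{0,\dots,k\}\subseteq A$ with $\sum_{i\in B}\sigma_i=\sigma_{k+1}$; swapping $B$ for $\{k+1\}$ contradicts maximality of $A$ unless $k+1\in A$. Hence $\{0,\dots,r-1\}\subseteq A$ actually forces $A=\{0,\dots,j-1\}$, so $v_j$ would be just right and this branch is vacuous---in particular the worry $m=r$ never arises. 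In the genuinely gappy case $A\cap\{0,\dots,r-1\}$ is then either empty or a proper run $\{r-m,\dots,r-1\}$ with $1\le m\le r-1$, and your pairings $v_j\cdot v_{r-m}=-1$ and $v_j\cdot v_r=m-[r\in A]$ dispose of everything except the single configuration $m=1$, $r\in A$, together with the case $A\cap\{0,\dots,r-1\}=\emptyset$. Those are exactly the two residual branches you flag, and they do require the further work you anticipate.
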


These two lemmas are all we will use from \cite{Gre13} in this section, but the reader should note that Section 5 of \cite{Gre13} contains a complete classification of changemaker bases whose intersection graphs are disconnected. 

\begin{lem}\label{lem:discvw1big}
If $G(\mathcal V)$ is disconnected, then $|w_1|\geq 3$.
\end{lem}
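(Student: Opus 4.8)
The plan is to argue by contradiction. Suppose $|w_1| = 2$. Since $w_1$ is unloaded (Lemma~\ref{lem:hardeightcombinatorial}(1) gives $s^*_1 \leq |\sigma|_1 + 1$) its norm is $2 + |A_1|$, so $A_1 = \emptyset$ and $w_1 = -e_1$, i.e. $s^*_1 = 0$. Because the present section operates under the hypothesis that $(\tau)^\perp$ is an orthogonal sum of linear lattices, $G(\mathcal S)$ is claw-free (Lemma~\ref{lem:clawfree}), contains no heavy triple (Lemma 4.10 of \cite{Gre13}), and contains no incomplete cycle; and because $G(\mathcal V)$ is disconnected, Lemmas 5.1 and 5.2 of \cite{Gre13} supply an index $r>1$ with $v_r = -d_r + d_0 + \cdots + d_{r-1}$, $|v_i| = 2$ for $1 \leq i < r$, every $v_i$ just right, and $\{v_1, \ldots, v_{r-1}\}$ and $\{v_r, \ldots, v_n\}$ generating the two indecomposable summands $L_1 \ni v_1$ and $L_2 \ni v_r$ of $(\sigma)^\perp$.

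The key structural point is that $w_1 = -e_1$ is decoupled from $\mathcal V$: its $\mathbb Z^{n+1}$-projection is trivial, so $w_1 \cdot v_i = 0$ for all $i$, and $w_1 \cdot w_j = \langle e_1, w_j|_{E_8}\rangle$ for $j \neq 1$ depends only on the $E_8$-components. Moreover $|w_1|=2$ forces $T(w_1)$ to be a chain of norm-two vertices, so any interval transverse to it pairs with it trivially; hence $w_1 \cdot w_j \neq 0$ is equivalent to $w_1 \sim w_j$ in $G(\mathcal S)$. I would then read off, from the construction of $\mathcal W$ and Lemma~\ref{lem:hardeightcombinatorial}, which $w_j$ satisfy $\langle e_1, w_j|_{E_8}\rangle \neq 0$: $w_5$ always; $w_3$ when unloaded; $w_2$ when unloaded or loaded of the form $-e_2 + e_4$; and $w_4$ in the remaining loaded subcases. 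Since at most one of $w_2, w_3$ is loaded, $w_1$ has at least two neighbours, including $w_5$, and if $w_2, w_3$ are both unloaded then $w_1$ is adjacent to $w_2, w_3, w_5$. As the $E_8$-parts of $w_2, w_3, w_5$ are pairwise orthogonal, an edge among $\{w_2, w_3, w_5\}$ can only come from an overlap of the $d$-supports $A_2, A_3, A_5$; so claw-freeness at $w_1$ forces two of $A_2, A_3, A_5$ to meet (and the branches where $w_4$ is also a neighbour of $w_1$ impose further such constraints).

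Finally I would exploit the two-summand structure to turn every such support overlap into a forbidden feature. A nonempty overlap among the $A_j$'s forces one of the intervals $T(w_j)$ to reach far enough along the $d$-axis that it either meets some $v_i$ with $i<r$ (so it lies in $L_1$) while also meeting $v_r$ or some $v_i$ with $i>r$ (so it lies in $L_2$) --- impossible for a single interval --- or else it avoids $v_1, \ldots, v_{r-1}$ but abuts or crosses $v_r$ in a way that, via the path $v_1, v_2, \ldots, v_{r-1}$ through $L_1$, closes an incomplete cycle or forms a heavy triple with $v_r$. Running this through the handful of loaded/unloaded branches for $w_2, w_3, w_4$ yields a contradiction in every case, so $|w_1| \geq 3$. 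The step I expect to be the main obstacle is exactly this last one: tracking, in the loaded branches of $w_2$ and $w_4$, how the (possibly gappy) supports force the relevant intervals to straddle the index $r$ and extracting the incomplete cycle or heavy triple cleanly. The conceptual picture --- the trivalent vertex of the $E_8$ diagram cannot become norm two without either claw-ing off $w_2, w_3, w_5$ or being pushed across the boundary between $L_1$ and $L_2$ --- is simple; the weight of the argument is in this bookkeeping.
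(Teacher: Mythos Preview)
Your overall strategy matches the paper's: argue by contradiction with $|w_1|=2$, so $w_1=-e_1$ has trivial $\mathbb Z^{n+1}$-support; then claw-freeness at $w_1$ forces some adjacency among $\{w_2,w_3,w_5\}$ (or one of $w_2,w_3$ is loaded with $w_j\cdot w_1=0$). Your observation that for $|w_1|=2$ one has $w_1\sim w_j$ iff $w_1\cdot w_j\neq 0$ is correct and used implicitly throughout the paper's argument.

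The substantive gap is your final step. The contradictions do not arise from intervals ``straddling the index $r$'' as you describe, and the paper's argument requires a reduction you do not anticipate: in the main case ($w_j\sim w_k$ for $j,k\in\{2,3,5\}$), one first shows that each of $w_j,w_k$ has \emph{exactly one} neighbor in $\mathcal V$, these neighbors lying in different components of $G(\mathcal V)$. This is done by analyzing a hypothetical triangle $(v_{i_1},v_{i_2},w_j)$ and showing it forces $G(\{v_1,\ldots,v_n,w_j\})$ to be connected, so that $w_k$ then closes an incomplete cycle. Only with this in hand can one pin down $A_j=\{m,\ldots,n\}$, $A_k=\{n\}$, $|v_{m+1}|\geq 3$, $|v_i|=2$ for $i\geq m+2$, and split on which of $j,k$ equals $5$, $3$, or $2$; each branch further requires separate treatment of $w_6$ or $w_4$ (including the loaded-$w_4$ subcases) to produce the needed claw, heavy triple, or incomplete cycle. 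The loaded $w_2$ or $w_3$ scenarios are dispatched at the end by appealing back to this main case. One minor inaccuracy in your setup: Lemma~5.1 of \cite{Gre13} only places $v_1$ and $v_r$ in different summands; it does not assert that $\{v_1,\ldots,v_{r-1}\}$ and $\{v_r,\ldots,v_n\}$ generate the two summands, and vectors $v_i$ with $i>r$ may lie in either.
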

\begin{proof}
Suppose, by way of contradiction, that $|w_1| = 2$. Lest $(w_1;w_2,w_3,w_5)$ be a claw, we are in one of three scenarios: either $w_j \sim w_k$ for some $j,k\in \{2,3,5\}$ with $w_j \cdot w_1 = w_k \cdot w_1 = -1$, $w_2$ is loaded and $w_2 \cdot w_1 = 0$, or $w_3$ is loaded and $w_3 \cdot w_1 = 0$. 

Case I: $w_j \sim w_k$ for some $j, k\in \{2,3,5\}$ with $w_j \cdot w_1 = w_k \cdot w_1 = -1$. 

If $w_j$ (or $w_k$) has more than one neighbor in $\mathcal V$, then $w_j$ has two neighbors $v_{i_1}$ and $v_{i_2}$, and $(v_{i_1}, v_{i_2}, w_j)$ is a triangle, or else $(w_j;v_{i_1}, v_{i_2}, w_1)$ is a claw. Since every $v_i \in \mathcal V$ is just right, if $(v_{i_1}, v_{i_2}, w_j)$ is a triangle with $i_1 < i_2$, then $|v_i| = 2$ for all $1 \leq i \leq i_1$, $r = i_1 + 1$, $|v_i| = 2$ for all $r<i<i_2$, $v_{i_2} = -d_{i_2} + d_{i_2 -1} + \ldots + d_{i_1}$, $w_j \cdot v_{i_1} = -1$ and $w_j \cdot v_{i_2} = 1$, or else the subgraph induced by $\{v_1, \ldots, v_n, w_j\}$ contains a heavy triple. It follows that $i_2 = i_1 + 2$ and $\{i_1, r, i_2\}\subset A_j$, or else the subgraph induced by $\{v_1, \ldots, v_n, w_j\}$ is connected, hence the subgraph induced by $\{v_1, \ldots, v_n, w_j, w_k\}$ contains an incomplete cycle. It follows that $A_j = \{i_1, \ldots, n\}$ and $|v_i| = 2$ for all $i_2 + 1 \leq i \leq n$. But then $w_k \dag v_r$ and $w_k \cdot v_i = 0$ for all $i \in \{1, \ldots,r-1, r+1 = i_2, \ldots, n\}$ or else the subgraph induced by $\{v_1, \ldots, v_n, w_j, w_k\}$ contains an incomplete cycle. Conclude that $w_j$ and $w_k$ each have a single neighbor in $\mathcal V$, respectively $v_{i_j}$ and $v_{i_k}$, and, moreover, $v_{i_j}$ and $v_{i_k}$ belong to distinct components of $G(\mathcal V)$. Without loss of generality, we may assume that $A_j = \{m, \ldots, n\}$, $A_k = \{n\}$, $|v_{m+1}|\geq 3$ and $v_{m+1} \cdot v_m = 0$, and $|v_i| = 2$ for all $m+1\leq i \leq n$. 

Case I.1: $j = 5$.

If $j = 5$, then $w_6\not \sim w_5$, so $w_6 \cdot w_5 = 0$ since $-1 \leq w_6 \cdot w_5 \leq |w_5|-3$, so $|w_6|\geq 3$, or else $(w_5;v_m,w_j, w_6)$ is a claw or $(w_j, w_5, w_6)$ is a heavy triple. Then, either $A_6 \cap A_5 = \{m\}$, in which case $w_6 = -e_6 + d_m$, in which case $(v_m, w_6, v_{m+1}, \ldots, v_n, w_k, w_1, w_5, v_m)$ is an incomplete cycle, or $w_6 = -e_6 + d_n$, in which case $(v_{m+1}, w_k, w_6)$ is a heavy triple.

Case I.2: $j = 3$.

If $j = 3$, then $|w_4| \geq 3$ or else $(w_3;v_m, w_1, w_4)$ is a claw. Then, either $w_4$ is loaded, or $w_4$ is unloaded and $w_4 \cdot w_3 = 0$, since in this case $-1\leq w_4 \cdot w_3 \leq |w_3|-3$. 

Case I.2.a: $w_4$ is loaded. 

First observe that if $w_4$ is loaded, then $w_4$ is not tight, for then $w_4 \sim v_1$ and $v_r \prec w_4$, thus the subgraph induced by $\{v_1, \ldots, v_n, w_1, w_3, w_4, w_k\}$ contains an incomplete cycle. It follows that $w_4$ is unbreakable, so $w_4 \cdot w_3 \in \{-1, 0, 1\}$ since $-1 \leq w_4 \cdot w_3 \leq |w_3|-2$. Suppose first that $w_4 \cdot w_3 = -1$. Then $w_4|_{E_8} = -e_4 + e_2$ or else $w_4 \cdot w_3 \geq 0$. Then, either $k = 2$, in which case $n \in A_4$ or else $w_4 \cdot w_2 = -2$, which is absurd since $|w_4|\geq 4$ and $w_4$ is unbreakable, but then $w_4 \cdot w_3 \geq 0$, or $k = 5$, in which case $A_4 \subset \{1, \ldots, m\}$ or else $n \in A_4$ and $s^*_4 - (s^*_1 + s^*_5) \geq 0$. But then, since every element of $\mathcal V$ is just right, there is some $l \leq m$ such that $|v_l|\geq 3$, $w_4 \cdot v_l = -1$, $w_4 \cdot v_{l+1} = 1$, and $v_{l}$ and $v_{l+1}$ belong to distinct components of $G(\mathcal V)$, so the subgraph induced by $\{v_1, \ldots, v_n, w_1, w_3, w_4, w_5\}$ contains an incomplete cycle. Conclude that $w_4 \cdot w_3 \neq -1$. Suppose instead that $w_4 \cdot w_3 = 0$. We must then have $w_4|_{E_8} = -e_4 + e_2$ or else $w_4 \cdot -e_3 = 0$, in which case $A_4 \subset \{1, \ldots, m\}$ and there is an incomplete cycle as before. It follows that either $w_4 = -e_4 + e_2 + d_m$, in which case $(v_m, w_4, w_{m+1}, \ldots, v_n, w_k, w_1, w_3, v_m)$ is an incomplete cycle, or $w_4 = -e_4 + e_2 + d_n$, in which case $(v_{m+1}, w_4, w_k)$ is a heavy triple. Suppose lastly that $w_4 \cdot w_3 = 1$. Then either $n = m+1$ and $w_4 = -e_4 + e_2 + d_m +d_{m+1}$ and $(w_3, w_k, w_4)$ is a heavy triple, or $w_4|_{E_8} = -e_4 + e_2 + e_1 + \ldots$, in which case either $A_4 = \{m\}$ and there is an incomplete cycle as before, or $A_4 = \{n\}$, and either $(v_{m+1}, w_k, w_4)$ is a heavy triple or there is a claw at $v_n$. Conclude that $w_4$ is not loaded.

Case I.2.b: $w_4$ is not loaded. 

If $w_4$ is not loaded, then we arrive at a contradiction in the same way as in Case I.1, \emph{mutatis mutandi}.

Case I.3: $j = 2$. 

If $j = 2$, then $k = 3$ or $k = 5$.

Case I.3.a: $k = 5$.

If $k = 5$, then $n \in A_6$ or else $(w_5; v_n, w_1, w_6)$ is a claw, but then $w_6 \cdot w_2 \geq 1$, so $w_6 \dag w_2$, and therefore $w_6 \cdot v_m = -1$ or else $(w_2;v_m, w_1, w_6)$ is a claw, but then $m \in A_6$, so $2\leq w_6, w_2$, which is absurd.

Case I.3.b: $k = 3$.

If $k = 3$, then $|w_4| \geq 3$ or else $(w_3;v_n, w_1, w_4)$ is a claw. Then, either $w_4$ is loaded, or $w_4$ is unloaded and $w_4 \cdot w_3 = 0$, since in this case $-1\leq w_4 \cdot w_3 \leq |w_3|-3$.

Case I.3.b.i: $w_4$ is loaded. 

First observe that if $w_4$ is loaded, then $w_4$ is not tight, for then $w_4 \sim v_1$ and $v_r \prec w_4$, thus the subgraph induced by $\{v_1, \ldots, v_n, w_1, w_2, w_4, w_3\}$ contains an incomplete cycle. It follows, as before, that $w_4 \cdot w_3 \in \{-1, 0, 1\}$. If $w_4 \cdot w_3 = -1$, then, since $n \not \in A_4$, either $w_4 \cdot w_2 = -2$, which is absurd, or $w_4 \cdot w_2 = -1$, in which case $w_4 = -e_4 + e_2 + d_m$ and $(w_2, w_3, w_4)$ is a negative triangle. If $w_4 \cdot w_3 = 0$, then either $w_4 = -e_4 + e_2 + d_n$, in which case there is a claw at $v_n$, or $w_4|_{E_8} = -e_4 + e_2 + e_1 + e_5 + \ldots$, so $A_4 \subset \{1, \ldots, m\}$ and we run into the same problem as in Case II.1. If $w_4 \cdot w_3 = 1$, then $w_4|_{E_8} = -e_4 + e_2 + e_1 + e_5 + \ldots$ and $A_4 = \{n\}$, but then $(v_{m+1}, w_3, w_4)$ is a heavy triple. Conclude that $w_4$ is not loaded.

Case I.3.b.ii: $w_4$ is unloaded.

If $w_4$ is not loaded, then we arrive at a contradiction in the same way as in Case I.3.a, \emph{mutatis mutandi}.

Note that in the remaining cases, i.e. where either $w_2$ is loaded and $w_2 \cdot w_1 = 0$ or $w_3$ is loaded and $w_3 \cdot w_1 = 0$, the resolution of Case I above implies that $w_j \not \sim w_k$ for $\{j,k\} \subset \{2,3,5\}$ with $w_j \cdot w_1 = w_k \cdot w_1 = -1$. In particular this means that either $|w_j| = 2$ or $|w_k| = 2$ for $w_j$, $w_k$ unloaded, or else, without loss of generality, we may assume that $G(\{v_1, \ldots, v_n, w_j\})$ is connected, hence the subgraph induced by $\{v_1, \ldots, v_n, w_1,w_j,w_k\}$ contains an incomplete cycle. This further implies that $|w_5| = 2$ since $|w_2|$, $|w_3|\geq 3$ if $w_2$ or $w_3$ is loaded, and we arrive at a contradiction: since $w_2$ or $w_3$ is loaded and $|w_5| = 2$, $|w_l|\geq 3$ for some $l \in \{6,7,8\}$, so, taking $l$ to be minimal, the subgraph induced by $\{v_1, \ldots, v_n, w_1, w_2, w_3, w_5, \ldots, w_l\}$ contains an incomplete cycle. 
\end{proof}

\begin{lem}\label{lem:discvw1nottight}
    If $G(\mathcal V)$ is disconnected, then $w_1$ is not tight.
\end{lem}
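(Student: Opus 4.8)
The plan is to argue by contradiction. Suppose $w_1$ is tight, so $w_1 = -e_1 + 2d_0 + d_1 + \cdots + d_n$ and $s^*_1 = |\sigma|_1 + 1$. First I would record the shape of a disconnected changemaker basis: by Greene's classification (Lemmas~5.1 and~5.2 of \cite{Gre13}) there is an index $r > 1$ with $v_r = -d_r + \sum_{i=0}^{r-1} d_i$, with $\sigma_0 = \cdots = \sigma_{r-1} = 1$, with $v_i = -d_i + d_{a(i)}$ of norm $2$ for $1 \le i < r$, with every $v_i$ just right (hence unbreakable), and with $v_1$ and $v_r$ lying in distinct components $C_1 \ni v_1$ and $C_2 \ni v_r$ of $G(\mathcal V)$. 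A direct computation gives $\langle w_1, v_r\rangle = r = |v_r| - 1$, $\langle w_1, v_1\rangle = 1$, and $\langle v_1, v_r\rangle = 0$. Moreover $w_1 = v_r + (w_1 - v_r)$ exhibits $w_1$ as \emph{breakable}: one has $|v_r| = r + 1 \ge 3$, $|w_1 - v_r| = n - r + 7 \ge 3$, and $\langle v_r, w_1 - v_r\rangle = \langle w_1, v_r\rangle - |v_r| = -1$. Applying the lemma describing how a breakable interval pairs against an unbreakable one, with $T = w_1$ and $V = v_r$, forces $T(v_r) \prec T(w_1)$; applying it with $V = v_1$ (which has norm $2$) forces $v_1$ to abut $w_1$; and since $v_1$ and $v_r$ lie in different components of $G(\mathcal V)$ they are distant. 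Thus $w_1$ both strictly contains $v_r$ and abuts a vertex of the other component.

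Next I would note the more general fact that, writing a just-right $v_i$ as $-d_i + d_a + \cdots + d_b$, one has $\langle w_1, v_i\rangle = (b - a) + [\,a = 0\,]$; hence $w_1$ is adjacent in $\hat G(\mathcal V)$ to every $v_i$ of norm $\ge 3$ and to every norm-$2$ vector $v_i$ whose positive vertex is $d_0$, so $w_1$ meets both $C_1$ and $C_2$ substantially. The core of the proof is then a case analysis paralleling that of Lemma~\ref{lem:discvw1big}, organized by the form of the (necessarily unloaded) vector $w_5$ --- namely $|w_5| = 2$; $w_5 = -e_5 + d_n$; $w_5 = -e_5 + d_1 + \cdots + d_n$; $w_5 = -e_5 + d_0 + \cdots + d_n$; or $w_5$ tight --- and, in the subcase $|w_5| = 2$, by which of $w_2, w_3$ is loaded and whether it pairs trivially with $w_1$, exactly the trichotomy used there to prevent a claw at $w_1$. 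In each configuration, $T(v_r) \prec T(w_1)$ together with the path structure of the component $C_2$ furnished by Section~5 of \cite{Gre13} produces a path from $v_r$ out of $T(w_1)$ through $C_2$, and a path through $C_1$ starting from the vertex of $C_1$ that $w_1$ abuts; splicing these with $w_1$ yields a cycle in $G(\mathcal S)$ that is not complete, i.e.\ an incomplete cycle. In the degenerate subcases where $C_1$ or $C_2$ is a single vertex, the same data instead forces a claw at $w_1$, a heavy triple among $v_r$, $w_1$, and a third norm-$\ge 3$ vector, or a sign error mediated by one of the norm-$2$ vectors $v_i$. All of these are forbidden once $(\tau)^\perp$ is a sum of linear lattices, which gives the contradiction.

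The main obstacle I anticipate is the bookkeeping inside the $|w_5| = 2$ cases: there a loaded $w_2$ or $w_3$ of norm $\ge 3$ drags in some $w_l$ with $l \in \{6,7,8\}$ of norm $\ge 3$, and one must check that a cycle running through $w_1$, that loaded vector, $w_5$, and the $w_l$'s is genuinely incomplete rather than spanning a complete subgraph --- $w_1$'s full support $\{0,1,\dots,n\}$ contributes many chords, and keeping track of which pairs of intervals abut requires care. A secondary subtlety is confirming, in the small-component subcases, that the explicit pairings $\langle w_1, v_1\rangle = 1$, $\langle w_1, v_r\rangle = r \ge 2$, and $\langle v_1, v_r\rangle = 0$ really do obstruct $T(w_1)$ from sitting within a single summand, so that $w_1$ is forced to bridge $C_1$ and $C_2$ and thereby create a forbidden pattern.
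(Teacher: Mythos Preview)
Your opening paragraph is correct and matches the paper exactly: $w_1$ tight forces $v_r\prec w_1$ (via $\langle w_1,v_r\rangle=|v_r|-1$) and $v_1\sim w_1$, while $v_1$ and $v_r$ are distant; this yields the claw $(w_1;v_1,v_r,w_5)$ when $|w_5|=2$, which is precisely the paper's first move.

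The gap is in your case analysis. The five forms you list for $w_5$ --- $|w_5|=2$, $-e_5+d_n$, $-e_5+d_1+\cdots+d_n$, $-e_5+d_0+\cdots+d_n$, or tight --- are exhaustive \emph{only when $\sigma=(1,\ldots,1)$}. In the disconnected setting one has $\sigma=(1,\ldots,1,r,\sigma_{r+1},\ldots,\sigma_n)$ with $\sigma_r=r\ge 2$, and $w_5$ can take many other shapes: for instance $w_5=-e_5+d_k$ for any $k$ with $\sigma_k=s^*_5$ (in particular $k<n$), or $w_5=-e_5+d_{r-s^*_5}+\cdots+d_{r-1}$ when $0<s^*_5<r$, or gappy forms when $s^*_5$ exceeds $r$. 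None of these are on your list, and the ``parallel to Lemma~\ref{lem:discvw1big}'' does not help, since that lemma's proof is organized around which pair $w_j,w_k\in\{w_2,w_3,w_5\}$ abuts, not around the shape of $w_5$.

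The paper avoids this entirely by arguing on $|w_5|$ rather than on the support $A_5$. Having forced $|w_5|\ge 3$ from the claw, it uses the pairing $w_5\cdot w_1$ against the \emph{breakable} interval $w_1$ to rule out $|w_5|\ge 4$ (the pairing lands in a range incompatible with the breakable--unbreakable lemma and the required adjacencies to $v_1,v_r$), leaving $w_5=-e_5+d_k$; then $w_5\cdot w_1=0$, and the single neighbor $v_k$ of $w_5$ in $\mathcal V$ produces an incomplete cycle through $w_1$. This is a two-step argument of a few lines, and it never needs to know $\sigma$ beyond the facts you already recorded in your first paragraph. If you want to salvage your plan, reorganize by $|A_5|$ rather than by the explicit support.
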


\begin{proof}
    If $G(\mathcal V)$ is disconnected and $w_1$ is tight, then $|w_5|\geq 3$ or else $(w_1;v_1,v_r, w_5)$ is a claw. We must furthermore have $|w_5| = 3$; if $|w_5| = 4$, then $1 \leq w_5 \cdot w_1 < |w_5| -2$, and therefore $|w_5| = 4$ and $w_5 \dag w_1$, but then $w_5 \cdot v_r = 1$ or else the subgraph induced by $\{v_1, \ldots, v_n, w_1, w_5\}$ contains an incomplete cycle, or $(v_1, w_1, w_5)$ is a negative triangle, or $(v_r, w_1, w_5)$ is a negative triangle, so we must have $w_5 \cdot v_{r-1} = -1$ or else $|w_5|\geq 5$, a contradiction. But then $w_5 = -e_5 + d_k$ and $w_5 \cdot w_1 = 0$, so $|v_i| = 2$ for all $1\leq i \leq k$, but then $r = k+1$ and $(v_1, \ldots, v_{r-1}, w_5, v_r, w_1, v_1)$ is an incomplete cycle. 
\end{proof}

\begin{lem}
    If $G(\mathcal V)$ is disconnected, then $w_j$ is not tight for $j \in \{2,3,5\}$. 
\end{lem}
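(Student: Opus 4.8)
The proof will follow the same template as Lemmas~\ref{lem:discvw1big} and~\ref{lem:discvw1nottight}: assume $G(\mathcal V)$ is disconnected and that some $w_j$ with $j \in \{2,3,5\}$ is tight, and derive a forbidden configuration (a claw, a heavy triple, or an incomplete cycle) in $G(\mathcal S)$. Recall from the disconnectedness lemmas that there is an index $r > 1$ with $v_r = -d_r + d_0 + \cdots + d_{r-1}$, $|v_i| = 2$ for $1 \le i < r$, and $v_1$, $v_r$ lying in separate components of $G(\mathcal V)$, and that every element of $\mathcal V$ is just right, hence unbreakable. Since a tight vector $w_j$ has $w_j \prec v_r$ (its $\mathbb Z^{n+1}$-projection is $2d_0 + \sum_{i=1}^n d_i$, which contains $T(v_r)$), $w_j$ is adjacent in $G(\mathcal S)$ to $v_1$, to $v_r$, and to every $v_i$ with $|v_i| \ge 3$; in particular $w_j$ bridges the two components of $G(\mathcal V)$.

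First I would dispose of $j = 5$, which is the cleanest case since $w_5$ is always unloaded. If $w_5$ is tight then $(w_5; v_1, v_r, w_1)$ threatens to be a claw unless $w_1 \sim v_1$, $w_1 \sim v_r$, or $w_1 \sim w_5$; by Lemmas~\ref{lem:discvw1big} and~\ref{lem:discvw1nottight}, $|w_1| \ge 3$ and $w_1$ is not tight, so $w_1$ has limited adjacencies, and one checks that $w_1 \sim v_1$ together with $w_1 \sim v_r$ forces $w_1$ to bridge the components of $G(\mathcal V)$ as well, producing—via the path through $w_5$ and the path through $w_1$ joining $v_1$ to $v_r$—an incomplete cycle; while $w_1 \sim w_5$ produces a heavy triple $(w_1, w_5, v_k)$ with any $v_k$ of norm $\ge 3$ reachable from both (there is such a $v_k$ since $G(\mathcal V)$ has at least one nontrivial component), or an incomplete cycle via the bridging path. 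Then for $j \in \{2,3\}$, I would argue similarly, using the $E_8$ Dynkin-diagram adjacencies: a tight $w_j$ is adjacent to $w_1$ (since $w_1|_{E_8}$ contains $-e_1$ and $-e_1 \cdot e_j = -1$ for $j \in \{2,3\}$), so $(w_1, w_j, v_r)$ or $(w_1, w_j, v_1)$ is a candidate heavy triple whenever $|w_1| \ge 3$—which holds by Lemma~\ref{lem:discvw1big}—unless $w_1$ separates $w_j$ from the relevant $v_i$, but $w_j \prec v_r$ prevents that kind of separation. The remaining sub-possibilities are ruled out by producing an incomplete cycle from the two vertex-disjoint $v_1$-to-$v_r$ paths through $w_j$ and through the chain $v_1, \ldots, v_{r-1}, v_r$ (the latter exists because $|v_i| = 2$ for $i < r$ guarantees $v_{i} \dag v_{i+1}$).

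The main obstacle I anticipate is the bookkeeping around the loaded case: when $j = 2$ or $j = 3$, $w_j$ could in principle be loaded, i.e. $s^*_j > |\sigma|_1 + 1$, in which case $w_j|_{E_8} = -e_j + e_k$ for an appropriate simple root (by Lemma~\ref{lem:hardeightcombinatorial} and the construction of $\mathcal W$), and then "$w_j$ tight" must be interpreted as having $\mathbb Z^{n+1}$-projection $2d_0 + \sum_i d_i$ on top of the $E_8$-part $-e_j + e_k$. I would handle this by noting that a loaded tight $w_j$ still satisfies $w_j \prec v_r$ in the linear lattice (the interval relation only sees the $\mathbb Z^{n+1}$-coordinates), so it still bridges the components of $G(\mathcal V)$ and the incomplete-cycle argument goes through verbatim; the extra $E_8$ coordinate only strengthens the adjacency to $w_1$, $w_4$, and neighbors of $e_k$ in the Dynkin diagram, which if anything makes finding a forbidden subgraph easier. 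A secondary nuisance is confirming in each branch that the relevant $v_k$ of norm $\ge 3$ actually exists and lies in the right component—this follows since $v_r$ itself has norm $r + 1 \ge 3$ and lies in the component not containing $v_1$, and Lemma~\ref{lem:discvw1big}'s proof already extracted the structural facts (namely $|v_{m+1}| \ge 3$ with $v_{m+1} \cdot v_m = 0$ in the other component) that I would reuse. I would organize the write-up as three short cases ($j = 5$, then $j = 3$, then $j = 2$), each closing with the explicit forbidden configuration, mirroring the style of the preceding lemmas.
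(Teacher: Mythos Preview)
Your plan has a real structural gap. In the $j=5$ branch you conclude with ``$w_1 \sim w_5$ produces a heavy triple $(w_1, w_5, v_k)$'', and in the $j\in\{2,3\}$ branches you likewise appeal to ``$(w_1, w_j, v_r)$ or $(w_1, w_j, v_1)$ is a candidate heavy triple''. But $w_j$ is assumed tight, hence breakable, and the heavy-triple lemma (Lemma~4.10 in the paper's numbering) only forbids heavy triples in $G(\bar{\mathcal S})$, the intersection graph on \emph{unbreakable} intervals. A breakable interval can never participate in a forbidden heavy triple, so none of these yield a contradiction. You also write $w_j \prec v_r$ when you mean $v_r \prec w_j$ (a tight interval is the large one), and the justification ``the interval relation only sees the $\mathbb Z^{n+1}$-coordinates'' conflates support in $E_8\oplus\mathbb Z^{n+1}$ with intervals in the vertex basis of $\Lambda(p,q)$; the correct route is the pairing lemma, which from $w_j\cdot v_r=|v_r|-1$ gives $v_r\prec w_j$.

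The loaded case is also mishandled. You claim ``the extra $E_8$ coordinate only strengthens the adjacency to $w_1$'', but for $j\in\{2,3\}$ loaded with (for instance) $w_2|_{E_8}=-e_2+e_3$ or $w_3|_{E_8}=-e_3+e_2$ one has $w_j\cdot(-e_1)=0$, so the $E_8$-contribution to the pairing with $w_1$ vanishes rather than strengthens. The paper treats exactly this case separately: from $w_j\cdot(-e_1)=0$ and $|w_1|\ge3$ it deduces $w_1\pitchfork w_j$, pins $w_1$ down to $-e_1+d_n$ or $-e_1+d_m+\cdots+d_n$, forces $|w_5|\ge3$ (else the loaded hypothesis gives an incomplete cycle through $w_5,\ldots,w_l$), and then runs a short case analysis on $w_5$ and the remaining $w_k$, $\{j,k\}=\{2,3\}$. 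For the unloaded case $w_j\cdot(-e_1)=-1$, the paper simply invokes the argument of Lemma~\ref{lem:discvw1nottight} with $w_1$ in the role that $w_5$ played there (using $|w_1|\ge3$ from Lemma~\ref{lem:discvw1big}); that argument never appeals to a heavy triple containing the tight vector, only to claws and incomplete cycles, which is what your rewrite should do as well.
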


\begin{proof}
    If $G(\mathcal V)$ is disconnected and $w_j$ is tight for some $j \in \{2,3,5\}$ with $w_j \cdot -e_1 = -1$, then we arrive at a contradiction as in the Lemma \ref{lem:discvw1nottight}. Suppose now that $j \in \{2,3\}$, $w_j \cdot -e_1 = 0$, and $w_j$ is tight. Since by Lemma \ref{lem:discvw1big} we must have $|w_1| \geq 3$ it follows that $w_1 \pitchfork w_2$. We must furthermore have either $w_1 = -e_1 + d_n$ or $w_1 = -e_1 + d_m + \ldots + d_n$ for some $m$ with $|v_{m+1}|\geq 3$ and $|v_i| = 2$ for all $m+2 \leq i \leq n$ or else the subgraph induced by $\{v_1, \ldots, v_n, w_1, w_j\}$ contains an incomplete cycle. We must furthermore have $|w_5|\geq 3$ or else $|w_5| = 2$ and, since $w_j$ is loaded, letting $l$ be minimal in $\{6,7,8\}$ with $|w_l|\geq 3$, the subgraph induced by $\{v_1, \ldots, v_n, w_1, w_j, w_5, \ldots, w_l\}$ contains an incomplete cycle, so $w_5 \pitchfork w_2$. Now, fix $m\in \{1, \ldots, n\}$ such that $|v_{m+1}| \geq 3$ and $|v_i| = 2$ for all $m+2 \leq i \leq n$. We must have either $w_1 = -e_1 + d_m + \ldots + d_n$ and $w_5 = -e_5 + d_n$ or $w_1 = -e_1 + d_n$ and $w_5 = -e_5 + d_m + \ldots + d_n$. Letting $\{j,k\} = \{2,3\}$ with $w_j$ loaded, we must have that either $A_k = A_5 = \{n\}$, in which case $(v_{m+1}, w_k, w_5)$ is a heavy triple, or $A_k = A_1 = \{n\}$, in which case there is a claw at $v_n$, or $A_k\cap A_1 = \emptyset$ and either $n = m+1$, $w_3 = -e_3 + d_m$, and $(v_{m+1}, w_1, w_3)$ is a negative triangle, or the subgraph induced by $\{v_1, \ldots, v_n, w_1, w_j, w_k, w_5\}$ contains an incomplete cycle.
\end{proof}

\begin{lem}
If $G(\mathcal V)$ is disconnected, then there are two $j, k \in \{2,3,5\}$ with $w_j \cdot -e_1 = w_k \cdot -e_1 = -1$ and $|w_j|$, $|w_k|\geq 3$.
\end{lem}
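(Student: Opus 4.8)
Throughout I work under the standing hypothesis of the section, namely that $(\tau)^\perp$ is a sum of linear lattices, so $G(\mathcal S)$ contains no claw, no heavy triple, no incomplete cycle, and no sign error; I also use the lemmas of this subsection already proved: with $G(\mathcal V)$ disconnected, $|w_1|\geq 3$ and none of $w_1,w_2,w_3,w_5$ is tight, so $w_1=-e_1+\sum_{i\in A_1}d_i$ with $A_1\neq\emptyset$ (recall loaded indices lie in $\{2,3,4\}$, and at most one of $w_2,w_3$ is loaded). By Lemma 5.1 of \cite{Gre13} there is an index $r>1$ with $v_r=-d_r+\sum_{i=0}^{r-1}d_i$, $|v_i|=2$ for $1\leq i<r$, and $v_1,v_r$ in distinct components of $G(\mathcal V)$; by Lemma 5.2 of \cite{Gre13} every $v_i$ is just right, so the component of $v_1$ is the norm-$2$ path $v_1\dag v_2\dag\cdots\dag v_{r-1}$ and $v_r,v_{r+1},\dots$ span the other component. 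Put $Q=\{j\in\{2,3,5\}\colon w_j\cdot(-e_1)=-1\text{ and }|w_j|\geq 3\}$; the goal is $|Q|\geq 2$, so I assume $|Q|\leq 1$ and seek a forbidden configuration, conditioning (as in the preceding lemmas) on the sizes of $w_2,w_3,w_5$.

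\textbf{Step 1: reduction to the "all small'' regime.} The plan is first to pin down, from $|Q|\leq 1$, that at most one of $w_2,w_3,w_5$ differs from $-e_j$. For $j\in\{2,3,5\}$ one computes from the construction of the standard basis that $w_j\cdot(-e_1)=-1$ precisely when $w_j$ is unloaded or when $|w_j|=2$ (which forces $w_j=-e_j$, i.e.\ $s^*_j=0$, since $|w_j|=2+|A_j|$), while $w_j\cdot(-e_1)=0$ precisely when $w_j$ is loaded with $w_j|_{E_8}\in\{-e_3+e_2\}$ ($j=3$) or $\{-e_2+e_3,\,-e_2+e_3+e_4\}$ ($j=2$, which by the construction forces $s^*_3>0$); note $w_5$ is always unloaded, so $5\in Q\iff|w_5|\geq3$. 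Hence any unloaded $w_j$ of norm $\geq 3$ lies in $Q$. Now: if $w_2$ is loaded and $2\notin Q$, then $s^*_3>0$, so $w_3\neq-e_3$, so $|w_3|\geq3$, and since at most one of $w_2,w_3$ is loaded $w_3$ is unloaded, whence $3\in Q$; symmetrically, if $w_3$ is loaded and $3\notin Q$ then $s^*_2>0$ by Lemma \ref{lem:hardeightcombinatorial}(4)(a), so $|w_2|\geq3$, $w_2$ unloaded, $2\in Q$. Combining this with $|Q|\leq1$ forces $w_5=-e_5$ whenever some $w_j$ ($j\in\{2,3\}$) is loaded or of norm $\geq3$, and rules out two norm-$\geq3$ members of $\{w_2,w_3,w_5\}$. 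So the surviving configurations are: (I) $w_2=-e_2$, $w_3=-e_3$, $w_5=-e_5$; (II) $w_2=-e_2$, $w_3=-e_3$, $|w_5|\geq3$; and (III) $w_5=-e_5$, exactly one of $w_2,w_3$ is large (unloaded of norm $\geq3$ or loaded), the other equal to $-e_j$ or loaded-non-participant.

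\textbf{Step 2: the all-small case is an immediate claw.} In (I) the vectors $w_2,w_3,w_5$ have empty $\mathbb Z^{n+1}$-part, so $w_1\cdot w_j=\langle e_1,e_j\rangle=-1\neq0$ for $j\in\{2,3,5\}$, while $w_j\cdot w_k=\langle e_j,e_k\rangle=0$ for distinct $j,k\in\{2,3,5\}$ (non-adjacent vertices of the $E_8$ Dynkin diagram, Figure \ref{fig:E_8Dynkin}). Thus $(w_1;w_2,w_3,w_5)$ is a claw, contradicting Lemma \ref{lem:clawfree}.

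\textbf{Step 3: the one-large cases, and the main obstacle.} In (II) and (III) one argues as in Case I of the proofs of Propositions \ref{prop:alloness10} and \ref{prop:alloness11}. Two of $w_2,w_3,w_5$ still equal some $-e_j$; call them $u,u'$. Then $w_1\cdot u=w_1\cdot u'=-1$ and $u\cdot u'=0$, so $w_1$ abuts both $u$ and $u'$, which do not abut each other; to avoid a claw at $w_1$ it therefore suffices to produce a third neighbour of $w_1$ not abutting $u$ or $u'$, and the natural candidate is the large vector $z\in\{w_2,w_3,w_5\}$. Here $z\cdot u$ and $z\cdot u'$ are $\langle z|_{E_8},e_j\rangle$, which vanish when $z$ is unloaded (Dynkin non-adjacency) but can be nonzero when $z$ is loaded (e.g.\ $\langle -e_2+e_3,e_3\rangle=-2$), so the loaded subcase of (III) must be treated by instead invoking claw-freeness and Lemma 3.8 of \cite{Gre13} at $z$ itself, forcing $z$ to abut $w_1$ and the relevant $w_6$-type vectors at a common end. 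For $w_1\cdot z$: since $w_1$ is unloaded, $w_1\cdot z=\langle e_1,e_{j(z)}\rangle+|A_1\cap A_z|=-1+|A_1\cap A_z|$ (with a harmless correction when $z$ is loaded), and because $w_1,z$ are unbreakable of norm $\geq3$, the lemma "$|w_i|,|w_j|\geq3$, unbreakable, $w_i\cdot w_j\neq0\Rightarrow w_i\not\pitchfork w_j$'' shows $w_1\sim z$ whenever $w_1\cdot z\neq0$; that immediately gives the claw $(w_1;u,u',z)$ unless $|A_1\cap A_z|=1$, i.e.\ $w_1\cdot z=0$. The residual case $w_1\cdot z=0$ is the crux: here $w_1$ and $z$ share exactly one $d_i$, and the plan is to run the standard bash --- conditioning on $|w_5|$, then controlling $w_6,w_7,w_8$ by the usual "$|w_6|=2$ or else an incomplete cycle through $\{v_1,\dots,v_n,w_1,w_5,\dots\}$'' argument --- while tracking which of the two components of $G(\mathcal V)$ (the norm-$2$ path $v_1\dag\cdots\dag v_{r-1}$ versus the $v_r$-component) the vector $w_1$, and hence $z$, attaches to; the shared $d_i$ plus the common endpoint of $w_1,z,u$ then forces either a heavy triple among $\{w_1,z,w_6\}$-type vectors or an incomplete cycle of the form $(v_1,\dots,v_\bullet,w_1,z,v_\bullet,\dots,v_1)$ spanning both components. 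This bookkeeping of $\mathbb Z^{n+1}$-support overlaps against the two-component structure of $G(\mathcal V)$ is the part I expect to be most delicate; everything before it is a short finite check, and Steps 1--2 already dispose of the genuinely "new'' configuration.
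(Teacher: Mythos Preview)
Your Step 1 case split is essentially right up to the loaded situation, but the opening sentence of Step 3 (``Two of $w_2,w_3,w_5$ still equal some $-e_j$; call them $u,u'$'') is false in exactly the case that matters. When, say, $w_3$ is loaded with $w_3|_{E_8}=-e_3+e_2$, you have already argued that $s^*_2>0$, so $|w_2|\geq 3$; thus \emph{both} $w_2$ and $w_3$ are large and only $w_5=-e_5$ is small. So there is no pair $u,u'$, and your whole Step 3 scheme of completing a claw $(w_1;u,u',z)$ collapses precisely in the one configuration that survives the easy claws.

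Separately, even in the non-loaded subcases of (II) and (III) you are working too hard. You \emph{do} have two small vectors $u,u'=-e_j$ there, and the right third vertex in the claw is not $z$ at all but any neighbour $v_i\in\mathcal V$ of $w_1$ (such a $v_i$ exists because $|w_1|\geq 3$). Since $u,u'$ have empty $\mathbb Z^{n+1}$-part, $u\cdot v_i=u'\cdot v_i=0$, and $(w_1;u,u',v_i)$ is an immediate claw; there is no ``crux'' case $w_1\cdot z=0$ to worry about. This is what the paper does in one line.

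The genuine content of the lemma is therefore the loaded case ($|w_5|=2$, one of $w_2,w_3$ loaded with $w_j\cdot(-e_1)=0$, the other unloaded of norm $\geq 3$), and here a different idea is needed. The paper first pins down $A_1$ (a unique neighbour of $w_1$ in $\mathcal V$, else a claw with $w_5$ and two $v_i$'s), and then uses Lemma \ref{lem:hardeightcombinatorial}(3)(b)/(4)(b): since $w_j$ is loaded and $s^*_5=0$, some $s^*_l>0$ for $l\in\{6,7,8\}$, so $|w_l|\geq 3$. One then shows $|w_6|\geq 3$ (else an incomplete cycle through $w_5,\dots,w_l$), forces $w_6\dag w_1$ with its unique $\mathcal V$-neighbour in the other component of $G(\mathcal V)$, and finally plays $w_k$ off against $w_1$ and $w_6$ to produce a heavy triple or incomplete cycle. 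Your sketch does not reach this argument; without the observation that loadedness forces a large $w_l$ with $l\in\{6,7,8\}$, there is no route to a contradiction in the surviving case.
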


\begin{proof}
Since $|w_1|\geq 3$, we know that $w_1$ has at least one neighbor in $\mathcal V$. If there are not two distinct $j, k \in \{2,3,5\}$ with $w_j \cdot e_1 = w_k \cdot e_1 = -1$ and $|w_j|$, $|w_k|\geq 3$, then $|w_5| = 2$, $w_j$ is loaded, and $|w_k|\geq 3$ for $\{j,k\} = \{2,3\}$, or else there is a claw at $w_1$. Furthermore, $w_1$ has a unique neighbor in $\mathcal V$, or else there is a claw at $w_1$. It follows that either $w_1 = -e_1 + d_n$ or $w_1 = -e_1 + d_m + \ldots + d_n$, $|v_{m+1}|\geq 3$, and $|v_i| = 2$ for all $m+2\leq i \leq n$. Since $w_j$ is loaded, we must have $|w_l|\geq 3$ for some $l \in \{6,7,8\}$, hence $|w_6|\geq 3$ or else, letting $l$ be minimal, either $w_l \dag w_1$ and $(w_1, w_5, w_6, \ldots, w_l, w_1)$ is an incomplete cycle, or $A_l \cap A_1 = \emptyset$, so $G(\{v_1, \ldots, v_n, w_l\})$ is connected, hence the subgraph induced by $\{v_1, \ldots, v_n, w_1, w_5, w_6, \ldots, w_l\}$ contains an incomplete cycle. It follows furthermore that $w_6 \dag w_1$ and the unique neighbor of $w_6$ in $\mathcal V$ lies in a different component than the neighbor of $w_1$ in $\mathcal V$, or else the subgraph induced by $\{v_1, \ldots, v_n, w_1, w_5, w_l\}$ contains an incomplete cycle, hence either $w_1 = -e_1 + d_m + \ldots + d_n$ and $w_6 = -e_6 + d_n$ or $w_1 = -e_1 + d_n$ and $w_6 = -e_6 + d_m + \ldots + d_n$. Now, either $A_k \cap A_1 = A_3 \cap A_6 = \emptyset$, in which case the subgraph induced by $\{v_1, \ldots, v_n, w_1, w_k, w_6\}$ contains an incomplete cycle, or $A_k \cap A_1 = \{m\}$, in which case $(v_m, w_k, v_{m+1}, \ldots, v_n, w_6, w_5, w_1, v_m)$ is an incomplete cycle, or $A_k \cap A_1 = \{n\}$, and either $w_1 = -e_1 + d_n$ and there is a claw at $v_n$, or $w_6 = -e_6 + d_n$ and $(v_{m+1}, w_k, w_6)$ is a heavy triple, or $|A_k \cap A_1| = 2$, in which case $n = m+1$, $w_k = -e_k + d_m + d_{m+1}$, $w_1 = -e_1 + d_m + d_{m+1}$, and $w_6 = -e_6 + d_{m+1}$, in which case $(w_1, w_k, w_6)$ is a heavy triple.
\end{proof}

\begin{lem}\label{lem:discvemptyint}
Let $j, k \in \{2,3,5\}$. If $w_j\cdot -e_1 = w_k \cdot -e_1 = -1$, and neither $w_j$ nor $w_k$ is tight, then $A_j \cap A_k = \emptyset$. 
\end{lem}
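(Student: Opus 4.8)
Assume for contradiction that $A_j \cap A_k \neq \emptyset$. Then $A_j$ and $A_k$ are both nonempty, so $|w_j|,|w_k|\geq 3$; since neither $w_j$ nor $w_k$ is tight, each is unbreakable, hence of the form $\epsilon(w_j)[T(w_j)]$, $\epsilon(w_k)[T(w_k)]$ for unbreakable intervals of norm $\geq 3$. The first step is the computation $w_j\cdot w_k=\langle w_j|_{E_8},w_k|_{E_8}\rangle+|A_j\cap A_k|$, valid because $w_j|_{\mathbb Z^{n+1}}$ and $w_k|_{\mathbb Z^{n+1}}$ have all coordinates in $\{0,1\}$. The hypothesis $w_m\cdot(-e_1)=-1$ forces, for $m\in\{j,k\}$, that either $w_m$ is unloaded (so $w_m|_{E_8}=-e_m$) or $m=2$ and $w_2|_{E_8}=-e_2+e_4$; since $e_2,e_3,e_5$ are pairwise orthogonal in $E_8$ and $\langle -e_2+e_4,-e_3\rangle=1$, $\langle -e_2+e_4,-e_5\rangle=0$, in every admissible case $\langle w_j|_{E_8},w_k|_{E_8}\rangle\geq 0$, and it is $\geq 1$ when $w_2$ is loaded. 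Hence $w_j\cdot w_k\geq 1$, with $w_j\cdot w_k\geq 2$ if $w_2$ is loaded.

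Second, because $w_j,w_k$ are unbreakable of norm $\geq 3$ with $w_j\cdot w_k\neq 0$, the lemma forbidding transverse pairings between such $w$-vectors gives $w_j\not\pitchfork w_k$; as distant intervals (and intervals in different summands) pair trivially, and nesting is incompatible with $|w_j\cdot w_k|\leq 1$, the intervals $T(w_j)$ and $T(w_k)$ are consecutive, so $|w_j\cdot w_k|=1$. Combined with the first step this forces $w_j\cdot w_k=1$ and, in particular, rules out $w_2$ loaded (which would give $w_j\cdot w_k\geq 2$). So I am reduced to the case where $w_j$ and $w_k$ are both unloaded, $T(w_j)\dag T(w_k)$, $w_j\cdot w_k=1$, and $A_j\cap A_k=\{i_0\}$ is a single index; moreover $\epsilon(w_j)=\epsilon(w_k)$, since $[T(w_j)]\cdot[T(w_k)]=+1$.

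For the final contradiction I would exploit the doubled coordinate this produces. As $\epsilon(w_j)=\epsilon(w_k)$, the interval class $[T(w_j)\cup T(w_k)]=\epsilon(w_j)(w_j+w_k)=\epsilon(w_j)\bigl(-e_j-e_k+2d_{i_0}+\sum_{i\in A_j\triangle A_k}d_i\bigr)$ has $E_8$-norm $4$ and coefficient $2$ at $d_{i_0}$. On the one hand $T(w_j)\cup T(w_k)$ contains both $z(T(w_j))$ and $z(T(w_k))$, so by Proposition \ref{prop:linearirreducibles}(4) this class is breakable; on the other hand I would show that a vector of the above shape cannot be written as $x+y$ with $x,y\in(\tau)^\perp$, $|x|,|y|\geq 3$, $\langle x,y\rangle=-1$: separating off the $E_8$-part, applying Cauchy--Schwarz to the $E_8$-component $z$ of $x$ bounds $|z|\leq 4$, and then $\langle x,\tau\rangle=\langle y,\tau\rangle=0$ together with the maximality of the sets $A_m$ and the changemaker relations $\sum_{i\in A_m}\sigma_i=s^*_m$ pin the decomposition down to a form violating one of the constraints. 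Where the $E_8$-bookkeeping alone does not close a case, I would instead invoke the triple $\{w_1,w_j,w_k\}$ — with $w_1$ unbreakable of norm $\geq 3$ by Lemmas \ref{lem:discvw1big} and \ref{lem:discvw1nottight}, and $G(\mathcal V)$ disconnected with the structure of Greene's Lemmas 5.1--5.2 (a distinguished index $r$ with $v_r=-d_r+\sum_{i<r}d_i$, $|v_i|=2$ for $i<r$, and $v_1,v_r$ in different summands) — and track the position of $i_0$ relative to the two summands to produce a claw, a heavy triple, an incomplete cycle, or a mediated sign error. I expect this last step to be the main obstacle: the first two steps are essentially forced, but excluding $|A_j\cap A_k|=1$ requires a genuine case analysis over the three possibilities for $\{j,k\}\subseteq\{2,3,5\}$ and over where $i_0$ lies, since the local data $T(w_j)\dag T(w_k)$, $w_j\cdot w_k=1$ is not contradictory on its own.
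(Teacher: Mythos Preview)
Your first two paragraphs carry out cleanly what the paper simply asserts: the reduction to $|A_j\cap A_k|=1$, $T(w_j)\dag T(w_k)$, and both $w_j,w_k$ unloaded. (One small patch: you should phrase the exclusion of nesting as a norm count rather than ``incompatible with $|w_j\cdot w_k|\le 1$'', since you have not yet bounded the pairing above. If $T(w_j)\prec T(w_k)$ then $w_j\cdot w_k=\pm(|w_j|-1)$, while $w_j\cdot w_k=|A_j\cap A_k|\le |A_j|=|w_j|-2$; similarly for $T(w_k)\prec T(w_j)$. The loaded subcase $w_2|_{E_8}=-e_2+e_4$ dies because that forces $s^*_3=0$, hence $|w_3|=2$ and $k=5$, where $\langle -e_2+e_4,-e_5\rangle=0$.)

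The third paragraph, however, rests on a sign error that collapses the breakability strategy entirely. For consecutive intervals in the positive-definite convention one has $[T(w_j)]\cdot[T(w_k)]=-1$, so $w_j\cdot w_k=+1$ forces $\epsilon(w_j)\epsilon(w_k)=-1$, not $+1$. Consequently
\[
[T(w_j)\cup T(w_k)]=\epsilon(w_j)w_j+\epsilon(w_k)w_k=\pm(w_j-w_k),
\]
and the coefficient at $d_{i_0}$ \emph{cancels} rather than doubling. Worse, $w_j-w_k$ is visibly breakable via $w_j+(-w_k)$, since $|w_j|,|w_k|\ge 3$ and $\langle w_j,-w_k\rangle=-1$; so breakability of the union interval gives no contradiction at all, and the $E_8$/Cauchy--Schwarz bookkeeping you sketch has nothing to bite on.

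What survives is exactly your fallback, and that \emph{is} the paper's proof. The paper first pins down the shapes: using that all $v_i$ are just right when $G(\mathcal V)$ is disconnected, one of $w_j,w_k$ is $-e_\bullet+d_m+\cdots+d_n$ and the other is $-e_\bullet+d_n$ (or the variant with common index $r-1$). It then runs a case analysis on how $w_1$ --- which by the preceding lemmas has $|w_1|\ge 3$ and is not tight --- sits relative to $\{m,\ldots,n\}$, and within each case on which of $\{2,3,5\}$ equals $j$ or $k$, bringing in $w_4,w_5,w_6$ as needed. Every branch ends in a claw, a heavy triple, a negative triangle, or an incomplete cycle. There is no structural shortcut; the single shared index $i_0$ is not contradictory on its own, and the contradiction only emerges once you locate $w_1$ and the remaining $w_\ell$ in the picture.
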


\begin{proof}
If $A_j \cap A_k \neq \emptyset$, then $|A_j \cap A_k| = 1$ and $w_j \dag w_k$. Without loss of generality, and lest there be a heavy triple, either $w_j = -e_j + d_{r-1} + \ldots + d_n$, $w_k = -e_k + d_{r-1}$, and $|v_i| = 2$ for all $r+1 \leq i \leq n$, or $w_j = -e_j + d_m + \ldots + d_n$, $w_k = -e_k + d_n$, $|v_{m+1}| \geq 3$, $|v_i|=2$ for all $m+2 \leq i \leq n$.

Case I: $w_j = -e_j + d_{r-1} + \ldots + d_n$, $w_k = -e_k + d_{r-1}$, and $|v_i| = 2$ for all $s+1 \leq i \leq n$.

If we are in Case I, then either $w_1 = -e_1 + d_{4-1}$, in which case $(v_{s-1}, w_1, v_r, w_k, v_{r-1})$ is an incomplete cycle, or $n = r$ and $w_1 = -e_1 + d_{r-1} + d_r$, in which case $(v_{r-1}, w_1, w_j, w_k, v_{r-1})$ is an incomplete cycle, or $n = r = 2$ and $w_1 = -e_1 + d_0 + d_1 + d_2$ in which case $(v_r, w_1, w_j, w_k, v_r)$ is an incomplete cycle. 

Case II: $w_j = -e_j + d_m + \ldots + d_n$, $w_k = -e_k + d_n$, $|v_{m+1}| \geq 3$, $|v_i|=2$ for all $m+2 \leq i \leq n$.

If we are in Case II, then either $w_1 = -e_1 + d_{m}$, in which case $(v_{m+1}, w_1, w_k)$ is a heavy triple, or $w_1 = -e_1 + d_n$, in which case $n = r$ or else there is a claw at $v_n$, or $n = r$, and $w_1 = -e_1 + d_r + d_{r+1}$, or else $A_1\cap \{m,\ldots, n\} = \emptyset$, in which case the subgraph induced by $\{v_1, \ldots, v_n,w_1,w_j,w_k\}$ contains an incomplete cycle

Case II.1: $n = r$ and $w_1 = -e_1 + d_r$.

Suppose that $n = r$ and $w_1 = -e_1 + d_r$. Let $l \in \{1, \ldots, 8\} \setminus \{1,j,k\}$. If $|A_l| \geq 1$, then $A_l = \{r-1, r\}$ or else $w_l \dag v_r$ and either $(v_r;w_1,w_k, w_l)$ is a claw or $(v_r, w_1, w_l)$ or $(v_r, w_1, w_k)$ is a heavy triple, or $w_l$ is tight and $(v_1, \ldots, v_{r-1}, w_j, w_k, v_r, w_l, v_1)$ is an incomplete cycle. It follows that either $|w_5| = 2$, or $j = 5$, or $k = 5$, or else $5 \not \in \{j,k\}$ and $w_5 \cdot w_j = 2$, which is absurd.

Case II.1.a: $|w_5| = 2$.

If $|w_5| = 2$, then $|w_6| = |w_7| = |w_8| = 2$ or else the subgraph induced by $\{v_1, \ldots, v_n, w_1, w_2,$ $w_3,w_5, w_6, w_7, w_8\}$ contains an incomplete cycle, thus $w_2$ and $w_3$ are unloaded and $\{2,3\}= \{j,k\}$. It follows that $|w_4|\geq 3$ or else there is a claw at $w_3$. Suppose that $w_4$ is unloaded. Then $w_4 = -e_4 + d_{r-1} + d_r$, so $j = 3$ or else $w_4\cdot w_2 = 2$, but then $(v_r, w_1, w_4, w_2, v_r)$ is an incomplete cycle. If $w_4$ is loaded, then $w_4|_{E_8} = -e_4 + e_2 + e_1 + d_{r-1} + d_r$, but then $(w_5;w_1, w_4, w_6)$ is a claw. 

Case II.1.b: $j = 5$.

If $j = 5$, then $w_5 = -e_5 + d_{r-1} + d_r$, so $|w_6|\geq 3$ or else $(w_5;v_{r-1}, w_k,w_6)$ is a claw, but then $w_6 = -e_6 + d_{r-1}$ or else either $n \in A_6$ and $(w_1, w_k, w_6)$ is a heavy triple or $n \not \in A_6$ and $2 \leq w_6 \cdot v_r$, which is absurd, but then $(v_{r-1}, w_6, v_r, w_k, w_5, v_{r-1})$ is an incomplete cycle. 

Case II.1.c: $k = 5$.

If $k = 5$, then $w_5 = -e_5 + d_r$, so $|w_6|\geq 3$ or else $(w_5;v_{r-1}, w_j, w_6)$ is a claw, but then $n \in A_6$ or else the subgraph induced by $\{v_1, \ldots, v_n, w_j,w_5, w_6\}$ contains an incomplete cycle, so $w_6 = -e_6 + d_n$ or else $2\leq w_6 \cdot w_j$, but then $(v_r, w_1, w_6)$ is a heavy triple. 

Case II.2: $n = r$ and $w_1 = -e_1 + d_{r-1} + d_r$.

Suppose that $n = r$ and $w_1 = -e_1 + d_{r-1} + d_r$. Let $l \in \{1, \ldots, 8\} \setminus \{1,j,k\}$. If $A_l = \{r\}$, then $w_l \dag v_r$ and either $w_l \dag w_k$, in which case $(v_r, w_k,w_l)$ is a heavy triple, $w_l \dag w_j$, in which case $(v_r,w_l,w_j,w_k,v_r)$ is an incomplete cycle, $w_l \dag w_1$, in which case $(v_r, w_l, w_1, w_j, w_k, v_r)$ is an incomplete cycle. If $A_l = \{r-1\}$, then $(v_{r-1}, w_l, v_r, w_k, w_j,w_1,v_{r-1})$ is an incomplete cycle. If $A_l = \{r-1\}$, then $w_l \dag v_{r-1}$ and $(w_1,w_j,w_l)$ is a heavy triple. If $r= 2$ and $A_l = \{0,1,2\}$, then $w_l \dag v_2$ and either $w_l \dag w_1$ or $w_l \dag w_j$, and in either case the subgraph induced by $\{v_1, \ldots, v_n, w_1,w_j,w_k,w_l\}$ contains an incomplete cycle. Conclude that $|A_l| = 0$ for $l \not \in \{1,j,k\}$. 

Case II.2.a: $|w_5| = 2$.

If $|w_5| = 2$, then $\{2,3\} = \{j,k\}$, and $w_2$ and $w_3$ are unloaded. It follows that there is a claw at $w_3$ unless $|w_4|\geq2$, in which case $j = 2$ and $w_4 = -e_4 + e_2 + e_1$, or else either $|A_4|\geq 1$ or $w_4$ is loaded and $s^*_4 \leq |\sigma|_1 + 1$, which is absurd, but then $(w_1,w_2, w_4)$ is a heavy triple. 

Case II.2.b: $j = 5$.

If $j = 5$, then $(w_5;v_{r-1}, w_k, w_6)$ is a claw.

Case II.2.c: $k = 5$.

If $k = 5$, then $(w_5;v_r,w_k,w_6)$ is a claw.
\end{proof}

\begin{prop}\label{prop:discv}
    If $G(\mathcal V)$ has two components then $\sigma = (1, 1, 2, \ldots, 2) \in \mathbb Z^{n+1}$ and either $s^*=(2n-1, 1, 2, 0, 0, 0, 0, 0)$ or $s^* = (2n-1, 1, 0, 0, 2, 0, 0, 0)$.
\end{prop}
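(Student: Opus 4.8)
The plan is to run the same kind of exhaustive argument used in the preceding four subsections, now feeding in Greene's analysis of decomposable changemaker bases together with the structural lemmas just established. First I would invoke \cite[Lemmas~5.1 and~5.2]{Gre13}: since $G(\mathcal V)$ is disconnected, every $v_i$ is just right, there is an index $r>1$ with $v_r = -d_r + d_{r-1} + \cdots + d_0$ and $|v_i| = 2$ for $1 \le i < r$, and the components of $G(\mathcal V)$ are the norm-$2$ path $\{v_1,\dots,v_{r-1}\}$ and the path $\{v_r,\dots,v_n\}$. Since $(\tau)^\perp$ has no norm-$1$ vector, $\sigma_0 = 1$, and the norm-$2$ conditions then force $\sigma_0 = \cdots = \sigma_{r-1} = 1$ and $\sigma_r = r$; a short argument (each later $v_i$ being just right forces it to be $-d_i + d_{i-1}$) gives $\sigma_{r+1} = \cdots = \sigma_n = r$, so that $\sigma = (1,\dots,1,r,\dots,r)$. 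The entire content of the proposition is then that $r = 2$.

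Next I would pull in the lemmas already proved in this subsection: $|w_1| \ge 3$ (Lemma~\ref{lem:discvw1big}), none of $w_1, w_2, w_3, w_5$ is tight, there are two distinct indices $j,k \in \{2,3,5\}$ with $w_j\cdot(-e_1) = w_k\cdot(-e_1) = -1$ and $|w_j|, |w_k| \ge 3$, and for any such pair $A_j \cap A_k = \emptyset$ (Lemma~\ref{lem:discvemptyint}). The crucial structural step is to pin down how the three ``long'' vectors $w_1, w_j, w_k$ meet $\mathcal V$: using claw-freeness at $w_1$ (the trivalent vertex of the $E_8$ diagram), the absence of heavy triples (Lemma~4.10 of \cite{Gre13}), the incomplete-cycle obstruction, and the reducibility arguments exploited throughout this section, I would show that $w_1$ can only attach to an end of the $\{v_1,\dots,v_{r-1}\}$-component while $w_j$ and $w_k$ must reach into (and, together with $\{v_r,\dots,v_n\}$, account for) the second component, with at most one of them bridging the two. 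This forces $|v_i| = 2$ for all $i \ge 2$, hence $r = 2$, hence $\sigma = (1,1,2,\dots,2)$, $|\sigma|_1 = 2n$, and (since $s^*_1 \le |\sigma|_1$) $A_1 = \{1,\dots,n\}$, i.e. $w_1 = -e_1 + d_1 + \cdots + d_n$ and $s^*_1 = |\sigma|_1 - 1 = 2n-1$.

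With $\sigma$ and $w_1$ determined, I would finish exactly as in Propositions~\ref{prop:alloness1full} and~\ref{prop:v1tight}: condition on the unordered pair $\{j,k\} \subseteq \{2,3,5\}$ and on whether $|w_5| = 2$ (noting that if $|w_5| = 2$ while $w_2$ or $w_3$ is loaded, then some $w_l$, $l \in \{6,7,8\}$, has $|w_l| \ge 3$ and produces an incomplete cycle through $w_1$), then place $w_4$ and $w_6, w_7, w_8$ and discard every remaining configuration except the two that survive, $s^* = (2n-1,1,2,0,0,0,0,0)$ and $s^* = (2n-1,1,0,0,2,0,0,0)$, using claw-freeness, the no-heavy-triple lemma, the incomplete-cycle obstruction, and the sign-error Lemmas~\ref{0pairingmediated}--\ref{pitchforkmediated}.

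The step I expect to be the main obstacle is the middle one: simultaneously controlling the three long vectors $w_1, w_j, w_k$ against one another and against the $E_8$ geometry so as to rule out $r \ge 3$ — that is, to rule out attaching a long $\mathcal W$-vector to a norm-$2$ path of length $\ge 2$ without creating a claw, a heavy triple, an incomplete cycle, or a sign error. The hypothesis $A_j \cap A_k = \emptyset$ and the reducibility obstruction are precisely the tools that should make this tractable, but verifying it requires careful bookkeeping of which endpoints of the two components of $G(\mathcal V)$ each of $w_1, w_j, w_k$ is allowed to meet.
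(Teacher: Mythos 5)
Your outline does follow the paper's route---Greene's Lemmas 5.1 and 5.2, the four lemmas already proved in this subsection, then pinning down $w_1, w_j, w_k$ to force $r=2$, then an endgame placing $w_4, w_6, w_7, w_8$---but two things need repair. First, your preliminary claim that $\sigma = (1,\dots,1,r,\dots,r)$ follows from ``a short argument'' because each $v_i$ with $i>r$ is just right and hence equal to $-d_i+d_{i-1}$ is false: just-rightness permits $v_i = -d_i + d_{i-1}+\cdots+d_l$ with $l<i-1$. For instance $\sigma=(1,1,2,4)$ is a changemaker with $v_3=-d_3+d_2+d_1+d_0$ just right, $G(\mathcal V)$ disconnected into $\{v_1\}$ and $\{v_2,v_3\}$, $v_2\dag v_3$, and no forbidden configuration in $G(\mathcal V)$ alone. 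That $|v_i|=2$ for $i>r$, and indeed that $r=2$, cannot be read off from $\mathcal V$ by itself; in the paper these facts emerge only after the supports of $w_1$, $w_j$, $w_k$ have been determined.

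Second, the step you flag as ``the main obstacle'' is essentially the whole proof, and the proposal does not carry it out. The paper's chain there is: by Lemma \ref{lem:discvemptyint} assume without loss of generality $n\notin A_j$; a gap $m\in A_j$, $m+1\notin A_j$ forces $w_j\cdot v_{m+1}=1$ with $|v_{m+1}|\ge 3$, and eliminating the alternative local configurations via heavy triples yields $A_j=\{m\}$; since $G(\{v_1,\dots,v_n,w_j\})$ is then connected, $w_k$ can have only one neighbor in $\mathcal V$, whence $A_k=\{n\}$; connectivity together with the incomplete-cycle and heavy-triple obstructions then forces $m\in A_1$, $|A_1\cap A_k|=1$, and $w_1$ just right, i.e.\ $w_1=-e_1+d_m+\cdots+d_n$; and claw-freeness at $v_{r-1}$ (the claw $(v_{r-1};v_{r-2},w_1,w_j)$) finally gives $r=2$ and $|v_i|=2$ for $i\ge 3$. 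Only at that point do $\sigma=(1,1,2,\dots,2)$ and $s^*_1=2n-1$ follow. Your endgame sketch (all other $A_l=\emptyset$, $j=2$, $w_4$ unloaded of norm $2$) is consistent with what the paper does, but as written the proposal records a plan rather than a proof, and its one concrete shortcut is the step that is wrong.
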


\begin{proof}
    Note first that the Lemma \ref{lem:discvemptyint} ensures that $n\not \in A_j \cap A_k$, so, without loss of generality, let us assume that $n \not \in A_j$, hence there is some $m \in A_j$ such that $m+1 \not \in A_j$. Notice that if $m \in A_j$ but $m+1\not \in A_j$, then $w_j \cdot v_{m+1} \geq 1$ and $|v_{m+1}|\geq 3$, so $w_j \cdot v_{m+1} = 1$. Then either $w_j \cdot v_m = -1$, or $v_m = -d_m + d_{m-1} + \ldots + d_l$ for some $l <m-1$ with $|v_{l+1}|\geq 3$ and $|v_i| = 2$ for all $l+2\leq i\leq m-1$, $v_{m+1} = -d_{m+1} + d_m + d_{m-1}$, and $A_j \cap \{l, \cdots, m+1\} = \{l,m\}$, in which case $w_j \cdot v_{l+1} = 1$, so $(v_{l+1}, v_{m+1}, w_j)$ is a heavy triple. It follows that either $A_j  = \{m\}$ or $n \in A_j$. Supposing that $A_j = \{m,\}$, note that since $G(\{v_1, \ldots, v_n, w_j\})$ is connected, $w_k$ has only a single neighbor in $\mathcal V$ or else there is a heavy triple or an incomplete cycle, hence $n \in A_k$. We must therefore have that $A_j = \{m\}$ and either $A_k = \{n\}$ or $A_k = \{m', \ldots, n\}$ for some $m < m'$ with $|v_{m'+1}|\geq 3$ and $|v_i| = 2$ for all $m' + 2 \leq i \leq n$. Since $G(\{v_1, \ldots, v_n, w_j, w_k\})$ is connected, we must have $m \in A_1$, $|A_1 \cap A_k| = 1$, and $w_1$ is just right, or else $G(\{v_1, \ldots, v_n, w_1, w_j, w_k\})$ contains a heavy triple or an incomplete cycle. It follows that $A_k = \{n\}$ and therefore $w_1 = -e_1 + d_m + \ldots + d_n$, but then $r = 2$ or else $(v_{r-1};v_{r-2}, w_1, w_j)$ is a claw. Furthermore, we must have $|v_i| = 2$ for all $3\leq i \leq n$. Hence, $\sigma = (1,1, 2, \ldots, 2) \in \mathbb Z^{n+1}$.

    It follows that $|A_l| = 0$ for all $l \in \{1, \ldots, 8\} \setminus \{1,j,k\}$ or else either $w_l$ is tight and $(V_1,w_j,v_2,w_l,v_1)$ is an incomplete cycle or $w_l \sim v_1$, in which case either $(w_1,w_j,w_l)$ is a heavy triple or $(v_1;w_1,w_j,w_l)$ is a claw, or $w_l \dag v_2$, in which case either $(w_j,w_k,w_l)$ is a heavy triple or $n = 2$ and $(v_2;w_j,w_k,w_l)$ is a claw or $n > 2$ and $(v_2;v_3,w_j,w_l)$ is a claw, or $w_l \sim v_n$ and either $(w_j,w_k,w_l)$ is a heavy triple or $n = 2$ and $(v_2;w_j,w_k,w_l)$ is a claw or $n > 2$ and $(v_2;v_3, w_k,w_l)$ is a claw. It follows that $j \neq 5$ or else $(w_5;v_1,v_2,w_6)$ is a claw. If $j = 3$, then $w_4$ is loaded and $w_4 = -e_4 + e_1 + e_k$, in which case $(w_1,w_k,w_4)$ is a heavy triple, or else $(w_3;v_1,v_2,w_4)$ is a claw. Conclude that $j = 2$ and that $w_4$ is unloaded, hence $|w_4| = 2$, in the same breath. Therefore, $s^* = (2n-1, 1, 2, 0, 0, 0, 0, 0)$ or $s^* = (2n-1, 1, 0, 0, 1, 0, 0, 0)$.   
\end{proof}

\section{Main Results}\label{sec:main}
\begin{table} 
\caption{$\Lambda(p, -k^2 \ \mod p) \hookrightarrow E_8\oplus \mathbb Z^{j-1},\  j \geq 1, \ \sigma=(1,1,\ldots,1)$.}
\begin{tabular}{|c|c|c|c|c|}\hline
 Tange Type & $p$& $k$& $s^*$ & Proposition \\ \hline
 A$_{1-}$ & $14j^2 - 7j + 1$ & $-7j + 2$ & $(0, 1, j-1, 0, 0, 0, 0, 0)$ & \ref{prop:alloness10}\\ \hline
 A$_{1+}$ & $14j^2 + 7j + 1$& $7j + 2$  & $(0,1,j-1,1,0,0,0,0)$ & \ref{prop:alloness10}\\ \hline
 A$_{2-}$ & $20j^2 - 15j + 3$& $-5j + 2$& $(0,1,0,0,j-1,0,0,0)$ & \ref{prop:alloness10}\\ \hline
 A$_{2+}$ & $20j^2 + 15j + 3$& $5j + 2$ &$(0,1,0,0,j-1,1,0,0)$ & \ref{prop:alloness10}\\ \hline
 B$_-$ & $30j^2 -9j + 1$& $-6j + 1$& $(j-1,1,0,1,0,0,0,0)$ & \ref{prop:alloness1full} \\ \hline
 B$_+$ & $30j^2 + 9j + 1$& $6j + 1$& $(j-1,0,0,1,1,0,0,0)$ & \ref{prop:alloness1full}\\\hline
 C$_{1-}$ & $42j^2 - 23j + 3$& $-7j + 2$& $(0,j,j-1,1,0,0,0,0)$& \ref{prop:alloness10}\\ \hline
 C$_{1+}$ & $42j^2 + 23j + 3$& $7j + 2$& $(1,j,j-1,0,0,0,0,0)$ & \ref{prop:alloness11}\\ \hline
 C$_{2-}$ & $42j^2 - 47j + 13$& $-7j + 4$& $(0,j,j-1,0,0,0,0,0)$& \ref{prop:alloness10}\\ \hline
 C$_{2+}$ & $42j^2 + 47j + 13$& $7j + 4$& $(1, j, j-1, 1, 0, 0, 0, 0)$ & \ref{prop:alloness11}\\ \hline
 D$_{1-}$ & $52j^2 - 15j + 1$& $-13j + 2$& $(0,j, 0, 0, j-1,1,0,0)$& \ref{prop:alloness10} \\ \hline
 D$_{1+}$ & $52j^2 + 15j + 1$& $13j + 2$& $(1, j, 0, 0, j-1, 0 ,0,0)$ & \ref{prop:alloness11}\\ \hline
 D$_{2-}$ & $52j^2 - 63j + 19$& $-13j + 8$&$(0,j,0,0,j-1,0,0,0)$& \ref{prop:alloness10} \\ \hline
 D$_{2+}$ & $52j^2 + 63j + 19$& $13j + 8$& $(1,j,0,0,j-1,1,0,0)$ & \ref{prop:alloness11}\\ \hline
 E$_{1-}$ & $54j^2 - 15j + 1$ & $-27j + 4$& $(j-1,0,0,j,1,0,0,0)$& \ref{prop:alloness1full} \\ \hline
 E$_{1+}$ & $54j^2 + 15j + 1$ & $27j + 4$& $(j-1, 1, 1, j, 0, 0, 0, 0)$& \ref{prop:alloness1full}\\ \hline
 E$_{2-}$ & $54j^2 - 39j + 7$ & $-27j + 10$& $(j-1, 1, 0, j, 0, 0, 0, 0)$& \ref{prop:alloness1full}\\\hline
 E$_{2+}$ & $54j^2 + 39j + 7$ & $27j + 10$ & $(j-1,0,1,j,1,0,0,0)$& \ref{prop:alloness1full}\label{table:allones}\\\hline

 \end{tabular}
\end{table}

\begin{table}
\caption{$\Lambda(p, -k^2 \ \mod p)\hookrightarrow E_8\oplus \mathbb Z^j,\ j \geq 1, \ \sigma=(1,1,\ldots,1,j)$.}
\begin{tabular}{|c|c|c|c|c|}
\hline
 Tange Type & $p$& $k$& $s^*$ & Proposition \\ \hline
 F$_{1-}$ & $69j^2 - 17j + 1$ & $-23j + 3$ &$(j-1,j,0,0,1,0,0,0)$ &\ref{prop:vttight}\\ \hline
 F$_{1+}$ & $69j^2 + 17j + 1$ & $23j + 3$ &$(j-1,j+1,1,0,0,0,0,0)$ &\ref{prop:vttight}\\ \hline
 F$_{2-}$ & $69j^2 - 29j + 3$ & $-23j + 5$ & $(j-1,j,1, 0 ,0,0,0,0)$&\ref{prop:vttight}\\ \hline
 F$_{2+}$ & $69j^2 + 29j + 3$ & $23j + 5$ & $(j-1,j+1,0,0,1,0,0,0)$&\ref{prop:vttight}\\ \hline
 G$_{1-}$ & $85j^2 - 19j + 1$ & $-17j + 2$& $(j-1, 0, j, 0, 1, 0, 0, 0)$&\ref{prop:vttight}\\ \hline
 G$_{1+}$ & $85j^2 + 19j + 1$ & $17j + 2$&$(j-1, 1, j+1, 0, 0, 0, 0, 0)$&\ref{prop:vttight}\\ \hline
 G$_{2-}$ & $85j^2 - 49j + 7$ & $-17j + 5$& $(j-1,1,j,0,0,0,0,0)$&\ref{prop:vttight}\\ \hline
 G$_{2+}$ & $85j^2 + 49j + 7$ & $17j + 5$&$(j-1, 0, j+1, 0, 1, 0, 0, 0)$&\ref{prop:vttight}\\ \hline
 H$_{1-}$ & $99j^2 - 35j + 3$ & $-11j + 2$& $(j-1,0,1,0,j,0,0,0)$&\ref{prop:vttight}\\ \hline
 H$_{1+}$ & $99j^2 + 35j + 3$ &$11j + 2$&$(j-1, 1, 0, 0, j+1, 0, 0, 0)$ &\ref{prop:vttight}\\ \hline
 H$_{2-}$ & $99j^2 - 53j + 7$ & $-11j + 3$&$(j-1, 1, 0, 0, j, 0, 0, 0)$&\ref{prop:vttight}\\ \hline
 H$_{2+}$ & $99j^2 + 53j + 7$ &$11j + 3$&$(j-1, 0, 1, 0, j+1, 0, 0, 0)$ &\ref{prop:vttight}\label{table:vntight}\\ \hline
 \end{tabular}
 \end{table}

 \begin{table}
\caption{$\Lambda(p, -k^2 \ \mod p)\hookrightarrow -E_8\oplus \mathbb Z^j,\ j \geq 1, \ \sigma=(1,2,2,\ldots,2)$.}
\begin{tabular}{|c|c|c|c|c|}
\hline
 Tange Type & $p$& $k$& $s^*$ & Proposition \\ \hline
  I$_{1-}$ & $120j^2 - 16j + 1$ & $-12j + 1$&$(2(j-1),1, 2, 0, 0,0,0,0)$ & \ref{prop:v1tight} \\ \hline
 I$_{1+}$ & $120j^2 + 16j + 1$&$12j + 1$&$(2(j-1),1, 0, 0, 2, 0, 0, 0)$ & \ref{prop:v1tight}\\\hline
 I$_{2-}$ & $120j^2 - 20j +1$ & $-20j + 2$& $(2(j-1),2,0,0,1,0,0,0)$ & \ref{prop:v1tight}\\ \hline
 I$_{2+}$ & $120j^2 + 20j +1$ & $20j + 2$ &$(2(j-1),0,2,0,1,0,0,0)$& \ref{prop:v1tight}\\ \hline
 I$_{3-}$ & $120j^2 - 36j + 3$ & $-12j + 2$ &$(2(j-1),2,1,0,0,0,0,0)$& \ref{prop:v1tight}\\ \hline
 I$_{3+}$ & $120j^2 + 36j + 3$ & $12j + 2$&$(2(j-1), 0, 1, 0, 2, 0, 0, 0)$ & \ref{prop:v1tight}\label{table:v1tight}\\ \hline
 \end{tabular}
 \end{table}

 \begin{table}
\caption{$\Lambda(p, -k^2 \ \mod p)\hookrightarrow -E_8\oplus \mathbb Z^j,\ j \geq 1, \ \sigma=(1,1,2,\ldots,2)$.}
\begin{tabular}{|c|c|c|c|c|}
\hline
 Tange Type & $p$& $k$& $s^*$ & Proposition\\ \hline
 J$_-$ & $120j^2-104j+22$& $-12j + 5$& $(2j-1, 1, 2, 0, 0, 0, 0, 0)$ & \ref{prop:discv}\\ \hline
 J$_+$ & $120j^2+104j+22$& $12j + 5$ & $(2j-1, 1, 0, 0, 2, 0, 0, 0)$ & \ref{prop:discv}\label{table:discv}\\ \hline
 \end{tabular}
 \end{table}
\subsection{How to read the tables in this section.} In \cite{Tan09} and \cite{Tan10}, Tange provided a tabulation of simple knots in lens spaces admitting integer surgeries to $\mathcal P$. There is one lens space on Tange's list of surgeries, $L(191, 157)$, that our tables do not account for. The linear lattice $\Lambda(191,157)$ has rank 8, and embeds in $E_8 \oplus \mathbb Z$ as the orthogonal complement to the $E_8$-changemaker $\tau = (s, (1))$ with $s^* = (1, 1, 1, 1, 0, 0, 0, 0)$, which falls out of the purview of our analysis of $E_8$-changemakers in $E_8 \oplus \mathbb Z^{n+1}$ with $n \geq 2$. With the exception of the lone simple knot in the lens space $L(191,157)$, each of Tange's 19 families is a family of simple knots $K_j \subset L(p_j,q_j)$ representing the class $k_j \in \mathbb Z/ p_j \mathbb Z \cong H_1(L(p_j, q_j))$, parametrized by $j \in \mathbb Z \setminus \{0\}$, where $p_j$ is a quadratic polynomial of $j$, $k_j$ is a linear polynomial in $j$, and $q_j$ is the residue of $-k_j^2 \ \mod p_j$. After characterizing linear $E_8$-changemaker lattices, it seems even more surprising to the author that the Tange knot admitting a surgery to $L(191, 157)$ does not fit in to an infinite family of knots like all the other Tange knots.

From the perspective of the linear lattices bounded by the lens spaces accounted for on Tange's list, each of Tange's families naturally splits into two subfamilies: a $-$-family and a $+$-family, depending on whether $j \leq -1$ or $j \geq 1$, respectively. We have recorded the naming convention according to \cite{Tan10}, $p_j$, $k_j$, the $E_8$-changemaker whose orthogonal complement is isomorphic to $\Lambda(p_j, q_j)$, and the proposition in which the $E_8$-changemaker makes an appearance in Tables 1--4, which are separated by changemaker tails. 

The attentive reader will observe that these 38 families of $E_8$-changemakers do not account for all forty-four families of lattices identified in Section \ref{sec:identifying}. In addition to these families, there are $6$ families of orthogonal sums of pairs of linear lattices which embed as the orthogonal complements to $E_8$-changmakers $\tau = (s, (1,\ldots, 1)) \in E_8 \oplus \mathbb Z^{n+1}$ given by $s^* = (n+1, 1, 0, 0, 0, 0, 0, 0)$ and $s^* = (n+1, 0, 1, 0, 1, 0, 0, 0)$, which correspond to surgeries on cables of the exceptional fiber of order $-2$, $s^* = (n+1, 0, 1, 0, 0, 0, 0, 0)$ and $s^* = (n+1, 1, 0, 0, 1, 0, 0, 0)$, which correspond to surgeries on cables of the exceptional fiber of order $3$, and $s^* = (n+1, 0, 0, 0, 1, 0, 0, 0)$ and $s^* = (n+1, 1, 1, 0, 0, 0, 0, 0)$, which correspond to surgeries on cables of the exceptional fiber of order $5$. Thus, we have accounted for all linear lattices and orthogonal pairs of linear lattices which are also $E_8$-changemaker lattices.

\subsection{Proofs of the main theorems.}

\begin{proof}[Proof of Theorem \ref{thm:twosummandhardeightembedding}]
Note that none of the decomposable lattices identified in Section \ref{sec:identifying} have $\Lambda(2,1)$ summands. It follows that if $\Lambda(p,q) \oplus \Lambda(2,1) \cong (\tau)^\perp$ for some $E_8$-changemaker $\tau \in E_8 \oplus \mathbb Z^{n+1}$, then $n \in \{-1, 0, 1\}$. In fact, we cannot have $n = 1$, or else the isomorphism $\Lambda(p,q) \oplus \Lambda(2,1)$ must take $\Lambda(2,1)$ to $(d_1-d_0)$, but then $A_j = \{0, 1\}$ and $w_j$ is just right for all $j \in \{1, \ldots, 8\}$ with $|w_j|\geq 3$, so $|w_j| \geq 3$ for at most one $j \in \{2,3,5\}$, so $(w_1; w_2, w_3, w_5)$ is a claw. Furthermore, we cannot have $n = 0$, since then $\sigma = (1)$ and for all $j \in \{1, \ldots, 8\}$ either $|w_j| \geq 3$ or $w_j \sim w_{j'}$ for some $j'$ such that $e_j$ and $e_{j'}$ are adjacent in the $E_8$ Dynkin diagram. A computer search of the 1003 non-zero $E_8$-changemakers in $E_8$ reveals only two whose orthogonal complements are isomorphic to $\Lambda(p,q) \oplus \Lambda(2,1)$: either $s^* = (1, 0, 0, 1, 0, 0, 0, 0)$, in which case $\Lambda(p,q)$ is given by the Gram matrix
$$\begin{bmatrix}
    2 & -1 & 0 & 0 & 0 & 0\\
    -1 & 4 & -1 & 0 & 0 & 0\\
    0 & -1 & 2 & -1 & 0 & 0\\
    0 & 0 & -1 & 2 & -1 & 0\\
    0 & 0 & 0 & -1 & 2 & -1\\
    0 & 0 & 0 & 0 & -1 & 2
\end{bmatrix},$$
and one readily certifies that $(p,q) = (27,16)$, or $s^* = (0, 0, 1, 0, 0, 0, 0, 0)$, in which case $\Lambda(p,q)$ is given by the Gram matrix
$$\begin{bmatrix}
    2 & -1 & 0 & 0 & 0 & 0\\
    -1 & 2 & -1 & 0 & 0 & 0\\
    0 & -1 & 2 & -1 & 0 & 0\\
    0 & 0 & -1 & 2 & -1 & 0\\
    0 & 0 & 0 & -1 & 2 & -1\\
    0 & 0 & 0 & 0 & -1 & 2
\end{bmatrix},$$
and one readily certifies that $(p,q) = (7,6)$. Note that $$0 = \max\{\langle \mathfrak c, \tau\rangle \colon \langle \mathfrak c , \mathfrak c\rangle = \text{rk}(E_8) -4d(\mathcal P) = 0\},$$ so $2g(K) = 2p$ for any knot $K \subset \mathcal P$ with $K(2p) \cong L(p,q)\#L(2,1)$.    
\end{proof}

\begin{proof}[Proof of Theorem \ref{thm:integersurgeryrealization}]
Every family of linear lattices admitting an $E_8$-changemaker embedding, i.e. those families captured in Propositions \ref{prop:alloness10} (cf. Table \ref{table:allones}), \ref{prop:alloness11} (cf. Table \ref{table:allones}), \ref{prop:alloness1full} (cf. Table \ref{table:allones}), \ref{prop:vttight} (cf. Table \ref{table:vntight}), \ref{prop:v1tight} (cf. Table \ref{table:v1tight}), \ref{prop:discv} (cf. Table \ref{table:discv}), is bounded by a lens space realized by surgery on a Tange knot in $\mathcal P$.
\end{proof}

\subsection{Characterizing $C$ and $E$.} Theorem \ref{thm:twosummandhardeightembedding} alone does not give us Theorem \ref{thm:main}; rather, it puts us in a position to leverage the following theorems, due, in order, to Baker, Rasmussen (building on work of Ni), and Hedden, to great effect. We are grateful to John Baldwin for pointing out as much. 

\begin{thm}[Theorem 1.1 of \cite{Bak06}]\label{thm:pplusoneoverfour}
Let $K \subset L(p,q)$ be a knot whose exterior $M_K$ fibers over $S^1$ with fiber surface $F$ where $\partial F$ is connected, and let $g(K) = g(F)$. If $g(K) \leq (p+1)/4$, then $K$ is one-bridge with respect to a Heegaard torus of $L(p,q)$. \qed
\end{thm}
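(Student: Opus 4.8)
The plan is to argue by contradiction: place $K$ in minimal bridge position with respect to the genus-one Heegaard torus $T$ of $L(p,q)$, let $b$ be the resulting bridge number (so $K \cap V_i$ is a union of $b$ trivial arcs in the solid torus $V_i$, where $L(p,q) = V_1 \cup_T V_2$), suppose $b \geq 2$, and deduce that $-\chi(F) = 2g(F)-1$ must be of order $p$. First I would dispose of the low-complexity cases: if $F$ is a disk then $M_K$ is a solid torus, so $K$ is isotopic to a core of a Heegaard solid torus (by uniqueness of genus-one splittings of lens spaces) and is $1$-bridge; if $F$ is an annulus then $M_K$ is an $I$-bundle and $K$ is a core, or a cable of a core, again $1$-bridge. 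So I may assume $-\chi(F) \geq 1$, whence $M_K$ is irreducible and $\partial$-irreducible and $F$, being a fibre, is incompressible and $\partial$-incompressible.

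Next I would isotope $F$ (keeping $K$ in minimal bridge position) to minimize $|F \cap T|$. Standard innermost-disk and outermost-arc surgeries, using incompressibility and $\partial$-incompressibility of $F$ together with irreducibility of $M_K$, eliminate every closed curve of $F \cap T$ that is inessential in the punctured torus $\widehat T = T \setminus N(K)$ and every inessential arc; so all components of $F \cap T$ are essential in $\widehat T$. Cutting $F$ along $T$ then produces incompressible, $\partial$-incompressible surfaces $F_i = F \cap V_i$ properly embedded in the genus-$(b+1)$ handlebody $W_i = V_i \setminus N(K \cap V_i)$, with boundary on $\widehat T$ and on the $2b$ meridional circles of the drilled arcs.

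The heart of the matter is a two-sided count. On one side, essentiality forces each closed curve of $F \cap T$ to run longitudinally around $\widehat T$; since the meridians of $V_1$ and $V_2$ meet $p$ times on $T$ and $F$ carries a generator of $H_2(M_K, \partial M_K)$, realizing $F_i$ as an honest incompressible surface in the handlebody $W_i$ with boundary slopes of the required type forces on the order of $p$ such curves (this is where $p$ enters the estimate), and, when $b\ge 2$, the requirement that $F_i$ meet all $b$ trivial arcs forces a definite supply of essential arcs as well. On the other side, each essential curve or arc lowers $\chi(F)$, because essential curves of $F\cap T$ are not boundary-parallel in the $F_i$ and essential arcs force $-\chi(F_i)>0$. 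Assembling the two inequalities yields $-\chi(F) \gtrsim p/2$, contradicting $g(F)\le (p+1)/4$; chasing the exact constants is what pins the threshold at $(p+1)/4$.

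The main obstacle is precisely getting that constant right: a bare Euler-characteristic count tends to lose a factor, and the clean way to recover it is a sweepout / thin-position argument in the style of Rubinstein--Scharlemann. One sweeps $L(p,q)$ by the parallel Heegaard tori $T_s$ (with singular locus the two cores $c_1 \cup c_2$) and sweeps $M_K$ by the fibres $F_t$ (a genuine fibration, hence no singular fibres --- a real convenience), forms the graphic for $(s,t) \mapsto (F_t, T_s)$, locates a cell where $F_t \cap T_s$ is simultaneously simple, uses the bridge position of $K$ to identify that intersection, and propagates the configuration across the whole $(s,t)$-square to conclude that $b>1$ cannot persist unless $-\chi(F)$ is large. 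The delicate steps --- controlling the graphic near the singular cores, near $\partial M_K$ where the fibration gives a product foliation of $T^2$ that must be reconciled with $T_s \cap \partial M_K$, and ruling out degenerate spiralling intersection patterns --- are where essentially all the work lies; alternatively one might replace the sweepout by an explicit taut sutured-manifold hierarchy adapted to the decomposition of $M_K$ along $\widehat T$.
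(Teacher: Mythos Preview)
The paper does not prove this statement: it is quoted verbatim as Theorem~1.1 of Baker's paper \cite{Bak06} and marked with a \qed, so there is no proof in the present paper to compare your proposal against. The author treats Baker's result as a black box, invoking it only in the final subsection to conclude that the surgery duals $E^*$ and $C^*$ are one-bridge (and then simple, via Hedden's theorem).

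That said, your sketch is broadly in the spirit of Baker's actual argument: he does use a Rubinstein--Scharlemann sweepout/graphic, comparing the fibration of $M_K$ with the sweepout of $L(p,q)$ by Heegaard tori, and extracts the genus bound from an analysis of how fibres meet level tori. Your first half (the direct Euler-characteristic count after minimizing $|F\cap T|$) is, as you yourself note, too lossy to recover the constant $(p+1)/4$; the real content is entirely in the graphic argument you defer to the second half. There you have correctly identified the shape of the method but have not actually carried out any of the steps---locating the good cell, controlling the graphic near the cores and near $\partial M_K$, and converting the intersection pattern into the sharp inequality. So as written this is an outline of where the proof lives rather than a proof, and since the paper under review never attempts one, there is nothing further to compare.
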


The following theorem follows from the combination of Theorem 4.3 and Proposition 4.5 of \cite{Ras07} (cf. \cite{Ni}).

\begin{thm}\label{thm:pplusoneovertwo}
Let $K \subset L(p,q)$ be a knot that admits a surgery to an integer homology sphere L-space. If $g(K) < (p+1)/2$, then rk $\widehat{HFK}(L(p,q), K) = p$. \qed
\end{thm}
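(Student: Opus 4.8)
The plan is to obtain this as a packaging of Rasmussen's classification of knots in lens spaces with L-space integer homology sphere surgeries, \cite[Theorem 4.3]{Ras07}, together with the detection of the (rational) Seifert genus by knot Floer homology, \cite[Proposition 4.5]{Ras07} and \cite{Ni}. First I would record two unconditional facts. Since a surgery on $K$ producing an integer homology sphere forces $[K]$ to generate $H_1(L(p,q);\mathbb Z)\cong \mathbb Z/p$, the knot $K$ is rationally null-homologous of order $p$, and $\widehat{HFK}(L(p,q),K)$ splits as a direct sum over the $p$ elements of $\text{Spin}^{\text{c}}(L(p,q))$. The spectral sequence from the $\mathfrak t$-summand $\widehat{HFK}(L(p,q),K,\mathfrak t)$ to $\widehat{HF}(L(p,q),\mathfrak t)\cong \mathbb F_2$ has $E_\infty$-page of rank $1$ and its page ranks all have the same parity, so each summand has odd rank; hence $\rk \widehat{HFK}(L(p,q),K)\geq p$, with equality precisely when $K$ is Floer simple, and in all cases $\rk \widehat{HFK}(L(p,q),K)\equiv p \pmod 2$.

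Next I would feed in the surgery hypothesis via the dual-knot perspective: if $K^*\subset Y$ is the core of the surgery solid torus, then $Y$ is an integer homology sphere L-space and some surgery on $K^*$ recovers the L-space $L(p,q)$. Applying the mapping cone/large surgery formula to $K^*$ pins $CFK^\infty(K^*)$ down tightly, and \cite[Theorem 4.3]{Ras07} extracts from this the bound $\rk\widehat{HFK}(L(p,q),K)\leq p+2$; combined with the previous paragraph, $\rk\widehat{HFK}(L(p,q),K)$ is either $p$ or $p+2$, and in the latter case exactly one $\text{Spin}^{\text{c}}$-summand has rank $3$ and the rest have rank $1$. Finally, \cite[Proposition 4.5]{Ras07} --- this is where Ni's Thurston-norm detection enters --- shows that in the rank-$(p+2)$ case the two extra generators are forced into Alexander gradings spread far from the grading that survives to $\widehat{HF}$, and the adjunction-type inequality bounding the support of $\widehat{HFK}(L(p,q),K)$ by a genus-minimizing rational Seifert surface for $K$ then forces $g(K)\geq (p+1)/2$. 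Taking the contrapositive, $g(K)<(p+1)/2$ rules out rank $p+2$, so $\rk\widehat{HFK}(L(p,q),K)=p$.

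The step I expect to be the main obstacle is the last one: making quantitative how the rank-$(p+2)$ case spreads out the Alexander gradings, and extracting exactly the threshold $(p+1)/2$ rather than a nearby constant. This rests on knowing the precise shape of $\widehat{HFK}(L(p,q),K)$ dictated by the L-space surgery condition (through the staircase form of $CFK^\infty(K^*)$ and the identification of surgery Floer homologies with quotient complexes) and then feeding the resulting grading support into the rational genus detection theorem. Since both ingredients are exactly the cited results of Rasmussen and Ni, the real work is bookkeeping --- checking that the normalization conventions for the Alexander grading on a rationally null-homologous knot line up so that the numerology outputs $(p+1)/2$.
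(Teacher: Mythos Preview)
Your proposal is correct and is essentially the same approach as the paper's: the paper does not give its own proof but simply records that the theorem follows from the combination of Theorem~4.3 and Proposition~4.5 of \cite{Ras07} (with input from \cite{Ni}), and your write-up is a faithful unpacking of precisely those two ingredients---the rank bound $\rk\widehat{HFK}(L(p,q),K)\in\{p,p+2\}$ from \cite[Theorem~4.3]{Ras07}, and the genus threshold $(p+1)/2$ in the rank-$(p+2)$ case from \cite[Proposition~4.5]{Ras07}.
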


For each homology class $\alpha \in H_1(L(p,q); \mathbb Z)$, the simple knot $K_\alpha$ representing $\alpha$ satisfies $\text{rk }\widehat{HFK}(L(p,q), K) = p$; those familiar with knot Floer homology can readily see this by observing that the knot $K_\alpha$ admits a genus one doubly pointed Heegaard diagram obtained by simply placing an extra basepoint in the standard genus one singly-pointed Heegaard diagram $\mathcal H$ of $L(p,q)$ such that $\widehat{CF}(\mathcal H) \cong \widehat{HF}(L(p,q))$. Hedden observed that simple knots are the only one-bridge knots in lens spaces that are \emph{Floer simple}---that is, they have minimal rank knot Floer homology as they satisfy $\text{rk}\widehat{HFK}(L(p,q), K) = p$.

\begin{thm}[Theorem 1.9 of \cite{Hed}]\label{thm:onebridgefloersimpleimpliessimple}
If $K \subset L(p,q)$ is primitive, $g(K) \leq \frac{p+1}{4}$, and rk $\widehat{HFK}(L(p,q), K) = p$, then $K$ is simple. \qed
\end{thm}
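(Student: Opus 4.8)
The plan is to reduce the theorem to a rigidity statement about genus-one doubly-pointed Heegaard diagrams. First I would use the two standing hypotheses — that $K$ is primitive and that $g(K) \leq (p+1)/4$ — together with Baker's Theorem \ref{thm:pplusoneoverfour}, to put $K$ in one-bridge position with respect to a Heegaard torus $T$ of $L(p,q)$. (To invoke Theorem \ref{thm:pplusoneoverfour} one needs $M_K$ to fiber over $S^1$ with connected fiber boundary; in the setting in which this theorem is applied here, $K$ is the surgery dual of an L-space knot in $\mathcal P$ and hence fibered, so this costs nothing.) A one-bridge knot with respect to a Heegaard torus is a $(1,1)$-knot, so it is presented by a doubly-pointed genus-one Heegaard diagram $\mathcal H = (T, \alpha, \beta, w, z)$ for the pair $(L(p,q), K)$: here $\alpha$ and $\beta$ are meridians of the two solid tori of the splitting, the set $\alpha \cap \beta$ has exactly $p$ points, and the pair $(w,z)$ — equivalently a pair of embedded arcs in $T \setminus (\alpha \cup \beta)$ joining them — records $K$ together with its two bridge arcs. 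Passing to the universal cover turns $\mathcal H$ into grid-like combinatorial data: $\alpha$ and $\beta$ lift to two families of parallel lines meeting in $p$ points inside a fundamental domain, and $K$ is encoded by the position of $z$ relative to $w$.

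Next I would exploit the Floer-simple hypothesis. The complex $\widehat{CFK}(\mathcal H)$ has exactly $p = \#(\alpha\cap\beta)$ generators, and by assumption $\mathrm{rk}\,\widehat{HFK}(L(p,q),K) = p$, so the knot differential — which counts embedded Whitney bigons on $T$ avoiding \emph{both} $w$ and $z$ — must vanish identically. (The differential of $\widehat{CF}(L(p,q))$, counting bigons that avoid only $w$, vanishes automatically since $L(p,q)$ is an L-space; this is where one uses the classification of embedded bigons on a genus-one diagram.) The theorem then amounts to the following \emph{rigidity assertion}: a doubly-pointed genus-one Heegaard diagram realizing $L(p,q)$ with $p$ intersection points, with identically vanishing knot differential, and whose induced knot has genus at most $(p+1)/4$, must agree — after handleslides and isotopy — with the standard doubly-pointed diagram of the simple knot $K_\alpha$ in the homology class $\alpha = [K]$.

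To prove this rigidity I would argue by the combinatorics of bigons in the grid picture. Vanishing of the knot differential forces that every embedded bigon between adjacent generators sweeps across $w$ or $z$ (otherwise it, or a cancelling pair of such bigons, would survive). The constraint that $[K] = \alpha$ generates $H_1(L(p,q))$ pins down the position of $w$ relative to $\beta$; given this, the vanishing-differential condition forces $z$ to lie in the region of $T \setminus (\alpha \cup \beta)$ immediately adjacent to $w$ — which is precisely the defining configuration of a simple knot — \emph{unless} $z$ sits several regions away, so that $K$ winds nontrivially around the torus. In that remaining case, the combinatorial Alexander-grading formula for the $p$ generators of a $(1,1)$-diagram of a lens space shows that the Alexander support of $\widehat{HFK}(L(p,q),K)$ has width $2g(K) > (p+1)/2$, contradicting $g(K) \leq (p+1)/4$. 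Hence $z$ is adjacent to $w$, $\mathcal H$ is the standard simple-knot diagram, and $K = K_\alpha$ is simple; this is the line of argument carried out by Hedden in \cite{Hed} (and it is consistent with Rasmussen's Theorem \ref{thm:pplusoneovertwo}, which supplies $\mathrm{rk}\,\widehat{HFK} = p$ from a weaker genus bound).

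The main obstacle is the last step: making the dichotomy ``either $z$ is adjacent to $w$, or the knot winds and the Alexander support exceeds $(p+1)/2$'' precise. This requires careful bookkeeping of the combinatorial Maslov and Alexander gradings of all $p$ generators of a $(1,1)$-diagram of $L(p,q)$ in terms of the two slope parameters and the basepoint positions, plus the observation that a non-adjacent basepoint configuration cannot have all of its candidate differentials absorbed by basepoint-crossing while keeping the grading spread small. I would expect to import the explicit $(1,1)$-knot Floer computations and the structure theory of one-bridge knots in lens spaces rather than redevelop them here.
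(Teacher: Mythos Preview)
The paper does not supply a proof of this statement: it is quoted from Hedden \cite{Hed} and closed with \qed. So there is nothing in the paper to compare your argument to; you are reconstructing Hedden's proof, not the author's.

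Your reconstruction has the right architecture --- Baker gives one-bridge, one-bridge gives a genus-one doubly-pointed diagram, and Floer-simplicity constrains that diagram --- but your final step is more complicated than it needs to be and suggests a small misconception. Once you isotope the $(1,1)$ diagram to remove every bigon missing both basepoints (which you can always do, decreasing $|\alpha\cap\beta|$ by two each time), the knot Floer differential vanishes identically and $\mathrm{rk}\,\widehat{HFK}(L(p,q),K)=|\alpha\cap\beta|$. The Floer-simple hypothesis then forces $|\alpha\cap\beta|=p$, i.e.\ the curves are in minimal position and the underlying singly-pointed diagram is the standard one for $L(p,q)$. But in the standard $p$-intersection diagram there are exactly $p$ complementary regions, and placing $z$ in any one of them yields precisely the simple knot in the corresponding homology class. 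There is no ``winding'' alternative to rule out, and no second appeal to the genus bound is required; the bound $g(K)\le (p+1)/4$ is used once, to invoke Baker. Your parenthetical about needing fiberedness for Baker is well taken: that is not among the stated hypotheses, and one should either add it or note (as you do) that it holds automatically in the intended application.
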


We will, in addition, make use of one more lemma, a variation of \cite[Lemma 2.2]{Ras07}. For any knot $K$ in a rational homology 3-sphere $Y$, the meridian of $K$, $\mu$, is still well-defined, though since $K$ may not be null-homologous in $Y$ there is no canonical choice of a Seifert longitude. However, we fix a longitude of $K$ and denote it by $\lambda$,  oriented so that $[\lambda]\cdot [\mu]=1$ with respect to the orientation on $\partial M_K$ induced by $Y$. Recall that, by the half-lives-half-dies principle, there is a unique slope $\gamma$ on $\partial M_K$ such that some integer multiple of it bounds in $M_K$.  Let $\alpha = a\cdot [\mu] + p\cdot [\lambda] \in H_1(\partial M_K;\mathbb Z)$ be a primitive homology class represented by such a curve. The number $p$ is well-defined---it is the \emph{order} of $[K]$ in $H_1(Y;\mathbb Z)$. Replacing $[\lambda]$ by $[\lambda] + [\mu]$ has the effect of replacing $a$ by $a-p$, so the value $a \ \mod p$ is an invariant of $K$---and so is the quantity $a/p \ \mod 1$, referred to as the \emph{self-linking number} of $K$ and denoted $K \cdot K$. The self-linking number of $K$ depends only on the class $[K] \in H_1(Y;\mathbb Z)$, and $[nK]\cdot[nK] \equiv n^2[K]\cdot[K]\ \mod 1$.

A \emph{distance} $n$ surgery on $K$ is a manifold $Y'$ obtained by Dehn filling $M_K$ along a curve representing $k\cdot[\mu] + n\cdot[\lambda]$ for some $k \in \mathbb Z$. More geometrically, $Y'$ is obtained by distance $n$ surgery on $K \subset Y$ if the curve $\beta$ along which $M_K$ is Dehn filled to obtain $Y'$ has minimum geometric intersection number $n$ with $\mu$. From this perspective, the relation of being obtained by distance $n$ surgery is clearly symmetric, as $\beta$ becomes the meridian of the surgery dual knot $K'\subset Y'$ when $M_K$ is Dehn filled along $\beta$.  

\begin{lem}[(cf. Lemma 2.2 of \cite{Ras07})]\label{lem:distancensurgery}
Let $K \subset Y$ be a knot in a rational homology sphere with $H_1(Y; \mathbb Z) \cong \mathbb Z/p\mathbb Z$. Then K has a distance $n$ surgery $Y'$ which is an integer homology sphere if and only if $[K]$ generates $H_1(Y)$ and its self-linking number $a/p$ is congruent to $\pm n'/p \ \mod 1$, where $nn' \equiv 1 \ \mod p$.
\end{lem}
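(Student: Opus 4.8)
\textbf{Proof proposal for Lemma \ref{lem:distancensurgery}.}
The plan is to translate the statement entirely into the arithmetic of $H_1(\partial M_K;\mathbb Z)$ and to use the fact that an integer homology sphere filling corresponds to a filling slope that generates $H_1(M_K;\mathbb Z)\cong\mathbb Z/p\mathbb Z$ together with the constraint imposed by the linking form. First I would fix the basis $\{[\mu],[\lambda]\}$ of $H_1(\partial M_K;\mathbb Z)$ as in the paragraph preceding the lemma, recall that $[K]$ has order $p$ in $H_1(Y;\mathbb Z)$, and write the class of the rationally null-homologous slope as $\alpha=a[\mu]+p[\lambda]$ with $a/p$ (mod $1$) the self-linking number $K\cdot K$. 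The key homological input is that the natural map $H_1(\partial M_K)\to H_1(M_K)$ has kernel $\mathbb Z\langle\alpha\rangle$ (half-lives-half-dies), so that Dehn filling $M_K$ along a primitive class $\beta=k[\mu]+n[\lambda]$ produces a manifold $Y'$ with $H_1(Y';\mathbb Z)\cong H_1(M_K)/\langle\beta\rangle$; hence $Y'$ is an integer homology sphere if and only if $\beta$ together with $\alpha$ generates $H_1(\partial M_K)\big/\big(\ker(H_1(\partial M_K)\to H_1(M_K))\big)$-lift, i.e. if and only if $\det\begin{pmatrix} k & n\\ a & p\end{pmatrix}=kp-na=\pm1$.

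Next I would extract the two asserted conditions from the determinant equation $kp-na=\pm1$. Reducing mod $p$ gives $na\equiv\mp1\pmod p$; for this to be solvable in $k$ at all we need $\gcd(a,p)=1$, which is exactly the statement that $[K]=\alpha\bmod\langle\mu\rangle$-image, equivalently the image of $\alpha$ in $H_1(Y;\mathbb Z)$, generates $H_1(Y;\mathbb Z)\cong\mathbb Z/p\mathbb Z$. Wait—more precisely, $[K]$ generating $H_1(Y)$ is equivalent to $\gcd(a,p)=1$, which I would verify by noting $H_1(Y;\mathbb Z)$ is the quotient of $H_1(M_K)$ by the meridian, so the order of $[K]$ divides $p$ with equality iff $a$ is a unit mod $p$. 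Granting $\gcd(a,p)=1$, let $n'$ be defined by $nn'\equiv1\pmod p$; then $na\equiv\mp1\pmod p$ is equivalent to $a\equiv\mp n'\pmod p$, i.e. $a/p\equiv\mp n'/p\pmod1$, which is the self-linking condition in the statement (the sign ambiguity matching $\pm$). Conversely, given $\gcd(a,p)=1$ and $a\equiv\pm n'\pmod p$, one solves $kp-na=\pm1$ for an integer $k$, checks $\gcd(k,n)=1$ so that $\beta$ is primitive (since $\beta$ together with $\alpha$ spans a unimodular sublattice, $\beta$ is primitive automatically), and the resulting filling has trivial first homology. Finally I would record that $\beta$ has geometric intersection number $|n|$ with $\mu$, so this is indeed a distance $n$ surgery, and note the elementary congruence $[nK]\cdot[nK]\equiv n^2(K\cdot K)\pmod1$ is consistent with, though not needed for, the computation.

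The only genuinely delicate point—and the step I expect to be the main obstacle—is pinning down the sign conventions and making the passage between ``$Y'$ is an integer homology sphere,'' ``$\beta$ and $\alpha$ form a basis of $H_1(\partial M_K)$,'' and ``$kp-na=\pm1$'' completely rigorous, since one must be careful that the orientation on $\partial M_K$ induced by $Y$ is the one making $[\lambda]\cdot[\mu]=1$ and that the linking form on $H_1(Y)$ genuinely forces $a/p\bmod1$ to be the self-linking number rather than its negative. This is exactly the content of \cite[Lemma 2.2]{Ras07} in the lens space case, and the argument there adapts essentially verbatim; I would cite it for the bookkeeping and present the homological computation above as the substance. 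Everything else—solvability of the linear Diophantine equation, primitivity of $\beta$, identification of distance with intersection number—is routine and I would state it without belaboring the arithmetic.
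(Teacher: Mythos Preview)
Your proposal is correct and takes essentially the same approach as the paper: both reduce the question to the determinant condition $kp-na=\pm1$ (equivalently $na\equiv\pm1\pmod p$) for the filling slope $\beta=k[\mu]+n[\lambda]$ against the null-homologous slope $\alpha=a[\mu]+p[\lambda]$, and then read off the self-linking condition. The only cosmetic difference is that the paper phrases the homological computation via the Mayer--Vietoris map $A:H_1(T^2)\to H_1(S^1\times D^2)\oplus H_1(M_K)$ being an isomorphism, whereas you phrase it via half-lives-half-dies and the quotient $H_1(Y')\cong H_1(M_K)/\langle\beta\rangle$; these are the same calculation, and the paper defers the bookkeeping you flag as delicate to \cite{Ras07} just as you do.
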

\begin{proof}[Sketch of proof.] The proof of this lemma follows the proof of \cite[Lemma 2.2]{Ras07} up until the final paragraph which discusses the map $A: H_1(T^2; \mathbb Z) \to H_1(S^1 \times D^2; \mathbb Z) \oplus H_1(M_K; \mathbb Z)$ in the Mayer-Vietoris sequence associated to the decomposition $Y' = (S^1 \times D^2) \cup_{T^2} M_K$. Since $H_2(Y';\mathbb Z)$ and $H_1(Y'; \mathbb Z)$ are both assumed to be $(0)$, $A$ is an isomorphism. The map $H_1(T^2; \mathbb Z) \to H_1(S^1 \times D^2; \mathbb Z)\cong \mathbb Z$ is given by $x \mapsto x\cdot [\beta]$, where $[\beta] = k\cdot [\mu] + n\cdot [\lambda]$. On the other hand, the map $H_1(T^2; \mathbb Z) \to H_1(M_K; \mathbb Z) \cong \mathbb Z$ is given by $x \mapsto x \cdot [\alpha]$, where $[\alpha] = a \cdot [\mu] + p \cdot [\lambda]$. Therefore, with respect to the basis $([\mu], [\lambda])$ on $H_1(T^2;\mathbb Z)$, the matrix for the map $A$ is given by
\begin{equation}
    M_A = \begin{bmatrix}
    -n & k\\
    -p & a
    \end{bmatrix}.
\end{equation}
In order for $A$ to be an isomorphism we must be able to choose a $k$ so that $\det(M_A) = \pm 1$, which is possible if and only if $na \equiv \pm 1 \ \mod p$.

\end{proof}

\begin{proof}[Proof of Theorem \ref{thm:main} and Theorem \ref{thm:mainrephrased}]
For any knot $\kappa \subset \mathcal P$ with $\kappa(p/2) \cong L(p,q)$, the cabling construction yields a knot $K$---which is isotopic to a simple closed curve representing $p\cdot [\mu] + 2 \cdot [\lambda] \in H_1(\partial M_K, \mathbb Z)$, and which we call the \emph{$(p,2)$-cable} of $\kappa$---such that $K(2p) = \kappa(p/2) \# L(2,p) \cong L(p,q) \# L(2,1)$. Crucially, since we must have $2p \geq 2g(K)$, it follows that every knot $K$ with $K(2p) \cong L(p,q) \#L(2,1)$ arises as the cable of some knot $\kappa$ with a non-integer surgery either to $L(p,q)$ or $L(2,1)$ by Matignon--Sayari \cite{MS03}. We may furthermore deduce that $\kappa(p/2) \cong L(p,q)$, since in order for $\kappa(2/p)$ $(p\geq 3)$ to be an L-space we must have $g(\kappa) = 0$, and so $\kappa$ bounds a disk in $\mathcal P$ and is therefore unknotted in a 3-ball; in this case $\kappa(2/p)\cong \mathcal P \# L(2,1)$ and is thus never a lens space. 

In the case that $K$ is the $(p,2)$-cable of $\kappa$, an elementary calculation shows that
\begin{equation}\label{eq:firstgenuseq}
    2g(K) -1 = 2p + 2(2g(\kappa)-1) - p.
\end{equation} On the other hand, if $K(2p) = L(2,1)\#L(p,q)$, then $(p,q) = (7,6)$ or $(27,16)$ and $g(K) = p$ by Theorem \ref{thm:twosummandhardeightembedding}. Then, (\ref{eq:firstgenuseq}) reads
\begin{equation}
    g(\kappa) = (p+1)/4,
\end{equation}
and since $\kappa(p/2) \cong L(p,q)$, by Theorems \ref{thm:pplusoneoverfour}, \ref{thm:pplusoneovertwo}, and \ref{thm:onebridgefloersimpleimpliessimple} and the preceding paragraph, it follows that the surgery dual to $\kappa$, $\kappa^* \subset L(p,q)$, is simple.

By Lemma \ref{lem:distancensurgery}, we must furthermore have that the self-linking number of $\kappa^*$ is $\pm 2'/p \ \mod 1$. Explicit computation shows that $\pm 2 \ \mod 7$ are the only solutions to $x^2 \equiv \pm 4 \ \mod 7$ and that $\pm 11 \ \mod 27$ are the only solutions to $x^2 \equiv \pm 14 \ \mod 27$, so, up to orientation reversal, $E^*$ is the unique simple knot in $L(7,6)$ with a distance 2 integer homology sphere surgery, and $C^*$ is the unique simple knot in $L(27,16)$ with a distance 2 integer homology sphere surgery. Therefore, if $K(2p) \cong L(2,1) \# L(p,q)$, $K$ is either the $(7,2)$-cable of $E$ or the $(27,2)$-cable of $C$.
\end{proof}


\begin{thebibliography}{99}

\bibitem{Bak06} K. Baker: \textit{Small genus knots in lens spaces have small bridge number}, Algebr. Geom. Topol. \textbf{6} (2006), 1519--1621.

\bibitem{Bak20}K. L. Baker: \textit{The {P}oincar\'{e} homology sphere, lens space surgeries, and some
              knots with tunnel number two}, Pacific J. Math. \textbf{305} (2020), no. 1, 1--27.

\bibitem{Ber}J. Berge. \textit{Some knots with surgeries yielding lens spaces}, 2018, available at \url{https://arxiv.org/abs/1802.09722}.

\bibitem{BZ96}S. Boyer and X. Zhang: \textit{Finite {D}ehn surgery on knots}, J. Amer. Math. Soc. \textbf{9} (1996), no. 4, 1005--1050.

\bibitem{Cau21}J. Caudell: \textit{Three lens space summands from the Poincar\'e homology sphere}, 2021, available at \url{https://arxiv.org/abs/2101.01256}.

\bibitem{Eft09}E. Eftekhary: \textit{Seifert fibered homology spheres withtrivial Heegaard Floer homology}, 2009, available at \url{https://arxiv.org/abs/0909.3975}.

\bibitem{Fro96} K. Fr\o yshov: \textit{The Seiberg-Witten equations and four-manifolds with boundary}, Math. Res. Lett. \textbf{3} (1996), no. 3, 373--390.

\bibitem{Ger} L. J. Gerstein: \textit{Nearly unimodular quadratic forms}, Ann. of Math. (2) \textbf{142}, no. 3 (1995), 597--610.

\bibitem{Gib} J. Gibbons: \textit{Deficiency symmetries of surgeries in {$S^3$}}, Int. Math.Res. Not. (2015), 12126--12151.

\bibitem{GAS86} F. Gonz\'alez-Acu\~na and H. Short: \textit{Knot surgery and primeness}, Math.  Proc. Cambridge Philos. Soc. \textbf{99} (1986), no. 1, 89--102.

\bibitem{Gre13}J. E. Greene: \textit{The lens space realization problem}, Ann. of Math. (2) \textbf{177} (2013), no. 2, 449--511.

\bibitem{Gre15}J. E. Greene: \textit{L-space surgeries, genus bounds, and the cabling conjecture}, J. Differential Geom. \textbf{100} (2015), no. 3, 491--506.

\bibitem{Hed} M. Hedden: \textit{On Floer homology and the Berge conjecture on knots admitting lens space surgeries}, Trans. Amer. Math. Soc. \textbf{2} (2011), no. 2, 949--968.

\bibitem{Hof98} J. A. Hoffman: \textit{There are no strict great {$x$}-cycles after a reducing or {$P^2$} surgery on a knot}, J. Knot Theory Ramifications \textbf{7} (1998), no. 5, 549--569.

\bibitem{Kir10} R. Kirby: Problems in low-dimensional topology (2010). \url{math.berkeley.edu/~kirby/problems.ps.gz}.

\bibitem{Lic62} W. B. R. Lickorish: \textit{A representation of orientable combinatorial {$3$}-manifolds}, Ann. of Math. (2) \textbf{76} (1962), 531--540.

\bibitem{MS03} D. Matignon and N. Sayari: \textit{Longitudinal slope and {D}ehn fillings}, Hiroshima Math. J. \textbf{33} (2003), no. 1, 127--136.

\bibitem{McC} D. McCoy: \textit{Surgeries, sharp 4-manifolds and the {A}lexander polynomial}, Algebr. Geom. Topol. \textbf{21} (2021), no. 5, 2649--2676.

\bibitem{Ni} Y. Ni: \textit{Link Floer homology detects the Thurston norm}, Geom. Topol. \textbf{13} (2009), no. 5, 2991--3019.

\bibitem{OS03}P. Ozsv\'{a}th and Z. Szab\'{o}: \textit{Absolutely graded {F}loer homologies and intersection forms for four-manifolds with boundary}, Adv. Math. \textbf{173} (2003), no. 2, 179--261.

\bibitem{OS05}P. Ozsv\'{a}th and Z. Szab\'{o}: \textit{On knot {F}loer homology and lens space surgeries}, Topology \textbf{44} (2005), no. 6, 1281--1300.

\bibitem{Ras07}J. Rasmussen: \textit{Lens space surgeries and L-space homology spheres}, 2007, available at \url{https://arxiv.org/abs/0710.2531}.

\bibitem{Sca18}C. Scaduto: \textit{On definite lattices bounded by a homology 3-sphere and Yang-Mills instanton Floer theory}, 2018, available at \url{https://arxiv.org/abs/1805.07875}.

\bibitem{Tan09}M. Tange: \textit{Lens spaces given from {$L$}-space homology 3-spheres}, Experiment. Math. \textbf{18} (2009), no. 3, 285--301.

\bibitem{Tan10}M. Tange: \textit{A complete list of lens spaces constructed by Dehn surgery I}, 2010, available at \url{https://arxiv.org/abs/1005.3512}.


\bibitem{Wal60}A. H. Wallace: \textit{Modifications and cobounding manifolds}, Canadian J. Math. \textbf {12} (1960), 503--528.
\end{thebibliography}
\end{document}